\newtheorem{thm}{Theorem}[section]
\newtheorem{lem}[thm]{Lemma}
\newtheorem{cor}[thm]{Corollary}
\newtheorem{prop}[thm]{Proposition}
\newtheorem{conj}[thm]{Conjecture}
\newtheorem{defn/prop}[thm]{Definition/Proposition}
\theoremstyle{definition}
\newtheorem{defn}[thm]{Definition}
\newtheorem{notation}[thm]{Notation}
\newtheorem{ex}[thm]{Example}
\theoremstyle{remark}
\newtheorem{rem}[thm]{Remark}
\numberwithin{equation}{section}
\newcommand{\thmref}[1]{Theorem~\ref{#1}}
\newcommand{\corref}[1]{Corollary~\ref{#1}}
\newcommand{\secref}[1]{\S\ref{#1}}
\newcommand{\conjref}[1]{Conjecture~\ref{#1}}
\newcommand{\propref}[1]{Proposition~\ref{#1}}
\newcommand{\lemref}[1]{Lemma~\ref{#1}}
\newcommand{\remref}[1]{Remark~\ref{#1}}
\newcommand{\Hom}{\operatorname{Hom}}
\newcommand{\End}{\operatorname{End}}
\newcommand{\Epi}{\operatorname{Epi}}
\newcommand{\TransRep}{\operatorname{TransRep}}
\newcommand{\hofib}{\operatorname*{hofib}}
\newcommand{\hoNat}{\operatorname{hoNat}}
\newcommand{\A}{{\mathcal  A}}
\newcommand{\U}{{\mathcal  U}}
\newcommand{\K}{{\mathcal  K}}
\newcommand{\V}{{\mathcal  V}}
\newcommand{\HH}{{\mathcal  H}}
\newcommand{\R}{{\mathbb  R}}
\newcommand{\C}{{\mathbb  C}}
\newcommand{\Z}{{\mathbb  Z}}
\newcommand{\Sinfty}{\Sigma^{\infty}}
\newcommand{\Oinfty}{\Omega^{\infty}}
\newcommand{\sm}{\wedge}
\newcommand{\ra}{\rightarrow}
\newcommand{\xra}{\xrightarrow}
\newcommand{\la}{\leftarrow}
\newcommand{\xla}{\xleftarrow}
\newcommand{\hra}{\hookrightarrow}
\newcommand{\im}{\operatorname{im}}
\begin{document}

\title[First Differential Conjecture]{The Whitehead Conjecture, the Tower of $S^1$ Conjecture, and Hecke algebras of type A}

\author[Kuhn]{Nicholas J.~Kuhn}
\address{Department of Mathematics \\ University of Virginia \\ Charlottesville, VA 22904}
\email{njk4x@virginia.edu}
\thanks{This research was partially supported by National Science Foundation grant 0967649.}
\dedicatory{Dedicated to the memory of my father Harold Kuhn (1925--2014) and uncle Leon Henkin (1921--2006), mathematicians and lifelong advocates for social justice.}
\date{August 6, 2014.}

\subjclass[2000]{Primary 55P65; Secondary 55Q40, 55S12, 20J06}

\begin{abstract}  In the early 1980's the author proved G.~ W.~ Whitehead's conjecture about stable homotopy groups and symmetric products.  In the mid 1990's, Arone and Mahowald showed that the Goodwillie tower of the identity had remarkably good properties when specialized to odd dimensional spheres.

In this paper we prove that these results are linked, as has been long suspected.  We give a state-of-the-art proof of the Whitehead conjecture valid for all primes, and simultaneously show that the identity tower specialized to the circle collapses in the expected sense.

Key to our work is that Steenrod algebra module maps between the primitives in the mod $p$ homology of certain infinite loopspaces are determined by elements in the mod $p$ Hecke algebras of type $A$.  Certain maps between spaces are shown to be chain homotopy contractions by using identities in these Hecke algebras.

\end{abstract}

\maketitle

\section{Introduction and main results} \label{introduction}

G.~ W.~ Whitehead's conjecture about $p$--local stable homotopy groups and symmetric products of spheres was proved by the author \cite{kuhn1} when $p=2$, and by the author with S.~ Priddy \cite{kuhnpriddy} for all primes $p$.  The conjecture concerns {\em stable} homotopy groups, but the proof involves {\em unstable} methods.

The Goodwillie homotopy calculus tower of the identity functor on spaces, when evaluated on odd dimensional spheres, was shown to have many lovely properties by G.~ Arone and M.~ Mahowald \cite{aronemahowald}, with further properties evident when one views this as a Weiss orthogonal calculus tower.  One is left with a wonderfully efficient tool for studying the {\em unstable} homotopy groups of spheres using {\em stable} calculations.

It has long been suspected that delooped attaching maps for fibers in the tower for $S^1$ could be used to construct contracting retractions for the sequence arising in the Whitehead conjecture.  In this paper we show that this is true: we give a state-of-the-art proof of the Whitehead conjecture, and simultaneously show that the tower for $S^1$ collapses as expected.

The crucial observation for us is that the Steenrod algebra module maps between the primitives in the mod p homology of the various key infinite loopspaces are very limited, and determined by elements in the Hecke algebras associated to the family of Chevalley groups $GL_n(\Z/p)$.  Identities in these Hecke algebras, already identified in \cite{kuhnpriddy}, ultimately show that maps arising from the tower induce on homology primitives a chain homotopy contraction of the complex induced by the maps arising in the resolution of the Whitehead conjecture.

\subsection{A family of co-H-spaces}

We make the following conventions: all spaces and spectra are $p$--complete, homology and cohomology are with $\Z/p$--coefficients, and, if $Z$ is a based space, $QZ = \Oinfty \Sinfty Z$.

Let $\rho_k$ be the $p^k$ dimensional real permutation representation of the symmetric group $\Sigma_{p^k}$, which we regard as the group of permutations of the vectors in $E_k = (\Z/p)^k$. $E_k$ can be viewed as the translation subgroup in $\Sigma_{p^k}$, and has normalizer $E_k \rtimes GL_k(\Z/p)$.

As $\rho_k$ contains a one dimensional trivial module, the Thom space $BE_k^{\rho_k}$ is a suspension with an action of $GL_k(\Z/p)$. We then define $L_1(k)$ to be the natural retract of this space
associated to the Steinberg idempotent $e_k \in \Z_{(p)}[GL_k(\Z/p)]$ \cite{steinberg}.  In particular, $L_1(k)$ is naturally a co-H-space. The first two of these are themselves suspensions, and not just retracts of suspensions: $L_1(0) \simeq S^1$ and $L_1(1) \simeq \Sigma B\Sigma_p$, both completed at $p$.

Below, we will describe $H^*(L_1(k))$ as a module over the mod $p$ Steenrod algebra $\A$.  For now, we just note that it is of finite type, it is $c(k)-1$ connected where $c(k) = 2p^k - 1 -k$, and $H_{c(k)}(L_1(k)) = \Z/p$.

This construction extends to $\V_{\C}$, the category of finite dimensional complex vector spaces with inner product.  Given $V \in \V_{\C}$, the Thom space $BE_k^{(\R \oplus V) \otimes_{\R} \rho_k}$ is a suspension with an action of $GL_k(\Z/p)$. We then let $L_1(k,V) = e_kBE_k^{(\R \oplus V) \otimes_{\R} \rho_k}$, so that $L_1(k) = L_1(k,{\bf 0})$.

\subsection{The main theorem}

We consider families of maps
\begin{equation*}
\xymatrix{
\dots \ar@<.5ex>[r] & QL_1(3) \ar@<.5ex>[r]^{d_2} \ar@<.5ex>[l] & QL_1(2) \ar@<.5ex>[r]^{d_1} \ar@<.5ex>[l]^{s_2} & QL_1(1) \ar@<.5ex>[r]^{d_0} \ar@<.5ex>[l]^{s_1} & QL_1(0) \ar@<.5ex>[l]^{s_0} \ar@<.5ex>[r]^{d_{-1}} & S^1 \ar@<.5ex>[l]^{s_{-1}} \\}
\end{equation*}
where $s_{-1}$ and $d_{-1}$ are the evident inclusion and infinite loop structure maps.

\begin{thm} \label{big thm}
Suppose the following conditions hold for  $j= k, k-1$.
\begin{enumerate}
\item[{$(1_j)$}] \ $d_j$ is an infinite loop map, and is nonzero on homology primitives in dimension $c(j+1)$.
\item[{$(2_j)$}] \ $s_j$ is the specialization to $V=\bf 0$ of a natural transformation $$s_j(V): QL_1(j,V) \ra QL_1(j+1,V),$$
and is nonzero on homology primitives in dimension $c(j+1)$.
\end{enumerate}
Then $d_k  s_k + s_{k-1} d_{k-1}: QL_1(k) \ra QL_1(k)$ will be a homotopy equivalence.
\end{thm}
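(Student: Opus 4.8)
The plan is to reduce the claimed homotopy equivalence to an algebraic identity on mod $p$ homology primitives, and then to deduce that identity from relations already known to hold in the mod $p$ Hecke algebra of type $A$. To begin, each $QL_1(k)$ is a connected, $p$--complete infinite loop space of finite type, hence simple, so by the mod $p$ homology Whitehead theorem it suffices to show that $\phi := d_k s_k + s_{k-1} d_{k-1}$ induces an automorphism of $H_*(QL_1(k);\Z/p)$. Since any based map induces a coalgebra map on mod $p$ homology, $\phi_*$ carries the submodule of primitives $PH_*(QL_1(k))$ to itself, and this is a module over $\A$.

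\textbf{Reduction to primitives.} Using the standard structure theory of $H_*(QZ;\Z/p)$ --- a free graded--commutative Hopf algebra functorially built from $\tilde H_*(Z;\Z/p)$ by Dyer--Lashof operations and filtered by weight --- I would show that $\phi_*$ is an isomorphism on $H_*(QL_1(k);\Z/p)$ as soon as it is an isomorphism on $PH_*(QL_1(k))$. Care is needed here, since $\phi$ is not a priori an $H$--map: the point is that $\phi_*$ will be shown to differ, on $PH_*(QL_1(k))$, from a nonzero scalar multiple of the identity by an operator that strictly lowers the weight filtration, and such an operator is automatically invertible; invertibility then propagates from the primitives to all of $H_*(QL_1(k);\Z/p)$.

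\textbf{Identification with Hecke algebra elements.} This is the heart of the argument. One first records the computation of the $\A$--modules $PH_*(QL_1(j))$, built from the Steinberg summand $\tilde H_*(L_1(j))$, together with the fact --- essentially the homological content of the Whitehead conjecture --- that the family $\{L_1(j)\}$ is chosen so that, via the maps $d_j$, these modules fit into a minimal free resolution over $\A$; hypotheses $(1_j)$ then pin each $d_j$ down on primitives, in the critical degree $c(j+1)$, up to a unit. The subtler half concerns the $s_j$. Although $s_j$ is only assumed natural in $V$ rather than an infinite loop map, I would exploit this naturality by passing to large $V \in \V_{\C}$ --- where the relevant Thom spaces $BE_j^{(\R \oplus V)\otimes_{\R} \rho_j}$ stabilize --- to prove that $s_j$ is determined, modulo maps of strictly lower weight, by its effect on homology primitives, and that this effect is in turn governed by a specific element of the mod $p$ Hecke algebra of type $A$, normalized by hypothesis $(2_j)$. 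Thus on $PH_*(QL_1(k))$ the composites $d_k s_k$ and $s_{k-1} d_{k-1}$ realize, up to lower--weight error, the two halves $ds$ and $sd$ of the candidate chain contraction, expressed through explicit Hecke algebra elements.

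\textbf{Conclusion and main obstacle.} It then remains to invoke the identity $ds + sd = 1$ (up to a unit) for these Hecke algebra elements, already established in \cite{kuhnpriddy} --- equivalently, the contractibility of the resolution with the stated contraction. Combined with the previous steps this shows that $\phi_*$ is a nonzero scalar plus a weight--lowering operator on $PH_*(QL_1(k))$, hence an automorphism there, hence an automorphism of $H_*(QL_1(k);\Z/p)$, so that $\phi$ is a homotopy equivalence. I expect the main difficulty to be the treatment of the $s_j$ in the identification step: extracting from naturality in $V$ alone --- with no infinite loop or $H$--space hypothesis --- the rigidity needed to recover $s_j$, up to lower--weight terms, from a single Hecke algebra element. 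This is precisely the bridge between the Weiss/Goodwillie--tower side, where naturality in $V$ is intrinsic, and the representation--theoretic identities of \cite{kuhnpriddy}.
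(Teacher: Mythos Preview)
Your overall architecture matches the paper's: reduce to an isomorphism on $PH_*(QL_1(k))$, pin down $d_{j*}$ and $s_{j*}$ up to nonzero scalars as specific Hecke algebra elements, and invoke the Kuhn--Priddy identity. However, two of your intermediate mechanisms are off, and one of them is a genuine gap.

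First, the reduction to primitives is simpler than you make it. Any based map induces a coalgebra map on $H_*$, so $\phi_*$ automatically preserves primitives; for connected cocommutative coalgebras over a field, a coalgebra map injective on primitives is injective; and a self-map of a finite-type graded vector space that is injective in each degree is bijective. No ``scalar plus weight-lowering'' argument is needed here, and indeed the paper's computation shows there \emph{are no} error terms on $PH_*$: the map on each summand $R_nL_k$ is exactly $\lambda\,\hat e_n e_{k+1}\hat e_n + \mu\, e_k\hat e_{n+1}e_k$, a nonzero linear combination of complementary idempotents, hence invertible. Your framing as ``identity up to lower weight'' is not what actually occurs and obscures the point.

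Second, and more seriously, your proposed control of $s_j$ by ``passing to large $V$ where the Thom spaces stabilize'' is not the right mechanism and I do not see how to make it work. What the paper uses is a \emph{polynomial-degree} argument from Weiss calculus: the adjoint of $s_k(V)$ has components $s_k(r,V)\colon \Sinfty D_rL_1(k,V)\to \Sinfty L_1(k+1,V)$, and as functors of $V$ the source is homogeneous of degree $rp^k$ while the target is polynomial of degree $p^{k+1}$; hence $s_k(r)$ is null for $r>p$. This is a statement about natural transformations between functors of different degrees, not about a colimit in $V$. With the $r>p$ pieces killed, separate $\A$--module vanishing results (namely $\Hom_{\A}(L_k,R_nL_{k+1})=0$ for all $n$ and $\Hom_{\A}(R_1L_k,R_nL_{k+1})=0$ for $n>0$) force $s_{k*}$ to be a nonzero multiple of the single surviving map $R_1L_k\to L_{k+1}$. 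You correctly anticipate that this rigidity of $s_j$ is the crux, but the bridge from naturality in $V$ to the Hecke description is the homogeneity argument, not stabilization.
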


The next two corollaries follow formally from \thmref{big thm}.  (If the `formal' reasoning for the first of these is unclear, see Appendix \ref{appendix 1}.)

\begin{cor}  \label{cor 1} Suppose the conditions of the theorem hold for all $k$.  If in addition, $d_{k-1}d_k = 0$ for all $k\geq 0$, then, for all spaces $Z$,
the following chain complex is exact:
$$ \cdots \xra{d_2} [Z, QL_1(2)] \xra{d_1} [Z, QL_1(1)] \xra{d_{0}} [Z,QL_1(0)] \xra{d_{-1}} [Z,S^1] \ra 0.$$
Indeed, there are product decompositions $QL_1(k) \simeq Y_k \times Y_{k-1}$ so that the sequence
$$ \dots \xra{d_2} QL_1(2) \xra{d_1} QL_1(1) \xra{d_0} QL_1(0) \xra{d_{-1}} S^1$$
identifies with the evident `exact' sequence
$$ \dots  \ra Y_2 \times Y_1 \ra Y_1 \times Y_0 \ra Y_0 \times Y_{-1} \ra Y_{-1}.$$
\end{cor}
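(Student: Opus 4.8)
The plan is to turn the conclusion of \thmref{big thm} into a genuine contracting homotopy for the complex $QL_1(\bullet)$, extract from it complementary orthogonal homotopy idempotents on each $QL_1(k)$, and then split those idempotents; the exactness of $[Z,QL_1(\bullet)]$ will be immediate, and the splittings will realize the displayed product decomposition.

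First I would fix notation: write $X_k=QL_1(k)$ for $k\ge 0$, $X_{-1}=S^1$, $\partial_k=d_{k-1}\colon X_k\to X_{k-1}$, and $\sigma_k=s_k\colon X_k\to X_{k+1}$. By condition $(1_j)$ every $\partial_k$ is an infinite loop map, hence an H-map, so post-composition with a $\partial_k$ distributes over the loop sum; pre-composition with an arbitrary map is of course additive as well. The extra hypothesis $d_{k-1}d_k=0$ says $\partial_{k-1}\partial_k\simeq\ast$, so $X_\bullet$ is a chain complex in the pointed homotopy category, and \thmref{big thm} says $h_k:=\partial_{k+1}\sigma_k+\sigma_{k-1}\partial_k$ is a self-equivalence of $X_k$ for all $k$. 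A short manipulation using only $\partial^2\simeq\ast$ and the H-map property shows $h_\bullet$ is a homotopy chain map, $\partial_k h_k\simeq h_{k-1}\partial_k$ (the two surviving cross-terms agree, the rest vanish); hence so is the degreewise inverse $h_k^{-1}$, and putting $\sigma'_k:=\sigma_k h_k^{-1}$ one obtains
\[
\partial_{k+1}\sigma'_k+\sigma'_{k-1}\partial_k\;\simeq\;(\partial_{k+1}\sigma_k+\sigma_{k-1}\partial_k)\,h_k^{-1}\;=\;h_k h_k^{-1}\;\simeq\;\id_{X_k},
\]
the first step using the chain-map relation for $h^{-1}$ to slide $h_{k-1}^{-1}$ past $\partial_k$, and additivity of pre/post-composition. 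So $\sigma'$ is an honest contracting homotopy. Applying $[Z,-]$ for any space $Z$ now yields a chain complex of abelian groups equipped with a strict chain contraction $(\sigma')_\ast$, hence an exact --- indeed split-exact --- sequence; this is the first assertion. (The reader who finds this ``$\id$ is chain-null-homotopic'' style of argument unconvincing is referred to \appref{appendix 1}.)

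Next I would set $e_k:=\partial_{k+1}\sigma'_k$ and $f_k:=\sigma'_{k-1}\partial_k$, so $e_k+f_k\simeq\id_{X_k}$, and check the idempotent relations $e_k^2\simeq e_k$, $f_k^2\simeq f_k$, $e_kf_k\simeq f_ke_k\simeq\ast$ using only $\partial^2\simeq\ast$ and that the $\partial$'s are H-maps: e.g.\ $f_k^2=\sigma'_{k-1}(\partial_k\sigma'_{k-1})\partial_k=\sigma'_{k-1}e_{k-1}\partial_k\simeq\sigma'_{k-1}\partial_k=f_k$ because $e_{k-1}\partial_k\simeq\partial_k$ (as $f_{k-1}\partial_k=\sigma'_{k-2}\partial_{k-1}\partial_k\simeq\ast$), and then $e_kf_k\simeq\ast$ follows by pre-composing $e_k+f_k\simeq\id$ with $f_k$. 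Moreover $\partial_k e_k=\partial_k\partial_{k+1}\sigma'_k\simeq\ast$ while $\partial_k f_k=\partial_k\sigma'_{k-1}\partial_k=e_{k-1}\partial_k$, and on images $\partial_k$ and $\sigma'_{k-1}$ restrict to mutually inverse maps between $\im f_k$ and $\im e_{k-1}$. I would then split the homotopy idempotents $e_k,f_k$: passing to the represented functors $[-,X_k]$ on the homotopy category of $p$-complete connected spaces of finite type, $e_k$ and $f_k$ exhibit $[-,X_k]$ as a product of two retract subfunctors, and Brown representability makes these representable, giving $X_k\simeq Y_k\times Y'_k$; the identification $\partial_k\colon\im f_k\xrightarrow{\ \simeq\ }\im e_{k-1}$ then forces $Y'_k\simeq Y_{k-1}$, so $X_k\simeq Y_k\times Y_{k-1}$, with $\partial_k$ becoming $(a,b)\mapsto(b,0)$; at the bottom, $f_{-1}\simeq\ast$ forces the complementary factor there to be trivial, so $S^1=X_{-1}\simeq Y_{-1}$ and $Y_0\times Y_{-1}\to Y_{-1}$ is the projection.

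The hard part will be precisely this last step --- splitting the homotopy idempotents --- because the homotopy category of spaces is not idempotent complete, so a single homotopy $e_k^2\simeq e_k$ does not by itself produce a splitting (no telescope without further coherence). One must route the argument through the represented functors and Brown representability --- taking care that the idempotent relations genuinely descend even though $e_k,f_k$ are not H-maps --- or, alternatively, deloop the infinite loop maps $\partial_k$ to connective spectra and split there, where idempotents do split; in either approach it is the $p$-complete finite-type hypotheses that make it go, and one also has to handle the truncation at $S^1$ with a little care. Everything before that --- extracting the strict contraction $\sigma'$ from the mere equivalences $h_k$, and verifying the chain-map and idempotent identities --- is routine bookkeeping with loop sums, H-maps, and the relation $d_{k-1}d_k=0$, and invokes nothing about the spaces $L_1(k)$ beyond the output of \thmref{big thm}.
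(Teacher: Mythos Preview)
Your argument is essentially correct but takes a different route from the paper's. Your normalization step---replacing $s$ by $\sigma' = s\,h^{-1}$ so that $\partial\sigma'+\sigma'\partial \simeq \id$---is a clean maneuver, and it yields the exactness of $[Z, QL_1(\bullet)]$ immediately, before any splitting is done. The paper does not normalize; it derives exactness only after the product decomposition is in hand.

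For the product decomposition, your idempotent-splitting route can be made to work but is more delicate than necessary. The functor-level product $[Z,X_k] \cong \im(e_k)_*\times\im(f_k)_*$ does hold despite $e_k$ not being an H-map, because $f_k = \sigma'_{k-1}\partial_k$ has $\partial_k$ an H-map with $\partial_k e_k \simeq *$, so that $f_k(a+b) = \sigma'_{k-1}(\partial_k a + \partial_k b) = f_k b$ whenever $a \in \im(e_k)_*$; combined with $e_k+f_k\simeq\id$ this forces $e_k(a+b)=a$, and Brown representability then applies to each factor. Your ``deloop to spectra'' alternative, however, does not work as stated: the idempotents $e_k, f_k$ involve $\sigma'_k$ and are not infinite loop maps, so they do not pass to the spectrum level.

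The paper's Appendix~\ref{appendix 1} avoids idempotent splitting entirely via an inductive fiber construction: with $p_{-1}=d_{-1}$, define $i_k:Y_k\hookrightarrow X_k$ as the homotopy fiber of $p_{k-1}$; since $i_{k-1}$ is split mono (inductively) and $i_{k-1}p_{k-1}d_k = d_{k-1}d_k \simeq *$, one has $p_{k-1}d_k\simeq *$ and hence a lift $p_k:X_{k+1}\to Y_k$ with $i_kp_k = d_k$. The unnormalized $s_k$ together with the hypothesis that $d_ks_k+s_{k-1}d_{k-1}$ is an equivalence then shows $p_ks_ki_k$ is an equivalence (a triangular argument on $\pi_*$), so each fibration $Y_k\to X_k\to Y_{k-1}$ splits as H-spaces. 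This is more elementary---no normalization, no Brown representability, no worry about idempotent completeness. What your approach buys is a cleaner separation of concerns: exactness comes first and for free, while the geometric splitting is isolated as a separate (and, as you note, harder) step.
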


\begin{cor}  \label{cor 2} Suppose the conditions of the theorem hold for all $k$, and let $Z$ be a suspension.  If the sequence of homomorphisms of abelian groups
$$ \cdots \xla{s_2} [Z, QL_1(2)] \xla{s_1} [Z,QL_1(1)] \xla{s_{0}} [Z,QL_1(0)] \xla{s_{-1}} [Z,S^1] \la 0$$
is a chain complex, then it is a contractible chain complex, and thus exact.
\end{cor}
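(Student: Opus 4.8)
The plan is to apply $[Z,-]$ throughout and observe that the maps $(d_{k-1})_*$, corrected by the self-equivalences produced by \thmref{big thm}, furnish a chain contraction. Write $C_{-1}=[Z,S^1]$, $C_k=[Z,QL_1(k)]$ for $k\ge 0$, and $\delta_k=(s_k)_*\colon C_k\to C_{k+1}$ (reading $QL_1(-1)=S^1$); since $Z$ is a suspension every map in sight induces a homomorphism of abelian groups, so $(C_\bullet,\delta)$ is the complex of the statement, with $\delta_{k+1}\delta_k=0$ the given chain-complex condition. First I would set $h_k:=(d_{k-1})_*\colon C_k\to C_{k-1}$ for $k\ge 0$ and $h_{-1}:=0$, and define $\phi_k:=\delta_{k-1}h_k+h_{k+1}\delta_k\colon C_k\to C_k$. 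By construction $\phi_k$ is the map induced on $[Z,QL_1(k)]$ by $d_ks_k+s_{k-1}d_{k-1}$ when $k\ge 0$ — the sum of two maps into the infinite loop space $QL_1(k)$ induces the group sum on $[Z,QL_1(k)]$, and composition with the $s$'s and $d$'s is compatible with $[Z,-]$ — while $\phi_{-1}=(d_{-1}s_{-1})_*$.

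The first observation is that each $\phi_k$ is an isomorphism. For $k\ge 0$ this is precisely \thmref{big thm} applied with that value of $k$: conditions $(1_j)$ and $(2_j)$ hold for $j=k,k-1$ by hypothesis, so $d_ks_k+s_{k-1}d_{k-1}$ is a self-homotopy-equivalence of $QL_1(k)$ and $\phi_k$ an isomorphism. For $k=-1$ I need $d_{-1}s_{-1}\colon S^1\to S^1$ to be a homotopy equivalence; conditions $(1_{-1})$ and $(2_{-1})$ say $d_{-1}$ and $s_{-1}$ are each nonzero on the one-dimensional groups of homology primitives in degree $c(0)=1$, hence isomorphisms there (and trivially in degree $0$), so $d_{-1}s_{-1}$ is an isomorphism on $H_*(S^1;\Z/p)$ and therefore a homotopy equivalence of this $p$-complete space of finite type. (Equivalently: $s_{-1}$ is the unit $S^1\to QS^1$, $d_{-1}$ the canonical map $QS^1\to\Spi S^1\simeq S^1$, and their composite is the canonical equivalence $S^1\simeq\Spi S^1$.)

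The key point — and where the chain-complex hypothesis enters — is that $\phi_\bullet$ is a chain map, $\delta_k\phi_k=\phi_{k+1}\delta_k$: since $\delta\delta=0$, in $\delta_k\phi_k=\delta_k\delta_{k-1}h_k+\delta_kh_{k+1}\delta_k$ the first term vanishes and in $\phi_{k+1}\delta_k=\delta_kh_{k+1}\delta_k+h_{k+2}\delta_{k+1}\delta_k$ the second vanishes, so both equal $\delta_kh_{k+1}\delta_k$ (the same working at the bottom with $h_{-1}=0$, $\delta_{-2}=0$). Hence the levelwise inverse $\psi_k:=\phi_k^{-1}$ is again a chain map, $\delta_{k-1}\psi_{k-1}=\psi_k\delta_{k-1}$, and setting $H_k:=\psi_{k-1}h_k\colon C_k\to C_{k-1}$ one gets
$$\delta_{k-1}H_k+H_{k+1}\delta_k=\psi_k\delta_{k-1}h_k+\psi_kh_{k+1}\delta_k=\psi_k\phi_k=\id_{C_k}.$$
So $\{H_k\}$ is a contracting homotopy, the complex is contractible, and in particular exact.

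I do not expect a real obstacle, since the argument is formal, as the paper promises. The one step that is more than bookkeeping is recognizing that $d_ks_k+s_{k-1}d_{k-1}$ induces, after applying $[Z,-]$, a chain \emph{endomorphism} of the $s$-complex — this is exactly where $\delta\delta=0$ is used — which \thmref{big thm} forces to be invertible; the remaining things needing a moment's care are the chain-map identity displayed above and the fact that $d_{-1}s_{-1}$ is an equivalence at the bottom of the tower.
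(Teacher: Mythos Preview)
Your proof is correct and is exactly the formal argument the paper has in mind; the paper does not spell out a proof for this corollary, merely noting that it ``follows formally'' from \thmref{big thm}, and your contraction $H_k=\phi_{k-1}^{-1}(d_{k-1})_*$ is the standard way to make that precise.
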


In \secref{Whitehead Conjecture section} and \secref{circle tower conj section} below, we describe two interesting families of maps $\{d_k\}$ and $\{s_k\}$ which we will prove fulfill the hypotheses of the main theorem.  The two corollaries then verify two substantive conjectures:
\begin{itemize}
\item The Whitehead Conjecture.  This is a conjecture about stable homotopy popularized by G.W.Whitehead in the 1960's, and previously proved by the author (with S.Priddy at odd primes) in the early 1980's \cite{kuhn1,kuhnpriddy}.  Our work here unifies, simplies, and clarifies these previous proofs.
\item The Tower of $S^1$ Conjecture.  This is a conjecture from the mid 1990's about the behavior of the Goodwillie tower of the identity, specialized to $S^1$.  M. Behrens has recently proved this for the prime 2 \cite{behrens2} using the Lie operad algebra structure on the identity tower identified by G. Arone and M. Ching \cite{aroneching}, and relying heavily on calculational details from \cite{kuhn1}.  Our work here works for all primes, and avoids both the use of the Arone-Ching work and reliance on \cite{kuhn1}.
\end{itemize}

\subsection{The strategy of the proof of \thmref{big thm}}

We say a little bit about the proof of \thmref{big thm}.

Let $\A$ be the mod $p$ Steenrod algebra and $Z$ a based space. Then $H_*(Z)$  is a coalgebra in the category of right $\A$--modules (with operations in $\A$  lowering degree), and we let $PH_*(Z)$ denote the submodule of primitives:
$$ PH_*(Z) = \{ x \in H_*(Z) \ | \ \Delta_*(x) = x \otimes 1 + 1 \otimes x \}.$$
A based map $f: Z \ra Y$ induces an $\A$--module map $f_*: PH_*(Z) \ra PH_*(Y)$, and if this map on primitives is a monomorphism, then $f_*: H_*(Z) \ra H_*(Y)$ will be a monomorphism.  It follows that if $Z$ is a $p$--complete, simple space of finite type, then a self map $f: Z \ra Z$ will be a homotopy equivalence if the induced map on primitives is an isomorphism, or even just a monomorphism.  Thus \thmref{big thm} will follow from the next theorem.

\begin{thm} \label{prim thm}  Assume the hypotheses of \thmref{big thm}. Then $${d_k}_*  {s_k}_* + {s_{k-1}}_* {d_{k-1}}_*: PH_*(QL_1(k)) \ra PH_*(QL_1(k))$$ will be an isomorphism.
\end{thm}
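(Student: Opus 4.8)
The plan is to compute $PH_*(QL_1(k))$ explicitly as an $\A$-module and then to translate the four hypothesis maps ${d_k}_*$, ${s_k}_*$, ${d_{k-1}}_*$, ${s_{k-1}}_*$ into elements of an appropriate Hecke algebra of type $A$, so that the composite we must invert becomes a single element of that algebra, which we then show is a unit (in fact, essentially an idempotent we can identify). First I would recall the standard fact that $PH_*(QZ)$ is the free (co)commutative-type object generated by $\tilde{H}_*(Z)$ under the Dyer--Lashof-type operations, and that after passing to primitives the relevant piece is governed by the $\A$-module $\tilde{H}_*(L_1(k))$ together with the bottom Dyer--Lashof operations; the connectivity and bottom-class statements quoted in the excerpt ($L_1(k)$ is $(c(k)-1)$-connected with $H_{c(k)} = \Z/p$, where $c(k) = 2p^k - 1 - k$) pin down exactly where the interesting interactions occur, namely in degree $c(j+1)$ at each stage. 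The key structural input is that $\tilde H^*(L_1(k))$, being cut out by the Steinberg idempotent $e_k$ from the Thom space of $\rho_k$, is the ``Steinberg summand'' of $H^*(BE_k)$, and $\A$-module maps between such summands for varying $k$ are controlled by the double Steinberg--Hecke elements $e_k \Z/p[GL_k(\Z/p) \times_{\text{parabolic}} \cdots] e_{k-1}$, exactly as in \cite{kuhnpriddy}.

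The key steps, in order: (i) identify the target and source $\A$-modules $PH_*(QL_1(k))$ with an explicit module built from the Steinberg summand, reducing to the ``primitive-generator'' piece sitting in the relevant degrees; (ii) use hypotheses $(1_j)$ and $(2_j)$ — that $d_j$ and $s_j$ are infinite loop maps (respectively come from a natural transformation of $V$-parametrized functors) and are nonzero on primitives in degree $c(j+1)$ — to show that ${d_j}_*$ and ${s_j}_*$ are, up to a nonzero scalar, the \emph{unique} nonzero $\A$-module map in the relevant degree range; naturality in $V$ is what rigidifies $s_j$, since a natural transformation of functors on $\V_{\C}$ that hits the bottom class is forced to be a specific Hecke element; (iii) assemble ${d_k}_* {s_k}_* + {s_{k-1}}_* {d_{k-1}}_*$ into a single element of the relevant Hecke algebra $e_k \mathcal{H} e_k$ and invoke the identity from \cite{kuhnpriddy} — the one the introduction advertises as ``already identified'' — which asserts precisely that this sum of two composites of Steinberg--parabolic double cosets equals a unit (one expects it to equal $e_k$ itself, i.e.\ the identity of the summand, perhaps after normalizing the scalars from step (ii)); (iv) conclude that the composite is an isomorphism on $PH_*$.

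I expect step (ii), the rigidity statement, to be the main obstacle: one must show that the hypotheses ``infinite loop map'' / ``natural in $V$'' plus ``nonzero on primitives in the single degree $c(j+1)$'' are enough to determine the induced map on \emph{all} of $PH_*(QL_1(j))$ up to scalar, not merely on the bottom cell. This requires knowing that $\Hom_{\A}(PH_*(QL_1(j)), PH_*(QL_1(j{+}1)))$ — in the restricted class of maps realized by infinite loop maps, or by natural transformations — is one-dimensional in the appropriate sense, which is where the full strength of the Steenrod-module structure of the Steinberg summands and the computation of Hom-groups between them (via Hecke algebras and the work of \cite{kuhnpriddy}) must be deployed. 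A secondary subtlety is bookkeeping the scalars: the two hypotheses only give nonvanishing, so one must check that the two summands' scalars do not conspire to cancel — this should follow from the Hecke identity being an identity of \emph{idempotents} rather than merely of nonzero elements, but it needs care, and is presumably why the theorem is phrased as ``homotopy equivalence'' (mono on primitives suffices, per the discussion preceding \thmref{prim thm}) rather than requiring a clean formula for the composite.
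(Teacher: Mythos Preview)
Your high-level strategy matches the paper's exactly: identify $PH_*(QL_1(k))$ with $R_*L_k$ (where $R_n$ is the length-$n$ Dyer--Lashof functor and $L_k = \tilde H_*(L_1(k))$), pin down ${d_j}_*$ and ${s_j}_*$ up to scalar, translate into the Hecke algebra $\mathcal H_{n+k}$ via the embedding $R_nL_k \hookrightarrow R_1^{n+k}L_0$, and finish with the idempotent identity $\hat e_n e_{k+1}\hat e_n + e_k\hat e_{n+1}e_k = \hat e_n e_k$ from \cite{kuhnpriddy}. Your handling of the scalar issue (orthogonal idempotents, so no cancellation) is also correct.

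The genuine gap is in your step (ii) for $s_k$. You write that ``naturality in $V$ is what rigidifies $s_j$,'' suggesting that being a natural transformation on $\V_{\C}$ directly forces $s_{j*}$ to be a specific Hecke element. But $s_k$ is \emph{not} an infinite loop map, so $s_{k*}$ on primitives is not simply determined by a map $L_k \ra R_*L_{k+1}$. The paper's argument requires two ingredients you do not mention. First, one needs the James--Hopf decomposition: any map $f\colon QZ \ra QY$ with $Z$ a co-H-space decomposes as $f \simeq \sum_r \Oinfty f(r)\circ j_r$, and on primitives only the terms $r=p^m$ contribute (\corref{QZ to QY on primitives cor}). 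For $s_k$ this gives infinitely many potential contributions $s_k(p^m)$, $m\geq 0$. Second, the naturality in $V$ is cashed in via a Weiss orthogonal calculus argument (\lemref{Weiss calc lemma}): since $\Sinfty D_rL_1(k,V)$ is homogeneous of degree $rp^k$ while $\Sinfty L_1(k+1,V)$ is polynomial of degree $p^{k+1}$, the components $s_k(r)$ vanish for $r>p$. Only then do the $\A$--module Hom calculations you allude to finish the job: $\Hom_{\A}(L_k,R_nL_{k+1})=0$ kills the $r=1$ term, and $\Hom_{\A}(R_1L_k,R_nL_{k+1})=0$ for $n>0$ together with $\Hom_{\A}(R_1L_k,L_{k+1})\simeq\Z/p$ pins down the $r=p$ term.

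Your framing ``$\Hom_{\A}(PH_*(QL_1(j)), PH_*(QL_1(j{+}1)))$ is one-dimensional in the restricted class'' is therefore not quite the right target: the relevant one-dimensional Hom groups are $\Hom_{\A}(L_{k+1},R_1L_k)$ (for $d_k$) and $\Hom_{\A}(R_1L_k,L_{k+1})$ (for $s_k$), and reducing $s_{k*}$ to the latter is where the real work lies.
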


Conditions $(1_k)$ and $(2_k)$ turn out to respectively determine $$d_{k*}: PH_*(QL_1(k+1)) \ra PH_*(QL_1(k))$$ and $$s_{k*}: PH_*(QL_1(k)) \ra PH_*(QL_1(k+1))$$ up to a nonzero scalar: see \corref{d cor} and \corref{s cor}.  This rigidity result for $s_{k}$ (which is {\em not} an infinite loop map) is a key new technical discovery: in analyzing $s_{k*}$ in \secref{sk section}, we show potential `lower' terms are zero with a Weiss orthogonal calculus argument, and we show potential `higher' terms are zero with an $\A$--module calculation.  The homomorphisms $d_{k*}$ and $s_{k*}$ are then described by means of certain elements in the Hecke algebras of type $A$ (over $\Z/p$): see \corref{d cor 2} and \corref{s cor 2}.  Identities in these Hecke algebras then imply that, on $PH_*(QL_1(k))$, $(d_ks_k)_*$ and $(s_{k-1}d_{k-1})_*$ will be nonzero scalar multiples of complementary idempotents: see \secref{final proofs section}.

These Hecke algebras show up for the following reason.  Let $B_n < GL_n(\Z/p)$ be the subgroup of upper triangular matrices. The Hecke algebra $\mathcal H_n$ of type $A_{n-1}$ is the algebra of natural endomorphisms of the functor sending a $GL_n(\Z/p)$--module $M$ to its $B_n$--invariants $M^{B_n}$.  With some modification in the case of odd primes, our right $\A$--modules of primitives are direct sums of direct summands of duals of the left $\A$--modules $H^*(BE_n^{\rho_n})^{B_n}$.  One has
\begin{thm} The natural homomorphism of algebras
$$\mathcal H_n \ra \End_{\A}(H^*(BE_n^{\rho_n})^{B_n})$$
is an isomorphism.
\end{thm}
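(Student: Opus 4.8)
The plan is to compute $\End_\A(M)$ for $M := H^*(BE_n^{\rho_n})$ and then descend to the $B_n$-invariants, so I begin with the setup. Because $\rho_n$ is the permutation representation of $\Sigma_{p^n}$, it extends to a representation of the normalizer $E_n\rtimes GL_n(\Z/p)$, so the quotient $GL_n(\Z/p)=(E_n\rtimes GL_n(\Z/p))/E_n$ acts on the Thom space $BE_n^{\rho_n}$ compatibly with the Steenrod operations, giving an algebra map $\F_p[GL_n(\Z/p)]\to\End_\A(M)$. Recall that $\mathcal H_n\cong\End_{\F_p[GL_n(\Z/p)]}(\F_p[GL_n/B_n])^{\mathrm{op}}$, of $\F_p$-dimension $n!$ by the Bruhat decomposition; that $N\mapsto N^{B_n}=\Hom_{\F_p[GL_n(\Z/p)]}(\F_p[GL_n/B_n],N)$ is a functor carrying a natural $\mathcal H_n$-action; and that the homomorphism of the theorem is this action evaluated at $N=M$, landing in $\End_\A(M^{B_n})$ because the natural transformations are natural with respect to the $GL_n(\Z/p)$-equivariant maps that furnish the $\A$-action. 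Both sides are finite-dimensional $\F_p$-algebras, so it suffices to prove injectivity and surjectivity.

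For the core computation I would first observe that $M$ is an injective object of the category $\U$ of unstable $\A$-modules: this follows from the injectivity of $H^*(BE_n)$ together with the Thom isomorphism, which (after the modification at odd primes needed to accommodate the exterior generators) presents $M$ as a retract of the cohomology of a classifying space of a larger elementary abelian $p$-group. Hence $\End_\A(M)=\Hom_{\U}(M,M)$, and by Lannes' $T$-functor machinery --- in the form of the Adams--Gunawardena--Miller computation, extended to Thom spectra of faithful representations of elementary abelian $p$-groups --- $\Hom_{\U}(H^*(BV^{\alpha}),H^*(BW^{\beta}))$ is the free $\F_p$-module on the pairs $(\phi,\iota)$ consisting of an $\F_p$-linear map $\phi\colon W\to V$ and an orientation-preserving identification $\phi^{*}\alpha\cong\beta$. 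Taking $V=W=E_n$ and $\alpha=\beta=\rho_n$, the condition $\phi^{*}\rho_n\cong\rho_n$ forces $\phi$ to be invertible (for a non-injective $\phi$ the pullback of a regular representation is a proper multiple of a smaller regular representation), so the $GL_n(\Z/p)$-action realizes an isomorphism of algebras $\F_p[GL_n(\Z/p)]\xrightarrow{\sim}\End_\A(M)$.

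For the descent, write $J=\{g\in\F_p[GL_n(\Z/p)]:gM^{B_n}\subseteq M^{B_n}\}$, a subalgebra containing $\F_p[B_n]$, and $L=\{g:gM^{B_n}=0\}$, a two-sided ideal of $J$. Since $M$ is injective in $\U$, any $\A$-endomorphism $f$ of the submodule $M^{B_n}\subseteq M$ extends to an $\A$-endomorphism of $M$, which by the core computation lies in $\F_p[GL_n(\Z/p)]$ and necessarily carries $M^{B_n}$ into itself; restriction gives an isomorphism $J/L\xrightarrow{\sim}\End_\A(M^{B_n})$. It remains to identify $J/L$ with $\mathcal H_n$: the classical Hecke operators $\hat T_w=\sum_{xB_n\subseteq B_nwB_n}x$ ($w$ in the Weyl group $\Sigma_n$) lie in $J$, elements of $B_n$ act as the identity on $M^{B_n}$, and so $J$ collapses modulo $L$ onto the $\F_p$-span of the $\hat T_w$ --- this is the honest, $\F_p$-level incarnation of the formal identity $J/L\cong e_{B_n}\F_p[GL_n(\Z/p)]e_{B_n}=\mathcal H_n$ that would be available if the averaging idempotent $e_{B_n}=|B_n|^{-1}\sum_{b\in B_n}b$ existed. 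That the resulting surjection $\mathcal H_n\twoheadrightarrow\End_\A(M^{B_n})$ is also injective --- equivalently, that $M^{B_n}$ is a faithful $\mathcal H_n$-module --- I would deduce from the fact that the $GL_n(\Z/p)$-module $M$ is large enough to contain the regular representation among its subquotients, so that $M^{B_n}$ visibly contains a faithful copy of $\F_p[B_n\backslash GL_n/B_n]\cong\mathcal H_n$. (If, as is suggested by the explicit description of $H^*(BE_n^{\rho_n})^{B_n}$ obtained elsewhere in the paper, this module is itself injective in $\U$, the descent simplifies: $M^{B_n}$ then splits off $M$ and $\End_\A(M^{B_n})=e\,\F_p[GL_n(\Z/p)]\,e$ for the corresponding projection idempotent $e$, with $\F_p[GL_n(\Z/p)]e\cong\F_p[GL_n/B_n]$ precisely when only elements of $B_n$ fix $M^{B_n}$ pointwise.)

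The main obstacle lies in two places. First, the core computation: one must verify that $H^*(BE_n^{\rho_n})$ is injective in $\U$, that the Thom-twisted Adams--Gunawardena--Miller statement genuinely excludes every non-invertible $\phi$ and pins the orientation $\iota$ down to a scalar, and that the odd-primary modification threads cleanly through both. Second, the descent to $B_n$-invariants: since $e_{B_n}$ does not exist over $\F_p$, the collapse of $J/L$ onto the Hecke algebra and the faithfulness of $M^{B_n}$ over $\mathcal H_n$ are not formal; they must be extracted from genuine structural information about $H^*(BE_n^{\rho_n})^{B_n}$ as a module over both $\A$ and $GL_n(\Z/p)$ --- in particular that it is faithful over $\mathcal H_n$ --- rather than from any soft semisimplicity.
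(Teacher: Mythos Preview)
Your strategy and the paper's both rest on the Adams--Gunawardena--Miller/Lannes computation of maps between cohomologies of elementary abelian groups, but you organize the argument around a claim that is not established and whose proposed justification does not work. You assert that $M=H^*(BE_n^{\rho_n})$ is injective in $\U$ because the Thom isomorphism ``presents $M$ as a retract of the cohomology of a classifying space of a larger elementary abelian $p$-group.'' The Thom isomorphism does no such thing: after a desuspension it identifies $\widetilde H^*(BE_n^{\rho_n})$ with the ideal $c_nH^*(BE_n)\subset H^*(BE_n)$, the $\A$-action being twisted by the Euler class $c_n$ of $\tilde\rho_n$; it produces no map from any $H^*(BE_m)$ onto $M$, and you supply no candidate retraction. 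Your entire descent then hinges on this injectivity (it is what lets you extend an endomorphism of $M^{B_n}$ to one of $M$), and you yourself flag the identification $J/L\cong\mathcal H_n$ and the faithfulness of $M^{B_n}$ over $\mathcal H_n$ as further unproved obstacles. So the proposal is an outline with two load-bearing gaps, the first of which rests on a mistaken reading of what the Thom isomorphism provides.

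The paper sidesteps both problems by using the injectivity of the \emph{target} rather than the source. One computes $\Hom_\U(c_nH^*(BE_n),H^*(BE_m))$ by restricting along $c_nH^*(BE_n)\hookrightarrow H^*(BE_n)$ from $\Hom_\U(H^*(BE_n),H^*(BE_m))=\Z/p[\Hom(E_m,E_n)]$: restriction is surjective because $H^*(BE_m)$ is $\U$-injective, and its kernel is spanned exactly by the non-epimorphisms because $c_n$ restricts to zero on every proper subgroup of $E_n$. This yields $\Hom_\U(c_nH^*(BE_n),H^*(BE_m))=\Z/p[\Epi(E_m,E_n)]$ --- your ``Thom-twisted AGM,'' but now proved rather than invoked. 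Setting $m=n$ and taking $B_n$-invariants on the source (again using $\U$-injectivity of the target) gives $\Hom_\U\bigl(c_nH^*(BE_n)^{B_n},H^*(BE_n)\bigr)=\Z/p[GL_n/B_n]$; since by construction every such map lands in $c_nH^*(BE_n)$, taking $B_n$-invariants on the target gives $\End_\U\bigl(c_nH^*(BE_n)^{B_n}\bigr)=\Z/p[GL_n/B_n]^{B_n}=\mathcal H_n$. No injectivity of $M$ is needed, and the ``descent'' is a one-line passage to invariants rather than an analysis of $J/L$.
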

See \thmref{End thm} for the variant we need at odd primes.

\subsection{The Whitehead Conjecture} \label{Whitehead Conjecture section}

The Whitehead conjecture concerns the natural filtration of the Eilenberg--MacLane spectrum $H\Z$, when viewed as the infinite symmetric product of the sphere spectrum $S$.  Classical calculations \cite{nakaoka} show that the filtration
$$ S=SP^1(S) \ra SP^p(S) \ra SP^{p^2}(S) \ra SP^{p^3}(S) \ra \cdots \ra SP^{\infty}(S) = H\Z$$
realizes the admissible sequence length filtration on $H^*(H\Z) = \A/\A\beta$.  This implies, in particular, that for all $k$,
$$ H_*(SP^{p^k}(S)) \ra H_*(SP^{p^{k+1}}(S))$$
is monic.

By contrast, listed as a `classical' problem attributed to G.W.Whitehead in a 1970 conference proceedings \cite[Conjecture 84]{1970AMSproblems} is the conjecture that for all $k$,
$$ \pi_*(SP^{p^k}(S)) \ra \pi_*(SP^{p^{k+1}}(S))$$
is zero for $*>0$.

Let $L(0) = S$ and $L(k) = \Sigma^{-k}SP^{p^k}(S)/SP^{p^{k-1}}(S)$ for $k\geq 1$.  By diagram chasing, the conjecture is equivalent to the exactness of the sequence
$$ \cdots  \ra L(3) \xra{\delta_2} L(2) \xra{\delta_1} L(1) \xra{\delta_0} L(0) \xra{\delta_{-1}} H\Z
$$
on $p$--local homotopy groups.  Here $\delta_{-1}$ is the inclusion of the bottom cell, and, for $k \geq 0$, $\delta_k$ is the evident connecting map.

S. Mitchell and Priddy \cite{MiP83} construct a homotopy equivalence $\Sigma L(k) \simeq \Sinfty L_1(k)$; Arone and W. Dwyer \cite{aronedwyer} give a slightly different proof.

Choosing such equivalences, let $d_k: QL_1(k+1) \ra QL_1(k)$ be the infinite loop map
$$ QL_1(k+1) \simeq  \Oinfty \Sigma L(k+1) \xra{\Oinfty \Sigma \delta_k} \Oinfty \Sigma L(k) \simeq QL_1(k).$$

\begin{lem}  \label{d_k lemma} $d_k$ satisfies condition $(1_k)$, and $d_{k-1}d_k = 0$.
\end{lem}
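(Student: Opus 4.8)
The plan is to treat the three assertions of the lemma separately: the infinite loop property and the relation $d_{k-1}d_k=0$ are formal consequences of how $d_k$ is built, whereas the nonvanishing clause of $(1_k)$ rests on a classical computation which I would quote.

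First, the formal part. Via the equivalences $\Sinfty L_1(j)\simeq\Sigma L(j)$ of Mitchell--Priddy \cite{MiP83} and Arone--Dwyer \cite{aronedwyer}, the map $d_k$ is by construction $\Oinfty$ applied to the spectrum map $\Sigma\delta_k$, hence an infinite loop map. For the composite, unwind the definition of the $\delta_j$ from the cofiber sequences $SP^{p^{j-1}}(S)\to SP^{p^j}(S)\to\Sigma^j L(j)$: one finds that the composite of spectrum maps realizing $\Sigma\delta_{k-1}\circ\delta_k$ contains $\Sigma$ of the consecutive pair $SP^{p^k}(S)\xra{q}SP^{p^k}(S)/SP^{p^{k-1}}(S)\xra{\partial}\Sigma SP^{p^{k-1}}(S)$ of a single cofiber sequence, so that $\partial q$, and hence $\delta_{k-1}\delta_k$, is null-homotopic. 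Therefore $d_{k-1}d_k=\Oinfty\Sigma(\delta_{k-1}\delta_k)$ is null-homotopic; the same argument also gives $d_{-1}d_0=0$.

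Next, the nonvanishing on primitives in degree $c(k+1)$. Since $L_1(k+1)$ is $(c(k+1)-1)$-connected with $H_{c(k+1)}(L_1(k+1))=\Z/p$, connectivity gives $PH_{c(k+1)}(QL_1(k+1))=\Z/p$, generated by the image $x_0$ of the bottom class under the unit $L_1(k+1)\to QL_1(k+1)$; so I must show ${d_k}_*(x_0)\neq0$. Writing $\epsilon$ for the stable counit of $QL_1(k)$, the triangle identities identify $\epsilon_*({d_k}_*(x_0))$ with the image of the bottom class under the spectrum-level map $(\Sigma\delta_k)_*\colon\tilde H_{c(k+1)}(\Sinfty L_1(k+1))\to\tilde H_{c(k+1)}(\Sinfty L_1(k))$. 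Hence it is enough that this map be injective. For $k\ge1$ the target is nonzero and this injectivity is the substantive point, supplied by the classical description recalled below. For $k=0$ the target $\tilde H_{c(1)}(\Sinfty S^1)$ vanishes (as $c(1)>1$), so there one must instead identify ${d_0}_*(x_0)$ with the nonzero \emph{decomposable} primitive of $PH_{c(1)}(QS^1)$ --- for $p=2$ this is $\iota^2=Q^1\iota$, the mod $2$ Hurewicz image of $\eta$ --- reflecting the classical fact that $\delta_0\colon\Sinfty\R P^\infty\to S$ is (essentially) the Kahn--Priddy map and is nonzero on $\pi_1^s$.

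The one genuinely non-formal ingredient is thus the injectivity of $(\Sigma\delta_k)_*$ on the bottom class for $k\ge1$ --- equivalently, that the bottom cohomology class of $L_1(k+1)$ lies in the image of $\delta_k^*$. This is precisely the unstable input of the earlier proofs of the Whitehead conjecture, and I would cite rather than reprove it: by Nakaoka's computation \cite{nakaoka} the symmetric product filtration of $H\Z$ realizes the admissible-length filtration of $H^*(H\Z)=\A/\A\beta$, the bottom cohomology class of $L_1(k+1)$ is the appropriate shift of the length-$(k+1)$, minimal-excess admissible monomial, and the analysis of the connecting maps carried out in \cite{kuhn1,kuhnpriddy} shows that this class is hit by $\delta_k^*$. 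Granting this, both clauses of $(1_k)$ and the relation $d_{k-1}d_k=0$ follow as above; locating and invoking this classical input correctly is the main obstacle.
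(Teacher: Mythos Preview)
Your treatment of the formal parts is fine: that $d_k$ is an infinite loop map and that $d_{k-1}d_k=0$ follow exactly as you say from the construction via cofiber sequences.

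The genuine gap is in your argument for the nonvanishing clause of $(1_k)$ when $k\ge1$. You reduce to showing that the spectrum-level map $(\Sigma\delta_k)_*\colon \tilde H_{c(k+1)}(\Sinfty L_1(k+1))\to \tilde H_{c(k+1)}(\Sinfty L_1(k))$ is injective, and then assert this is supplied by the classical computations in \cite{kuhn1,kuhnpriddy}. But this map is in fact \emph{zero}: it is an $\A$--module map $L_{k+1}\to L_k$, and the paper records (citing \cite{kuhn5}) that $\Hom_{\A}(L_{k+1},L_k)=0$. Equivalently, in the decomposition $PH_*(QL_1(k))=R_*L_k$, the image $d_{k*}(x_0)$ of the bottom primitive lands in $R_1L_k$, not in $R_0L_k=L_k$, so applying the counit $\epsilon_*$ kills it. Your own $k=0$ analysis already hints at this phenomenon (the relevant primitive is a Dyer--Lashof operation on $\iota$, not a suspension class), but for $k\ge1$ the same thing happens and your reduction to $(\Sigma\delta_k)_*$ cannot work. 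The references you invoke do not establish the injectivity you want, because it is false.

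This is precisely why the paper does \emph{not} argue directly. Instead it proves the nonvanishing by induction on $k$, bootstrapping off \thmref{big thm} itself: assuming $(1_j)$ and $(2_j)$ for $j<k$, the theorem gives a truncated version of \corref{cor 1}, which in turn shows that the fibration $\Oinfty\Sigma^{-k}H\Z/SP^{p^k}(S)\xra{\Oinfty\delta_k}QL_1(k)\to\Oinfty\Sigma^{1-k}H\Z/SP^{p^{k-1}}(S)$ is split, so $\Oinfty\delta_k$ is monic on homology; combined with the fact that $i_{k+1}$ is an isomorphism on bottom homology, this gives the nonvanishing of $d_{k*}$ in degree $c(k+1)$. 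The point is that one needs the \emph{unstable} map $\Oinfty\delta_k$ (which can detect classes in $R_1L_k$), not the stable map $\Sigma\delta_k$ on spectrum homology (which cannot).
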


The only thing here that is not clear is that $d_k$ is nonzero on $H_{c(k+1)}$.  Using \thmref{big thm}, this will be proved by induction on $k$ in \secref{final proofs section}.

We will shortly describe two families of maps $s_k$ satisfying conditions $(2_k)$: see \lemref{s_k lemma} and \remref{Whitehead conjecture remark}.
As in \cite{kuhn3}, we call a spectrum {\em spacelike} if it is a retract of a suspension spectrum. \corref{cor 1}, applied to our maps $d_k$, can be restated as follows.

\begin{thm}[Whitehead Conjecture] \label{Whitehead conjecture}  \ If $Y$ is spacelike, then the following sequence is exact:
$$ \cdots \xra{\Sigma \delta_2} [Y,\Sigma L(2)] \xra{\Sigma \delta_1} [Y,\Sigma L(1)] \xra{\Sigma \delta_{0}} [Y,\Sigma L(0)] \xra{\Sigma \delta_{-1}} [Y,\Sigma H\Z] \ra 0.$$
\end{thm}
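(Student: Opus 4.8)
The strategy is to deduce this theorem directly from \corref{cor 1}, applied to the family of maps $d_k$ constructed above, once we know that $\{d_k\}$ satisfies all the hypotheses of that corollary. The point is that \corref{cor 1} asserts exactness of the sequence $\cdots \xra{d_1} [Z,QL_1(1)] \xra{d_0} [Z,QL_1(0)] \xra{d_{-1}} [Z,S^1]\ra 0$ for \emph{all} spaces $Z$; by the Mitchell--Priddy equivalences $\Sigma L(k)\simeq \Sinfty L_1(k)$ and the adjunction $[Z,QL_1(k)] = [\Sinfty Z, \Sigma L(k)]$, the case $Z = \Oinfty Y$ for a spacelike spectrum $Y$ (or, more precisely, passing to the colimit/stable setting) recovers exactly the claimed exact sequence $\cdots \to [Y,\Sigma L(1)]\to[Y,\Sigma L(0)]\to[Y,\Sigma H\Z]\to 0$. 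So the theorem is a repackaging of \corref{cor 1} once its hypotheses are verified.

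Concretely, I would proceed in the following steps. First, invoke \lemref{d_k lemma}, which already gives that each $d_k$ satisfies condition $(1_k)$ and that $d_{k-1}d_k = 0$ for all $k\geq 0$ — these are precisely the conditions on the $\{d_k\}$ required by \corref{cor 1}, \emph{except} that \corref{cor 1} also needs conditions $(2_k)$ to hold for all $k$, i.e. the existence of a compatible family of natural transformations $s_k(V)$ with the stated nonvanishing on homology primitives. Second, therefore, invoke the existence of such a family $\{s_k\}$: this is the content of \lemref{s_k lemma} together with \remref{Whitehead conjecture remark}, which exhibit maps $s_k$ fulfilling $(2_k)$. (Note that \corref{cor 1} does not depend on \emph{which} family $\{s_k\}$ one uses — it only needs that some such family exists, so that \thmref{big thm}, and hence the product decompositions $QL_1(k)\simeq Y_k\times Y_{k-1}$, are available.) Third, with both $(1_j)$ and $(2_j)$ now verified for all $j$, and with $d_{k-1}d_k=0$, all hypotheses of \corref{cor 1} are met, so the chain complex $\cdots \xra{d_1}[Z,QL_1(1)]\xra{d_0}[Z,QL_1(0)]\xra{d_{-1}}[Z,S^1]\ra 0$ is exact for every space $Z$. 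Fourth, translate this exactness across the Mitchell--Priddy equivalences and the loop–suspension adjunction: the stable mapping groups $[Y,\Sigma L(k)]$ for $Y$ a suspension spectrum are computed as $[\Sinfty Z, \Sigma L(k)] = [Z, \Oinfty\Sigma L(k)] = [Z, QL_1(k)]$ with $Z$ the space such that $\Sinfty Z = Y$, and under these identifications $\Sigma\delta_k$ corresponds to $d_k$ by the very definition of $d_k$. Since a retract of a suspension spectrum has its mapping groups a retract of those of an actual suspension spectrum, exactness persists for all spacelike $Y$, yielding the claimed sequence.

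The one genuinely nontrivial input is the nonvanishing clause inside \lemref{d_k lemma} — that $d_k$ is nonzero on $H_{c(k+1)}$, equivalently that $(1_k)$ genuinely holds — and as the excerpt notes, this is \emph{not} formal: it is proved by induction on $k$ using \thmref{big thm} itself (see \secref{final proofs section}). So the real work sits upstream in \thmref{big thm}, \thmref{prim thm}, and the Hecke-algebra identities, not in the present deduction. Given \lemref{d_k lemma}, \lemref{s_k lemma}/\remref{Whitehead conjecture remark}, and \corref{cor 1} as black boxes, the proof of \thmref{Whitehead conjecture} is essentially a two-line citation plus the adjunction bookkeeping; accordingly I expect the only step requiring any care here to be the careful handling of the spacelike (retract) case in the adjunction step, and the verification that the Mitchell--Priddy equivalence is compatible with the connecting maps so that $\Sigma\delta_k \leftrightarrow d_k$ on the nose (rather than up to sign or an automorphism), which is built into the definition of $d_k$ given just above the statement.
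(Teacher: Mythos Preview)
Your proposal is correct and follows exactly the paper's approach: the paper states explicitly that \thmref{Whitehead conjecture} is just \corref{cor 1} applied to the maps $d_k$, restated, with \lemref{d_k lemma} supplying conditions $(1_k)$ and $d_{k-1}d_k=0$, and \lemref{s_k lemma}/\remref{Whitehead conjecture remark} supplying conditions $(2_k)$. Your adjunction and retract bookkeeping fleshes out what the paper leaves implicit in the phrase ``can be restated as follows.''
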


We have reproved the main theorems of \cite{kuhn1,kuhnpriddy}.  Since $L(0) = S^0$ and $L(1) \simeq \Sinfty B\Sigma_p$, exactness at $[Y, \Sigma L(0)]$ is a strengthened version (since it is once delooped) of the Kahn-Priddy Theorem \cite{kahnpriddy}, which resolved a conjecture of M. Mahowald from the same 1970 conference proceedings \cite[Conjecture 81]{1970AMSproblems}.

\begin{rem} One can also construct maps $\delta_k: \Sinfty L_1(k+1) \ra \Sinfty L_1(k)$ `by hand' such that $d_k = \Oinfty \Sigma \delta_k$ satisfies condition $(1_k)$.  Using that $L_1(k)$ is a retract of $\Sigma BE_{k+}$ \cite{MiP83}, one lets $\delta_k$ be the composite
$$ \Sinfty L_1(k+1) \ra \Sinfty \Sigma BE_{(k+1)+} \xra{tr} \Sinfty \Sigma BE_{k+} \ra \Sinfty L_1(k)$$
where $tr$ is the transfer associated to the subgroup inclusion $E_k < E_{k+1}$.  This is essentially what was done in \cite{kuhnpriddy}.  Showing that condition $(1_k)$ holds can then be given a direct proof, which we omit here.

In fact, all such maps `$d_k$' turn out to be the same, up to a unit in $\Z_p$, and $d_{k-1}d_k = 0$ holds automatically: one can compute that, for all $k$, $[L(k+1), L(k)]= \Z_p$ and $[L(k+2),L(k)] = 0$.  This follows from the Segal Conjecture for elementary abelian groups \cite{agm} and calculations with Steinberg idempotents in \cite{kuhn5} or \cite{nishida}.
\end{rem}

\subsection{The Tower of $S^1$ Conjecture} \label{circle tower conj section}

Given $V \in \V_{\C}$, results by Arone and coauthors Mahowald \cite{aronemahowald}, Dwyer \cite{aronedwyer}, and Kankaanrinta \cite{aronekan} combine to show that there is a strongly convergent tower of principal fibrations under $S^{\R \oplus V}$,
\begin{equation*}
\xymatrix{
S^{\R \oplus V} \ar[d] \ar[dr] \ar[drr]\ar[drrr]&&&&\\
QS^{\R \oplus V} = P_1(V) & P_{p}(V) \ar[l] & P_{p^2}(V) \ar[l] & P_{p^3}(V) \ar[l] & \cdots \ar[l]
}
\end{equation*}
with $\hofib \{P_{p^k}(V) \ra P_{p^{k-1}}(V)\} \simeq \Omega^k QL_1(k,V)$ for $k>0$.  This tower can be regarded as either the Weiss tower \cite{weiss} of the functor $V \rightsquigarrow S^{\R \oplus V}$, or the Goodwillie tower \cite{goodwillie3} of the identity functor on based spaces, specialized to odd dimensional spheres.

Using the former interpretation, Arone, Dwyer, and Lesh \cite{aronedwyerlesh} show that the connecting maps of the tower, $\Omega^k QL_1(k,V) \ra \Omega^k QL_1(k+1,V)$, can be naturally delooped $k$ times. Choosing deloopings, one sees that one has a sequence of natural maps
$$
S^{\R \oplus V} \xra{s_{-1}(V)} QL_1(0,V) \xra{s_0(V)}  QL_1(1,V)  \xra{s_1(V)}  QL_1(2,V) \ra \cdots,
$$
where $s_{-1}(V)$ is the natural inclusion $S^{\R \oplus V} \ra QS^{\R \oplus V}$.

Specializing to $V = \bf 0$ and letting $s_k = s_k(\bf 0)$, the sequence takes the form
$$
S^1 \xra{s_{-1}}  QL_1(0) \xra{s_0}   QL_1(1) \xra{s_1}  QL_1(2) \ra \cdots .
$$

\begin{lem}  \label{s_k lemma} $s_k$ satisfies condition $(2_k)$.
\end{lem}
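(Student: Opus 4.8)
The plan is to verify the two clauses of condition $(2_k)$ separately. The first clause---that $s_k$ is the specialization to $V = \mathbf{0}$ of a natural transformation $s_k(V)\colon QL_1(k,V) \ra QL_1(k+1,V)$---is essentially built into the construction: the maps $s_k(V)$ are precisely the $k$-fold deloopings of the connecting maps $\Omega^k QL_1(k,V) \ra \Omega^k QL_1(k+1,V)$ in the Weiss tower of the functor $V \rightsquigarrow S^{\R \oplus V}$, whose naturality in $V$ is exactly what Arone--Dwyer--Lesh establish. So the first clause needs only a careful citation and a remark that the chosen deloopings can be made compatibly in $V$; I would record this and move on.

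The substance is the second clause: $s_k$ is nonzero on homology primitives in dimension $c(k+1)$. Here I would argue as follows. The map $s_k$ sits inside the infinite loop map $s_{k-1} d_{k-1}$ plus the correction term $d_k s_k$ coming from \thmref{big thm}, but circularly invoking that is not allowed at this stage, so instead I would work directly with the tower. The composite $S^1 \xra{s_{-1}} QL_1(0) \xra{s_0} \cdots \xra{s_{k-1}} QL_1(k)$ is, after looping down $k$ times and restricting attention to the relevant fiber, the classifying map for the connecting homomorphism, and the key point is to track the bottom cell. Recall that $L_1(k)$ is $(c(k)-1)$-connected with $H_{c(k)}(L_1(k)) = \Z/p$, while $QL_1(k)$ and $QL_1(k+1)$ differ in low degrees: $c(k+1) > c(k)$, so dimension $c(k+1)$ is where the bottom class of $L_1(k+1)$ lives. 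I would show that the composite $\Omega^{k} \Sigma^\infty$-adjoint of $s_k$ detects the transfer-type map $E_k \hookrightarrow E_{k+1}$ on the relevant Steinberg summands, and that on the bottom cell this transfer is nonzero mod $p$---a fact that ultimately reduces to the nonvanishing of the relevant Steinberg idempotent applied to the generator of $H^{c(k+1)}$ of the appropriate Thom space, which is a standard computation with the $GL_k(\Z/p)$-action on $H^*(BE_k^{\rho_k})$.

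The main obstacle, I expect, is that $s_k$ is genuinely \emph{not} an infinite loop map, so one cannot simply compute its effect on homology by a Steenrod-module argument on $\Sigma^\infty$-level. One must instead descend into the tower, identify the $k$-fold delooping concretely enough to see what it does on the bottom cohomology class of the fiber $\Omega^k QL_1(k+1,V)$, and then check that the delooping process---which a priori is only well-defined up to the indeterminacy measured by the relevant $\lim^1$ or by $[\Omega^{k-1}(-), -]$ groups---does not kill the bottom class. I would handle this by a connectivity estimate: the bottom class sits in dimension $c(k+1)$, the delooping ambiguity lives in strictly higher filtration (coming from the connectivity of the next layer $\Omega^{k+1} QL_1(k+1,V)$ relative to $QL_1(k+1,V)$), so on $H_{c(k+1)}$ the map $s_k$ is independent of the choices and equals the connecting map of the tower, which is nonzero there by the Arone--Mahowald analysis. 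Making that filtration comparison precise is where the real work lies; everything else is bookkeeping with Steinberg summands.
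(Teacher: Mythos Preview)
Your proposal has a genuine gap, and it stems from a misreading of what is and is not circular. You write that invoking \thmref{big thm} ``is not allowed at this stage,'' and so you look for a direct route. But the paper's proof \emph{does} invoke \thmref{big thm}---inductively, for indices $j<k$---and this is not circular: the hypotheses of \thmref{big thm} at level $k$ only require conditions $(1_j)$ and $(2_j)$ for $j=k,k-1$, so once those are known for all $j<k$, the conclusion of \thmref{big thm} (and hence a truncated \corref{cor 2}) is available for those smaller indices. The paper then feeds this into the homotopy spectral sequence of the tower: strong convergence to $\pi_*(S^1)$ forces $E^\infty_{-(k+1),c(k+1)}=0$, the inductive exactness kills $E^2_{-s,t}$ for $s<k$, and connectivity rules out any later differential hitting $E^r_{-(k+1),c(k+1)}$. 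The only way that group can die is if the $d^1$-differential $s_{k*}\colon \pi_{c(k+1)}(QL_1(k))\to\pi_{c(k+1)}(QL_1(k+1))$ is onto. A Hurewicz comparison then gives the nonvanishing on $PH_{c(k+1)}$.

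Your alternative route does not close. The claim that the connecting map ``is nonzero there by the Arone--Mahowald analysis'' is not something \cite{aronemahowald} provides: they identify the layers, not the nontriviality of the attaching maps in specific degrees. Your attempt to identify $s_k$ with a transfer associated to $E_k\hookrightarrow E_{k+1}$ conflates the tower-based $s_k$ with the alternative James--Hopf construction of \remref{Whitehead conjecture remark}; for the tower $s_k$ there is no such a priori description, and indeed the whole point of \corref{s cor} is that one must \emph{deduce} the shape of $s_{k*}$ from hypotheses $(2_k)$ rather than read it off a construction. Finally, your connectivity estimate for the delooping indeterminacy, even if made precise, would at best show $s_k$ is well defined on $H_{c(k+1)}$; it does nothing to show the resulting map is nonzero. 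The missing idea is the inductive spectral-sequence argument: without it you have no mechanism that forces $s_{k*}$ to be nonzero.
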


This will be proved in \secref{final proofs section}.  As before, the only thing here that is not clear is that $s_k$ is nonzero on $PH_{c(k+1)}$, and, as before, this will be proved by induction on $k$, using \thmref{big thm}.

Note that, from the construction, each composite $s_ks_{k-1}$ is  null after applying $\Omega^k$.  \corref{cor 2} applied to our maps $s_k$ thus proves the following version of longtime conjectures about the tower of $S^1$.

\begin{thm}[The Tower of $S^1$ Conjecture] \label{circle tower conjecture} If, for some $k$, $Z$ is the $k$--fold suspension of a CW complex and $\dim Z < c(k+1)$, then the following sequence is exact:
$$0 \ra [Z,S^1] \xra{s_{-1}} [Z, QL_1(0)] \xra{s_0}  [Z, QL_1(1)]  \xra{s_1}  [Z,QL_1(2)] \ra \cdots .$$
\end{thm}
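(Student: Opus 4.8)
The statement to prove is \thmref{circle tower conjecture}, and the key point is that it should follow \emph{formally} from \corref{cor 2} once we know that the maps $s_k$ satisfy the hypotheses of \thmref{big thm} (which is the content of \lemref{s_k lemma}) and that the composites $s_ks_{k-1}$ vanish on the relevant range. The plan is therefore: first invoke \lemref{s_k lemma} so that conditions $(2_k)$ hold for all $k$, and invoke \lemref{d_k lemma} so that conditions $(1_k)$ hold for all $k$; this puts us in a position to apply \thmref{big thm} (and hence \corref{cor 2}) for every $k$. Second, I would record why the sequence $0 \to [Z,S^1] \to [Z,QL_1(0)] \to \cdots$ is a chain complex in the stated range: the construction of the $s_k$ as deloopings of connecting maps in the Arone--Mahowald tower makes each composite $s_k s_{k-1}$ null after applying $\Omega^k$, i.e.\ null as a map out of a $k$-fold suspension; so if $Z$ is a $k$-fold suspension then $(s_k)_*(s_{k-1})_* = 0$ on $[Z,-]$. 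Third, I would feed this into \corref{cor 2} to conclude the complex is contractible, hence exact.

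The one genuine subtlety — and the step I expect to be the main obstacle — is the dimension hypothesis $\dim Z < c(k+1)$, together with the fact that \corref{cor 2} as stated wants the sequence to be a chain complex \emph{everywhere}, not just in a range. The resolution is a connectivity/truncation argument. Since $L_1(m)$ is $(c(m)-1)$-connected and $c(m)$ is strictly increasing in $m$, the spaces $QL_1(m)$ become more and more highly connected; if $\dim Z < c(k+1)$ then $[Z, QL_1(m)] = 0$ for all $m \geq k+1$ on account of $QL_1(m)$ being $(c(m)-1)$-connected with $c(m) \geq c(k+1) > \dim Z$. Hence only the finitely many terms $[Z,S^1], [Z,QL_1(0)], \dots, [Z,QL_1(k)]$ can be nonzero, the tail is automatically zero, and the chain-complex condition $s_m s_{m-1} = 0$ on $[Z,-]$ is then needed only for $m \leq k$ — precisely the range in which $Z$, being a $k$-fold suspension, is also an $m$-fold suspension for $m \le k$, so that the delooping argument of the previous paragraph applies. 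One must also check the finitely many $QL_1(m)$, $m \le k$, and $S^1$ are $p$-complete simple spaces of finite type so that \corref{cor 2}'s machinery (which rests ultimately on \thmref{big thm} and the homology-primitive arguments of \thmref{prim thm}) applies; this is immediate from the standing conventions and the finite-type remark about $H^*(L_1(k))$.

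Concretely the steps, in order, are: \emph{(i)} combine \lemref{d_k lemma} and \lemref{s_k lemma} to verify the hypotheses of \thmref{big thm} for all $k$; \emph{(ii)} observe that for $Z$ a $k$-fold suspension with $\dim Z < c(k+1)$, all but finitely many terms of the sequence vanish by the connectivity of the $L_1(m)$, so exactness is a statement about a bounded complex; \emph{(iii)} observe that on this bounded range the composites $s_m s_{m-1}$, $m \le k$, induce zero because they are null after looping down $m \le k$ times and $Z$ is an $m$-fold suspension; \emph{(iv)} apply \corref{cor 2} to deduce the bounded complex is contractible, hence exact; reading off exactness at each spot and the vanishing of the (already zero) tail gives the theorem. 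The bookkeeping in \emph{(ii)}--\emph{(iii)} — making sure the dimension bound is exactly what is needed to both kill the tail and to ensure $Z$ is a suspension enough times to see each relevant composite is null — is where the care is required, but no new mathematical input beyond what is already assembled is needed.
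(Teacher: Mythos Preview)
Your proposal is correct and takes essentially the same approach as the paper: the paper's entire argument is the one sentence ``Note that, from the construction, each composite $s_ks_{k-1}$ is null after applying $\Omega^k$; \corref{cor 2} applied to our maps $s_k$ thus proves the following,'' and you have correctly unpacked the implicit connectivity/truncation reasoning (that $\dim Z < c(k+1)$ kills $[Z,QL_1(m)]$ for $m\geq k+1$, so the chain-complex condition is only needed for $m\leq k$, where the $k$-fold suspension hypothesis suffices). The only minor comment is that you do not actually need to invoke \lemref{d_k lemma} separately in your step (i), since the paper's standing choice of $d_k$ and $s_k$ already supplies both $(1_j)$ and $(2_j)$ via Lemmas~\ref{d_k lemma} and~\ref{s_k lemma}; otherwise your outline matches the paper exactly.
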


All spheres satisfy the hypothesis for $Z$, so one gets an exact sequence on homotopy groups.  \cite{behrens2} shows this is true when $p=2$.

We conjecture an addendum to \cite{aronedwyerlesh}, which would allow one to simplify the hypotheses on $Z$ in the last theorem to just the statement that $Z$ be a suspension.

\begin{conj} \label{loop prop}  All natural deloopings $s_k(V)$ satisfy $(\Omega s_k(V)) (\Omega s_{k-1}(V)) = 0$.
\end{conj}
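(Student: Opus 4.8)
The plan is to reduce Conjecture \ref{loop prop} to an $\A$--module statement, exactly as in the strategy for \thmref{big thm}. The composite $(\Omega s_k(V))(\Omega s_{k-1}(V))$ is a natural transformation of functors on $\V_{\C}$ from $\Omega^{k}QL_1(k-1,V)$ to $\Omega^{k}QL_1(k+1,V)$; by construction it is null after a further loop, i.e. $\Omega^{k+1}$ of it is null, but what we want is that it is already null before that last deloop. First I would show that a natural transformation of such functors is determined, up to the information lost by the forgetful map $[\Omega^k QL_1(k-1,V), \Omega^k QL_1(k+1,V)]_{\text{nat}} \to [\Omega^{k+1} QL_1(k-1,V), \Omega^{k+1} QL_1(k+1,V)]_{\text{nat}}$, by a coset of a ``primary obstruction'' group, and then identify that obstruction group using the Weiss-tower / cross-effect analysis already invoked for $s_{k*}$ in \secref{sk section}. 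The key point is that $\Omega^{k}QL_1(k-1,V)$ has a Weiss tower whose layers are built from $L_1(j,V)$ with $j \geq k-1$, and the obstruction to deved-ness lives in a group controlled by maps out of the bottom layer; since the relevant bottom pieces are already $k$--connected after the loops, the ambiguity collapses.

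The key steps, in order: (i) Use the Arone--Dwyer--Lesh deloopings and the naturality in $V$ to set up the comparison between the $k$--fold and $(k+1)$--fold looped mapping groups; reduce the vanishing of $(\Omega s_k(V))(\Omega s_{k-1}(V))$ to showing that this difference group vanishes in the relevant range. (ii) Compute the relevant group using the Weiss calculus of the source functor $V \rightsquigarrow \Omega^k QL_1(k-1,V)$, whose nontrivial layers contribute only in degrees $\geq c(k)-k$; combine with the connectivity $c(k+1)-1$ of $QL_1(k+1,V)$ and the fact that $c(k+1) - c(k) = 2p^k - 2p^{k-1} - 1 > 0$, so that maps into $QL_1(k+1,V)$ from the relevant skeleton are rigid. (iii) Alternatively — and this is probably the cleaner route — observe that on homology primitives the composite $(s_ks_{k-1})_*$ is already zero by \corref{s cor 2} and the Hecke-algebra identities of \secref{final proofs section} (it is a product of two of the complementary-idempotent building blocks), so the obstruction to the unlooped composite being null is a phantom-type class; then use finiteness of type and $p$--completeness to kill it.

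The main obstacle will be step (ii)/(iii): knowing that a natural transformation is null on homology primitives does not immediately imply it is null as a map of spaces — one needs a genuine rigidity statement at the level of natural transformations, not just their effect on $PH_*$. The honest difficulty is therefore proving that the forgetful map from $k$--fold-looped natural transformations to $(k+1)$--fold-looped ones is injective on the relevant summand, which amounts to showing a certain $\lim^1$ or cross-effect obstruction group vanishes. I expect this to require the same Weiss-orthogonal-calculus input used to kill the ``lower'' terms in \secref{sk section}, pushed one step further, together with the observation that $c(k+1) > c(k)$ gives exactly the degree gap needed; making that gap argument precise, uniformly in $V$, is where the real work lies, and it is why I have stated it only as a conjecture rather than a theorem here.
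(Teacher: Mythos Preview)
This statement is a \emph{conjecture} in the paper, not a theorem: the paper does not prove it. Appendix~\ref{loop prop appendix} explicitly records a \emph{failed} attempt. The paper's approach is to try to extend the Arone--Dwyer--Lesh delooping machinery from the case $r=1$ (source and target differ by one wreath factor) to $r=2$; this would yield injectivity of the $(d-1)$--fold looping map on $\pi_0$ of the relevant natural transformation spaces (their \conjref{loop conj}), which in turn would force $(\Omega s_k)(\Omega s_{k-1})$ to be null since its $(k-1)$--fold loop is. The obstruction they hit is concrete: the representation-theoretic input, namely the analogue of \cite[Lemma 5.1(5-2)]{aronedwyerlesh} asserting that a certain projection $I_g \to \Sigma_i \wr E_k$ is monic, is \emph{false} for $r=2$, and the paper exhibits an explicit counterexample with $p=r=i=2$, $k=1$.

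Your proposal does not engage with this obstruction and has its own genuine gap. Step~(iii) is the cleanest to diagnose: vanishing on homology primitives does \emph{not} imply a map of spaces is null, even for $p$--complete finite-type spaces. The argument in the paper that works in the other direction --- a self-map inducing a monomorphism on $PH_*$ is an equivalence --- has no analogue for nullity; many nontrivial maps (e.g.\ any map whose image lies in decomposables) are zero on primitives. So ``phantom-type class plus finite type'' is not a mechanism that applies here. Step~(ii) is where the real content would have to live, and your connectivity estimate $c(k+1)-c(k)>0$ is far too coarse: what is needed is injectivity of a looping map on $\pi_0$ of a space of \emph{natural} transformations, and the paper's analysis shows that the relevant double-coset/isotropy computation controlling that space breaks down precisely at $r=2$. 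Your Weiss-tower sketch does not supply an alternative route around that breakdown, and you yourself flag in the final paragraph that making the injectivity precise ``is where the real work lies'' --- that is exactly the point at which the paper's attempt founders on a counterexample.
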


We discuss our failed attempt to prove this in Appendix \ref{loop prop appendix}.  We do prove, however, that there is a unique $(k-1)$--fold delooping $\Omega s_k(V)$ of the $k$th connecting map.

If one applies $\pi_*$ to the tower for $S^{\R \oplus V}$, one gets a spectral sequence converging to $\pi_*(S^{\R \oplus V})$.  These spectral sequences are compatible as $V$ varies, as the map $S^{\R\oplus V} \ra \Omega^W S^{\R \oplus V \oplus W}$ induces a map of towers with the maps on fibers homotopic to standard quotient maps. \thmref{circle tower conjecture} implies that the spectral sequence for $S^1$ collapses (to the known answer!) at $E^2$, and so can be used to help determine $d_1$ differentials in the spectral sequences for the other odd dimensional spheres.  We note that $L_1(k,V)$ is $2p^k-2-k+2\dim_{\C}V$ connected, so the spectral sequences converge very quickly.  See \cite{behrens1} for some enhanced thoughts along these lines, and many calculations when $p=2$.

\begin{rem} \label{Whitehead conjecture remark} If one is only interested in the Whitehead Conjecture, there is an alternative set of maps available for the contracting homotopies, defined in the spirit of \cite{kuhn1}.  Let $s_k(V): QL_1(k,V) \ra QL_1(k+1,V)$ be given as the composite
$$ QL_1(k,V) \xra{j_p} QD_pL_1(k,V) \xra{\Oinfty \pi} QL_1(k+1,V)$$
where $D_pX$ is the $p$th extended power of $X$, $j_p$ is the $p$th James-Hopf map (see \secref{james hopf map subsection}), and $\pi$ is a stable retraction constructed as in \corref{DpLk retract cor}. With $s_k(V)$ so defined, $s_k = s_k(\bf 0)$ satisfies condition $(2_k)$.  See \remref{Whitehead conjecture remark 2} for more about using these maps $s_k$ to prove the Whitehead Conjecture.
\end{rem}

\subsection{Acknowledgements} I would like to thank Geoffrey Powell for a critical reading of the first version of this paper, and, in particular, for catching an incorrect, but thankfully inessential, lemma.

\section{Computing maps $f:QZ \ra QY$ on homology primitives}  The maps in our main theorem all have the form $f:QZ \ra QY$, where $Z$ and $Y$ are co-H-spaces.  In this section, we discuss how to compute such maps on homology primitives.

\subsection{Stable splittings and maps from $QZ$ to an infinite loop space} \label{james hopf map subsection}

For $r \geq 1$, the $r$th extended power of a based space $Z$ is the space $D_rZ = E\Sigma_{r+} \sm_{\Sigma_r}Z^{\sm r}$.  It is convenient to let $D_0Z=S^0$.  These constructions extend to spectra so that $D_r(\Sinfty Z) = \Sinfty D_r Z$ \cite{lmms}.

Let $\displaystyle DX = \bigvee_{r=0}^{\infty} D_rX$ and $\displaystyle D^{\Pi}X = \prod_{r=0}^{\infty} D_rX$.
The various subgroup inclusions
$$\Sigma_i \times \Sigma_j \leq \Sigma_{i+j} \text{ and } \Sigma_q \wr \Sigma_r < \Sigma_{qr}$$ induce natural maps
$$ D_i X \sm D_j X \ra D_{i+j}X \text{ and } D_qD_r X \ra D_{qr}X.$$
These in turn assemble to make $D$ into a monad, and an $E_{\infty}$ ring spectrum is an $D$--algebra in spectra.  Examples of interest to us are $DX$, $D^{\Pi}X$, and $\Sinfty (\Oinfty X)_+$, for $X$ a spectrum.

When $X$ is a suspension spectrum, these examples can be compared.  Stable splittings as in the next theorem go back to work of D. Kahn \cite{kahn splitting}, and modern presentations are given in \cite[Appendix B]{kuhn7} and \cite[Thm.2.2]{kuhn8}.
\begin{thm} \label{stable splitting thm}  There are natural stable maps
\begin{equation*} \label{stable splitting} D \Sinfty Z  \ra \Sinfty (QZ)_+ \ra D^{\Pi} \Sinfty Z
\end{equation*}
such that
\begin{itemize}
\item the composite is the canonical map from a coproduct to a product.
\item both maps are weak equivalences if $Z$ is path connected.
\item both maps are maps of $E_{\infty}$ ring spectra augmented over $S$.
\item the inclusion and projection maps $\Sinfty D_1 Z \ra \Sinfty QZ \ra \Sinfty D_1Z$ correspond to the unit and counit maps $\eta: Z \ra QZ$ and $\epsilon: \Sinfty QZ \ra \Sinfty Z$.
\end{itemize}
\end{thm}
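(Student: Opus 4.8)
The plan is to construct the two maps separately and then read off the four bulleted properties, deducing as much as possible from the free $D$-algebra structure. Write $\iota: D\Sinfty Z \to \Sinfty(QZ)_+$ for the left-hand map and $\pi: \Sinfty(QZ)_+ \to D^\Pi\Sinfty Z$ for the right-hand map. First I would construct $\iota$ from the universal property of $D$: since $D$ is the monad whose algebras are $E_\infty$-ring spectra, $D\Sinfty Z = D(\Sinfty Z)$ is the free $D$-algebra on the spectrum $\Sinfty Z$, so a map of $D$-algebras out of it is the same datum as a map of spectra out of $\Sinfty Z$. Applying this to the $D$-algebra $\Sinfty(QZ)_+ = \Sinfty(\Oinfty\Sinfty Z)_+$ and to the spectrum map $\Sinfty\eta: \Sinfty Z = \Sinfty D_1 Z \to \Sinfty QZ \hookrightarrow \Sinfty(QZ)_+$, I let $\iota$ be the resulting $D$-algebra map. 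Then $\iota$ is a map of $E_\infty$-ring spectra by construction; it is compatible with the canonical augmentations to $S = D_0\Sinfty Z$, since both augmentations kill the generators $\Sinfty Z$ (the image of the monad unit) and $\iota$ preserves them; and on $D_1$ it restricts to $\Sinfty\eta$, the inclusion half of the last bullet.

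Next I would construct $\pi = (j_r)_{r\geq 0}$, where $j_r: \Sinfty(QZ)_+ \to \Sinfty D_r Z$ is the $r$-th stable James--Hopf map. For $Z$ path connected I would use the combinatorial model $QZ \simeq CZ$ for the free $E_\infty$-space on $Z$, filtered by number of points with $r$-th filtration quotient $D_r Z$: the natural map $\delta_r: CZ \to C(D_r Z)$ sends an $n$-point configuration labelled in $Z$ to the configuration of its $\binom{n}{r}$ $r$-element sub-configurations, labelled in $D_r Z$ (placing the new points in $\R^\infty$ by a fixed combinatorial rule), and $j_r$ is $\Sinfty(\delta_r)_+$ followed by the stable counit $\Sinfty(Q D_r Z)_+ \to \Sinfty D_r Z$ (with $j_0$ defined to be the augmentation); disconnected $Z$ is handled by naturality as in \cite{kuhn7,kuhn8}. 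One then checks that $\pi$ is a map of $E_\infty$-ring spectra for the evident $D$-algebra structure on $D^\Pi\Sinfty Z$, that it is augmented over $S$ because $j_0$ is the augmentation, and --- since $\delta_1 = \id$ --- that the $D_1$-component of $\pi$ restricted to $\Sinfty QZ$ is the stable counit $\epsilon$, which finishes the last bullet.

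The remaining two bullets then follow formally. The composite $\pi\iota$ is a $D$-algebra map out of the free $D$-algebra on $\Sinfty Z$, hence is determined by its restriction to the generators $\Sinfty Z = D_1\Sinfty Z$; there its $D_r$-component is $j_r\circ\Sinfty\eta$, which equals $\epsilon\circ\Sinfty\eta = \id$ for $r = 1$ by the triangle identity and equals $0$ for $r = 0$ and for $r\geq 2$ (the latter because $\delta_r$ kills one-point configurations once $r\geq 2$). Thus $\pi\iota$ and the canonical coproduct-to-product map $\bigvee_r\Sinfty D_r Z \to \prod_r\Sinfty D_r Z$ are $D$-algebra maps with the same restriction to generators, hence equal. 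Finally, for path connected $Z$ the spectrum $\Sinfty D_r Z$ is $(r-1)$-connected, so the canonical map $\bigvee_r\Sinfty D_r Z \to \prod_r\Sinfty D_r Z$ is a weak equivalence; combined with the Snaith splitting --- the statement that $\iota$ itself is a weak equivalence for connected $Z$, which I would prove by induction over the number-of-points filtration of $CZ$, using that $j_r$ splits off the top filtration quotient at stage $r$ while the lower $j_s$ do not raise filtration --- it follows that $\pi = (\pi\iota)\iota^{-1}$ is a weak equivalence as well.

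The main obstacle I anticipate is the construction and bookkeeping for the James--Hopf maps: making $\delta_r$ genuinely natural and well defined (the choice of embedding of the set of $r$-subsets into $\R^\infty$), checking that the total map $\pi$ is multiplicative for the correct $E_\infty$-ring structure on $D^\Pi\Sinfty Z$, and carrying out the filtration-splitting argument that identifies $\iota$ as an equivalence. These are precisely the points settled in the modern treatments cited in the text, which is why the theorem is quoted there rather than reproved; the steps above record why the four properties hold once those constructions are in hand.
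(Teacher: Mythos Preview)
The paper does not give its own proof of this theorem: it is stated as background, with the sentence preceding it attributing the result to Kahn \cite{kahn splitting} and pointing to modern presentations in \cite[Appendix~B]{kuhn7} and \cite[Thm.~2.2]{kuhn8}. So there is no in-paper argument to compare against.

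Your sketch is essentially the approach one finds in those references: build the left map from the free $D$--algebra universal property applied to $\Sinfty\eta$, build the right map from combinatorially defined James--Hopf invariants on a configuration model for $QZ$, and then read off the properties. Your identification of the delicate points---naturality and well-definedness of the $\delta_r$, multiplicativity of the total James--Hopf map $\pi$ as an $E_\infty$--ring map, and the filtration argument establishing that $\iota$ is an equivalence---is accurate; these are exactly the points that the cited papers work out in detail and that the present paper is content to import. One small remark: in this paper the James--Hopf map $j_r$ is \emph{defined} (Definition following \thmref{stable splitting thm}) as the adjoint of the projection coming from the splitting, so within the paper's logic the maps $j_r$ are outputs of the theorem rather than inputs; your independent combinatorial construction of them is of course what makes the argument non-circular, and is how the cited references proceed.
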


\begin{notation} Given a space $Z$ and a spectrum $X$, the map $D \Sinfty Z  \ra \Sinfty (QZ)_+$
induces a homomorphism of homotopy groups
$$ [QZ, \Oinfty X] = [\Sinfty QZ,X] \ra \prod_{r=1}^{\infty} [\Sinfty D_rZ,X].$$
Under this homomorphism, we let the image of $f:QZ \ra \Oinfty X$ have components $f(r): \Sinfty D_rZ \ra X$, with adjoints $\tilde f(r): D_rZ \ra \Oinfty X$.  In particular, the first map $\tilde f(1)$ identifies with the composite $Z\xra{\eta} QZ \xra{f} \Oinfty X$.
\end{notation}

\begin{defn}  For $r\geq 1$, the $r$th James-Hopf map $j_r: QZ \ra QD_rZ$ is defined as the adjoint to the projection $\Sinfty QZ \ra \Sinfty D_rZ$.  (In particular, $j_1$ is the identity.)
\end{defn}

With this notation and definition, we have the following lemma.

\begin{lem}  \label{decomp lemma} If $Z$ is path connected, then any map $f: QZ \ra \Oinfty X$ decomposes as $$f \simeq \sum_{r=0}^{\infty} \Oinfty f(r) \circ j_r: QZ \ra \Oinfty X.$$
\end{lem}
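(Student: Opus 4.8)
The plan is to pass to adjoints and reduce the statement to the three bulleted properties of the stable splitting in \thmref{stable splitting thm}. Write $\hat f\colon\Sinfty QZ\ra X$ for the adjoint of $f$. For $r\geq 1$ the natural map $D\Sinfty Z\ra\Sinfty(QZ)_+$ carries the wedge summand $\Sinfty D_rZ$ into the augmentation ideal $\Sinfty QZ$ by a natural map $\iota_r\colon\Sinfty D_rZ\ra\Sinfty QZ$, and chasing the Notation that precedes the lemma one sees $f(r)=\hat f\circ\iota_r$; dually, $\Sinfty(QZ)_+\ra D^\Pi\Sinfty Z$ restricts on $\Sinfty QZ$ to the projections $p_r\colon\Sinfty QZ\ra\Sinfty D_rZ$ whose adjoints are the James--Hopf maps $j_r$. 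On the augmentation ideal the $r=0$ projection $\Sinfty QZ\ra\Sinfty D_0Z=S$ is zero, so $j_0$ is nullhomotopic and the $r=0$ term contributes nothing to the sum in the lemma.

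First I would transport the claimed formula to the level of spectrum maps. Naturality of the adjunction $[\Sinfty W,-]\cong[W,\Oinfty(-)]$ in the spectrum variable identifies the adjoint of $\Oinfty f(r)\circ j_r\colon QZ\ra\Oinfty X$ with $f(r)\circ p_r\colon\Sinfty QZ\ra X$; since adjunction is additive, the lemma is therefore equivalent to the identity
$$\hat f\;=\;\sum_{r\geq 1}f(r)\circ p_r\qquad\text{in }[\Sinfty QZ,X].$$
Since $Z$ is path connected, $D_rZ$ is $(r-1)$-connected, so $\bigvee_{r\geq 1}\Sinfty D_rZ\ra\prod_{r\geq 1}\Sinfty D_rZ$ is an equivalence and only finitely many summands are nonzero in any given degree; this makes the sum on the right converge, its $N$th partial sum agreeing with $\hat f$ through dimension $N$.

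Next I would feed in the remaining two properties from \thmref{stable splitting thm}: for $Z$ path connected both $D\Sinfty Z\ra\Sinfty(QZ)_+$ and $\Sinfty(QZ)_+\ra D^\Pi\Sinfty Z$ are equivalences, and their composite is the canonical map from a coproduct to a product. Restricting to augmentation ideals, the first of these says that $\alpha=(\iota_r)_{r\geq 1}\colon\bigvee_{r\geq 1}\Sinfty D_rZ\ra\Sinfty QZ$ is an equivalence, and the second says that $p_r\circ\iota_s$ is the identity of $\Sinfty D_sZ$ when $r=s$ and is nullhomotopic when $r\neq s$. Hence, for every $s\geq 1$,
$$\Bigl(\sum_{r\geq 1}f(r)\circ p_r\Bigr)\circ\iota_s\;=\;\sum_{r\geq 1}\hat f\circ\iota_r\circ(p_r\circ\iota_s)\;=\;\hat f\circ\iota_s.$$
Thus $\sum_{r}f(r)\circ p_r$ and $\hat f$ have the same image under the bijection $[\Sinfty QZ,X]\xra{\cong}\prod_{s\geq 1}[\Sinfty D_sZ,X]$, $g\mapsto(g\circ\iota_s)_s$, induced by the equivalence $\alpha$; so they are equal, and adjointing back gives $f\simeq\sum_{r}\Oinfty f(r)\circ j_r$.

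The step I expect to be the main obstacle is not any of the homotopy theory above but the bookkeeping around augmentations: \thmref{stable splitting thm} is phrased in terms of the unreduced objects $D\Sinfty Z$ and $\Sinfty(QZ)_+$, whereas $\hat f$, the $f(r)$, and the $j_r$ all live on the reduced spectrum $\Sinfty QZ$, and, crucially, $f(r)$ is built from the inclusion $D\Sinfty Z\ra\Sinfty(QZ)_+$ while $j_r$ is built from the projection $\Sinfty(QZ)_+\ra D^\Pi\Sinfty Z$ --- the two \emph{different} maps of the splitting. Once one has checked that after passing to augmentation ideals these become mutually inverse equivalences whose composite is still the coproduct-to-product map --- which is precisely what \thmref{stable splitting thm} provides for path connected $Z$ --- what remains is exactly the short computation above.
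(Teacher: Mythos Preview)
Your argument is correct and is essentially the paper's own proof, written out in greater detail. The paper's one-line computation $(\Oinfty f(r)\circ j_r)(s)=f(r)$ for $s=r$ and $*$ otherwise is exactly your identity $p_r\circ\iota_s\simeq\delta_{rs}$ transported through the adjunction; your handling of convergence via the connectivity of $D_rZ$ is a mild variant of the paper's inverse-limit topology on $[QZ,\Oinfty X]$, and the augmentation bookkeeping you flag is implicit in the paper's Notation.
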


\begin{proof}  Firstly we explain what the infinite sum means.  Convergence in $[QZ, \Oinfty X]$ is with respect to the topology induced by the subgroups
$$\ker \{[QZ, \Oinfty X] \ra \prod_{r=1}^{n} [\Sinfty D_rZ,X]\}.$$  The lemma now follows by noting that, by construction,
\begin{equation*}
(\Oinfty f(r) \circ j_r)(s) =
\begin{cases}
f(r) & \text{if } s = r \\ * & \text{if } s \neq r.
\end{cases}
\end{equation*}
\end{proof}

\begin{rem}  $f$ is an infinite loop map $\Leftrightarrow f= \Oinfty f(1) \Leftrightarrow f(r)$ is null for all $r>1$.
\end{rem}

When $Z$ is a co-H-space, the stable equivalence has two other nice properties.

\begin{prop}\cite{kuhn7} \label{diag prop}  Let $Z$ be a co-H-space. There is a commutative diagram
\begin{equation*}
\xymatrix{
D(\Sinfty Z) \ar[d]^{\wr} \ar[rr]^-{\delta} && D(\Sinfty Z) \sm D(\Sinfty Z) \ar[d]^{\wr}  \\
\Sinfty (QZ)_+ \ar[r]^-{\Delta} & \Sinfty (QZ \times QZ)_+  \ar[r]^-{\sim} &\Sinfty (QZ)_+ \sm \Sinfty (QZ)_+ }
\end{equation*}
where $\Delta$ is induced by the diagonal on $QZ$, and $\delta$ is the wedge over all $(i,j)$ of the transfer maps
$$D_{i+j}(\Sinfty Z) \ra D_i(\Sinfty Z) \sm D_j(\Sinfty Z)$$
associated to the inclusions $\Sigma_i \times \Sigma_j \leq \Sigma_{i+j}$.
\end{prop}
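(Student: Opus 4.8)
The plan is to use two facts already set up in this section: the vertical maps of \thmref{stable splitting thm} are maps of $E_\infty$ ring spectra, and, since $D$ is the monad whose algebras are $E_\infty$ ring spectra, $D\Sinfty Z = \bigvee_{r\geq 0} D_r\Sinfty Z$ is the \emph{free} $E_\infty$ ring spectrum on $D_1\Sinfty Z = \Sinfty Z$. Consequently any map of $E_\infty$ ring spectra out of $D\Sinfty Z$ is determined by its restriction to the summand $D_1\Sinfty Z$, so it suffices to show that the two composites $D\Sinfty Z \ra \Sinfty(QZ)_+\sm\Sinfty(QZ)_+$ around the square (a) are both maps of $E_\infty$ ring spectra, and (b) agree after restriction to $D_1\Sinfty Z \hra D\Sinfty Z$.

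For (a), the composite through $\Delta$ is a map of $E_\infty$ ring spectra because $\Delta\colon QZ\ra QZ\times QZ$ is an infinite loop map — it is $\Oinfty$ applied to the diagonal $\Sinfty Z\ra\Sinfty Z\times\Sinfty Z$ of spectra — so $\Sinfty(\Delta_+)$ is a map of $E_\infty$ ring spectra, and precomposing with the (left) vertical map of \thmref{stable splitting thm} preserves this. For the composite through $\delta$, I would identify $\delta$ with $\mathbb P(d)$, where $\mathbb P$ is the free $E_\infty$ ring functor and $d\colon\Sinfty Z\ra\Sinfty Z\vee\Sinfty Z$ is the stable diagonal, i.e.\ the sum $\iota_1+\iota_2$ of the two wedge inclusions, equivalently the image of the space--level diagonal under $\Sinfty(Z\times Z)\simeq\Sinfty(Z\vee Z)$ (after deleting basepoints). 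Indeed, on the summand $D_r\Sinfty Z = (\Sinfty Z)^{\sm r}_{h\Sigma_r}$ the smash power $d^{\sm r}\colon(\Sinfty Z)^{\sm r}\ra(\Sinfty Z\vee\Sinfty Z)^{\sm r}$ splits, by distributivity of $\sm$ over $\vee$, into a wedge over subsets $S\subseteq\{1,\dots,r\}$ of copies of $(\Sinfty Z)^{\sm r}$; passing to $\Sigma_r$--homotopy orbits collects these into the orbits $\Sigma_r/(\Sigma_i\times\Sigma_j)$ indexed by $i+j=r$, and the resulting wrong--way map $D_r\Sinfty Z\ra D_i\Sinfty Z\sm D_j\Sinfty Z$ is precisely the transfer for $\Sigma_i\times\Sigma_j\leq\Sigma_r$. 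Thus $\delta=\mathbb P(d)$ is a map of $E_\infty$ ring spectra, hence so is the composite through it.

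For (b): through $\delta$ the restriction $\delta|_{D_1\Sinfty Z}$ is just $d$ (the transfers for $\Sigma_1\times\Sigma_0$ and $\Sigma_0\times\Sigma_1$ being identities of $D_1\Sinfty Z\sm D_0\Sinfty Z$ and $D_0\Sinfty Z\sm D_1\Sinfty Z$), followed by the inclusion into $\Sinfty(QZ)_+\sm\Sinfty(QZ)_+$. Through $\Delta$: by the last bullet of \thmref{stable splitting thm} the inclusion $D_1\Sinfty Z\ra\Sinfty(QZ)_+$ is $\Sinfty$ of the unit $\eta\colon Z\ra QZ$, and naturality of the diagonal gives $\Delta\circ\eta=(\eta\times\eta)\circ\Delta_Z$, so this composite equals $(\Sinfty(\eta)_+\sm\Sinfty(\eta)_+)\circ\Sinfty(\Delta_Z)$ on $\Sinfty Z$. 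Here the co--H hypothesis enters: the comultiplication on $Z$ factors the space--level diagonal $\Delta_Z\colon Z\ra Z\times Z$ through $Z\vee Z\ra Z\times Z$ up to homotopy, so the reduced diagonal $\Sinfty Z\ra\Sinfty Z\sm\Sinfty Z$ is null and the stable diagonal $\Sinfty Z\ra\Sinfty(Z_+)\sm\Sinfty(Z_+)$ collapses to $\iota_1+\iota_2$ landing in the two ``linear'' summands $\Sinfty Z\sm S$ and $S\sm\Sinfty Z$. Postcomposing with $\Sinfty(\eta)_+\sm\Sinfty(\eta)_+$ then carries these into $D_1\Sinfty Z\sm D_0\Sinfty Z$ and $D_0\Sinfty Z\sm D_1\Sinfty Z$, giving exactly the restriction of the composite through $\delta$. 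Hence (b) holds and the proof is complete.

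The step I expect to be the main obstacle is the identification $\delta=\mathbb P(d)$ in (a): proving that the total ``separating'' transfer on $\bigvee_r D_r\Sinfty Z$ is multiplicative and agrees with the free $E_\infty$ functor applied to the stable diagonal. This is a careful bookkeeping argument involving symmetric--group double cosets and the decomposition of smash powers of a wedge, and it is also where the co--H hypothesis must be located precisely — it is exactly what forces the ``diagonal inside a single $D_r$'' contributions to vanish, leaving only the separating transfers. Everything else is formal once this is in hand.
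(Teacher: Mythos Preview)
Your argument is sound, and in fact this proposition is not proved in the present paper at all --- it is quoted from \cite{kuhn7}, so there is no in-paper proof to compare against. Your approach via the freeness of $D\Sinfty Z$ as an $E_\infty$ ring spectrum is the natural one and is essentially how such results are established in \cite{kuhn7}.

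One small correction to your closing paragraph: the identification $\delta = \mathbb P(d)$ does \emph{not} require the co-H hypothesis, and there are no ``diagonal inside a single $D_r$'' contributions to worry about in step~(a). The equality ``wedge of separating transfers $=$ free $E_\infty$ functor applied to the stable pinch $d=\iota_1+\iota_2$, composed with $D(X\vee X)\simeq DX\sm DX$'' is a formal consequence of the fact that $D$ takes coproducts of spectra to coproducts of commutative ring spectra, together with the standard description of the transfer $Y_{hG}\ra Y_{hH}$ as the homotopy-orbit map induced by $Y\ra \mathrm{Ind}_H^G\mathrm{Res}_H^GY$. This holds for arbitrary $Z$. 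The co-H hypothesis enters \emph{only} in step~(b), exactly where you already use it: to kill the reduced diagonal $\Sinfty Z\ra\Sinfty Z\sm\Sinfty Z$, so that the space-level diagonal of $Z$ stabilizes to $\iota_1+\iota_2$. Without the co-H assumption the diagram still commutes provided $\delta$ is augmented by higher terms coming from iterated reduced diagonals of $Z$; this is precisely what the remark following \propref{James map prop} is alluding to.
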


The main result of \cite{kuhn6}, as strengthened for co-H-spaces in \cite[Appendix B]{kuhn7}, says the following.

\begin{prop} \label{James map prop} If $Z$ is a co-$H$-space, then the composite
$$ \Sinfty D_rZ \hra \Sinfty QZ \xra{\Sinfty j_t} \Sinfty QD_tZ \twoheadrightarrow \Sinfty D_sD_tZ$$
is null unless $r=st$, when it equals the transfer
$$D_{st}(\Sinfty Z) \ra D_sD_t(\Sinfty Z)$$
associated to the subgroup inclusion $\Sigma_s \wr \Sigma_t < \Sigma_{st}$.
\end{prop}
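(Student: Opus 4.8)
Since \propref{James map prop} is the main theorem of \cite{kuhn6}, strengthened to co-$H$-spaces in \cite[Appendix~B]{kuhn7}, I will only outline the argument I would give. Fix $r,s,t$ and write $\Phi_{r,s,t}\colon\Sinfty D_rZ\to\Sinfty D_sD_tZ$ for the composite in the statement. By \thmref{stable splitting thm} (and using that $D_tZ$ is path connected when $Z$ is) its source and target genuinely are the $r$-th and $s$-th stable wedge summands of $\Sinfty QZ$ and $\Sinfty QD_tZ$, and $\Phi_{r,s,t}$ is natural in $Z$. The first step is the vanishing assertion: the functors $Z\mapsto\Sinfty D_rZ$ and $Z\mapsto\Sinfty D_sD_tZ$ are homogeneous, in the Goodwillie-calculus sense, of degrees $r$ and $st$, and a cross-effect analysis shows that any natural stable transformation between them with $r\neq st$ either vanishes identically (this covers $r>st$) or factors through an iterated reduced diagonal of $Z$. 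This is exactly where the co-$H$ hypothesis is used: for $Z$ a co-$H$-space the reduced diagonal $Z\to Z\wedge Z$ is stably null---and, correspondingly, \propref{diag prop} collapses the coalgebra structure on $\Sinfty QZ$ to a sum of $\Sigma_i\times\Sigma_j$ transfers---so $\Phi_{r,s,t}=0$ unless $r=st$.

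It remains to identify $\Phi_{st,s,t}$ inside the group of natural stable maps $\Sinfty D_{st}(-)\to\Sinfty D_sD_t(-)$; after passing to $st$-th cross effects and invoking the appropriate $\Sigma_{st}$-equivariant computation, that group is spanned by the transfers associated to the $\Sigma_{st}$-sets $\Sigma_{st}/H$, and the claim is that $\Phi_{st,s,t}$ is the single such map coming from $H=\Sigma_s\wr\Sigma_t$, with coefficient $1$. I would prove this in stages. The base case $s=1$ is immediate: the projection $\mathrm{pr}_1^{D_tZ}\colon\Sinfty QD_tZ\to\Sinfty D_1D_tZ=\Sinfty D_tZ$ is the counit $\epsilon$, so the triangle identity for the $(\Sinfty,\Oinfty)$-adjunction gives $\epsilon\circ\Sinfty j_t=\mathrm{pr}_t^{Z}$, the $t$-th splitting projection for $Z$; restricted to the $D_r$-summand this is the identity when $r=t$ (the identity transfer for $\Sigma_1\wr\Sigma_t=\Sigma_t\le\Sigma_t$) and zero otherwise. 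For general $s$ I would write the splitting projection $\mathrm{pr}_s$, hence $\Phi_{st,s,t}$, in terms of iterated diagonals on $QZ$ followed by a norm map, use \propref{diag prop} (legitimate since $Z$ is a co-$H$-space) to re-express everything as a sum of $\Sigma_i\times\Sigma_j$ transfers, and then run the double-coset bookkeeping for $\Sigma_s\wr\Sigma_t\le\Sigma_{st}$---normalized by the $s=1$ case---to check that every term indexed by a proper partition cancels and only the wreath-product transfer survives, with coefficient $1$.

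The main obstacle is precisely that last bookkeeping: controlling how the James-Hopf maps interact with the free $E_\infty$-ring structure on $\Sinfty(QZ)_+$ and with the transfers, and checking that the co-$H$-structure annihilates every unwanted term while normalizing the survivor. This is the technical heart of \cite{kuhn6} and its co-$H$ refinement in \cite[Appendix~B]{kuhn7}. A secondary, purely clerical, point is the identification between the wedge-summand projections $\Sinfty QW\twoheadrightarrow\Sinfty D_sW$ appearing in the statement and the product projections $\Sinfty QW\to D^{\Pi}\Sinfty W\to\Sinfty D_sW$ supplied by \thmref{stable splitting thm}, harmless because both maps there are equivalences when $W=D_tZ$ is path connected.
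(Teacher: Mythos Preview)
The paper does not give a proof of this proposition: the sentence immediately preceding it reads ``The main result of \cite{kuhn6}, as strengthened for co-$H$-spaces in \cite[Appendix~B]{kuhn7}, says the following,'' and no argument is supplied beyond that citation. You have correctly identified this, and your proposal is really an outline of how one might reconstruct the argument rather than a comparison target.

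Your sketch is broadly reasonable. The $s=1$ base case via the triangle identity is clean and correct. The vanishing for $r>st$ via homogeneity degrees is fine. The one place I would press you is the vanishing for $r<st$: asserting that any such natural transformation ``factors through an iterated reduced diagonal of $Z$'' is exactly the substantive content here, not a formal consequence of homogeneity (there are plenty of nonzero natural maps from lower-degree to higher-degree homogeneous functors). This is precisely what the paper flags in the remark following the proposition---without the co-$H$ hypothesis there are ``higher terms identified in \cite{kuhn6,kuhn7}''---and establishing that those extra terms are all diagonal-type is the work you are deferring to the references. Your Goodwillie-calculus and cross-effect framing is a more modern packaging than the 1987 argument in \cite{kuhn6}, which proceeds by direct manipulation of explicit splitting and transfer formulae, but the underlying bookkeeping (the double-coset analysis for $\Sigma_s\wr\Sigma_t<\Sigma_{st}$ that you allude to) is the same in either language.
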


\begin{rem}  Without the hypothesis that $Z$ is a co-H-space, the last two propositions still hold, modulo higher terms identified in \cite{kuhn6, kuhn7}.
\end{rem}
\subsection{$H_*(QZ)$, $H_*(DZ)$, and the functors $R_n$}

As the homology of $E_{\infty}$ spaces, both $H_*(\Oinfty X)$ and $H_*(DZ)$ admit homology operations. If $X = \Sinfty Z$, so that $\Oinfty X = QZ$, such structure will be compatible under the stable splitting of $QZ$.

Firstly, the H-space multiplication on $\Oinfty X$ and the maps $D_iZ \sm D_jZ \ra D_{i+j}Z$ respectively induce commutative associative products on $H_*(\Oinfty X)$ and $H_*(DZ)$.

Secondly, the maps $D_p(\Oinfty X_+) \ra \Oinfty X_+$ and $D_pD_rZ \ra D_{pr}Z$ induce Dyer--Lashof operations on $H_*(\Oinfty X)$ and $H_*(DZ)$.

We remind readers how this goes.

When $p=2$, the operations take the form
$$Q^i: H_n(\Oinfty X) \ra H_{n+i}(\Oinfty X) \text{ and } Q^i: H_n(D_rZ) \ra H_{n+i}(D_{2r}Z).$$
One can iterate; if $I=(i_1, \dots, i_n)$, one lets $Q^I = Q^{i_1} \dots Q^{i_n}$, and defines $l(I)$, the length of $I$, to be $n$.
These satisfy Adem relations and the unstable relation: $Q^ix=0$ if $i<|x|$. They interact with Steenrod operations via the Nishida relations, and interact with product structure via the Cartan formula and the restriction property: $Q^{i}x = x^2$ if $i=|x|$.

When $p$ is odd, the operations take the form
$$Q^i: H_n(\Oinfty X) \ra H_{n+2(p-1)i}(\Oinfty X) \text{ and } Q^i: H_n(D_rZ) \ra H_{n+2(p-1)i}(D_{pr}Z).$$
Given $(\epsilon_1, i_i, \dots, \epsilon_n, i_n)$ with each $\epsilon_j = 0$ or 1, one lets $Q^I = \beta^{\epsilon_1}Q^{i_1} \dots \beta^{\epsilon_n}Q^{i_n}$, and defines $l(I)=n$.
As at 2, one has Adem and Nishida relations, and the Cartan formula.  The unstable relation now reads $Q^ix=0$ if $2i<|x|$ and the restriction property says $Q^{i}x = x^p$ if $2i=|x|$.

Using this structure, if $Z$ is path connected, $H_*(QZ) \simeq H_*(DZ)$ is a well known functor of $H_*(Z)$ (see \cite{clm} or \cite[Thm.IX.2.1]{bmms}), as we now describe.

One defines functors
$$R_n: \text{ right $\A$--modules } \ra \text{ right $\A$--modules }$$
by letting $R_nM$ be the span of length $n$ Dyer--Lashof operations applied to $M$:
$$R_nM = \langle Q^Ix \ | \ l(I) = n,  x \in M\rangle/(\text{unstable and Adem relations}),$$
with Steenrod operations acting via the Nishida relations.

Let $R_*M = \bigoplus_{n=0}^{\infty} R_nM$, so $R_*M$ has a left action by the Dyer--Lashof algebra, and a compatible right action by $\A$.
Then $$H_*(QZ) \simeq H_*(DZ) \simeq U(R_*\tilde H_*(Z)),$$ where, if $N$ is a module over the Dyer--Lashof algebra, $U(N)$ is the free graded commutative algebra on $N$ subject to the relations $x^2 = Q^{i}x$, if $p=2$ and $i=|x|$, and $x^p = Q^{i}x$, if $p$ is odd and $2i=|x|$.

Note that $R_n\tilde H_*(Z)$ is naturally a sub $\A$--module of both $H_*(D_{p^n}Z)$ and $H_*(QZ)$, and these submodule embeddings are compatible under the isomorphism $H_*(DZ) \simeq H_*(QZ)$. The following two `geometric' constructions of $R_n\tilde H_*(Z)$ are both illuminating and useful.

\begin{lem} \label{Rn lem 1} $R_n\tilde H_*(Z)$ is the image in homology of the (stable) map $$\epsilon: \Sigma D_{p^n}(\Sigma^{-1} Z) \ra D_{p^n}Z$$ induced by the diagonal $S^1 \ra S^{p^n}$. \\
\end{lem}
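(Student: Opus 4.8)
The plan is to make $\epsilon$ completely explicit, reduce to the case where $Z$ is a wedge of spheres, and then read off the image summand by summand. First I would pin down $\epsilon$. Write $W=\Sigma^{-1}\Sinfty Z$ and let $\rho=\rho_{p^n}$ be the permutation representation of $\Sigma_{p^n}$ on $\R^{p^n}$. There is a $\Sigma_{p^n}$--equivariant identification $(\Sigma W)^{\sm p^n}\simeq S^{\rho}\sm W^{\sm p^n}$, and the diagonal line $\R=\rho^{\Sigma_{p^n}}$ gives a $\Sigma_{p^n}$--equivariant inclusion $S^1=S^{\R}\hra S^{\rho}$ --- this is the ``diagonal $S^1\to S^{p^n}$'' of the statement. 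Applying $(E\Sigma_{p^n})_+\sm_{\Sigma_{p^n}}(-\sm W^{\sm p^n})$ to this inclusion returns exactly $\epsilon\colon \Sigma D_{p^n}(\Sigma^{-1}Z)\to D_{p^n}Z$. Splitting $\rho=\R\oplus\bar\rho$ with $\bar\rho$ the reduced permutation representation, one sees $\epsilon$ is a map of Thom spectra over $B\Sigma_{p^n}$ (twisted by $W^{\sm p^n}$) induced by a subbundle inclusion whose quotient is the $\bar\rho$--bundle; hence, via the Thom isomorphism, $\epsilon_*$ is cap product with the (at odd primes $\Z/p$--twisted) Euler class of that bundle.

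Next I would reduce to $Z=\bigvee_\alpha S^{d_\alpha}$. The mod $p$ homology of an extended power $D_r\Sinfty Z$ is a functor of $\tilde H_*(Z)$ alone --- it is the weight--$r$ summand of $U(R_*\tilde H_*(Z))$, see \cite{clm} --- and $\epsilon$ is natural in $Z$. Since both $\operatorname{image}(\epsilon_*)$ and $R_n\tilde H_*(Z)$ are sub $\A$--modules of $H_*(D_{p^n}Z)$, it suffices to compare them as graded vector spaces, and hence to treat the case $Z=\bigvee_\alpha S^{d_\alpha}$, every graded $\Z/p$--vector space in positive degrees arising this way (a filtered colimit argument reduces to finite wedges). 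For such $Z$ the James splitting gives $D_r(\bigvee_\alpha S^{d_\alpha})\simeq\bigvee_{\mathbf r}\bigwedge_\alpha D_{r_\alpha}(S^{d_\alpha})$, the wedge over functions $\mathbf r=(r_\alpha)$ with $\sum_\alpha r_\alpha=r$, compatibly with the analogous splitting of $\Sigma D_{p^n}(\Sigma^{-1}Z)$; so $\epsilon$ is a wedge of maps $\epsilon_{\mathbf r}$ over the indices with $\sum r_\alpha=p^n$. As Dyer--Lashof operations are additive, $R_n\tilde H_*(\bigvee_\alpha S^{d_\alpha})=\bigoplus_\alpha R_n\tilde H_*(S^{d_\alpha})$, which sits inside $H_*(D_{p^n}Z)$ as precisely the sum of the summands for the \emph{concentrated} indices $\mathbf r$ (those supported at a single $\alpha_0$ with $r_{\alpha_0}=p^n$).

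For a non-concentrated $\mathbf r$, with support of size $s\geq 2$, the map $\epsilon_{\mathbf r}$ is --- up to a suspension by a bundle common to source and target --- the Thom--spectrum map over $B\Sigma_{\mathbf r}$ induced by the inclusion of the overall diagonal line $\R\hra\bigoplus_\alpha\rho_{r_\alpha}$; the quotient bundle is $\R^{\,s-1}\oplus\bigoplus_\alpha\bar\rho_{r_\alpha}$, which contains a trivial summand of positive rank and so has vanishing Euler class, whence $(\epsilon_{\mathbf r})_*=0$. Thus $\operatorname{image}(\epsilon_*)$ is carried by the concentrated summands, where $\epsilon_{\mathbf r}$ is the map $\epsilon$ for the single sphere $S^{d_{\alpha_0}}$, and the whole statement comes down to: for $Z=S^d$, the image of $\epsilon_*$ in $H_*(D_{p^n}S^d)$ equals $R_n\tilde H_*(S^d)$. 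By the above this is the image of cap product with the Euler class of $\bar\rho_{p^n}$ over $B\Sigma_{p^n}$, and I would pin it down either by matching it against the classical Dyer--Lashof computation of $H_*(D_{p^n}S^d)$ (whose length--$n$ part is the span of the $Q^I\iota_d$ with $l(I)=n$ and excess $\geq d$), or by induction on $n$: the diagonal factors as $S^1\to S^p\to S^{p^n}$ compatibly with $\Sigma_{p^{n-1}}\wr\Sigma_p<\Sigma_{p^n}$, and the resulting commuting square relating $\epsilon$ to its $p$-- and $p^{n-1}$--analogues and the wreath transfer yields $R_n\tilde H_*(S^d)\subseteq\operatorname{image}(\epsilon_*)$; the base case $n=1$ is the elementary fact that the bottom--cell collapse $\Sigma D_p(S^{d-1})\to D_pS^d$ of stunted projective (resp.\ lens) spectra is onto on homology, together with $H_*(D_pS^d)=R_1\tilde H_*(S^d)$.

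I expect the real difficulty to be this single--sphere case, and within it the inclusion $\operatorname{image}(\epsilon_*)\subseteq R_n\tilde H_*(S^d)$: one must rule out that $\epsilon$ hits the \emph{decomposable} classes --- products of operations of length $<n$ --- which, unlike for wedges of several spheres, already appear inside $H_*(D_{p^n}S^d)$ once $n\geq 2$. This is exactly the point where the unstable (excess) constraint on Dyer--Lashof operations is really used, and it is cleanest to extract from the construction of the operations from extended powers of spheres in \cite{clm} or \cite{bmms}. Everything else --- the identification of $\epsilon$, the vanishing of the cross terms via a trivial--summand Euler class, and the reduction to wedges of spheres --- should be routine, modulo the standard orientation/sign bookkeeping at odd primes.
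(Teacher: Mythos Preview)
Your argument is correct, but it is far more elaborate than what the paper does. The paper's entire proof is two lines: it asserts that $\epsilon$ is well known to commute with Dyer--Lashof operations and to annihilate product decomposables, citing \cite{kuhnmccarty} for where to locate these facts in the literature. From those two properties the lemma is immediate: the source homology is spanned by products of $Q^I x$ with $x\in\tilde H_*(\Sigma^{-1}Z)$, decomposables die, and on indecomposables $\epsilon_*(Q^I x)=Q^I\sigma(x)$ with $l(I)=n$.

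Your route---the Thom/Euler-class model, reduction to wedges of spheres, and the vanishing of non-concentrated summands via a trivial summand in the normal bundle---is a pleasant geometric way of seeing that $\epsilon$ kills \emph{external} decomposables (products of classes coming from distinct wedge summands). But as you yourself note, the single-sphere case still requires ruling out \emph{internal} decomposables in $H_*(D_{p^n}S^d)$, and for that you end up appealing to the same body of results in \cite{clm,bmms} that the paper simply cites outright. So your approach buys a hands-on picture of part of the ``$\epsilon$ kills decomposables'' statement, at the cost of a substantial detour, without actually avoiding the literature dependence at the crucial point. One minor point to tighten: your reduction to wedges of spheres needs $\epsilon_*$ to be a natural transformation of functors of the graded vector space $\tilde H_*(Z)$, not merely natural in $Z$; this is true (your own Euler-class description makes it manifest), but you should say so rather than inferring it from naturality in $Z$ alone.
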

\begin{proof} $\epsilon$ is well known to commute with Dyer--Lashof operations, and annihilate product decomposables. (See the discussion following \cite[Lemma 2.10]{kuhnmccarty} for how to find this in the literature.)
\end{proof}

\begin{lem} \label{Rn lem 2} $R_n\tilde H_*(Z)$ is the image in homology of the Steenrod diagonal
$$BE_{n+} \sm Z \ra D_{p^n}Z.$$
\end{lem}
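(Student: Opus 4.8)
The plan is to induct on $n$, taking $Z$ path connected throughout (the case that matters), so that the sub-$\A$-modules $R_m\tilde H_*(Z) \subseteq H_*(D_{p^m}Z)$ from the preceding discussion are available.

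For $n=1$ the map is the classical extended-power diagonal $B\Z/p_+ \sm Z \ra D_pZ$ induced by the $\Z/p$-equivariant reduced diagonal $Z \ra Z^{\sm p}$, and the claim that its image on reduced homology is exactly $R_1\tilde H_*(Z)$ is the standard geometric construction of the Dyer--Lashof operations together with the computation of $H_*(D_pZ)$ (see \cite{clm} or \cite[Ch.~IX]{bmms}): on $H_*(B\Z/p) \otimes \tilde H_*(Z)$ the map carries $e_j \otimes x$ to the appropriate (possibly zero) Dyer--Lashof operation on $x$, and these classes span $R_1\tilde H_*(Z)$.

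For the inductive step, write $E_n = \Z/p \times E_{n-1}$. The translation action of $E_n$ on the set $E_n$ presents $E_n$ as a subgroup of $\Sigma_p \wr \Sigma_{p^{n-1}}$, the factor $\Z/p$ cyclically permuting the $p$ cosets of $E_{n-1}$ and $E_{n-1}$ acting by the same translation inside each coset. Compatibly with the resulting inclusions $E_n < \Sigma_p \wr \Sigma_{p^{n-1}} < \Sigma_{p^n}$, the $p^n$-fold reduced diagonal of $Z$ factors as the $p^{n-1}$-fold reduced diagonal $Z \ra Z^{\sm p^{n-1}}$ followed by the $p$-fold reduced diagonal of $Z^{\sm p^{n-1}}$. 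Applying the extended-power construction and unwinding definitions (the same sort of bookkeeping as in \propref{James map prop}), one obtains, up to homotopy, a factorization of the Steenrod diagonal for $E_n$,
\begin{equation*}
BE_{n+} \sm Z = B\Z/p_+ \sm BE_{n-1+} \sm Z \ra B\Z/p_+ \sm D_{p^{n-1}}Z \ra D_pD_{p^{n-1}}Z \ra D_{p^n}Z,
\end{equation*}
where the first map is $B\Z/p_+$ smashed with the Steenrod diagonal $BE_{n-1+}\sm Z \ra D_{p^{n-1}}Z$, the second is the ($n=1$) Steenrod diagonal applied to the path-connected space $D_{p^{n-1}}Z$, and the third is the monad structure map. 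Now compute the image on homology: by the inductive hypothesis and the K\"unneth theorem the first map has image $H_*(B\Z/p)\otimes R_{n-1}\tilde H_*(Z)$; by the $n=1$ case the second map carries this onto the span of the length-one Dyer--Lashof operations applied to $R_{n-1}\tilde H_*(Z) \subseteq \tilde H_*(D_{p^{n-1}}Z)$; and the structure map $D_pD_{p^{n-1}}Z \ra D_{p^n}Z$, which is compatible with Dyer--Lashof operations, then takes this onto the span of the length-$n$ Dyer--Lashof operations on $\tilde H_*(Z)$, that is, onto $R_n\tilde H_*(Z)$.

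The homotopy factorization of the Steenrod diagonal is routine once the definitions are unwound, and the $n=1$ input is quoted from \cite{clm, bmms}; so the one point I would want to pin down carefully is the last step, that feeding $R_{n-1}\tilde H_*(Z)$ through the length-one operations of $\tilde H_*(D_{p^{n-1}}Z)$ and then through the (non-injective) structure map reproduces $R_n\tilde H_*(Z)$ on the nose --- losing no classes (the instability and Adem relations must fall out correctly) and hitting every admissible length-$n$ operation. Equivalently, one needs that the natural surjection from ``length-one operations on $R_{n-1}\tilde H_*(Z)$'' onto $R_n\tilde H_*(Z)$ is the one induced by the structure map; this is exactly the iterative, wreath-product description of the functor $R_*$ recorded in those references, so here the argument is an appeal to it rather than a new computation.
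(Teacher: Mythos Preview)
Your argument is correct and is essentially the paper's own proof written out in full: the paper simply says ``By iteration, it suffices to show this when $n=1$,'' and then attributes the $n=1$ case to Nishida \cite{nishida68} rather than to \cite{clm,bmms}. Your inductive factorization through $B\Z/p_+ \sm D_{p^{n-1}}Z \to D_pD_{p^{n-1}}Z \to D_{p^n}Z$ is exactly what ``iteration'' means here, and your closing caveat about the last step is answered by the definition of $R_n$ in terms of iterated length-one operations.
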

\begin{proof}  By iteration, it suffices to show this when $n=1$.  Nishida proved this case: see  \cite[Theorems 1 and 2]{nishida68}.
\end{proof}

\begin{prop} \label{Rn trans prop} The transfer and inclusion maps associated to $ \Sigma_{p^n}\wr\Sigma_{p^k} < \Sigma_{p^{n+k}}$ restrict to defined the dotted arrows in the a commutative diagram
$$
\xymatrix{
R_{n+k}\tilde H_*(Z) \ar@{^(->}[d] \ar@{-->}[r]^-{tr} & R_{n}R_k\tilde H_*(Z) \ar@{^(->}[d] \ar@{-->}[r]^-{i_*} & R_{n+k}\tilde H_*(Z) \ar@{^(->}[d] \\
\tilde H_*(D_{p^{n+k}}Z) \ar[r]^-{tr} & \tilde H_*(D_{p^n}D_{p^k}Z) \ar[r]^-{i_*} & \tilde H_*(D_{p^{n+k}}Z)}
$$
\end{prop}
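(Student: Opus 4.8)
The plan is to reduce the proposition to the two containments
$$ tr\big(R_{n+k}\tilde H_*(Z)\big) \subseteq R_nR_k\tilde H_*(Z) \qquad\text{and}\qquad i_*\big(R_nR_k\tilde H_*(Z)\big) \subseteq R_{n+k}\tilde H_*(Z), $$
and to establish these using the geometric model of \lemref{Rn lem 2}. Once the containments are in hand the dotted arrows are simply \emph{defined} to be these restrictions, and the two squares then commute for free, so there is nothing further to check. The first step is to record a model for the middle term: applying \lemref{Rn lem 2} once with $D_{p^k}Z$ in place of $Z$ and using $BE_{n+}\sm BE_{k+} = BE_{n+k+}$, one sees that $R_nR_k\tilde H_*(Z)$ is the image in homology of an iterated Steenrod diagonal
$$ \Delta'\colon BE_{n+k+}\sm Z \ra D_{p^n}D_{p^k}Z, $$
namely the map induced by the subgroup inclusion $E_{n+k} < \Sigma_{p^n}\wr\Sigma_{p^k}$ — with $E_n$ the translation subgroup of the outer $\Sigma_{p^n}$ and $E_k$ placed diagonally across the $p^n$ inner factors — together with the diagonal $Z \ra Z^{\sm p^{n+k}}$. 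Under the identification $(\Z/p)^{n+k} = (\Z/p)^k \times (\Z/p)^n$ this copy of $E_{n+k}$ is exactly the translation subgroup of $\Sigma_{p^{n+k}}$, so $i\circ\Delta'$ is precisely the Steenrod diagonal of \lemref{Rn lem 2}, whose image is $R_{n+k}\tilde H_*(Z)$.

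The inclusion containment is then immediate: $i_*\big(R_nR_k\tilde H_*(Z)\big) = i_*\big(\im(\Delta'_*)\big) = \im\big((i\circ\Delta')_*\big) = R_{n+k}\tilde H_*(Z)$. In fact $i_*$ restricts to an isomorphism $R_nR_k\tilde H_*(Z) \xra{\ \sim\ } R_{n+k}\tilde H_*(Z)$, since both sides carry an evident basis given by length-$(n+k)$ admissible Dyer--Lashof monomials applied to a basis of $\tilde H_*(Z)$, and $i_*$ matches these bases (rebracketing a length-$n$ operation on a length-$k$ operation as a single length-$(n+k)$ operation).

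For the transfer I would compute $tr_*\big(R_{n+k}\tilde H_*(Z)\big) = \im\big((tr\circ i\circ\Delta')_*\big)$ and expand $tr\circ i$ by the double coset formula for $\Sigma_{p^n}\wr\Sigma_{p^k} < \Sigma_{p^{n+k}}$, twisted throughout by $Z^{\sm p^{n+k}}$ (legitimate because the diagonal $Z \ra Z^{\sm p^{n+k}}$ is $\Sigma_{p^{n+k}}$-equivariant). The double coset of the identity contributes the identity self-map of $D_{p^n}D_{p^k}Z$, so its composite with $\Delta'$ is again $\Delta'$, contributing exactly $R_nR_k\tilde H_*(Z)$. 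Each remaining double coset contributes a composite which, after a second application of the double coset formula to pull $\Delta'$ past the inner transfer, is assembled out of Steenrod constructions attached to conjugates of proper subgroups of $E_{n+k}$; one must show that each of these too has image inside $R_nR_k\tilde H_*(Z)$.

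That last verification is the main obstacle. Invariantly it is the statement that the transfer sends a length-$(n+k)$ Dyer--Lashof monomial to the same monomial reread as a length-$n$ operation applied to a length-$k$ operation — that it ``rebrackets'' the monomial — and the content is that the non-identity double coset terms do not disturb this. I expect the cleanest way to dispatch it, at least when $Z$ is a co-$H$-space (the case of interest here), is to bypass the double coset bookkeeping entirely: \propref{James map prop}, with $r=p^{n+k}$, $s=p^n$, $t=p^k$, identifies $tr$ with the composite
$$ \Sinfty D_{p^{n+k}}Z \hra \Sinfty QZ \xra{\Sinfty j_{p^k}} \Sinfty QD_{p^k}Z \ora \Sinfty D_{p^n}D_{p^k}Z, $$
and on homology this reduces the claim to the classical description of the James--Hopf map on $H_*(QZ) \cong U(R_*\tilde H_*(Z))$ \cite{clm, bmms}: $(j_{p^k})_*$ followed by projection onto the weight-$p^n$ summand sends a length-$(n+k)$ admissible to itself, rebracketed as above, and the image of this map is precisely $R_nR_k\tilde H_*(Z)$. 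With the transfer containment secured, the two squares commute by construction and the proof is complete.
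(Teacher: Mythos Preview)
Your argument for $i_*$ via \lemref{Rn lem 2} is sound, and more explicit than the paper's bare ``clear''.  The transfer half, however, takes a much harder road than necessary, and as written does not establish the proposition in the generality stated.

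The paper dispatches the transfer in one line using the \emph{other} geometric model, \lemref{Rn lem 1}.  The map $\epsilon$ there is induced by the $\Sigma_{p^{n+k}}$--equivariant diagonal $S^1 \ra S^{p^{n+k}}$, and is therefore natural with respect to any stable map between extended powers arising from subgroup data --- in particular, with respect to the transfer for $\Sigma_{p^n}\wr\Sigma_{p^k} < \Sigma_{p^{n+k}}$.  Since $R_{n+k}\tilde H_*(Z) = \im(\epsilon_*)$ and the corresponding $\epsilon$ on the wreath product side has image inside $R_nR_k\tilde H_*(Z)$, the commuting square of $\epsilon$'s and transfers forces $tr(R_{n+k}\tilde H_*(Z)) \subseteq R_nR_k\tilde H_*(Z)$ at once.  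No double cosets, no James--Hopf, and no hypothesis on $Z$.

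By contrast, your Steenrod-diagonal model from \lemref{Rn lem 2} is anchored to the specific inclusion $E_{n+k} \hra \Sigma_{p^{n+k}}$ and has no clean compatibility with the transfer; the non-identity double coset terms you flag as the ``main obstacle'' really are awkward from this side, and you do not resolve them.  Your fallback through \propref{James map prop} is a legitimate workaround, but that proposition requires $Z$ to be a co-$H$-space, whereas \propref{Rn trans prop} is stated (and proved in the paper) for arbitrary $Z$.  You are right that every later application has $Z$ a co-$H$-space, so your route would suffice for the paper's purposes, but relative to the proposition as stated the co-$H$ restriction is a genuine gap --- and one that disappears entirely if you switch to \lemref{Rn lem 1}.
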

\begin{proof} That $i_*$ restricts as indicated is clear.  That the transfer $tr$ restricts is a consequence of \lemref{Rn lem 1}, noting that $\epsilon$ commutes with the transfer.
\end{proof}

\begin{cor}  Via $tr$ and $i_*$, the $\A$--module $R_{n+k}\tilde H_*(Z)$ is a direct summand in $R_nR_k\tilde H_*(Z)$.
\end{cor}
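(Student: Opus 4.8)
The plan is to deduce this directly from \propref{Rn trans prop} by showing that the composite
$$ R_{n+k}\tilde H_*(Z) \xra{tr} R_nR_k\tilde H_*(Z) \xra{i_*} R_{n+k}\tilde H_*(Z) $$
is an isomorphism of $\A$--modules. Since both $tr$ and $i_*$ are $\A$--linear, this exhibits $tr$ as a split $\A$--monomorphism, and then $R_nR_k\tilde H_*(Z) = \im(tr) \oplus \ker(i_*)$ as $\A$--modules, which is exactly the assertion.

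To compute this composite I would pass to the ambient homology. By \propref{Rn trans prop} it is the restriction of
$$ \tilde H_*(D_{p^{n+k}}Z) \xra{tr} \tilde H_*(D_{p^n}D_{p^k}Z) \xra{i_*} \tilde H_*(D_{p^{n+k}}Z), $$
where $tr$ is the transfer and $i_*$ the map induced by the inclusion $H := \Sigma_{p^n}\wr\Sigma_{p^k} < \Sigma_{p^{n+k}} =: G$. For these extended powers, exactly as for classifying spaces, the transfer--then--inclusion composite equals multiplication by the index $[G:H]$. It remains to see that $[G:H]$ is prime to $p$. Since $|H| = (p^n)!\,(p^k!)^{p^n}$, Legendre's formula $v_p((p^m)!) = (p^m-1)/(p-1)$ gives
$$ v_p(|H|) = \frac{p^n-1}{p-1} + p^n\cdot\frac{p^k-1}{p-1} = \frac{p^{n+k}-1}{p-1} = v_p\!\left((p^{n+k})!\right), $$
so $v_p([G:H]) = 0$; equivalently, $H$ contains a Sylow $p$--subgroup of $G$.

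Consequently multiplication by $[G:H]$ is an automorphism of the $\Z/p$--vector space $\tilde H_*(D_{p^{n+k}}Z)$ which, being a central scalar, restricts on the submodule $R_{n+k}\tilde H_*(Z)$ to an $\A$--module automorphism; by the previous paragraph this restriction is precisely $i_*\circ tr$. Hence $[G:H]^{-1}i_*$ is an $\A$--linear retraction of $tr$, and the Corollary follows. The one step that needs genuine care is the identification of the transfer--then--inclusion composite with multiplication by the index $[G:H]$ in the setting of (parametrized) extended powers; granting that standard fact about transfers, what remains is just the elementary valuation count above.
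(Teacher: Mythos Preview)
Your argument is correct and is essentially the paper's own proof: the paper simply observes that $i_*\circ tr$ is multiplication by the index and that $[\Sigma_{p^{n+k}}:\Sigma_{p^n}\wr\Sigma_{p^k}]\equiv 1\bmod p$, which is a slightly sharper statement than your valuation count (and makes $i_*$ itself, not a scalar multiple, the retraction). Your Legendre computation is a perfectly good substitute and yields the same conclusion.
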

\begin{proof} $i_* \circ tr$ is multiplication by the index, and $[\Sigma_{p^n}\wr\Sigma_{p^k}:\Sigma_{p^{n+k}}] \equiv 1\mod p$.
\end{proof}

\subsection{Primitives in $H_*(\Oinfty X)$ and $H_*(DZ)$}

Both $H_*(\Oinfty X)$ and $H_*(DZ)$ become Hopf algebras with coproducts respectively induced by the diagonal
$$ \Delta: \Oinfty X \ra \Oinfty X \times \Oinfty X$$
and the stable map
$$ \delta: D(\Sinfty Z) \ra D(\Sinfty Z) \sm D(\Sinfty Z)$$
as in \propref{diag prop}.

The Cartan formula shows that, if $x$ is a primitive in either $H_*(\Oinfty X)$ or $H_*(DZ)$, then so is $Q^Ix$ for any $I$.  It follows that the Dyer-Lashof module structure maps
$$ \Theta_*: R_*H_*(\Oinfty X) \ra H_*(\Oinfty X) \text{ and } \Theta_*:R_*H_*(DZ) \ra H_*(DZ)$$
restrict to maps
$$ \Theta_*:R_*PH_*(\Oinfty X) \ra PH_*(\Oinfty X) \text{ and } \Theta_*:R_*PH_*(DZ) \ra PH_*(DZ).$$

For any $Z$, $\tilde H_*(Z)$ will be primitive in $H_*(DZ)$, and it follows that
$$ PH_*(DZ) = R_*\tilde H_*(Z).$$
If $Z$ is also a co-H-space, so that $\tilde H_*(Z)$ is primitive in $H_*(QZ)$, then also
$$ PH_*(QZ) = R_*\tilde H_*(Z).$$

(Note that these statements are compatible with \propref{diag prop}.)

\subsection{Computing maps $f:QZ \ra \Oinfty X$ on homology primitives}

The results in the previous subsections now combine to tell us how to compute
$$ f_*: PH_*(QZ) \ra PH_*(\Oinfty X)$$
if $Z$ is a co-H-space, and $f: QZ \ra \Oinfty X$ is an arbitrary continuous map.

\begin{lem} If $Z$ is a co-H-space, so is the space $D_rZ$ for $r\geq 1$.
\end{lem}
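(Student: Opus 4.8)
The plan is to build a comultiplication on $D_rZ$ directly from one on $Z$, using the ``exponential law'' for extended powers applied to a wedge. Recall that a co-H-structure on $Z$ is a based map $\nu\colon Z\to Z\vee Z$ whose composites with the two collapse maps $p_1,p_2\colon Z\vee Z\to Z$ (collapsing the second, resp.\ first, wedge summand) are both homotopic to $\id_Z$. Two preliminary facts are used. First, $D_r$ is a homotopy functor on based spaces: a based homotopy $f\simeq g\colon Z\to W$ induces, via the diagonal $I\to I^r$ followed by $H^{\wedge r}$, a $\Sigma_r$-equivariant homotopy $f^{\wedge r}\simeq g^{\wedge r}$, and applying $E\Sigma_{r+}\wedge_{\Sigma_r}(-)$ gives $D_rf\simeq D_rg$. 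Second, there is a natural splitting
$$ D_r(A\vee B)\ \simeq\ \bigvee_{s=0}^{r} D_sA\wedge D_{r-s}B, $$
coming from the decomposition $(A\vee B)^{\wedge r}=\bigvee_{T\subseteq\{1,\dots,r\}}(A\text{ in the slots of }T,\ B\text{ elsewhere})$, collecting the $\Sigma_r$-orbits of subsets (one orbit of each size $s$, with stabilizer $\Sigma_s\times\Sigma_{r-s}$), applying $E\Sigma_{r+}\wedge_{\Sigma_r}(-)$, and using an equivariant equivalence $E\Sigma_r\simeq E\Sigma_s\times E\Sigma_{r-s}$. This splitting is natural in $A$ and $B$ separately (up to homotopy, which is all we need, since the equivalences $E\Sigma_r\simeq E\Sigma_s\times E\Sigma_{r-s}$ do not involve $A$ or $B$).

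Now specialize to $A=B=Z$. Since $D_0(-)=S^0$, the extreme summands $s=0$ and $s=r$ are each a copy of $D_rZ$, so
$$ D_r(Z\vee Z)\ \simeq\ D_rZ\ \vee\ D_rZ\ \vee\ \Big(\bigvee_{0<s<r} D_sZ\wedge D_{r-s}Z\Big), $$
and I define $\nu_r\colon D_rZ\to D_rZ\vee D_rZ$ to be $D_r(\nu)$ followed by the collapse of the middle summands onto $D_rZ\vee D_rZ$, where the first factor is the $s=r$ summand and the second is the $s=0$ summand.

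It remains to verify $q_i\circ\nu_r\simeq\id_{D_rZ}$ for the two collapse maps $q_i\colon D_rZ\vee D_rZ\to D_rZ$. This is where naturality of the exponential splitting is used: the map $p_1\colon Z\vee Z\to Z$ factors as $Z\vee Z\xrightarrow{\id\vee c}Z\vee *=Z$, and applying $D_r$ and naturality of the splitting in the $B$-variable to $c\colon Z\to *$ (noting $D_m(*)=*$ for $m\ge 1$ while $D_0(*)=S^0$ with $D_0$ of a map the identity) identifies $D_r(p_1)$, under the decomposition, with the projection onto the $s=r$ summand; that is, $D_r(p_1)$ equals $q_1$ composed with the collapse of the middle terms. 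Symmetrically, $D_r(p_2)$ is the projection onto the $s=0$ summand, i.e.\ $q_2$ composed with the collapse. Hence
$$ q_1\circ\nu_r \;=\; D_r(p_1)\circ D_r(\nu)\;=\;D_r(p_1\circ\nu)\;\simeq\;D_r(\id_Z)\;=\;\id_{D_rZ}, $$
and likewise $q_2\circ\nu_r\simeq\id_{D_rZ}$, using that $D_r$ is a homotopy functor. Thus $\nu_r$ is a co-H-structure on $D_rZ$.

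The only non-formal step is establishing the naturality (in each variable, up to homotopy) of the splitting $D_r(A\vee B)\simeq\bigvee_s D_sA\wedge D_{r-s}B$ and checking that it carries $D_r(\id,\ast)$ and $D_r(\ast,\id)$ to the evident projections; this is pure bookkeeping with the $\Sigma_r$-set of subsets of $\{1,\dots,r\}$ and the equivariant contractibility of $E\Sigma_r$, and may well be quotable from the same sources that give the stable splitting of $QZ$ (e.g.\ \cite{kuhn7,kuhn8}). Everything else is formal manipulation with the co-H-axioms and the homotopy-functoriality of $D_r$.
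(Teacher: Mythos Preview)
Your proof is correct, but it takes a genuinely different route from the paper's.

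The paper's argument is a two-line observation using the equivalent characterization of co-H-spaces via the reduced diagonal: a based space $Z$ is a co-H-space if and only if the reduced diagonal $\bar\Delta\colon Z\to Z\wedge Z$ is null. The reduced diagonal on $D_rZ$ factors as
\[
D_rZ \xrightarrow{D_r\bar\Delta} D_r(Z\wedge Z) \longrightarrow D_rZ\wedge D_rZ,
\]
and the first map is null since $\bar\Delta$ is.

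By contrast, you work directly with the comultiplication definition and build an explicit co-H-structure on $D_rZ$ from one on $Z$, via the exponential splitting $D_r(Z\vee Z)\simeq\bigvee_s D_sZ\wedge D_{r-s}Z$. This is more hands-on: it actually exhibits the coproduct on $D_rZ$, whereas the paper's argument only certifies existence. The trade-off is that you need to invoke and manipulate the wedge splitting (including its naturality and the identification of $D_r(p_i)$ with projections), while the paper needs only the trivial functoriality of $D_r$ on a null map. Both approaches are valid; the paper's is shorter and requires less machinery, while yours is more constructive and would be preferable if one later needed to know what the co-H-structure on $D_rZ$ actually is.
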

\begin{proof}  The reduced diagonal on $D_rZ$ factors as
$$ D_r Z \xra{D_r\Delta} D_r(Z \sm Z) \ra D_r Z \sm D_rZ,$$
and the first of these maps is null if $Z$ is a co-H-space.
\end{proof}

\begin{thm} \label{QZ to Oinfty X on primitives thm} If $Z$ is a co-H-space and $f: QZ \ra \Oinfty X$ is a continuous map, then the induced map on homology primitives can be computed as
$$ f_* = \sum_{m=0}^{\infty} \Oinfty f(p^m)_* \circ j_{p^m*}: R_*\tilde H_*(Z) = PH_*(QZ) \ra PH_*(\Oinfty X).$$
Furthermore, each term $\Oinfty f(p^m)_* \circ j_{p^m*}$ is determined by the Dyer-Lashof operations on $H_*(\Oinfty X)$, together with the map
$$ R_m \tilde H_*(Z) \subset \tilde H_*(D_{p^m}Z) \xra{\tilde f(p^m)_*} PH_*(\Oinfty X),$$
as follows: on $R_{n+m} \tilde H_*(Z)$, it is zero for $n<0$, and, for $n \geq 0$, it is the composite
$$ R_{n+m}\tilde H_*(Z) \xra{tr}R_{n}R_m\tilde H_*(Z) \xra{R_n\tilde f(p^m)_*} R_nPH_*(\Oinfty X) \xra{\Theta_n} PH_*(\Oinfty X).$$
\end{thm}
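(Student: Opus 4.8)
The plan is to begin from the homotopy decomposition $f \simeq \sum_{r\ge 0}\Oinfty f(r)\circ j_r$ of \lemref{decomp lemma}, apply $\Z/p$--homology, and restrict to primitives. Each $\Oinfty f(r)\circ j_r$ is a map of spaces, so induces a coalgebra map on $H_*$, and $f_*$ is itself a coalgebra map; hence on $PH_*(QZ) = R_*\tilde H_*(Z)$ one gets $f_* = \sum_{r\ge 0}\Oinfty f(r)_*\circ j_{r*}$ with every summand landing in $PH_*(\Oinfty X)$. Convergence is harmless on homology: since $D_rZ$ becomes increasingly connected as $r\to\infty$, in each fixed degree only finitely many summands are nonzero, so term-by-term analysis is legitimate. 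The remaining work is to show that the summands with $r$ not a power of $p$ vanish on primitives, and to compute the surviving summands $\Oinfty f(p^m)_*\circ j_{p^m*}$; I would do this by analyzing $j_{r*}$ and $\Oinfty f(p^m)_*$ on primitives separately.

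For $j_{r*}$ I would exploit that $D_rZ$ is again a co-H-space, so that both $PH_*(QZ) = \bigoplus_n R_n\tilde H_*(Z)$ and $PH_*(QD_rZ) = \bigoplus_k R_k\tilde H_*(D_rZ)$ are understood, with $R_n\tilde H_*(Z)\subseteq \tilde H_*(D_{p^n}Z)$ realized by the stable inclusion $\Sinfty D_{p^n}Z\hra\Sinfty QZ$, and $R_k\tilde H_*(D_rZ)$ carried isomorphically onto its image by the stable projection $\Sinfty QD_rZ\ora\Sinfty D_{p^k}(D_rZ)$ of \thmref{stable splitting thm} (using the stated compatibility of these embeddings under $H_*(D(-))\simeq H_*(Q(-))$). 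Thus to evaluate $j_{r*}$ on $R_n\tilde H_*(Z)$ it suffices to evaluate, for each $k$, the stable composite $\Sinfty D_{p^n}Z\hra\Sinfty QZ\xra{\Sinfty j_r}\Sinfty QD_rZ\ora\Sinfty D_{p^k}(D_rZ)$. By \propref{James map prop} this is null unless $p^n = p^k r$; hence $j_{r*}$ kills $R_n\tilde H_*(Z)$ unless $r = p^m$ with $0\le m\le n$, in which case the one surviving composite ($k=n-m$) is the transfer for $\Sigma_{p^{n-m}}\wr\Sigma_{p^m}<\Sigma_{p^n}$, which by \propref{Rn trans prop} restricts to $R_n\tilde H_*(Z)\xra{tr}R_{n-m}R_m\tilde H_*(Z)\subseteq R_{n-m}\tilde H_*(D_{p^m}Z)$. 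Re-indexed, this says $j_{p^m*}$ on $R_{n+m}\tilde H_*(Z)$ is the transfer $R_{n+m}\tilde H_*(Z)\xra{tr}R_nR_m\tilde H_*(Z)$ for $n\ge 0$, and is $0$ for $n<0$.

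For $\Oinfty f(p^m)_*$ I would use that $f(p^m)\colon\Sinfty D_{p^m}Z\ra X$ is a stable map, so $\Oinfty f(p^m)$ is an infinite loop map, hence a Hopf algebra map that commutes with Dyer--Lashof operations and preserves primitives. On $R_0\tilde H_*(D_{p^m}Z)=\tilde H_*(D_{p^m}Z)$, which is primitive because $D_{p^m}Z$ is a co-H-space, it restricts to $\tilde f(p^m)_* = (\Oinfty f(p^m)\circ\eta)_*$; and since $R_n\tilde H_*(D_{p^m}Z)$ is spanned by length-$n$ Dyer--Lashof operations applied to these primitive classes, commuting $\Oinfty f(p^m)_*$ past the operations shows it equals $\Theta_n\circ R_n(\tilde f(p^m)_*)$ on $R_n\tilde H_*(D_{p^m}Z)$. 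Restricting to the submodule $R_nR_m\tilde H_*(Z)\subseteq R_n\tilde H_*(D_{p^m}Z)$ — the image of $j_{p^m*}$ from the previous step — and pre-composing with that transfer produces precisely the asserted composite $R_{n+m}\tilde H_*(Z)\xra{tr}R_nR_m\tilde H_*(Z)\xra{R_n\tilde f(p^m)_*}R_nPH_*(\Oinfty X)\xra{\Theta_n}PH_*(\Oinfty X)$. Summing over $m$ (finitely many terms in each degree) then yields both assertions of the theorem.

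I expect the main obstacle to be not any single computation but the coherent bookkeeping of the nested submodule identifications: one must verify that the copy of $R_n\tilde H_*(Z)$ sitting inside $H_*(QZ)$, inside $\tilde H_*(D_{p^n}Z)$, and — after a James--Hopf map and a transfer — inside $R_{n-m}\tilde H_*(D_{p^m}Z)\subseteq H_*(QD_{p^m}Z)$ is literally the same submodule, compatibly with all maps in sight. The compatibilities already recorded above (the $H_*(DZ)\simeq H_*(QZ)$ identification, \propref{James map prop}, and \propref{Rn trans prop}) are exactly what makes this go through, and assembling them without index errors is the real content. A secondary point needing care is the claim used in the third paragraph, that $\Oinfty$ of a stable map acts on $R_*\tilde H_*(W)$ by ``$R_*$ of the adjoint, followed by $\Theta_*$'': this rests on the co-H-space hypothesis (so $\tilde H_*(W)$ is primitive) together with the fact that infinite loop maps commute with Dyer--Lashof operations and preserve primitives.
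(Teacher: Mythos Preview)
Your proposal is correct and follows essentially the same route as the paper: use \lemref{decomp lemma} to write $f_*$ as a sum over $r$, invoke \propref{James map prop} to see that only $r=p^m$ contributes on primitives and that $j_{p^m*}$ is the wreath-product transfer (via \propref{Rn trans prop}), and then use that $\Oinfty f(p^m)$ is an infinite loop map to compute it on $R_n\tilde H_*(D_{p^m}Z)$ as $\Theta_n\circ R_n\tilde f(p^m)_*$. Your extra care about convergence and about tracking the nested submodule identifications is well placed but does not represent a different argument.
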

\begin{proof}  By \lemref{decomp lemma},
$$f_* = \sum_{r=1}^{\infty} \Oinfty f(r)_* \circ j_{r*}: R_*\tilde H_*(Z) = PH_*(QZ) \ra PH_*(\Oinfty X).$$
As $R_m\tilde H_*(Z) \subset \tilde H_*(D_{p^m}Z)$, \propref{James map prop} then tells us that $j_{r*}$, restricted to $PH_*(QZ)$, will be zero unless $r=p^m$ for some $m$.  This establishes the first statement of the theorem.

To prove the rest of the theorem, we first note that, by the lemma, $D_{p^m}Z$ is a co-H-space, and thus
$$\tilde f(p^m): D_{p^m}Z \ra \Oinfty X$$
really does induce a map
$$ \tilde f(p^m)_*: \tilde H_*(D_{p^m}Z) \ra PH_*(\Oinfty X),$$
as implied in the statement of the theorem.

Since $\Oinfty f(p^m): QD_{p^m} Z \ra \Oinfty$ is the infinite loop extension of $\tilde f(p^m)$,
$$\Oinfty f(p^m)_*: PH_*(QD_{p^m}Z) \ra PH_*(\Oinfty X)$$ is the sum over $n$ of the composites
$$R_{n}\tilde H_*(D_{p^m}Z) \xra{R_n\tilde f(p^m)_*} R_nPH_*(\Oinfty X) \xra{\Theta_n} PH_*(\Oinfty X).$$
Meanwhile, \propref{James map prop} and \propref{Rn trans prop} combine to say that
$$ R_{m+n}\tilde H_*(Z) \subset PH_*(QZ) \xra{j_{p^m*}} PH_*(QD_{p^m}Z)$$
is zero if $n<0$, and is the composite
$$ R_{m+n}\tilde H_*(Z) \xra{tr} R_nR_m\tilde H_*(Z) \subset R_n \tilde H_*(D_{p^m}Z) \subset PH_*(QD_{p^m}Z)$$
if $n\geq 0$.

\end{proof}

\begin{cor} \label{QZ to QY on primitives cor} If $Y$ and $Z$ are co-H-spaces and $f: QZ \ra Q Y$ is a continuous map, then the induced map on homology primitives can be computed as
$$ f_* = \sum_{m=0}^{\infty} \Oinfty f(p^m)_* \circ j_{p^m*}: R_*\tilde H_*(Z) = PH_*(QZ) \ra PH_*(Q Y) = R_*\tilde H_*(Y).$$
Furthermore, each term $\Oinfty f(p^m)_* \circ j_{p^m*}$ is determined by
$$ R_m \tilde H_*(Z) \subset \tilde H_*(D_{p^m}Z) \xra{\tilde f(p^m)_*} PH_*(Q Y) = R_*\tilde H_*(Y)$$
as follows: on $R_{n+m} \tilde H_*(Z)$, it is zero for $n<0$, and, for $n \geq 0$, it is the composite
$$ R_{n+m}\tilde H_*(Z) \xra{tr}R_{n}R_m\tilde H_*(Z) \xra{R_n\tilde f(p^m)_*} R_nR_*\tilde H_*(Y) \xra{i_*} R_{n+*}\tilde H_*(Y).$$
\end{cor}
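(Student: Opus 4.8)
The plan is to read off \corref{QZ to QY on primitives cor} from \thmref{QZ to Oinfty X on primitives thm} by taking $X = \Sinfty Y$, so that $\Oinfty X = QY$ and $f\colon QZ \ra QY$ is as in the theorem; the only work is to rewrite the structure appearing on $PH_*(\Oinfty X)$ in the theorem in terms of the explicit description of $PH_*(QY)$ available when $Y$ is a co-H-space. First I would recall from the discussion of primitives in $H_*(\Oinfty X)$ that, since $Y$ is a co-H-space, $\tilde H_*(Y)$ is primitive in $H_*(QY)$, so that $PH_*(QY) = R_*\tilde H_*(Y)$ as right $\A$--modules and each $R_n\tilde H_*(Y)$ is the submodule of $PH_*(QY)$ spanned by length-$n$ Dyer--Lashof operations on $\tilde H_*(Y)$, sitting compatibly inside $\tilde H_*(D_{p^n}Y)$. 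Under this identification the map $\tilde f(p^m)_*\colon \tilde H_*(D_{p^m}Z) \ra PH_*(\Oinfty X)$ of \thmref{QZ to Oinfty X on primitives thm} becomes exactly the map $R_m\tilde H_*(Z) \subset \tilde H_*(D_{p^m}Z) \xra{\tilde f(p^m)_*} PH_*(QY) = R_*\tilde H_*(Y)$ named in the corollary, and the leading formula $f_* = \sum_m \Oinfty f(p^m)_* \circ j_{p^m*}$ is verbatim that of the theorem.

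It then remains only to identify the Dyer--Lashof structure map $\Theta_n\colon R_n PH_*(QY) \ra PH_*(QY)$ of the theorem with the map $i_*\colon R_nR_*\tilde H_*(Y) \ra R_{n+*}\tilde H_*(Y)$ of \propref{Rn trans prop}. This is immediate from the structure theorem $H_*(QY) \cong U(R_*\tilde H_*(Y))$ and the description of the Dyer--Lashof action on the generating module: a length-$n$ operation $Q^I$ applied to a class $Q^J x$ with $x \in \tilde H_*(Y)$ and $l(J) = m$ is $Q^{IJ}x$, which modulo the Adem relations is precisely the image of the concatenation map $R_nR_m\tilde H_*(Y) \ra R_{n+m}\tilde H_*(Y)$; and by \propref{Rn trans prop} this concatenation map is exactly the map denoted $i_*$, namely the one induced on the summands $R_*\tilde H_*(-)$ by the inclusions $\Sigma_{p^n}\wr\Sigma_{p^m} < \Sigma_{p^{n+m}}$. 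Substituting $\Theta_n = i_*$ into the composite
$$ R_{n+m}\tilde H_*(Z) \xra{tr}R_{n}R_m\tilde H_*(Z) \xra{R_n\tilde f(p^m)_*} R_nPH_*(\Oinfty X) \xra{\Theta_n} PH_*(\Oinfty X)$$
of \thmref{QZ to Oinfty X on primitives thm} gives exactly the asserted formula.

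I do not expect a genuine obstacle: the entire content sits in \thmref{QZ to Oinfty X on primitives thm}, and what is new is only the recognition that, when the target infinite loop space is $QY$ for a co-H-space $Y$, the abstract map $\Theta_n$ on $PH_*(QY)$ is the concrete combinatorial inclusion $i_*$ of \propref{Rn trans prop}. The single point requiring any care is that $R_nR_m\tilde H_*(Y)$ is formed with its own unstable and Adem relations already imposed before applying $R_n$, so a priori the concatenation map is only surjective; but this costs nothing, since that map is by definition the $i_*$ of \propref{Rn trans prop}, which is all the statement requires.
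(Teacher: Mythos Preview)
Your proposal is correct and matches the paper's approach: the paper states \corref{QZ to QY on primitives cor} immediately after \thmref{QZ to Oinfty X on primitives thm} with no separate proof, treating it as the specialization $X = \Sinfty Y$ together with the identification $PH_*(QY) = R_*\tilde H_*(Y)$ for $Y$ a co-H-space. Your explicit identification of $\Theta_n$ with the concatenation map $i_*$ is exactly the unwinding the paper leaves implicit.
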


\section{Proof of \thmref{prim thm}: first reductions}

It is convenient to let $L_k = \tilde H_*(L_1(k))$, so, in particular, $L_0 = \tilde H_*(S^1)$.

As $PH_*(QL_1(k)) = R_*L_k$, to prove \thmref{prim thm}, we need to understand the maps

$$ R_*L_{k+1} \begin{array}{c} d_{k*} \\[-.08in] \longrightarrow \\[-.1in] \longleftarrow \\[-.1in] s_{k*}
\end{array} R_*L_{k} \begin{array}{c} d_{{k-1}*} \\[-.08in] \longrightarrow \\[-.1in] \longleftarrow \\[-.1in] s_{{k-1}*}
\end{array} R_*L_{k-1}.$$

In this section, we show how the hypotheses of \thmref{prim thm} fit with the theory of the last section to greatly simplify the problem.

\subsection{Analyzing $d_{k*}$}

As $d_k$ is assumed to be an infinite loop map, $d_{k*}$ is determined by $\tilde d_{k}(1)_*: L_{k+1} \ra R_*L_k$, where $\tilde d_{k}(1)$ is the composite
$$ L_1(k+1) \xra{\eta} QL_1(k+1) \xra{d_k} QL_1(k).$$
The following three lemmas restrict what $\tilde d_{k}(1)_*$ could be.

\begin{lem} $\Hom_{\A}(L_{k+1},L_k) = 0$
\end{lem}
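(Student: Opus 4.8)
The plan is to compute the connectivities of $L_{k+1}$ and $L_k$ and observe that there is simply no room for a nonzero $\A$-module map. Recall that we are told in the introduction that $\tilde H_*(L_1(k))$ is of finite type, is $(c(k)-1)$-connected with $c(k) = 2p^k - 1 - k$, and $H_{c(k)}(L_1(k)) = \Z/p$; moreover $L_1(k)$ has dimension behavior governed by the Steinberg summand of $H^*(BE_k^{\rho_k})$, which is concentrated in a range whose top is well below $c(k+1)$. The key numerical point is that $c(k+1) = 2p^{k+1} - 1 - (k+1) = 2p \cdot p^k - k - 2$, which grows by a factor of roughly $p$, while $L_{k+1}$ lives entirely above dimension $c(k+1)$; so if one knows the \emph{top} nonzero dimension of $L_k$ is strictly less than $c(k+1)$, then every $\A$-module homomorphism $L_{k+1} \to L_k$ must vanish on the bottom class of $L_{k+1}$ (there is nothing in $L_k$ in that degree), and then, since $L_{k+1}$ is generated as an $\A$-module by classes in a range all of which exceed the top dimension of $L_k$, the whole map is zero.

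The steps I would carry out, in order: first, recall or record the explicit description of $H^*(L_1(k))$ as an $\A$-module — this is promised in the introduction and presumably set up just before this lemma — and in particular extract its top nonzero degree $t(k)$ (for the Steinberg summand of $H^*(BE_k^{\rho_k})$, this is a standard computation; the module is finite-dimensional). Second, verify the inequality $t(k) < c(k+1)$; since $t(k)$ is of order $p^k$ (the Steinberg module sits in cohomological degrees bounded by something like $p^k - 1$ plus the Thom class shift $p^k$, i.e. roughly $2p^k$) while $c(k+1)$ is of order $2p^{k+1}$, this inequality is comfortable for all $k \geq 0$ and reduces to a short arithmetic check. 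Third, conclude: since $L_{k+1}$ is $(c(k+1)-1)$-connected and $L_k$ vanishes in all degrees $\geq c(k+1)$, any $\A$-linear $\phi: L_{k+1} \to L_k$ is zero in every degree, hence $\phi = 0$; so $\Hom_\A(L_{k+1}, L_k) = 0$.

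The main obstacle — really the only content — is pinning down the top nonzero degree of $\tilde H_*(L_1(k))$ precisely enough, or at least finding a clean upper bound, and then checking the arithmetic inequality against $c(k+1)$. Concretely one wants to show $\tilde H_*(L_1(k))$ is concentrated in degrees in the interval $[c(k), b(k)]$ with $b(k) < c(k+1) = 2p^{k+1} - k - 2$. Since $L_1(k)$ is a retract of the Thom space $BE_k^{\rho_k}$ twisted by the Steinberg idempotent and $\rho_k$ is $p^k$-dimensional, $H^*(L_1(k))$ lies in degrees $\leq p^k + \dim(\text{Steinberg summand of } H^*(BE_k))$; the Steinberg summand of $H^*(B(\Z/p)^k)$ is finite-dimensional with top degree of order $p^k$, so $b(k)$ is $O(p^k)$, which is indeed $o(p^{k+1})$. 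This is the one computation I would need to get right; everything else is a one-line connectivity argument. (I would also double-check the edge cases $k=0$, where $L_1(0) \simeq S^1$ so $L_1 = \tilde H_*(S^1)$ is concentrated in degree $1 = c(0)$, and $c(1) = 2p - 2 > 1$, and $k=1$, where $L_1(1) \simeq \Sigma B\Sigma_p$, directly.)
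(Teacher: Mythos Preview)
Your approach has a fatal gap: for $k \geq 1$, the module $L_k = \tilde H_*(L_1(k))$ is \emph{not} bounded above, so there is no ``top nonzero degree'' $t(k)$ to compare against $c(k+1)$. This is already visible at $k=1$: since $L_1(1) \simeq \Sigma B\Sigma_p$, at $p=2$ one has $L_1 \cong \tilde H_*(\Sigma B\Z/2)$, which is nonzero in every degree $\geq 2$, and in particular nonzero in degree $c(2)=5$. More generally, the Steinberg summand of $H^*(BE_k^{\rho_k})$ is infinite-dimensional over $\Z/p$ --- via the symmetric-product identification $\Sinfty L_1(k)\simeq \Sigma L(k)$ it corresponds to the (infinite) set of length-$k$ admissible monomials in $\A/\A\beta$ --- so the inequality $b(k) < c(k+1)$ you hope to verify has no content. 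Your connectivity argument happens to work for $k=0$, but only because $L_0$ is one-dimensional.

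The paper itself does not supply a proof; it cites \cite{kuhn5}. The elementary argument there is, in spirit, dual to what you attempted: rather than bounding $L_k$ from above (impossible), one uses that the \emph{cohomology} $H^*(L_1(k))$ is a cyclic left $\A$-module, generated by its bottom class in degree $c(k)$. Dualizing (everything is of finite type), $\Hom_{\A}(L_{k+1},L_k) \cong \Hom_{\A}(H^*(L_1(k)), H^*(L_1(k+1)))$, and any such map is determined by the image of the degree-$c(k)$ generator, which must vanish since $H^{c(k)}(L_1(k+1))=0$ by the connectivity $c(k)<c(k+1)$. So a connectivity argument is available, but only after passing to cohomology and invoking cyclicity; in homology the modules extend upward without bound and your version cannot get started.
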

This was given an elementary proof by the author in \cite{kuhn5}.

\begin{lem} \label{d lem 1} $\Hom_{\A}(L_{k+1},R_nL_k) = 0$ if $n>1$.
\end{lem}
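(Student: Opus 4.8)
My approach is a reduction to the vanishing recorded just above, $\Hom_\A(L_{k+1},L_k)=0$: I would realize $R_nL_k$, as an $\A$-module, as a sum of pieces built from modules $L_m$ with $m\neq k+1$, for which $\Hom_\A(L_{k+1},-)$ vanishes.

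First, recall from the previous section that, with $L_0=\tilde H_*(S^1)$, the module $L_k=\tilde H_*(L_1(k))$ is the Steinberg summand $e_kR_kL_0$ of the weight-$p^k$ primitives $R_kL_0\subset H_*(QS^1)$, the group $GL_k(\Z/p)$ acting by $\A$-module automorphisms. Since the Dyer--Lashof operations are additive, the functor $R_n$ preserves direct sums and commutes with idempotents, so $R_nL_k=R_n(e_kR_kL_0)=e_k(R_nR_kL_0)$, the idempotent acting through the inner $R_k$ factor. Thus it suffices to prove $\Hom_\A(L_{k+1},\,e_k(R_nR_kL_0))=0$ when $n>1$.

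Next I would transport $e_k(R_nR_kL_0)$ into $R_{n+k}L_0$. By \propref{Rn trans prop} and its corollary, applied with $Z=S^1$, the transfer and inclusion for the subgroup $\Sigma_{p^n}\wr\Sigma_{p^k}<\Sigma_{p^{n+k}}$ split $R_{n+k}L_0$ off $R_nR_kL_0$ as an $\A$-module summand. I would then check that the Steinberg idempotent $e_k$ is carried by this splitting into the $R_{n+k}L_0$ summand, so that $i_*$ identifies $R_nL_k=e_k(R_nR_kL_0)$ with an $\A$-module summand of $R_{n+k}L_0$ --- the image of the Steinberg idempotent of the block-diagonal $GL_k(\Z/p)\le GL_{n+k}(\Z/p)$. (In the same way $L_{k+1}=e_{k+1}R_{k+1}L_0$.) This is the step I expect to be the main obstacle: it amounts to a Mackey/double-coset compatibility between $e_k$ and the wreath-product transfer, and it is also what controls precisely which Steinberg constituents occur in $R_nL_k$.

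With that identification in hand, I would decompose $R_{n+k}L_0=PH_*(D_{p^{n+k}}S^1)$ into indecomposable $\A$-module summands --- available via the Segal conjecture for elementary abelian $p$-groups and Steinberg-idempotent calculations in the style of \cite{kuhn5}, \cite{kuhnpriddy} --- so that $R_nL_k$, being a summand, is itself such a sum. If $\Hom_\A(L_{k+1},R_nL_k)$ were nonzero, some indecomposable summand $X$ of $R_nL_k$ would admit a nonzero $\A$-module map from $L_{k+1}$; by the preceding lemma and its analogues for the other Steinberg pieces, this forces $X\cong L_{k+1}$. But then $R_nL_k$ would contain a class in degree $c(k+1)$, whereas its bottom degree is of order $p^nc(k)$, and $c(k+1)=p\,c(k)+(p-2)+(p-1)k$ while $c$ grows exponentially, so the bottom degree of $R_nL_k$ exceeds $c(k+1)$ once $n\ge 2$ --- a contradiction. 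Hence $\Hom_\A(L_{k+1},R_nL_k)=0$ for $n>1$. (As a consistency check, the same argument with $n=0$ reproves $\Hom_\A(L_{k+1},L_k)=0$, and it correctly fails at $n=1$, which is exactly where $d_{k*}$ will live.)
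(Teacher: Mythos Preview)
The step you flag as ``the main obstacle'' is in fact false: $R_nL_k$ is \emph{not} in general an $\A$--module summand of $R_{n+k}L_0$.  In the paper's framework both are summands of the larger module $R_1^{n+k}L_0$, namely $R_nL_k = \hat e_n e_k\, R_1^{n+k}L_0$ and $R_{n+k}L_0 = \hat e_{n+k}\, R_1^{n+k}L_0$, and these idempotents are not comparable.  Concretely, for $n=k=1$ one has $\hat e_1 = e_1 = 1$ in $\mathcal H_2$, so $R_1L_1 = R_1^2L_0$, which \emph{properly contains} $R_2L_0 = \hat e(1)R_1^2L_0$; the inclusion goes the wrong way for your argument.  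The compatibility you hope to check between $e_k$ and the wreath--product transfer does not produce an embedding of $R_nL_k$ into $R_{n+k}L_0$.  Even if you retreat to the correct ambient module $R_1^{n+k}L_0$, your plan to decompose it into copies of the various $L_j$ runs into the fact that $\End_\A(R_1^{n+k}L_0) \cong \mathcal H_{n+k}$ is the mod~$p$ $0$--Hecke algebra, which is not semisimple once $n+k\ge 3$; the indecomposable $\A$--summands are not just copies of the $L_j$, so the ``rigidity plus connectivity'' endgame does not get started.

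The paper's proof is quite different and avoids any decomposition into indecomposables, any appeal to $\Hom_\A(L_j,L_m)=0$ for general $j\ne m$, and any connectivity estimate.  One simply observes that $L_{k+1}$ and $R_nL_k$ are retracts of $R_1^{k+1}L_0$ and $R_1^{n+k}L_0$ respectively, so it suffices to show $\Hom_\A(R_1^mL_0,\,R_1^{m'}L_0)=0$ for $m<m'$.  This is \thmref{m not n thm}, proved in dual cohomological form in \secref{End RnL0 section}: via Lannes--Zarati one identifies $\Hom_\U(N(m'),H^*(BE_m))$ with $\Z/p[\Epi(E_m,E_{m'})/B_{m'}]$, and there are no epimorphisms $E_m \twoheadrightarrow E_{m'}$ when $m<m'$.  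Specializing to $m=k+1$ and $m'=n+k$ gives the lemma for $n>1$.
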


This is a special case of \thmref{m not n thm}.

\begin{lem} \label{d lem} $\Hom_{\A}(L_{k+1},R_1L_k) \simeq \Z/p$.
\end{lem}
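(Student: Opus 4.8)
The plan is to realize $L_{k+1}$ as a direct summand of the right $\A$--module $R_1L_k$ whose endomorphism ring is $\Z/p$, and then to show that no nonzero $\A$--map out of $L_{k+1}$ meets the complementary summand. Two connectivity observations are used throughout. By the Nishida relations, $R_1L_k = R_1\tilde H_*(L_1(k))$ is spanned by length--one Dyer--Lashof operations on $L_k$, so, $L_1(k)$ being $(c(k)-1)$--connected, $R_1L_k$ is concentrated in degrees $\geq p\,c(k)$ (in degrees $\geq 2c(k)$ when $p=2$); and since $c(k)=2p^k-1-k$, one has $c(k+1)>p\,c(k)$ at odd primes and $c(k+1)\geq 2c(k)$ at $p=2$. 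Consequently any $\phi\in\Hom_{\A}(L_{k+1},R_1L_k)$ must carry the bottom class of $L_{k+1}$, which spans the one--dimensional group $(L_{k+1})_{c(k+1)}$, into the subspace of $(R_1L_k)_{c(k+1)}$ annihilated by the positive--degree part of $\A$.

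The geometry enters as follows. By \lemref{Rn lem 2}, $R_1L_k$ is the image in homology of the Steenrod diagonal $BE_{1+}\sm L_1(k)\to D_pL_1(k)$. Since $BE_{1+}\sm L_1(k)$ is a stable retract of $\Sigma BE_{(k+1)+}$ (because $L_1(k)$ is a retract of $\Sigma BE_{k+}$ and $E_{k+1}=E_1\times E_k$) and the Steinberg idempotents $e_k$ and $e_{k+1}$ are compatible under this inclusion, the stable retraction of $L_1(k+1)$ off $D_pL_1(k)$ furnished by \corref{DpLk retract cor} may be chosen so that the associated split $\A$--monomorphism $\sigma_*\colon L_{k+1}\hookrightarrow\tilde H_*(D_pL_1(k))$ has image inside $R_1L_k$; composing with a chosen retraction then gives a splitting $R_1L_k\cong L_{k+1}\oplus C$ of right $\A$--modules, so that
$$\Hom_{\A}(L_{k+1},R_1L_k)\;\cong\;\End_{\A}(L_{k+1})\ \oplus\ \Hom_{\A}(L_{k+1},C).$$
For the first summand, dualizing turns it into $\End_{\A}(\tilde H^*(L_1(k+1)))$; since $\tilde H^*(L_1(k+1))$ is the summand of $\tilde H^*(BE_{k+1}^{\rho_{k+1}})$ cut out by the Steinberg idempotent, \thmref{End thm} (or its odd--primary variant) identifies this endomorphism ring with $e\,\mathcal H_{k+1}\,e$ for the idempotent $e\in\mathcal H_{k+1}$ corresponding to the Steinberg summand, and primitivity of the Steinberg idempotent gives $e\,\mathcal H_{k+1}\,e\cong\Z/p$. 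For the second summand, I claim $\Hom_{\A}(L_{k+1},C)=0$; granting this, $\Hom_{\A}(L_{k+1},R_1L_k)\cong\Z/p$.

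Proving $\Hom_{\A}(L_{k+1},C)=0$ is the decisive step and, I expect, the only real difficulty. It amounts to the two facts flagged above --- that the $\A$--annihilated part of $(R_1L_k)_{c(k+1)}$ is one--dimensional, hence exhausted by $\sigma_*$ of the bottom class of $L_{k+1}$, and that the remaining $\A$--generators of $L_{k+1}$ are then forced to map to $0$ --- both of which require control of $R_1L_k$, equivalently of $\tilde H_*(D_pL_1(k))$, as an $\A$--module well above its bottom degree. This is precisely the kind of admissibility/excess bookkeeping on which \thmref{m not n thm} rests; indeed this lemma is plausibly the diagonal case $m=n+k$ of that theorem. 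A less structural alternative, avoiding \corref{DpLk retract cor}, would establish the one--dimensionality and rigidity statements by hand from the admissible--monomial description of $\tilde H^*(L_1(k))$ and then exhibit a nonzero element of $\Hom_{\A}(L_{k+1},R_1L_k)$ from the transfer map of \cite{kuhnpriddy}; but checking that this map is nonzero on the bottom class is itself a Dyer--Lashof/Nishida computation of comparable difficulty, so the retract approach seems preferable.
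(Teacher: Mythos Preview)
Your overall strategy---split $R_1L_k \cong L_{k+1}\oplus C$ and compute the two resulting $\Hom$ groups---is sound, and your identification $\End_\A(L_{k+1})\cong \Z/p$ via \thmref{End thm} is correct. But the step you flag as ``the only real difficulty'' is a genuine gap, and the heuristic you offer for it does not work. A right $\A$--module map out of $L_{k+1}$ is \emph{not} determined by its value on the bottom class: operations in $\A$ lower degree on homology, so the bottom class generates only itself, and nothing forces ``the remaining $\A$--generators'' anywhere. (Dually, $\tilde H^*(L_1(k+1))$ is cyclic on its bottom class, but you would need control over maps \emph{into} it from $(R_1L_k)^*$, and cyclicity of the target gives no such control.) Nor is the one--dimensionality of the $\A$--annihilated part of $(R_1L_k)_{c(k+1)}$ established.

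The paper closes this gap---and in fact bypasses your decomposition entirely---via \corref{End cor}, the ``$m+j=n+k$'' companion to \thmref{m not n thm} that you gesture toward at the end. Both $L_{k+1}$ and $R_1L_k$ are summands of $R_1^{k+1}L_0$, cut out by the commuting idempotents $e_{k+1}$ and $\hat e_1 e_k = e_k$ in $\mathcal H_{k+1} \cong \End_\A(R_1^{k+1}L_0)$, so
\[
\Hom_\A(L_{k+1}, R_1L_k) \;=\; e_k\,\mathcal H_{k+1}\,e_{k+1}.
\]
Since $e(i)e_{k+1}=e_{k+1}$ for every $i$ and the $e(i)$ generate $\mathcal H_{k+1}$, the whole left ideal $\mathcal H_{k+1}e_{k+1}$ is already one--dimensional, spanned by $e_{k+1}$; as $e_k e_{k+1}=e_{k+1}$, the corner is $\langle e_{k+1}\rangle\cong\Z/p$ (this is \lemref{d lem 2}). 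Your missing piece $\Hom_\A(L_{k+1},C)=0$ drops out of the same computation: $C$ corresponds to the idempotent $e_k-e_{k+1}$, and $(e_k-e_{k+1})\,\mathcal H_{k+1}\,e_{k+1}=0$. So the Hecke machinery you already invoke for $\End_\A(L_{k+1})$ does all the work; no separate connectivity or $\A$--annihilator bookkeeping is needed.
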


See \lemref{d lem 2} for a more precise statement.

\begin{cor}  \label{d cor} If $d_k$ satisfies condition $(1_k)$, then
$d_{k_*}$ is a composite of the form
$$ R_*L_{k+1} \xra{R_*d} R_*R_1L_k \xra{i_*} R_{*+1}L_k,$$
where $d: L_{k+1} \ra R_1L_k$ is nonzero.
\end{cor}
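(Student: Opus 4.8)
The plan is to combine the structural description of $f_*$ on primitives from Corollary~\ref{QZ to QY on primitives cor} with the three preceding $\Hom_\A$ computations, using the fact that $d_k$ is an infinite loop map to reduce everything to the single map $\tilde d_k(1)_*$. First I would invoke the remark after Lemma~\ref{decomp lemma}: since $d_k$ is an infinite loop map by condition~$(1_k)$, we have $d_k = \Oinfty d_k(1)$, so in the formula of Corollary~\ref{QZ to QY on primitives cor} only the $m=0$ term survives. Thus $d_{k*}$ is entirely determined by $\tilde d_k(1)_* \colon L_{k+1} = \tilde H_*(L_1(k+1)) \ra PH_*(QL_1(k)) = R_*L_k$, and on $R_nL_{k+1}$ it is given by the composite
$$ R_n L_{k+1} \xra{tr} R_n R_0 L_{k+1} \xra{R_n \tilde d_k(1)_*} R_n R_* L_k \xra{i_*} R_{n+*} L_k $$
(with $R_0$ the identity functor and the transfer $tr$ for $\Sigma_{p^n}\wr\Sigma_1 < \Sigma_{p^n}$ also the identity), i.e. $d_{k*} = i_* \circ R_*(\tilde d_k(1)_*)$.

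Next I would pin down $\tilde d_k(1)_*$ itself. It is an $\A$--module map $L_{k+1} \ra R_*L_k = \bigoplus_n R_nL_k$, so it decomposes by target summand into components $L_{k+1} \ra R_nL_k$. The first unnumbered lemma of this section gives $\Hom_\A(L_{k+1}, L_k) = 0$, killing the $n=0$ component; Lemma~\ref{d lem 1} gives $\Hom_\A(L_{k+1}, R_nL_k) = 0$ for $n > 1$, killing all components with $n \geq 2$; and Lemma~\ref{d lem} gives $\Hom_\A(L_{k+1}, R_1L_k) \cong \Z/p$. Hence $\tilde d_k(1)_*$ factors as $L_{k+1} \xra{d} R_1L_k$ for some $d$, unique up to a scalar in $\Z/p$. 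Substituting this into the displayed formula for $d_{k*}$ and using that $R_n$ applied to a map landing in $R_1L_k$ lands in $R_nR_1L_k$, we get exactly
$$ d_{k*} \colon R_*L_{k+1} \xra{R_*d} R_*R_1L_k \xra{i_*} R_{*+1}L_k, $$
as claimed.

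It remains to see that $d \neq 0$, and this is where condition~$(1_k)$ — rather than just "$d_k$ is an infinite loop map" — gets used: the hypothesis states that $d_k$ is nonzero on homology primitives in dimension $c(k+1)$. Since $L_{k+1}$ is $(c(k+1)-1)$--connected with $(L_{k+1})_{c(k+1)} = \Z/p$ (by the stated properties of $H_*(L_1(k+1))$), and the inclusion $L_{k+1} = R_0L_{k+1} \hookrightarrow R_*L_{k+1} = PH_*(QL_1(k+1))$ is an isomorphism in that bottom degree, nonvanishing of $d_{k*}$ in degree $c(k+1)$ forces $\tilde d_k(1)_*$, hence $d$, to be nonzero. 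The main obstacle in this argument is not any of these steps individually — each is essentially bookkeeping against results already in hand — but rather making sure the degree comparison in this last paragraph is airtight, i.e. that $i_* \circ R_*d$ genuinely cannot vanish in degree $c(k+1)$ unless $d$ does; this is immediate once one notes $i_*$ restricted to $R_0R_1L_k$ is the inclusion $R_1L_k \hookrightarrow R_*L_k$ and that $d$ lands in $R_1L_k$ which, by the connectivity statement for $R_1L_k \subset H_*(D_pL_1(k))$, detects degree $c(k+1)$ faithfully.
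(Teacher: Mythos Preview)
Your proof is correct and follows essentially the same approach as the paper, which leaves the corollary unproved but makes the argument implicit in the surrounding text: use that $d_k$ is an infinite loop map so only $\tilde d_k(1)_*$ matters, apply the three $\Hom_\A$ lemmas to force $\tilde d_k(1)_*$ to factor through $R_1L_k$, and then invoke the nonvanishing part of condition $(1_k)$ to get $d\neq 0$. Your final paragraph is slightly more elaborate than necessary---it suffices to observe that if $d=0$ then $d_{k*}=i_*\circ R_*d$ vanishes identically, contradicting $(1_k)$---but the reasoning is sound.
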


\subsection{Analyzing $s_{k*}$} \label{sk section}

Analyzing $s_{k*}$ is more delicate, as $s_k: QL_1(k) \ra QL_1(k+1)$ is not assumed to be an infinite loop map.

Recall that the adjoint to $s_k$ has components $s_k(r): \Sinfty D_r L_1(k) \ra \Sinfty L_1(k+1)$, and \lemref{decomp lemma} tells us that $s_k$ is the sum over $r$ of the maps $\Oinfty s_k(r)\circ j_r$.
The next lemma, an application of Weiss orthogonal calculus \cite{weiss}, shows that, in our situation, almost all of these are zero.

\begin{lem} \label{Weiss calc lemma} If $s_k$ extends to a natural transformation $s_k(V): QL_1(k,V) \ra QL_1(k+1,V)$, then $s_k(r)$ is null for $r> p$.
\end{lem}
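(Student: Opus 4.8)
The plan is to run a Weiss orthogonal (or equivalently the relevant piece of Goodwillie) calculus argument on the polynomial degree of the functors involved. The key point is that $D_rL_1(k,V)$, as a functor of $V \in \V_{\C}$, is built from the $r$-th extended power construction applied to Thom spaces of representations of the form $(\R \oplus V)\otimes_\R \rho_k$; taking the $r$-th extended power of a functor whose value on $V$ involves $V$ "linearly" (once) produces a functor whose value involves $V$ to the $r$-th power, so in the language of Weiss calculus $V \rightsquigarrow \Sinfty D_rL_1(k,V)$ is homogeneous of degree $r$ (roughly: the $\Sigma_r$-extended power of a degree-one functor is degree $r$). Meanwhile the target $V \rightsquigarrow \Sinfty L_1(k+1,V)$ is homogeneous of degree $p$, since $L_1(k+1,V) = e_{k+1}BE_{k+1}^{(\R\oplus V)\otimes_\R\rho_{k+1}}$ and the determinant-twisting by the Steinberg idempotent over $E_{k+1} = (\Z/p)^{k+1}$ forces the "polynomial degree in $V$" to be $p^{k+1}/p^k = p$ — more precisely, $L_1(k+1,V)$ is, up to suspension, $\Sigma^{(\text{const})\dim_\C V \cdot p} $-connected and sits as the top Steinberg summand, which detects exactly the degree-$p$ part.

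First I would set up the bookkeeping: recall from \secref{james hopf map subsection} and \lemref{decomp lemma} that $s_k(r)$ is the $r$-th component of the adjoint of $s_k$, i.e.\ the composite $\Sinfty D_rL_1(k) \to \Sinfty QL_1(k) \xra{s_k} \Sinfty QL_1(k+1) \to \Sinfty L_1(k+1)$, and observe that since $s_k = s_k(\bf 0)$ is the specialization of a natural transformation $s_k(V)$, the whole map $\Sinfty D_rL_1(k,V) \to \Sinfty L_1(k+1,V)$ of functors of $V$ is natural; hence so is $s_k(r)(V)$, after composing with the natural stable splitting maps of \thmref{stable splitting thm} (which hold on the nose for path-connected $Z$, and $L_1(k,V)$ is highly connected). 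Next I would invoke the fundamental fact of orthogonal calculus: a natural transformation from a homogeneous degree-$r$ functor to a homogeneous degree-$p$ functor is null whenever $r \ne p$ (there are no nonzero natural maps between distinct homogeneous layers). Applying this with $r > p$ (so in particular $r \ne p$) gives that $s_k(r)(V)$ is null for all $V$, and specializing to $V = \bf 0$ gives $s_k(r)$ null, as required. One should note the argument actually also kills $r < p$ with $r \ne 1$; the case $r = 1$ survives because $D_1 = \mathrm{id}$ and the degree-one layer is genuinely present — but the lemma as stated only claims $r > p$, so this is harmless, and one could remark on it.

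The main obstacle is verifying the two homogeneity claims rigorously, and in particular pinning down that $V \rightsquigarrow \Sinfty L_1(k+1,V)$ is homogeneous of degree exactly $p$ (not merely polynomial of degree $\le p$ or of some other degree). This requires knowing how the Steinberg idempotent $e_{k+1}$ interacts with the Weiss derivatives of $V \rightsquigarrow \Sinfty BE_{k+1}^{(\R\oplus V)\otimes_\R \rho_{k+1}}$: the latter functor is, as a functor of $V$, a "polynomial" functor whose homogeneous layers are indexed by subspaces/flags, and the Steinberg summand extracts precisely one layer. I would either cite the relevant computation from \cite{aronedwyer} or \cite{aronedwyerlesh} identifying these derivatives (this is essentially the identification $\partial_{p^{k+1}}(\mathrm{id}) $-type statement underlying the whole tower picture, so it is available), or give a direct argument using that $BE_{k+1}^{(\R\oplus V)\otimes_\R\rho_{k+1}} \simeq D_{p^{k+1}}$-like pieces and that $e_{k+1}$ is supported on the top. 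Getting the connectivity estimates right — so that the stable splittings and the calculus tower both converge and the naturality is genuine — is the routine-but-delicate part; the conceptual content is just "no maps between unequal homogeneous layers."
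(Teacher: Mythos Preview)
Your approach is the same as the paper's---extend $s_k(r)$ to a natural transformation in $V$, compare Weiss-calculus degrees of source and target, and invoke the vanishing of maps from a homogeneous functor into polynomial functors of lower degree---but your degree computations are wrong. The functor $V \rightsquigarrow \Sinfty L_1(k,V)$ is not linear in $V$: the bundle $(\R\oplus V)\otimes_\R \rho_k$ contains $V\otimes_\R\rho_k$, so $V$ appears with multiplicity $\dim_\R\rho_k = p^k$, and this functor is homogeneous of degree $p^k$ (indeed, it is exactly the $p^k$th layer of the Weiss tower of $S^{\R\oplus V}$). Hence $V \rightsquigarrow \Sinfty D_r L_1(k,V)$ is homogeneous of degree $rp^k$, not $r$, and the target $V \rightsquigarrow \Sinfty L_1(k+1,V)$ is homogeneous of degree $p^{k+1}$, not $p$. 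The relevant inequality is $rp^k > p^{k+1}$, which is equivalent to $r > p$, so your conclusion survives; but the stated reason (``involves $V$ linearly'') is incorrect and would mislead anyone trying to follow the argument.

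Two further corrections. First, the ``fundamental fact'' you quote is false as stated: there are no nontrivial natural transformations from a degree-$d$ homogeneous functor to a \emph{polynomial} functor of degree less than $d$, but maps from lower to higher homogeneous degree certainly exist (e.g.\ the diagonal $\Sinfty S^V \to \Sinfty S^{2V}$). So your side remark that the argument ``also kills $r < p$ with $r\ne 1$'' is wrong; those components must be handled differently (in the paper, by the $\A$--module calculations of \lemref{s lem 2}). Second, you make life harder than necessary by insisting that the target be \emph{homogeneous} of an exact degree and then worrying about how the Steinberg idempotent interacts with Weiss derivatives. All that is needed is that the target be \emph{polynomial} of degree $p^{k+1}$, which is immediate once one knows it is (a retract of) the $p^{k+1}$st fiber of the tower; the paper's proof is correspondingly a three-line affair.
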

\begin{proof}  Under the hypothesis of the lemma, each $s_k(r)$ extends to a natural transformation $s_k(r,V): \Sinfty D_rL_1(k,V) \ra \Sinfty L_1(k+1,V)$.  As functors of $V$ to spectra, $\Sinfty D_rL_1(k,V)$ is homogeneous of degree $rp^k$.  Now apply the standard fact: if $F$ is a homogeneous functor of degree $d$, there are no nontrivial natural transformations from $F$ to any polynomial functor of degree less that $d$.
\end{proof}

Recall that only the terms $\Oinfty s_k(p^m)_*\circ j_{p^m*}$ contribute to $s_{k_*}$ when restricted to primitives.  Thus the lemma implies that
$$ s_k* = \Oinfty s_k(1)_* + \Oinfty s_k(p)_*\circ j_{p*}.$$

The first term here is determined by $\tilde s_{k}(1)_*: L_{k} \ra R_*L_{k+1}$.  This is zero due to the next lemma, which is again a special case of \thmref{m not n thm}.

\begin{lem} \label{s lem 2} $\Hom_{\A}(L_{k},R_nL_{k+1}) = 0$ for all $n$.
\end{lem}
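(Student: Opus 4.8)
The plan is to deduce \lemref{s lem 2} from the uniform calculation of $\Hom_\A(L_m, R_n L_{m'})$ that is recorded as \thmref{m not n thm}: the content of that theorem is that such a Hom group is nonzero (and then $\simeq \Z/p$) only when $n = m - m'$. Here $m = k$ and $m' = k+1$, so a nonzero group would force $n = -1$, which is impossible; hence the group vanishes for every $n \geq 0$. So what I really need is the appropriate half of \thmref{m not n thm}, namely that $R_n L_{k+1}$ supports no nonzero $\A$--module map out of $L_k$.

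The main input is a structural description of $R_n \tilde H_*(L_1(k+1))$ as an $\A$--module. By \lemref{Rn lem 2} applied iteratively, $R_n \tilde H_*(Z)$ is the image of the Steenrod diagonal $\tilde H_*(BE_{n+} \sm Z) \to \tilde H_*(D_{p^n}Z)$, so it inherits a $GL_n(\Z/p)$--action through $BE_n$; equivalently, by the corollary following \propref{Rn trans prop}, $R_n \tilde H_*(Z)$ is the ``Steinberg--type'' direct summand of the $n$--fold iterate $R_1 \cdots R_1 \tilde H_*(Z)$. Taking $Z = L_1(k+1)$ and splitting with idempotents in $\Z_{(p)}[GL_n(\Z/p)]$, one finds that every summand of $R_n L_{k+1}$ built from a Steinberg idempotent is a suspension of some $L_j$, that all such $j$ satisfy $j \geq k+1$ (this is forced by, and refines, \lemref{d lem} and \lemref{d lem 1}, which compute $\Hom_\A(L_{k+1}, R_1 L_k) \simeq \Z/p$ and $\Hom_\A(L_{k+1}, R_n L_k) = 0$ for $n>1$), and that the complementary summands are ``reduced'' modules $W$ carrying no copy of any $L_j$. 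Granting the rigidity statement $\Hom_\A(L_a, \Sigma^s L_b) = 0$ unless $(a,s) = (b,0)$ --- an extension of the input $\Hom_\A(L_{k+1}, L_k) = 0$ provable by the elementary methods of \cite{kuhn5} --- the $L_j$--summands contribute nothing, since every index $j$ occurring satisfies $j \geq k+1 > k$.

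The residual task, which is the real obstacle, is to show $\Hom_\A(L_k, W) = 0$ for the complementary ``reduced'' part $W$. I would do this by induction on $n$: the case $n = 0$ is exactly the rigidity statement, and the inductive step uses that $R_n L_{k+1}$ is a summand of $R_1 \cdots R_1 L_{k+1}$ together with the fact that $R_1$ sends reduced modules to reduced modules, so that everything comes down to the single delicate claim $\Hom_\A(L_k, R_1 L_{k+1}) = 0$. For that base case one uses the explicit Dyer--Lashof (Singer construction) description of $R_1 L_{k+1}$ from \lemref{Rn lem 1}: the connectivity bookkeeping $c(k) < c(k+1) \le$ (bottom degree of $R_1 L_{k+1}$) rules out any map detected on the bottom class of $L_k$, and the Adem and Nishida relations among the Dyer--Lashof classes, run against the known set of $\A$--module generators of $L_k$, are pushed through degree by degree to kill the rest. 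Organizing this last step so that it is a clean structural argument rather than a brute force computation is where the work lies, and is why \thmref{m not n thm} is isolated as a theorem in its own right.
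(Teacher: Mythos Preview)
Your first paragraph is the entire proof, and it is exactly what the paper does: \lemref{s lem 2} is stated in the paper with the one-line justification ``this is a special case of \thmref{m not n thm}.'' With $L_k = R_0L_k$, the hypothesis $m+j < n+k$ of that theorem becomes $0+k < n+(k+1)$, which holds for every $n \geq 0$. (Your paraphrase of \thmref{m not n thm} slightly overstates it---the theorem only gives vanishing for $m+j < n+k$, not a full trichotomy---but the direction you use is the correct one.)

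Everything after your first paragraph is an attempt to reprove this special case of \thmref{m not n thm} by hand, and here you diverge sharply from the paper while also running into real trouble. The paper's proof of \thmref{m not n thm} (in \secref{End RnL0 section}) dualizes to cohomology, embeds everything in $c_nH^*(BE_n)^{B_n}$, and then invokes the Adams--Gunawardena--Miller/Lannes--Zarati computation $\Hom_{\U}(H^*(BE_n),H^*(BE_m)) \simeq \Z/p[\Hom(E_m,E_n)]$ together with the vanishing of $c_n$ on proper subgroups; the upshot is that the relevant Hom is governed by $\Z/p[\Epi(E_m,E_n)]$, which is zero when $m<n$. This is clean, uniform, and avoids any inductive bookkeeping on Dyer--Lashof words.

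By contrast, your sketch rests on structural claims that are not correct as stated. The summands of $R_nL_{k+1}$ picked out by Hecke idempotents are \emph{not} suspensions of various $L_j$; in the paper's description $R_nL_{k+1} = \hat e_n e_{k+1} R_1^{n+k+1}L_0$, and these modules are genuinely different from (de)suspended $L_j$'s. Consequently your ``rigidity statement'' $\Hom_{\A}(L_a,\Sigma^sL_b)=0$ for $(a,s)\neq(b,0)$, even if true, would not do the work you assign it. And you yourself flag the base case $\Hom_{\A}(L_k,R_1L_{k+1})=0$ as requiring a degree-by-degree Nishida-relation computation---precisely the kind of argument the Lannes--Zarati approach is designed to replace. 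Stop after your first paragraph; the lemma is already proved.
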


Thus $ s_k* = \Oinfty s_k(p)_*\circ j_{p*}$.  As made explicit in \corref{QZ to QY on primitives cor}, this is determined by
$ \tilde s_{k}(p)_*: R_1L_{k} \ra R_*L_{k+1}$,  and is thus constrained by the next two lemmas.

\begin{lem} \label{s lem 3} $\Hom_{\A}(R_1L_{k},R_nL_{k+1}) = 0$ for $n>0$.
\end{lem}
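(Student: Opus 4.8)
The approach is to obtain \lemref{s lem 3} from the same general vanishing principle that gives \lemref{d lem 1} and \lemref{s lem 2}, namely \thmref{m not n thm}: an $\A$--module map between two of the modules $R_aL_i$ and $R_bL_j$ is forced to be zero once their total indices differ, $a+i\neq b+j$. For the source $R_1L_k$ and target $R_nL_{k+1}$ the total indices are $1+k$ and $n+k+1$, which coincide only when $n=0$. So if \thmref{m not n thm} is phrased with a Dyer--Lashof image allowed in the source, \lemref{s lem 3} is literally the special case $(a,i,b,j)=(1,k,n,k+1)$ with $n>0$, and nothing further is needed. This is also consistent with the fact that for $n=0$ the group $\Hom_\A(R_1L_k,L_{k+1})$ is the one that has to be computed (not killed) in the analysis of $s_{k*}$.

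If instead \thmref{m not n thm} is only available with a source of the form $L_i$, the extra step is to replace the source $R_1L_k$ by such modules. Here I would use the geometric descriptions of $R_*$: by \lemref{Rn lem 2} and \propref{Rn trans prop}, together with the fact that $L_1(k)$ is a stable retract of $\Sigma BE_{k+}$ (Mitchell--Priddy \cite{MiP83}), the space $D_pL_1(k)$ is a stable retract of a spectrum whose homology, as an $\A$--module, is assembled by the Steinberg idempotents out of the modules $L_j$ with $j\le k+1$ and their Dyer--Lashof images $R_cL_j$ with $c+j\le k+1$; concretely, $R_1L_k=PH_*(D_pL_1(k))$ inherits a finite filtration (or direct sum decomposition) whose subquotients are summands of such $R_cL_j$. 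Since $\Hom_\A(-,R_nL_{k+1})$ is left exact, it then suffices to know $\Hom_\A(R_cL_j,R_nL_{k+1})=0$ whenever $c+j\le k+1$, which holds for $n>0$ by \thmref{m not n thm}. A shortcut avoiding the filtration: the retraction $D_pL_1(k)\to\cdots\to L_1(k+1)$ of \corref{DpLk retract cor}, combined with \lemref{Rn lem 2}, should identify $L_{k+1}=\tilde H_*(L_1(k+1))$ with the top--index summand of $R_1L_k$, leaving complementary summands of strictly smaller total index, after which \thmref{m not n thm} finishes the job.

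The real obstacle is not \lemref{s lem 3} itself but \thmref{m not n thm}, on which it rests; that vanishing cannot be proved by a naive connectivity or finite-generation estimate, since the modules $L_k$ — and hence $R_1L_k$ and $R_nL_{k+1}$ — are infinite-dimensional and neither finitely generated nor finitely cogenerated over $\A$ (already $\tilde H^*(L_1(1))=\Sigma\tilde H^*(B\Sigma_p)$ requires $\A$--generators in unbounded degrees). Thus the statement is genuinely about which composition factors the two modules can have in common, and is ultimately controlled by the Hecke-algebra description of $\End_\A(H^*(BE_n^{\rho_n})^{B_n})$ and the disjointness of these modules for distinct $n$. Granting \thmref{m not n thm}, the only work particular to \lemref{s lem 3} is the bookkeeping above that presents the source $R_1L_k$ in a form the theorem accepts.
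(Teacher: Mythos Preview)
Your proposal is correct and matches the paper's treatment exactly: \lemref{s lem 3} is stated in the paper simply as ``yet another special case of \thmref{m not n thm},'' and that theorem is phrased so as to allow a Dyer--Lashof image $R_mL_j$ in the source, giving $\Hom_\A(R_mL_j,R_nL_k)=0$ whenever $m+j<n+k$; your index computation $1+k<n+(k+1)$ for $n>0$ is precisely the relevant instance. One small correction: \thmref{m not n thm} asserts vanishing only for $m+j<n+k$, not for $m+j\neq n+k$ as your phrasing ``total indices differ'' suggests, but this does not affect your application.
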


This is yet another special case of \thmref{m not n thm}.

\begin{lem} \label{s lem} $\Hom_{\A}(R_1L_k, L_{k+1}) \simeq \Z/p$.
\end{lem}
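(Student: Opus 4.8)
The plan is to bound $\Hom_{\A}(R_1L_k,L_{k+1})$ below by exhibiting one explicit nonzero map, and above by using the rigidity supplied by $\thmref{End thm}$ to show that every such map is a scalar multiple of it. It is convenient to pass to cohomology: since $R_1L_k=R_1\tilde H_*(L_1(k))$ is the full module of primitives $PH_*(D_pL_1(k))$, $\Z/p$--duality identifies $\Hom_{\A}(R_1L_k,L_{k+1})$ with $\Hom_{\A}(H^*(L_1(k+1)),QH^*(D_pL_1(k)))$, where $QH^*$ denotes indecomposables. As indicated in the introduction, both $H^*(L_1(k+1))$ and $QH^*(D_pL_1(k))$ are assembled (at odd primes in a slightly modified form) from direct summands of the modules $H^*(BE_n^{\rho_n})^{B_n}$, on which the Hecke algebras $\mathcal H_n$ act.

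For the lower bound I would use the map built from transfers and the Steinberg idempotent, in the spirit of \cite{kuhnpriddy} and $\remref{Whitehead conjecture remark}$: writing $L_k$ as the Steinberg summand $e_kR_k\tilde H_*(S^1)$ of the module of \lemref{Rn lem 2}, compose the inclusion $R_1L_k\hookrightarrow R_1R_k\tilde H_*(S^1)$ with the projection $i_*\colon R_1R_k\tilde H_*(S^1)\to R_{k+1}\tilde H_*(S^1)$ of \propref{Rn trans prop} and the Steinberg projection $R_{k+1}\tilde H_*(S^1)\twoheadrightarrow L_{k+1}$. Equivalently, one may take $\pi_*$ restricted to primitives, where $\pi\colon D_pL_1(k)\to L_1(k+1)$ is the stable retraction of \corref{DpLk retract cor}. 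Either description makes it routine to check nonvanishing on the bottom class detecting $H_{c(k+1)}$, so that $\dim_{\Z/p}\Hom_{\A}(R_1L_k,L_{k+1})\geq 1$.

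The reverse inequality is the heart of the matter. I would first note that $L_{k+1}$ is the dual of the Steinberg summand $e_{k+1}H^*(BE_{k+1}^{\rho_{k+1}})$, so $\thmref{End thm}$ identifies $\End_{\A}(L_{k+1})$ with the corner of $\mathcal H_{k+1}$ cut out by the Steinberg idempotent; this corner is $\Z/p$ because the Steinberg module of $GL_{k+1}(\Z/p)$ has a one--dimensional space of $B_{k+1}$--invariants. It then remains to decompose $R_1L_k$ into its constituent summands and to show that $\Hom_{\A}$ into $L_{k+1}$ of every summand other than the distinguished copy of $L_{k+1}$ vanishes, either because such a summand is of the wrong bidegree type in the sense of $\thmref{m not n thm}$ or by orthogonality of the Steinberg idempotent with the others that appear. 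The obstacle is getting a precise enough hold on $R_1L_k$ as an $\A$--module: it is not cyclic, and (especially at odd primes) it carries low--degree classes such as the $p$th power of the bottom class of $L_k$ that must be shown to contribute nothing. The crux is the computation of $D_p$ of a Steinberg summand of a Thom spectrum over $BE_k$, which should exhibit the relevant summand of $QH^*(D_pL_1(k))$ as a single copy of $H^*(L_1(k+1))$; carrying this out is exactly where the explicit form of the Steinberg idempotent and the modular representation theory of $GL_{k+1}(\Z/p)$ come in.
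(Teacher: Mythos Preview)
Your lower bound is fine (the second description, via the retraction $\pi$ of \corref{DpLk retract cor}, is correct; the first has a slip---$L_k$ is the summand $e_kR_1^kL_0$, not $e_kR_kL_0$, and indeed $e_k\hat e_k=0$ in $\mathcal H_k$ for $k\geq 2$, so $L_k$ does not sit inside $R_kL_0$ at all).

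The real issue is your upper bound. You propose to decompose $R_1L_k$ into summands and argue summand by summand, and you correctly flag this as the obstacle you do not know how to overcome. The paper avoids this entirely. The point you are missing is that $R_1L_k$ is \emph{already} an idempotent summand of $R_1^{k+1}L_0$: since $\hat e_1=1$, the identification $R_nL_j=\hat e_ne_jR_1^{n+j}L_0$ gives $R_1L_k=e_kR_1^{k+1}L_0$, while $L_{k+1}=e_{k+1}R_1^{k+1}L_0$. Once both source and target are cut out of the same module by idempotents in $\End_{\A}(R_1^{k+1}L_0)=\mathcal H_{k+1}$ (this is \thmref{End thm}), the Hom between them is simply the corner
\[
\Hom_{\A}(R_1L_k,L_{k+1})\;=\;e_{k+1}\,\mathcal H_{k+1}\,e_k,
\]
which is the content of \corref{End cor} and \lemref{s lem 4}. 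Because $e_{k+1}e(i)=e_{k+1}$ for all $i$ and the $e(i)$ generate $\mathcal H_{k+1}$, one has $e_{k+1}\mathcal H_{k+1}=\Z/p\cdot e_{k+1}$, whence the corner is $\langle e_{k+1}\rangle\simeq\Z/p$. No further decomposition of $R_1L_k$, no analysis of $p$th--power classes, and no separate lower/upper bound argument is needed.
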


See \lemref{s lem 4} for a more precise statement.

\begin{cor}  \label{s cor} If $s_k$ satisfies condition $(2_k)$, then
$s_{k_*}$ is a composite of the form
$$ R_{*+1}L_k \xra{tr}  R_*R_1L_k \xra{R_*s} R_{*}L_{k+1},$$
where $s: R_1 L_k \ra L_{k+1}$ is nonzero.
\end{cor}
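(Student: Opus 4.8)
The plan is to pin down the single $\A$--module map through which $s_{k*}$ factors and then unwind the explicit formula of \corref{QZ to QY on primitives cor}. By the discussion preceding \lemref{s lem 3} --- \lemref{Weiss calc lemma} kills the terms $s_k(r)$ with $r>p$, and \lemref{s lem 2} kills $\Oinfty s_k(1)_*$ --- we already know that on primitives $s_{k*} = \Oinfty s_k(p)_* \circ j_{p*}$, and by \corref{QZ to QY on primitives cor} (with $m=1$, $Z = L_1(k)$, $Y = L_1(k+1)$) this is completely determined by the $\A$--module map
$$ \tilde s_k(p)_* : R_1 L_k \subset \tilde H_*(D_p L_1(k)) \lra R_* L_{k+1}. $$
So the first task is to identify $\tilde s_k(p)_*$.

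First I would show that $\tilde s_k(p)_*$ lands in the summand $R_0 L_{k+1} = L_{k+1}$. Composing with the projection $R_* L_{k+1} \ora R_n L_{k+1}$ gives an $\A$--module map $R_1 L_k \ra R_n L_{k+1}$, which vanishes for every $n>0$ by \lemref{s lem 3}; since every element of the direct sum $R_* L_{k+1}$ is the finite sum of its components, this forces the image of $\tilde s_k(p)_*$ into $R_0 L_{k+1}$. Now $\Hom_{\A}(R_1 L_k, L_{k+1}) \simeq \Z/p$ by \lemref{s lem}, so $\tilde s_k(p)_*$ is a scalar multiple of a fixed nonzero generator. That scalar is nonzero: were it zero we would have $\tilde s_k(p)_* = 0$ and hence $s_{k*} = 0$, contradicting the assumption in $(2_k)$ that $s_k$ be nonzero on homology primitives in dimension $c(k+1)$. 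Write $s = \tilde s_k(p)_* : R_1 L_k \ra L_{k+1}$; it is nonzero.

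Finally I would substitute $\tilde s_k(p)_* = s$ into the formula of \corref{QZ to QY on primitives cor}. There the term $\Oinfty s_k(p)_* \circ j_{p*}$ is zero on $R_0 L_k$ (the case $n = -1$), and on $R_{n+1} L_k$ for $n \geq 0$ it is the composite
$$ R_{n+1} L_k \xra{tr} R_n R_1 L_k \xra{R_n s} R_n R_* L_{k+1} \xra{i_*} R_{n+*} L_{k+1}. $$
Because $s$ takes values in $R_0 L_{k+1}$, the map $R_n s$ lands in the summand $R_n R_0 L_{k+1} = R_n L_{k+1}$, on which $i_*$ --- the map of \propref{Rn trans prop} attached to the equality $\Sigma_{p^n} \wr \Sigma_{p^0} = \Sigma_{p^n}$ --- is the identity. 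Hence the composite collapses to $R_{n+1} L_k \xra{tr} R_n R_1 L_k \xra{R_n s} R_n L_{k+1}$, and assembling over $n \geq 0$ identifies $s_{k*}$ with the asserted composite $R_{*+1} L_k \xra{tr} R_* R_1 L_k \xra{R_* s} R_* L_{k+1}$.

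The substantive content is entirely in \lemref{Weiss calc lemma} and in the $\A$--module computations \lemref{s lem 2}, \lemref{s lem 3}, \lemref{s lem}; granting those inputs, as we may here, the only points requiring care are the pointwise decomposition of a map into the direct sum $R_* L_{k+1}$ and the identification of the relevant instance of $i_*$ with an identity map, both of which are routine. The real obstacle, handled elsewhere, is \lemref{s lem}: proving that $\Hom_{\A}(R_1 L_k, L_{k+1})$ is only one--dimensional, which is what confers the rigidity exploited above.
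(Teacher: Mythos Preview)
Your proposal is correct and follows exactly the approach the paper takes in \S\ref{sk section}: the paper gives no separate proof of \corref{s cor}, treating it as an immediate consequence of the preceding discussion (\lemref{Weiss calc lemma}, \lemref{s lem 2}, \lemref{s lem 3}, \lemref{s lem}) together with the explicit formula of \corref{QZ to QY on primitives cor}, and you have simply written that implicit argument out in full.
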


\begin{rem}  \label{Whitehead conjecture remark 2} With $s_{k*}$ defined as in \remref{Whitehead conjecture remark}, the conclusion of this corollary clearly holds.  This avoids the need to use \lemref{Weiss calc lemma}, and thus Weiss calculus, in a proof of the Whitehead conjecture.
\end{rem}

\subsection{What is next in the proof of \thmref{prim thm}?}

The analysis above focuses attention on the $\A$--module maps among the modules of the form $R_nL_k$.  In the next four sections we describe these in a manner that allow us to easily finish the proof of \thmref{prim thm}.

The key points are the following:
\begin{itemize}
\item $R_nL_k$ is a direct summand of $R_1^{n+k}L_0$.
\item $\Hom_{\A}(R_1^mL_0,R_1^nL_0) = 0$ for $m< n$.
\item $\End_{\A}(R_1^nL_0) = \HH_n$, where $\HH_n$ is the Hecke algebra of type $A_{n-1}$.
\item Thanks to the first three points, the maps occurring in \corref{d cor} and \corref{s cor} can be described in terms of elements in these Hecke algebras.
\end{itemize}

Some of this material is in \cite{kuhn4} or \cite{kuhnpriddy}. We also make use of special properties of $H^*(BE_n)$, viewed as an unstable $\A$--module.

\section{The Hecke algebra and Steinberg idempotent for $GL_n(\Z/p)$}

In this and subsequent sections, we often write $GL_n$ for $GL_n(\Z/p)$.

We describe some subgroups of $GL_n$, viewed as a Chevalley group of type $A_{n-1}$.

We let $B_n < GL_n$ be the standard Borel subgroup: the set of all upper triangular matrices.  This contains the subgroup $U_n$ consisting of all upper triangular matrices with 1's on the main diagonal.  This is a $p$--Sylow subgroup of $GL_n$.

For $i=1, \dots, n-1$, we let $P_i < GL_n$ be the $i$th minimal parabolic subgroup: the subgroup generated by $B_n$ and the transposition $w_i$ that interchanges coordinates $i$ and $i+1$ of $E_n$. These transpositions generate the group $W_n$ of permutation matrices.  As is standard, given $w \in W_n$, we let $l(w)$ be the minimal length of a word in the $w_i$ representing $w$.

\subsection{The Hecke algebra of type $A_{n-1}$.}

\begin{defn} For $n\geq 1$, the mod $p$ Hecke algebra of type $A_{n-1}$ is the endomorphism algebra $$\mathcal H_n = \End_{GL_n}(\Z/p[B_n\backslash GL_n]).$$

\end{defn}
It is convenient to also let $\mathcal H_0 = \Z/p$.

Recall that if $M$ is a left $\Z/p[G]$--module, and $H<G$, then
$$M_H = \Z/p[H \backslash G] \otimes_{\Z/p[G]} M$$ is the module of coinvariants.

Standard Yoneda lemma arguments show the following.

\begin{lem} $\mathcal H_n$ is the endomorphism ring of the functor $M \mapsto M_{B_n}$.
\end{lem}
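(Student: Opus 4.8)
The statement to prove is that $\mathcal H_n$, defined as $\End_{GL_n}(\Z/p[B_n\backslash GL_n])$, is the same as the endomorphism ring of the functor $M \mapsto M_{B_n}$ on left $\Z/p[GL_n]$-modules. The plan is a completely standard Yoneda-type argument, which is why the paper calls it "standard Yoneda lemma arguments."

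First I would observe that the functor $M \mapsto M_{B_n}$ is corepresented by $\Z/p[B_n\backslash GL_n]$ in the sense that $M_{B_n} = \Z/p[B_n\backslash GL_n] \otimes_{\Z/p[GL_n]} M$. To apply Yoneda cleanly I would rewrite this as a Hom functor: since $\Z/p[B_n\backslash GL_n]$ is a finitely generated free $\Z/p$-module and the group is finite, there is a natural isomorphism $\Z/p[B_n\backslash GL_n] \otimes_{\Z/p[GL_n]} M \cong \Hom_{\Z/p[GL_n]}(\Z/p[GL_n/B_n], M)$ (using that $\Z/p[GL_n/B_n]$ is self-dual as a permutation module, or just that $B_n\backslash GL_n$ and $GL_n/B_n$ have the same cardinality and the relevant modules are dual). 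Alternatively, and perhaps more cleanly, I would just note that $\Z/p[B_n\backslash GL_n] \otimes_{\Z/p[GL_n]} (-)$ and $\Hom_{\Z/p[GL_n]}(\Z/p[GL_n/B_n], -)$ agree because $\Z/p[GL_n/B_n]$ is a projective (indeed free-on-a-transitive-set, hence a direct summand of free) $\Z/p[GL_n]$-module, so tensoring with its dual equals Homming out of it. So the functor $F = (-)_{B_n}$ is representable: $F(M) \cong \Hom_{GL_n}(P, M)$ with $P = \Z/p[GL_n/B_n]$.

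Then the Yoneda lemma gives $\mathrm{Nat}(F, F) = \mathrm{Nat}(\Hom_{GL_n}(P, -), \Hom_{GL_n}(P,-)) \cong \Hom_{GL_n}(P, P) = \End_{GL_n}(P)$, and since $P \cong \Z/p[B_n\backslash GL_n]$ as $GL_n$-modules (the permutation modules on $GL_n/B_n$ and on $B_n\backslash GL_n$ are isomorphic via $g \mapsto g^{-1}$), this endomorphism ring is exactly $\mathcal H_n$. I would spell out that the Yoneda isomorphism is an isomorphism of rings, not merely of sets: composition of natural transformations on the left corresponds to composition of endomorphisms of $P$ on the right (possibly with an order reversal that one absorbs into the conventions, as is typical — one should be slightly careful about whether $\mathcal H_n$ or its opposite appears, but the paper's conventions make this a non-issue, and in any case for a Hecke algebra of type $A$ the algebra is isomorphic to its opposite via the standard anti-involution). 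The $n=0$ and small-$n$ edge cases are trivial since everything is $\Z/p$.

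The main (and only) obstacle is the bookkeeping around representability: making sure the coinvariants functor really is corepresented by the right module and that the Yoneda identification is ring-theoretic with the correct handedness. None of this is deep — it is the kind of thing that is "standard" precisely because the finite-group, finite-dimensional setting makes all the projectivity and self-duality statements automatic — so I would present it crisply in a few lines rather than belaboring it, which is exactly what the paper does by citing it as a Yoneda argument and moving on.

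\begin{proof}
This is a standard representability argument. The module $P_n = \Z/p[GL_n/B_n]$ is a finitely generated projective (in fact permutation) $\Z/p[GL_n]$--module, and for any left $\Z/p[GL_n]$--module $M$ there is a natural isomorphism
$$ M_{B_n} = \Z/p[B_n\backslash GL_n] \otimes_{\Z/p[GL_n]} M \;\cong\; \Hom_{GL_n}(P_n, M), $$
since $P_n$ is projective and its $\Z/p$--linear dual is isomorphic to $\Z/p[B_n\backslash GL_n]$ as a $GL_n$--module (the permutation modules on $GL_n/B_n$ and on $B_n\backslash GL_n$ being interchanged by $g \mapsto g^{-1}$). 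Thus the functor $M \mapsto M_{B_n}$ is corepresented by $P_n$. By the Yoneda lemma, the ring of natural endomorphisms of this functor is isomorphic to $\End_{GL_n}(P_n)^{\mathrm{op}}$, and $\End_{GL_n}(P_n) \cong \End_{GL_n}(\Z/p[B_n\backslash GL_n]) = \mathcal H_n$. Since $\mathcal H_n$ carries the standard anti-involution $T_w \mapsto T_{w^{-1}}$ realizing an isomorphism $\mathcal H_n \cong \mathcal H_n^{\mathrm{op}}$, we may identify the ring of natural endomorphisms of $M \mapsto M_{B_n}$ with $\mathcal H_n$ itself.
\end{proof}
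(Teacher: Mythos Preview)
There is a genuine gap. Your argument hinges on the claim that $P_n = \Z/p[GL_n/B_n]$ is a projective $\Z/p[GL_n]$--module, so that $M_{B_n} \cong \Hom_{GL_n}(P_n,M)$. But $\Z/p[GL_n/B_n]$ is projective over $\Z/p[GL_n]$ if and only if the trivial module $\Z/p$ is projective over $\Z/p[B_n]$, i.e.\ if and only if $p \nmid |B_n|$; since $U_n \leq B_n$ is a nontrivial $p$--group for $n\geq 2$, this fails. Correspondingly, $\Hom_{GL_n}(P_n,M) = M^{B_n}$ is the \emph{invariants} functor, which is left exact, while $M_{B_n}$ is the right exact \emph{coinvariants} functor; these are not naturally isomorphic over $\Z/p$ once $n\geq 2$. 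So the identification you use to invoke the Hom--form of Yoneda is simply not available here.

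The fix is to run the Yoneda argument on the tensor side directly, which is what the paper has in mind. Since $M_{B_n} = \Z/p[B_n\backslash GL_n] \otimes_{\Z/p[GL_n]} M$, evaluate a natural endomorphism $\eta$ at $M = \Z/p[GL_n]$ to obtain a map $\eta_{\Z/p[GL_n]}\colon \Z/p[B_n\backslash GL_n] \to \Z/p[B_n\backslash GL_n]$; naturality with respect to right multiplications $\rho_r\colon \Z/p[GL_n] \to \Z/p[GL_n]$ forces this to be a map of right $GL_n$--modules. Conversely, any $\phi \in \End_{GL_n}(\Z/p[B_n\backslash GL_n])$ gives the natural transformation $\phi \otimes 1_M$, and these constructions are mutually inverse ring homomorphisms. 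No projectivity is needed, and no ${}^{\mathrm{op}}$ appears.
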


This lemma allows us to define some elements in $\mathcal H_n$. (We do this in a way that makes it clear these elements really come from the integral Hecke algebra: see \cite[\S 4]{kuhn4} and \cite[\S 2]{kuhnpriddy} for more about this.)

To describe these, recall that, given $K<H<G$, and a $\Z/p[G]$--module $M$, there is an evident quotient map $M_K \ra M_H$. There is also a transfer (or norm) map $tr: M_H \ra M_K$ defined by $\displaystyle tr([x]) = \sum_{i}[g_ix]$ if $\displaystyle \coprod_i g_iK = H$.  The composite $M_H \xra{tr} M_K \ra M_H$ is multiplication by the index $[H:K]$.

\begin{defn}  For $i=1, \dots, n-1$, let $\hat e(i), e(i) \in \mathcal H_n$ be defined as follows.  We let $\hat e(i)$ be the natural transformation
$$ M_{B_n} \ra M_{P_i} \xra{tr} M_{B_n}.$$
As the index of $B_n$ in $P_i$ is $p+1$, $\hat e(i)^2 = (p+1)\hat e(i)$, and so  is idempotent mod $p$.  We then let $e(i) = p+1-\hat e(i) = 1-\hat e(i) \mod p$, the complementary idempotent.
\end{defn}

In terms of the standard algebra generators \cite{iwahori} $T_1, \dots ,T_{n-1}$ for $\mathcal H_n$, $\hat e(i) = 1+ T_i$, corresponding to the Bruhat decomposition $P_i = B_n \coprod B_nw_iB_n$.  Thus $e(i) = p-T_i = -T_i \mod p$.

The equation $\hat e(i)^2 = (p+1)\hat e(i)$ is then equivalent to the classic relation
$$T_i^2 = (p-1)T_i + p.$$
The classic presentation \cite{iwahori} for $\mathcal H_n$ in terms of the $T_i$ gives the following
presentation of $\mathcal H_n$ in terms of the $e(i)$.

\begin{prop} The $\Z/p$--algebra $\mathcal H_n$ is generated by $e(1), \dots, e(n-1)$ subject to the relations
\begin{itemize}
\item[(i)] $e(i)^2 = e(i)$ for all $i$.
\item[(ii)] $e(i)e(i+1)e(i)= e(i+1)e(i)e(i+1)$ for $1\leq i\leq n-2$.
\item[(iii)] $e(i)e(j)=e(j)e(i)$ if $j-i>1$.
\end{itemize}

Furthermore, the assignment $e(i) \mapsto \hat e(i)$ extends to an involution of algebras $$\hat{\text{}}: \mathcal H_n \ra \mathcal H_n.$$
\end{prop}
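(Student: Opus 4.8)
The plan is to derive both assertions from the classical Iwahori presentation of $\mathcal{H}_n$ on the generators $T_1,\dots,T_{n-1}$, via the invertible affine change of generators recorded just above the statement. Over $\Z/p$ one has $T_i = -e(i)$ and $\hat e(i) = 1 - e(i)$, so the $e(i)$ span the same subspace of $\mathcal{H}_n$ as the $T_i$ and therefore generate the algebra; it then suffices to translate the Iwahori relations into (i)--(iii) and back, the translation being governed by an invertible substitution (a Tietze transformation).

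First I would check that the relations correspond. The quadratic relation $T_i^2 = (p-1)T_i + p$ reduces mod $p$ to $T_i^2 = -T_i$, and substituting $T_i = -e(i)$ gives $e(i)^2 = e(i)$, which is (i). The far-commutation relation $T_iT_j = T_jT_i$ for $j-i>1$ is (iii) verbatim after the substitution. The braid relation $T_iT_{i+1}T_i = T_{i+1}T_iT_{i+1}$, after substituting $T_i = -e(i)$ and $T_{i+1} = -e(i+1)$ and cancelling the common factor $-1$, is exactly (ii). Since the change of generators is a bijection, no relations are created or lost, so (i)--(iii) is a presentation of $\mathcal{H}_n$.

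Next, for the involution, I would define a map on the free $\Z/p$-algebra on symbols $e(1),\dots,e(n-1)$ by $e(i)\mapsto 1-e(i)$ and verify that it annihilates the defining relations (i)--(iii), so that it descends to an algebra endomorphism of $\mathcal{H}_n$ whose value on generators is $1-e(i) = \hat e(i)$. Writing $a = e(i)$ and $b = e(i+1)$: relation (i) is preserved because $(1-a)^2 = 1 - 2a + a^2 = 1-a$ using $a^2 = a$; relation (iii) is preserved because $1-e(i)$ and $1-e(j)$ commute whenever $e(i)$ and $e(j)$ do; and for (ii) one expands $(1-a)(1-b)(1-a) = 1 - a - b + ab + ba - aba$ and $(1-b)(1-a)(1-b) = 1 - a - b + ab + ba - bab$, which agree precisely because of relation (ii). Finally $e(i) \mapsto 1-e(i) \mapsto e(i)$, so the resulting endomorphism is an involution.

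The only point needing care — and it is hardly an obstacle — is bookkeeping with the mod $p$ reductions and the sign $T_i = -e(i)$; once those are fixed, each verification above is a two- or three-term manipulation of the quadratic and braid relations. One can alternatively recognize $\hat{\ }$ as the reduction mod $p$ of the standard algebra involution $T_i \mapsto (q-1) - T_i$ of the generic Hecke algebra over $\Z[q]$ (which interchanges the two roots of the quadratic relation), but the direct verification is the shortest route.
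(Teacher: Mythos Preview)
Your proposal is correct and follows exactly the route the paper indicates: the paper does not give a detailed proof but simply states that the presentation follows from Iwahori's classical presentation in the $T_i$ via the substitution $e(i) = -T_i \bmod p$, and you have carried out precisely that translation together with the direct verification that $e(i)\mapsto 1-e(i)$ preserves the relations. Your expansion for relation (ii) and the observation that the substitution is invertible (hence a Tietze transformation) are the only nontrivial points, and both are handled correctly.
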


\begin{rem} In the integral Hecke algebra, relations (i) and (ii) lift to
\begin{itemize}
\item[(i)] $e(i)^2 = (p+1)e(i)$.
\item[(ii)]$e(i)e(i+1)e(i) + pe(i)= e(i+1)e(i)e(i+1) + pe(i+1)$.
\end{itemize}
\end{rem}

\begin{defn/prop} \cite[\S 4]{kuhn4} There is a unique nonzero element $e_n \in \mathcal H_n$ satisfying the two properties:
\begin{itemize}
\item[(a)] $e(i)e_n = e_n = e_ne(i)$ for all $i$.
\item[(b)] $e_n^2=e_n$.
\end{itemize}
\end{defn/prop}

Note that $e_1 = e_0 = 1$.  For larger $n$, there is an explicit formulae for $e_n$.

\begin{prop} For $n\geq 2$, $e_n$ is the `longest word' in the $e(i)$.
\end{prop}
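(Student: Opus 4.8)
The plan is to recognize ``the longest word in the $e(i)$'' as a well-defined element of $\mathcal H_n$, check that it satisfies the two properties (a), (b) characterizing $e_n$ in the preceding Definition/Proposition together with nonvanishing, and then invoke the uniqueness clause there. First I would make the phrase precise. Relations (ii) and (iii) of the displayed presentation are exactly the braid relations of the Coxeter system $(W_n,\{w_1,\dots,w_{n-1}\})$ of type $A_{n-1}$, so by Matsumoto's theorem (any two reduced words for an element of $W_n$ are connected by a sequence of braid moves) the product $e(i_1)\cdots e(i_{l(w)})$ depends only on $w$, not on the chosen reduced expression $w = w_{i_1}\cdots w_{i_{l(w)}}$. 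Write $e_w$ for this element. Let $w_0 \in W_n$ be the longest element; since $W_n \cong \Sigma_n$ we have $l(w_0) = \binom{n}{2}$. Then ``the longest word in the $e(i)$'' means $e_{w_0}$, and the assertion to prove is $e_n = e_{w_0}$.

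Next I would verify properties (a) and (b) for $e_{w_0}$. The longest element is characterized by $l(w_i w_0) = l(w_0 w_i) = l(w_0) - 1$ for every simple reflection $w_i$; consequently $w_0$ has reduced expressions beginning with $w_i$ and reduced expressions ending with $w_i$, so by well-definedness $e_{w_0} = e(i)\,e_{w_i w_0} = e_{w_0 w_i}\,e(i)$. Applying relation (i) gives $e(i)\,e_{w_0} = e(i)^2\,e_{w_i w_0} = e(i)\,e_{w_i w_0} = e_{w_0}$, and symmetrically $e_{w_0}\,e(i) = e_{w_0}$, which is (a). For (b), fix a reduced expression $e_{w_0} = e(i_1)\cdots e(i_N)$ with $N = \binom{n}{2}$ and compute
$$ e_{w_0}^2 = e(i_1)\cdots e(i_N)\,e_{w_0} = e(i_1)\cdots e(i_{N-1})\,e_{w_0} = \cdots = e(i_1)\,e_{w_0} = e_{w_0},$$
absorbing the factors $e(i_N), e(i_{N-1}), \dots, e(i_1)$ into $e_{w_0}$ one at a time using (a).

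Finally I would check that $e_{w_0}\neq 0$. Writing $T_1,\dots,T_{n-1}$ for the standard Iwahori generators, one has $e(i) = -T_i$ in $\mathcal H_n$, hence $e_{w_0} = (-1)^N T_{w_0}$, where $T_{w_0}$ is (the mod $p$ reduction of) a member of the standard $\Z$-basis $\{T_w\}_{w\in W_n}$ of the integral Hecke algebra and so is nonzero in $\mathcal H_n$. By the uniqueness clause of the Definition/Proposition, $e_{w_0} = e_n$. The one step that is not a purely formal manipulation of (a), (b), (i) and the length function is the appeal to Matsumoto's theorem needed to see that $e_{w_0}$ is well defined independently of the reduced word — this is precisely where braid relations (ii), (iii) enter — and I expect no genuine obstacle beyond keeping that bookkeeping straight.
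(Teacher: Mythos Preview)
Your argument is correct. You correctly invoke Matsumoto's theorem to make $e_{w_0}$ well defined, then verify (a) from the fact that $w_0$ admits reduced expressions beginning and ending with any given $w_i$, deduce (b) from (a), and establish nonvanishing via $e_{w_0} = (-1)^{l(w_0)}T_{w_0}$ in the standard basis; uniqueness then gives $e_{w_0}=e_n$.

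As for comparison: the paper does not actually supply a proof of this proposition in the text --- it is stated as a fact, with the characterizing Definition/Proposition attributed to \cite{kuhn4}. So there is no ``paper's own proof'' to compare against here; you have filled in precisely the argument one would expect, and your appeal to Matsumoto's theorem is the right way to handle the bookkeeping that the phrase ``longest word'' elides.
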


\begin{ex}  In $\mathcal H_4$, $e_4 = e(1)e(2)e(3)e(1)e(2)e(1)$.
\end{ex}

\begin{cor} For all $\mathcal H_n$--modules $N$, $\displaystyle e_nN = \bigcap_{i=1}^{n-1} e(i)N$.
\end{cor}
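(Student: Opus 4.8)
The plan is to reduce the claimed equality to a purely formal statement about idempotents and then invoke two results already in hand. First, for any idempotent $e$ in a ring acting on a left module $N$ one has $eN = \{x \in N : ex = x\}$: the inclusion $\subseteq$ is $e(ey) = e^2y = ey$, and $\supseteq$ is immediate. Applying this to $e = e(i)$ (idempotent by relation (i)) and to $e = e_n$ (idempotent by property (b) of the Definition/Proposition), the assertion $e_nN = \bigcap_{i=1}^{n-1} e(i)N$ is equivalent to the statement that, for $x \in N$, one has $e_n x = x$ if and only if $e(i)x = x$ for all $i = 1, \dots, n-1$.

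For the ``only if'' direction, suppose $e_n x = x$. Property (a) of the Definition/Proposition gives $e(i)e_n = e_n$, whence $e(i)x = e(i)e_n x = e_n x = x$ for every $i$; this already shows $e_nN \subseteq \bigcap_i e(i)N$ (and in fact uses only property (a), not idempotency of the $e(i)$). For the ``if'' direction, suppose $e(i)x = x$ for all $i$. When $n \le 1$ there is nothing to prove, since the intersection is empty, i.e. equals $N$, and $e_0 = e_1 = 1$. For $n \ge 2$, use the preceding Proposition to write $e_n = e(i_1)e(i_2)\cdots e(i_\ell)$ as the longest word in the generators; applying the factors to $x$ one at a time from the right, $e(i_\ell)x = x$, then $e(i_{\ell-1})(e(i_\ell)x) = e(i_{\ell-1})x = x$, and so on, so that $e_n x = x$, and hence $x = e_n x \in e_n N$.

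The argument is entirely formal and I do not expect a genuine obstacle; the only points requiring a little care are matching each direction to the correct prior fact (property (a) of the Definition/Proposition for $\subseteq$, the ``longest word'' description of $e_n$ for $\supseteq$), noting that idempotency of the $e(i)$ is what lets one pass between $x \in e(i)N$ and $e(i)x = x$, and disposing of the degenerate cases $n = 0, 1$ where the indexing set is empty.
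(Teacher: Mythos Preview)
Your proof is correct and is precisely the argument the paper has in mind: the corollary is stated without proof there, as an immediate consequence of property (a) of the Definition/Proposition (giving $e_nN \subseteq \bigcap_i e(i)N$) together with the explicit ``longest word'' description of $e_n$ in the subsequent Proposition (giving the reverse inclusion). Your handling of the idempotent bookkeeping and the degenerate cases $n\le 1$ is fine.
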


Using the involution of $\mathcal H_n$, one sees that the element $\hat e_n \in \mathcal H_n$ satisfies analogous properties.  ($\hat e_1 = \hat e_0 = 1$.) The `hatted' version of the corollary for $\mathcal H_n$--modules of the form $M_{B_n}$ is as follows.

\begin{cor} For all $\Z/p[GL_n]$--modules $M$, multiplication by $\hat e_n$ is the composite $M_{B_n} \ra M_{GL_n} \xra{tr} M_{B_n}$.
\end{cor}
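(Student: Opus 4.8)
The plan is to identify the composite $\tau\colon M_{B_n}\to M_{GL_n}\xrightarrow{tr}M_{B_n}$ with the element $\hat e_n$, for every $\Z/p[GL_n]$-module $M$ at once. Note first that $\tau$ is natural in $M$ (both the quotient $M_{B_n}\to M_{GL_n}$ and the transfer are natural), so by the identification of $\mathcal{H}_n$ with the endomorphism ring of the functor $M\mapsto M_{B_n}$ it defines a single element $\tau\in\mathcal{H}_n$; it is this element I will show equals $\hat e_n$. Applying the involution $e(i)\mapsto\hat e(i)$ to the Definition/Proposition above, $\hat e_n$ is characterized as the \emph{unique} nonzero element of $\mathcal{H}_n$ with $\hat e(i)\hat e_n=\hat e_n=\hat e_n\hat e(i)$ for all $i$ and $\hat e_n^2=\hat e_n$, so it suffices to check that $\tau$ has these three properties.

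Each check reduces to the two elementary facts about induced maps recalled in the text — transitivity of transfers, $tr^H_K=tr^{H'}_K\circ tr^H_{H'}$ for $K<H'<H$, and the fact that $M_H\xrightarrow{tr}M_K\to M_H$ is multiplication by the index $[H:K]$ — together with the congruences $[P_i:B_n]=p+1\equiv1$ and $[GL_n:B_n]=\prod_{i=1}^n(1+p+\cdots+p^{i-1})\equiv1\pmod p$ (the latter because, in the Bruhat decomposition $GL_n=\coprod_{w\in W_n}B_nwB_n$, only the identity double coset has $|B_nwB_n|/|B_n|=p^{l(w)}$ prime to $p$). To see $\hat e(i)\tau=\tau$: since $\hat e(i)$ is by definition the composite $M_{B_n}\to M_{P_i}\xrightarrow{tr}M_{B_n}$, the product $\hat e(i)\tau$ is the five-term composite $M_{B_n}\to M_{GL_n}\xrightarrow{tr^{GL_n}_{B_n}}M_{B_n}\to M_{P_i}\xrightarrow{tr^{P_i}_{B_n}}M_{B_n}$; factoring $tr^{GL_n}_{B_n}=tr^{P_i}_{B_n}\circ tr^{GL_n}_{P_i}$ and using that the quotient $M_{B_n}\to M_{P_i}$ followed by $tr^{P_i}_{B_n}$ is multiplication by $p+1\equiv1$ on $M_{P_i}$, the middle three maps collapse and one is left with $\tau$. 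The identity $\tau\hat e(i)=\tau$ is the mirror image, obtained instead by factoring the quotient $M_{B_n}\to M_{GL_n}$ as $M_{B_n}\to M_{P_i}\to M_{GL_n}$. For idempotence, $\tau^2$ has middle composite $M_{GL_n}\xrightarrow{tr^{GL_n}_{B_n}}M_{B_n}\to M_{GL_n}$ equal to multiplication by $[GL_n:B_n]\equiv1$, i.e. the identity of the $\Z/p$-module $M_{GL_n}$, so $\tau^2=\tau$. Finally, $\tau\neq0$: evaluating on the trivial module $M=\Z/p$ gives $M_{B_n}=M_{GL_n}=\Z/p$ with the quotient map the identity, and $\tau$ becomes multiplication by $[GL_n:B_n]\equiv1$, hence the identity of $\Z/p$. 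Uniqueness then forces $\tau=\hat e_n$, which is the claim.

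The main thing to be careful about is purely bookkeeping: in each product in $\mathcal{H}_n$ one must keep straight which of the two maps between $M_{B_n}$ and $M_{P_i}$ (respectively between $M_{B_n}$ and $M_{GL_n}$) is the quotient and which is the transfer, and in which order the factors compose, so that the cancellations in the first two steps come out as stated. There is no conceptual difficulty: $\tau$ is manifestly the analogue for $GL_n$ of the parabolic composites defining the $\hat e(i)$, and every group index that appears is $\equiv1\bmod p$, which is exactly what makes $\tau$ a nonzero idempotent fixed by all the $\hat e(i)$.
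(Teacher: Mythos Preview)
Your proof is correct and follows essentially the same approach as the paper's: both show that the composite $\tau$ satisfies the characterizing properties of $\hat e_n$ (from the hatted version of the Definition/Proposition), with the paper phrasing the key step as the image inclusion $\im\{M_{GL_n}\xrightarrow{tr}M_{B_n}\}\subseteq\bigcap_i\im\{M_{P_i}\xrightarrow{tr}M_{B_n}\}$ while you verify $\hat e(i)\tau=\tau=\tau\hat e(i)$, $\tau^2=\tau$, and $\tau\neq 0$ directly via transitivity of transfers and the index congruences. One tiny slip in wording: in your check of $\hat e(i)\tau=\tau$, the composite that equals multiplication by $p+1$ on $M_{P_i}$ is $tr^{P_i}_{B_n}$ \emph{followed by} the quotient $M_{B_n}\to M_{P_i}$ (as stated in the paper), not the other way around; your conclusion that ``the middle three maps collapse'' is nevertheless correct.
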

\begin{proof} Since $\displaystyle \im \{M_{GL_n} \xra{tr} M_{B_n}\} \subseteq \bigcap_{i=1}^{n-1} \im \{M_{P_i} \xra{tr} M_{B_n}\}$, the natural transformation $M_{B_n} \ra M_{GL_n} \xra{tr} M_{B_n}$ is an idempotent satisfying properties that characterize $\hat e_n$.
\end{proof}

\begin{notation} There is an algebra embedding $\mathcal H_k \times \mathcal H_n \hra \mathcal H_{k+n}$ corresponding to the block inclusion $GL_k \times GL_n \hra GL_{k+n}$. Explicitly this sends $e(i) \in \mathcal H_k$ to $e(i) \in \mathcal H_{k+n}$ and $e(i) \in \mathcal H_n$ to $e(k+i) \in \mathcal H_{k+n}$.  Slightly abusing notation, in $\mathcal H_{k+n}$, we let $e_k, \hat e_n \in \mathcal H_{k+n}$ be the images of $e_k \in \mathcal H_k$ and $\hat e_n \in \mathcal H_n$ under this embedding. Thus $e_k$ and $\hat e_n$ are commuting idempotents, $e_k$ is the longest word in the first $k-1$ of the $e(i)$,  $\hat e_n$ is the longest word in the last $n-1$ of the $\hat e(i)$, and neither involves $e(k)$.
\end{notation}

A useful consequence of the above propositions goes as follows.

\begin{cor} \cite{kuhnpriddy} \label{ek cor} In $\mathcal H_{k+n}$, $e_ke(k)e_{k}=e_{k+1}$ and $\hat e_n \hat e(k) \hat e_n = \hat e_{n+1}$.
\end{cor}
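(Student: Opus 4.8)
The plan is to prove the first identity as an equality in $\mathcal{H}_{k+n}$ by a short chain of rewrites, using only relation (iii), property (a) of the Steinberg idempotent from the Definition/Proposition, and one structural fact about longest words; the hatted identity will then follow by the mirror argument. The structural fact I would isolate first is the telescoping factorization: for every $m\geq 2$,
\[
e_m \;=\; e_{m-1}\cdot\bigl(e(m-1)\,e(m-2)\cdots e(1)\bigr),
\]
where $e_{m-1}$ on the right denotes the longest word in $e(1),\dots,e(m-2)$. This I would deduce from the Proposition that $e_m$ \emph{is} the longest word in the $e(i)$: by Matsumoto's theorem the product of the $e(i)$ along any reduced expression for the longest element $w_0^{(m)}\in\Sigma_m$ computes $e_m$ (this uses only the braid relations (ii) and (iii), not the idempotent relation (i)), and $w_0^{(m)}$ admits the reduced expression $w_0^{(m)}=w_0^{(m-1)}\cdot(s_{m-1}s_{m-2}\cdots s_1)$ --- the lengths add since $\binom{m}{2}=\binom{m-1}{2}+(m-1)$, and $w_0^{(m-1)}$ lies in the standard parabolic $\Sigma_{m-1}$ fixing $m$.

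Granting this, the computation is three lines. Substituting the factorization into the rightmost $e_k$ gives $e_k e(k) e_k=e_k\,e(k)\,e_{k-1}\,e(k-1)\cdots e(1)$. Since $e_{k-1}$ is a word in $e(1),\dots,e(k-2)$ and each of these commutes with $e(k)$ by relation (iii), we may slide $e(k)$ past $e_{k-1}$, obtaining $e_k\,e_{k-1}\,e(k)\,e(k-1)\cdots e(1)$. Because $e_{k-1}$ is a word in generators $e(i)$ with $i\leq k-1$, each absorbed by $e_k$ on the right by property (a), we have $e_ke_{k-1}=e_k$, leaving $e_k\cdot e(k)\,e(k-1)\cdots e(1)$, which is precisely the telescoping factorization of $e_{k+1}$ (the case $m=k+1$). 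Hence $e_k e(k) e_k=e_{k+1}$; in particular it is nonzero, so no separate nonvanishing argument is needed. (For $k=1$ one has $e_1=1$ and the identity simply reads $e(1)=e_2$.)

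For the second identity, the same three steps apply verbatim to the top block of generators $\hat e(k),\hat e(k+1),\dots,\hat e(k+n-1)$ of $\mathcal{H}_{k+n}$: by the involution $e(i)\mapsto\hat e(i)$ (and the integral lifts recorded in the Remark) the $\hat e(i)$ obey the analogous Coxeter relations, so $\hat e_n$ is again a longest word, now with the mirrored factorization $\hat e_{n+1}=\hat e_n\cdot(\hat e(k)\,\hat e(k+1)\cdots\hat e(k+n-1))$; substituting, commuting $\hat e(k)$ through $\hat e_n$ (which involves only $\hat e(j)$ with $j\geq k+2$), and absorbing $\hat e_n$ on the right yields $\hat e_{n+1}$.

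The main obstacle is entirely bookkeeping: pinning down the "longest word" formalism precisely enough to license the telescoping factorization --- that the element of $\mathcal{H}_m$ produced from a reduced word for $w_0^{(m)}$ is independent of the reduced word chosen, and that the displayed reduced decomposition of $w_0^{(m)}$ is correct --- and keeping the index ranges straight in the block embedding $\mathcal{H}_k\times\mathcal{H}_n\hookrightarrow\mathcal{H}_{k+n}$ so that the two instances of the factorization land in the asserted subalgebras and none of them involves $e(k)$. Once that is set up, everything else is three one-line manipulations using only relation (iii) and property (a).
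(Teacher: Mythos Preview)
Your argument is correct and is exactly the kind of computation the paper has in mind: the corollary is stated without proof, attributed to \cite{kuhnpriddy}, and introduced as ``a useful consequence of the above propositions'' --- namely the longest-word description of $e_m$ together with the absorption property (a) and the commutation relation (iii). Your telescoping factorization and three-line rewrite implement precisely this.

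One small slip in your exposition of the mirrored identity: you write that you commute $\hat e(k)$ ``through $\hat e_n$ (which involves only $\hat e(j)$ with $j\geq k+2$)'', but $\hat e_n$ is the longest word in $\hat e(k+1),\dots,\hat e(k+n-1)$ and so does involve $\hat e(k+1)$, which does \emph{not} commute with $\hat e(k)$. What you actually need (and clearly intend) is to first substitute the mirrored factorization $\hat e_n = \hat e_n'\cdot\bigl(\hat e(k+1)\cdots\hat e(k+n-1)\bigr)$, with $\hat e_n'$ the longest word in $\hat e(k+2),\dots,\hat e(k+n-1)$, and then commute $\hat e(k)$ past $\hat e_n'$; that smaller piece genuinely involves only indices $j\geq k+2$. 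After absorption $\hat e_n\hat e_n'=\hat e_n$ and the factorization $\hat e_{n+1}=\hat e_n\cdot\bigl(\hat e(k)\hat e(k+1)\cdots\hat e(k+n-1)\bigr)$, the computation concludes exactly as in your first identity.
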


\subsection{The Steinberg idempotent}

\begin{defn}  In the integral group ring $\Z[GL_k(\Z/p)]$, let $\displaystyle b_k = \sum_{b \in B_k} [b]$, and let $\displaystyle \tilde w_k = \sum_{w \in W_k} (-1)^{l(w)}[w]$.  Then let $e^{St}_k \in \Z_{(p)}[GL_k(\Z/p)]$ be
$$ e^{St}_k = \frac{1}{[Gl_k:U_k]}\widetilde w_k b_k.$$
\end{defn}

R.~Steinberg \cite{steinberg} showed that $e^{St}_k$ is idempotent.

As $e^{St}_kb = e^{St}_k$ for all $b \in B_k$, for any $\Z_{(p)}[GL_k]$--module $M$, $e^{St}_k: M \ra M$ factors through the quotient $M \ra M_{B_k}$.

\begin{prop} \cite{kuhn4} For all $\Z/p[GL_k]$--modules $M$, the natural composition $M_{B_k} \xra{e^{St}_k} M \ra M_{B_k}$ is $e_k \in \mathcal H_k$.
\end{prop}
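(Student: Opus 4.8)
The statement to prove is that for all $\Z/p[GL_k]$--modules $M$, the composite $M_{B_k} \xra{e^{St}_k} M \ra M_{B_k}$ equals the element $e_k \in \mathcal H_k$. The plan is to use the characterization of $e_k$ from the Definition/Proposition above: $e_k$ is the unique nonzero element of $\mathcal H_k$ satisfying $e(i)e_k = e_k = e_k e(i)$ for all $i$ together with $e_k^2 = e_k$. So I would first exhibit the natural transformation $\Phi_k\colon M_{B_k} \ra M_{B_k}$ induced by the Steinberg idempotent as an element of $\mathcal H_k$ (this is legitimate since $e^{St}_k b = e^{St}_k$ for $b \in B_k$, so $e^{St}_k$ factors through $M \ra M_{B_k}$, and the assignment $M \mapsto \Phi_k$ is plainly natural, hence defines an element of $\End_{GL_k}(\Z/p[B_k\backslash GL_k]) = \mathcal H_k$ by the Yoneda lemma), and then check that it satisfies the three defining properties, so that by uniqueness $\Phi_k = e_k$. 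One must also check $\Phi_k \neq 0$, e.g. by evaluating on $M = \Z/p[GL_k]$, where the Steinberg module is nonzero.

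\smallskip

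The key identities to verify are the absorption relations $e(i)\Phi_k = \Phi_k = \Phi_k e(i)$. Recall $e(i) = 1 - \hat e(i)$ where $\hat e(i)$ is the composite $M_{B_k} \ra M_{P_i} \xra{tr} M_{B_k}$. So $e(i)\Phi_k = \Phi_k$ is equivalent to $\hat e(i)\Phi_k = 0$, i.e. the composite $M_{B_k} \xra{e^{St}_k} M \ra M_{B_k} \ra M_{P_i} \xra{tr} M_{B_k}$ is zero. On the level of the group ring this amounts to the identity $(\sum_{g \in P_i/B_k}[g])\, e^{St}_k = 0$ in $\Z_{(p)}[GL_k]$ (up to the usual identification of transfers with such sums), and since $P_i/B_k = \{[1],[w_i]\}$ this is the statement $(1 + w_i)e^{St}_k = 0$, i.e. $w_i e^{St}_k = -e^{St}_k$. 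This is a standard and elementary property of the Steinberg idempotent: from $e^{St}_k = \frac{1}{[GL_k:U_k]}\tilde w_k b_k$ and the fact that $w_i \tilde w_k = -\tilde w_k$ (left-multiplication by a simple reflection on the alternating sum $\sum_{w}(-1)^{l(w)}[w]$ changes the sign, since it's a bijection $W_k \to W_k$ reversing parity of length), one gets $w_i e^{St}_k = -e^{St}_k$ immediately. The other-sided relation $\Phi_k e(i) = \Phi_k$ follows symmetrically using $b_k w_i \cdots$; alternatively, it follows from the first by applying the anti-automorphism $[g] \mapsto [g^{-1}]$ of the group ring, under which $e^{St}_k$ is fixed (this needs $b_k \mapsto b_k$, which holds since $B_k \to B_k$, $b \mapsto b^{-1}$ is a bijection, and $\tilde w_k \mapsto \tilde w_k$ since $l(w) = l(w^{-1})$).

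\smallskip

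Finally, idempotency $\Phi_k^2 = \Phi_k$ follows from $e^{St}_k$ being idempotent (Steinberg's theorem, cited above) once one checks that the two composites $M_{B_k} \xra{e^{St}_k} M \ra M_{B_k} \xra{e^{St}_k} M \ra M_{B_k}$ and $M_{B_k} \xra{(e^{St}_k)^2} M \ra M_{B_k}$ agree — which is exactly the compatibility of $e^{St}_k$ with the quotient $M \to M_{B_k}$, used twice. With all three properties and nonvanishing in hand, the Definition/Proposition forces $\Phi_k = e_k$.

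\smallskip

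The main obstacle I anticipate is purely bookkeeping: being careful about the identification of the transfer map $tr\colon M_{P_i} \ra M_{B_k}$ with left-multiplication by $\sum_{g \in P_i/B_k}[g]$ and about left- versus right-coset conventions, so that the group-ring identity $w_i e^{St}_k = -e^{St}_k$ is applied on the correct side. There is no deep content here — the entire argument reduces to the one elementary sign identity for $\tilde w_k$ — but the proof should be written so that the reader is not left to re-derive the conventions. (This is presumably why the paper attributes it to \cite{kuhn4} and I would simply sketch it at this level of detail.)
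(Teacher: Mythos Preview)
The paper does not prove this proposition; it simply cites \cite{kuhn4}.  So there is no ``paper's own proof'' to compare against, and I will assess your argument on its own merits.

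Your overall strategy---verify that the natural transformation $\Phi_k$ satisfies the characterizing properties of $e_k$---is the right one, and the sign identity $w_i\,e^{St}_k=-e^{St}_k$ is indeed central.  But the reduction step contains a genuine error.  You assert that $P_i/B_k=\{[1],[w_i]\}$, so that the transfer sum collapses to $1+w_i$.  This is false: as the paper itself notes when defining $\hat e(i)$, the index $[P_i:B_k]$ is $p+1$ (the quotient is a copy of $\mathbb P^1(\F_p)$).  With correct coset representatives, say $\{1\}\cup\{w_iu:u\in U_{\alpha_i}\}$, the element $(\sum_j g_j)\,e^{St}_k$ is \emph{not} zero in $\Z_{(p)}[GL_k]$ in general (for $p=2$, $k=2$ one computes it to be $w+wu+uw+uwu\neq 0$), so the identity you propose to verify simply does not hold.

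There is a clean repair for the left absorption $\hat e(i)\Phi_k=0$.  Since $M_{P_i}\xra{tr}M_{B_k}\to M_{P_i}$ is multiplication by $p+1\equiv 1$, the transfer is split monic, so $\hat e(i)\Phi_k=0$ if and only if the composite $M_{B_k}\xra{e^{St}_k}M\to M_{P_i}$ vanishes, i.e.\ $[e^{St}_k x]_{P_i}=0$ for all $x$.  Now factor $\tilde w_k=(1-w_i)B$ for some $B\in\Z[W_k]$ (use the left coset decomposition of $W_k$ over $\langle w_i\rangle$, picking minimal-length representatives; this is the two-sided refinement of your identity $w_i\tilde w_k=-\tilde w_k$).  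Then $e^{St}_k=(1-w_i)(cBb_k)$ lies in $(1-w_i)\,\Z_{(p)}[GL_k]$, whence $[e^{St}_k x]_{P_i}=[(1-w_i)y]_{P_i}=0$ because $w_i\in P_i$.  This works uniformly in $p$; note that the naive route via $[e^{St}_k x]_{P_i}=[w_ie^{St}_k x]_{P_i}=-[e^{St}_k x]_{P_i}$ only gives $2[e^{St}_k x]_{P_i}=0$, which is vacuous at $p=2$.

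Your fallback for the right absorption $\Phi_k e(i)=\Phi_k$ also fails as written: the anti-automorphism $\iota:[g]\mapsto[g^{-1}]$ reverses products, so $\iota(e^{St}_k)=\iota(c\,\tilde w_k b_k)=c\,b_k\tilde w_k$, which is \emph{not} equal to $e^{St}_k$ (the two are conjugate idempotents but distinct).  One-sided absorption plus idempotence does not force $\Phi_k=e_k$ (easy $2\times 2$ counterexamples exist), so this side genuinely needs its own argument.  The quickest route at this point is probably to abandon the characterization approach and compute $\Phi_k$ directly: evaluating on $M=\Z/p[GL_k]$ at $[B_k]$ and reducing mod $p$, only the $w_0$-term of $\tilde w_k b_k$ survives the passage to $B_k$-coinvariants, giving $\Phi_k=(-1)^{l(w_0)}T_{w_0}=e_k$.
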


It follows that there are canonical isomorphisms $e^{St}_kM \simeq e_kM_{B_k}$.

\begin{prop} \label{ek st prop} \cite{kuhnpriddy} $e_k^{St}e_{k+1}^{St}=e_{k+1}^{St} = e_{k+1}^{St}e_k^{St} \in \Z_{(p)}[GL_{k+1}(\Z/p)]$.
\end{prop}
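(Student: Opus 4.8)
It is enough to prove the two containments $e_{k+1}^{St}\in e_k^{St}\,\Z_{(p)}[GL_{k+1}]$ and $e_{k+1}^{St}\in\Z_{(p)}[GL_{k+1}]\,e_k^{St}$: since $e_k^{St}$ is idempotent, writing $e_{k+1}^{St}=e_k^{St}x$ gives $e_k^{St}e_{k+1}^{St}=(e_k^{St})^2x=e_k^{St}x=e_{k+1}^{St}$, and symmetrically on the other side. The two containments are handled identically, so I will describe the plan for the first; equivalently, one wants $e_k^{St}$ to act as the identity on the submodule $e_{k+1}^{St}M\subseteq M$ of an arbitrary $\Z_{(p)}[GL_{k+1}]$-module $M$.

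Two inputs organize the argument. First, the dictionary of the preceding proposition: for a $\Z_{(p)}[GL_j]$-module $N$ the idempotent $e_j^{St}\colon N\to N$ factors through the quotient $q_j\colon N\twoheadrightarrow N_{B_j}$, and the induced endomorphism of $N_{B_j}$ is the Hecke idempotent $e_j\in\mathcal H_j$; since $e_j^{St}$ kills $\ker q_j$, the map $q_j$ restricts to an isomorphism $e_j^{St}N\xrightarrow{\ \sim\ }e_jN_{B_j}$. Taking $N=\Z_{(p)}[GL_k]$ shows that $e_k^{St}\Z_{(p)}[GL_k]$ is a $\Z_{(p)}$-form of the Steinberg module $\mathrm{St}_k$, and, using that $\Z_{(p)}[GL_{k+1}]$ is free over $\Z_{(p)}[GL_k]$, that $e_k^{St}\Z_{(p)}[GL_{k+1}]\cong\operatorname{Ind}_{GL_k}^{GL_{k+1}}\mathrm{St}_k$ as right $\Z_{(p)}[GL_{k+1}]$-modules. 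Second, the relevant Hecke-algebra identity is elementary: regarding $\mathcal H_k\subset\mathcal H_{k+1}$ via the block embedding $\mathcal H_k\times\mathcal H_1\hookrightarrow\mathcal H_{k+1}$ (so that $e_k$ is the longest word in $e(1),\dots,e(k-1)$), property (a) of $e_{k+1}$ gives $e(i)e_{k+1}=e_{k+1}$ for every $i$, and peeling the factors of $e_k$ off one at a time yields $e_ke_{k+1}=e_{k+1}$; dually $e_{k+1}e_k=e_{k+1}$, which feeds the second containment.

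The step I expect to be the real obstacle is promoting this Hecke-algebra identity to the claimed identity in $\Z_{(p)}[GL_{k+1}]$: passing to $B_k$-coinvariants discards information, so $e_ke_{k+1}=e_{k+1}$ on $N_{B_k}$ does not \emph{formally} give $e_k^{St}e_{k+1}^{St}=e_{k+1}^{St}$. What must be shown is that the \emph{distinguished} copy $\mathrm{St}_{k+1}=e_{k+1}^{St}\Z_{(p)}[GL_{k+1}]$ of the Steinberg module sitting inside the regular representation actually lies in $e_k^{St}\Z_{(p)}[GL_{k+1}]\cong\operatorname{Ind}_{GL_k}^{GL_{k+1}}\mathrm{St}_k$, not merely that some copy of $\mathrm{St}_{k+1}$ does; equivalently, that $\mathrm{St}_k$ occurs in $\operatorname{Res}^{GL_{k+1}}_{GL_k}\mathrm{St}_{k+1}$ via the correct inclusion. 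The rank-one case is a transparent model: for $k=1$ one has $e_1^{St}=\tfrac{1}{p-1}\sum_{a\in(\Z/p)^{\times}}[\operatorname{diag}(a,1)]$, and because each $\operatorname{diag}(a,1)$ together with all of its $W_2$-conjugates lies in $B_2$, one gets $[\operatorname{diag}(a,1)]e_2^{St}=e_2^{St}$, hence $e_1^{St}e_2^{St}=e_2^{St}$ at once. For $k\ge 2$ the unipotent part of $B_k$ does not fix $e_{k+1}^{St}$ and this shortcut fails; one instead runs a Bruhat-decomposition computation using the parabolic factorization $\tilde w_{k+1}=\tilde w_k\cdot\bigl(\sum_t(-1)^{l(t)}[t]\bigr)$, the sum over minimal-length representatives $t$ of $W_k\backslash W_{k+1}$, together with the relations that underlie Steinberg's proof that $(e_{k+1}^{St})^2=e_{k+1}^{St}$. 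This is essentially the content of \cite[\S2]{kuhnpriddy}, and I would carry the proof out by following that treatment; alternatively one can deduce the needed containment from the projectivity and absolute irreducibility of the mod-$p$ Steinberg modules together with a branching computation for $\operatorname{Res}^{GL_{k+1}}_{GL_k}\mathrm{St}_{k+1}$.
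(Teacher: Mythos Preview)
The paper does not prove this proposition at all: it is simply quoted from \cite{kuhnpriddy}, with the clarifying remark about the embedding $GL_k\times GL_1\subset GL_{k+1}$. So your outline is not competing with a paper proof; it is supplementing the citation with context.

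Your reduction to the two containments is correct, as is the observation that the Hecke identity $e_ke_{k+1}=e_{k+1}$ (and its mirror) is immediate from the defining property~(a) of $e_{k+1}$. Your $k=1$ verification is correct: for $d=\operatorname{diag}(a,1)$ one has $d\,w_1=w_1\operatorname{diag}(1,a)$ with $\operatorname{diag}(1,a)\in B_2$, so $[d]\,\tilde w_2\,b_2=\tilde w_2\,b_2$ and hence $e_1^{St}e_2^{St}=e_2^{St}$. You also correctly locate the genuine difficulty---that neither $b_k$ acting on the left of $e_{k+1}^{St}$ nor $\tilde w_k$ acting on the right is transparent once $U_k$ is nontrivial---and you ultimately defer to \cite[\S 2]{kuhnpriddy} for the Bruhat computation, exactly as the paper does.

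Two small remarks. First, the excursion through $\operatorname{Ind}_{GL_k}^{GL_{k+1}}\mathrm{St}_k$ and branching is correct but more apparatus than the cited proof uses; the argument in \cite{kuhnpriddy} is a direct group-ring calculation with the factorization $\tilde w_{k+1}=\tilde w_k\cdot\bigl(\sum_t(-1)^{l(t)}[t]\bigr)$ you mention. Second, your proposed alternative via projectivity and absolute irreducibility of $\mathrm{St}$ most naturally yields the identity in $\Z/p[GL_{k+1}]$; promoting it to $\Z_{(p)}[GL_{k+1}]$ is not automatic, so that route is less clean than you suggest.
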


Here $\Z_{(p)}[GL_{k}(\Z/p)]$ is viewed as a subalgebra of $\Z_{(p)}[GL_{k+1}(\Z/p)]$ using the inclusion $GL_k \times GL_1 \subset GL_{k+1}$.

\subsection{Using $e^{St}_k$ to define the co-H-space $L_1(k)$.}

As $GL_k(\Z/p)$ acts on the suspension space $BE_k^{\rho_k}$, one can find a map $e^{St}_k: BE_k^{\rho_k} \ra BE_k^{\rho_k}$ which will induce multiplication by $e^{St}_k$ on homology.  The methods of \cite{adamskuhn} show that there is even a choice of such a map that is homotopy idempotent.

The space $L_1(k)$ is then defined to be the mapping telescope $Tel(e^{St}_k)$, and the canonical map $r: BE_k^{\rho_k} \ra L_1(k)$ fits into a homotopy commutative diagram
\begin{equation*}
\xymatrix{
L_1(k) \ar[dr]^i \ar@{=}[rr] && L_1(k) \ar[dr]^i & \\
& BE_k^{\rho_k} \ar[rr]^{e^{St}_k} \ar[ur]^r && BE_k^{\rho_k} }
\end{equation*}

\section{Some subgroups of $\Sigma_{p^n}$ and wreath product transfers}

Using iterated wreath products, the symmetric group $\Sigma_{p^n}$ contains subgroups analogous to the subgroups $B_n, P_i < GL_n$, as we now describe.

Our notational convention for wreath products is that if $H$ permutes a set $S$ and $G$ permutes a set $T$, then $H \wr G = G^S \rtimes H$ permutes the set $T \times S$, and contains $G \times H$ as a subgroup.

We now let $\widetilde B_n$ be the $n$--fold iterated wreath product $\Sigma_p \stackrel{n}{\wr} \Sigma_p$ and $\widetilde P_i$ be the $n-1$--fold wreath product $\widetilde B_{n-i-1} \wr \Sigma_{p^2} \wr \widetilde B_{i-1}$.  $\widetilde B_n$ has $\widetilde U_n = \Z/p \stackrel{n}{\wr} \Z/p$ as a Sylow subgroup.  Note that $E_n$ is canonically a subgroup of all of these.

In \secref{introduction} we mentioned that $N_{\Sigma_{p^n}}(E_n)/E_n = GL_n$. Similarly, one has
\begin{lem} \cite[Prop.3.7]{kuhn4}
$N_{\widetilde B_n}(E_n)/E_n = B_n$, $N_{\widetilde P_i}(E_n)/E_n = P_i$, and \\ $N_{\widetilde U_n}(E_n)/E_n = U_n$.
\end{lem}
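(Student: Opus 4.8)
The plan is to reduce everything to a single assertion about the normalizer of $E_n$ inside $\Sigma_{p^n}$ together with the observation that an iterated wreath product is precisely the subgroup of $\Sigma_{p^n}$ preserving a chain of partitions. Recall that $E_n = (\Z/p)^n$ acts on itself by translation, and its normalizer in $\Sigma_{p^n}$ is $E_n \rtimes GL_n$, with the quotient $GL_n$ acting in the evident way. So for any subgroup $K$ with $E_n \leq K \leq \Sigma_{p^n}$ one has $N_K(E_n) = K \cap N_{\Sigma_{p^n}}(E_n)$, and $N_K(E_n)/E_n$ is the image of $K \cap N_{\Sigma_{p^n}}(E_n)$ under the projection $E_n \rtimes GL_n \to GL_n$; this image consists of exactly those $g \in GL_n$ such that some (equivalently, every) lift of $g$ to $E_n \rtimes GL_n$ lies in $K$. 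Thus the task is, for $K \in \{\widetilde B_n, \widetilde P_i, \widetilde U_n\}$, to identify which $g \in GL_n$ arise this way and check the answer is $B_n$, $P_i$, $U_n$ respectively.

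The key structural input is a description of $\widetilde B_n$ as a stabilizer. Think of the underlying set of $\Sigma_{p^n}$ as $E_n = (\Z/p)^n$, but organized hierarchically: fix the standard flag of subgroups
$$0 = F_0 < F_1 < \cdots < F_n = E_n, \qquad F_j = \{(x_1,\dots,x_j,0,\dots,0)\},$$
which gives a descending chain of partitions of $E_n$ (cosets of $F_{n-1}$, then of $F_{n-2}$, etc.). One checks directly from the recursive definition $\Sigma_p \wr (\Sigma_p \stackrel{n-1}{\wr} \Sigma_p)$ that $\widetilde B_n$ is exactly the subgroup of $\Sigma_{p^n}$ preserving this tower of partitions. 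Similarly $\widetilde P_i$ preserves the coarser tower in which the two consecutive refinement steps at levels $i$ and $i+1$ are merged into one step (the $\Sigma_{p^2}$ factor acts on the corresponding $p^2$-element blocks), and $\widetilde U_n = \Z/p \stackrel{n}{\wr} \Z/p$ is the Sylow $p$-subgroup sitting inside $\widetilde B_n$. Once this is set up, the lemma becomes: an element $g \in GL_n$ lifts into $\widetilde B_n$ iff the translation action twisted by $g$ preserves the standard flag-coset tower, and a routine linear-algebra computation shows this happens iff $g F_j = F_j$ for all $j$, i.e. iff $g$ is upper triangular, i.e. $g \in B_n$. The case of $\widetilde P_i$ is identical with "$g F_j = F_j$ for all $j$" relaxed to "$g F_j = F_j$ for all $j \neq i$", which is exactly the defining condition for $P_i$ (the $i$th minimal parabolic); and $\widetilde U_n$ forces in addition that $g$ act trivially on each successive quotient $F_j/F_{j-1}$, giving $g \in U_n$.

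Concretely, I would carry out the steps in this order: (1) recall $N_{\Sigma_{p^n}}(E_n) = E_n \rtimes GL_n$ and reduce $N_K(E_n)/E_n$ to the image-in-$GL_n$ description above; (2) prove the wreath-product-as-partition-stabilizer identification for $\widetilde B_n$, $\widetilde P_i$, $\widetilde U_n$ by induction on $n$ using the recursive wreath structure, being careful that $E_n \leq \widetilde U_n \leq \widetilde B_n$ compatibly; (3) translate "$g$ lifts into $K$" into a condition on $g$'s interaction with the standard flag and read off $B_n$, $P_i$, $U_n$; (4) note the inclusions are consistent across the three cases. Since this is cited as \cite[Prop.3.7]{kuhn4}, I would present it as a recollection and keep (2) and (3) brief. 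The main obstacle I anticipate is step (2): making the partition-tower bookkeeping precise without drowning in indices — in particular keeping straight the "reversed" indexing (the outermost $\Sigma_p$ in $\Sigma_p \wr \widetilde B_{n-1}$ corresponds to the coarsest partition, hence to the last basis vector $F_n/F_{n-1}$), and matching this to the convention $\widetilde P_i = \widetilde B_{n-i-1} \wr \Sigma_{p^2} \wr \widetilde B_{i-1}$ so that the merged step lands at positions $i,i+1$ and produces $P_i$ rather than $P_{n-i}$. Once the indexing convention is pinned down, the verification that the lifting condition cuts out exactly the upper-triangular (resp. parabolic, resp. unipotent) matrices is a short direct computation.
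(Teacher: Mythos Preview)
The paper does not prove this lemma at all: it is stated with a citation to \cite[Prop.~3.7]{kuhn4} and no argument is given. Your plan is mathematically sound and is essentially the standard proof (and presumably the one in the cited reference): identify $N_{\Sigma_{p^n}}(E_n) = E_n \rtimes GL_n$, recognize each of $\widetilde B_n$, $\widetilde P_i$, $\widetilde U_n$ as the stabilizer in $\Sigma_{p^n}$ of the appropriate tower of coset partitions of the standard flag, and then check that an affine map $x \mapsto gx + v$ preserves (resp.\ preserves and acts by translation on successive quotients of) that tower exactly when $g$ stabilizes the corresponding partial flag (resp.\ and is unipotent). Your anticipated obstacle is the real one: matching the paper's wreath-product convention $H \wr G = G^S \rtimes H$ acting on $T \times S$ to the flag indexing so that $\widetilde P_i$ really yields $P_i$ and not $P_{n-i}$ requires care, but once fixed the rest is routine. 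Since the paper itself is content to cite the result, a one-paragraph sketch along your lines would already exceed what the paper provides.
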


It follows that the subgroup inclusions $E_n < \widetilde B_n$ and $E_n < \widetilde P_i$ induce maps in homology passing through the $B_n$ and $P_i$ coinvariants:
$$ H_*(BE_n^{\rho_n})_{B_n} \ra H_*(B\widetilde B_n^{\rho_n})$$
and
$$ H_*(BE_n^{\rho_n})_{P_i} \ra H_*(B\widetilde P_i^{\rho_n}).$$

The main theorem in \cite{kuhn4} shows that cohomology diagrams dual to the ones in following theorem commute.

\begin{thm} \label{transfer thm}  The diagrams
$
\xymatrix{
H_*(BE_n^{\rho_n})_{P_i} \ar[d] \ar[r]^-{tr} &  H_*(BE_n^{\rho_n})_{B_n}\ar[d]  \\
H_*(B\widetilde P_i^{\rho_n}) \ar[r]^-{tr} & H_*(B\widetilde B_n^{\rho_n}) }
$
commute.
\end{thm}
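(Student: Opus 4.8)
The statement is that the square relating the $P_i$- and $B_n$-coinvariants of $H_*(BE_n^{\rho_n})$, on the one hand, and $H_*(B\widetilde P_i^{\rho_n})$ and $H_*(B\widetilde B_n^{\rho_n})$ on the other, commutes, where the vertical maps are induced by the subgroup inclusions $E_n < \widetilde P_i$ and $E_n < \widetilde B_n$ and the horizontal maps are the respective transfers (wreath-product transfer below, the Hecke-algebra transfer $tr: M_{P_i} \to M_{B_n}$ above). The plan is to deduce this from the main theorem of \cite{kuhn4}, which — as the text says — asserts that the dual cohomology square commutes. So the first step is simply to dualize: apply $\Hom_{\Z/p}(-,\Z/p)$ to the asserted homology square and check that it becomes exactly the cohomology square treated in \cite{kuhn4}. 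For this one needs that each of the four maps in the homology square is the linear dual of the corresponding cohomology map: the vertical maps are induced by a space-level map (the Thom space of the $E_n \hookrightarrow \widetilde P_i$ (resp. $\widetilde B_n$) Borel construction, using $\rho_n$ restricted appropriately), so homology and cohomology versions are automatically dual; and the transfers are dual because the stable transfer map is self-dual up to the Spanier–Whitehead duality on the relevant Thom spectra — concretely, the wreath-product transfer on $H_*$ is the $\Z/p$-dual of the transfer on $H^*$, and likewise the Hecke transfer $M_{P_i} \xrightarrow{tr} M_{B_n}$ is dual to the quotient-then-transfer description on invariants $M^{B_n} \to M^{P_i}$ that appears in the cohomological formulation. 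After this identification the theorem is literally the cited result.

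The one point that needs a little care, and which I expect to be the main obstacle, is matching the coinvariant functors appearing here with the invariant functors appearing in \cite{kuhn4}: in this paper the left $\A$-modules of interest are $H^*(BE_n^{\rho_n})^{B_n}$ (invariants), while the present statement is phrased with $H_*(BE_n^{\rho_n})_{B_n}$ (coinvariants), and one must make sure the dualization genuinely interchanges $(-)^{B_n}$ with $(-)_{B_n}$ and $(-)^{P_i}$ with $(-)_{P_i}$ compatibly with the transfer. Since $B_n$ and $P_i$ are finite groups and we work over the field $\Z/p$, for any $\Z/p[G]$-module $N$ one has a natural isomorphism $(N^\vee)^H \cong (N_H)^\vee$, and under this isomorphism the transfer $N_H \xrightarrow{tr} N_K$ dualizes to the transfer $(N^\vee)^K \xrightarrow{tr} (N^\vee)^H$, while the quotient $N_K \to N_H$ dualizes to the inclusion $(N^\vee)^H \hookrightarrow (N^\vee)^K$. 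Applying this with $G = GL_n$, $H = B_n$, $K = P_i$ (using $B_n < P_i$) and $N = H_*(BE_n^{\rho_n})$ turns the top row of our square, together with the vertical legs, into exactly the cohomological diagram of \cite{kuhn4}.

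So the steps, in order, are: (1) record the field-coefficient duality $(N_H)^\vee \cong (N^\vee)^H$ and its compatibility with transfer and with the $B_n < P_i$ quotient map; (2) identify the vertical maps in the asserted square as the $\Z/p$-duals of the cohomology maps induced by the evident maps of Thom spectra coming from $E_n < \widetilde P_i$ and $E_n < \widetilde B_n$; (3) identify the horizontal maps as the $\Z/p$-duals of the cohomological transfers, using self-duality of the stable transfer for the wreath-product inclusion $\widetilde B_n < \widetilde P_i$ (hence for the induced map on Thom spectra $B\widetilde P_i^{\rho_n} \to B\widetilde B_n^{\rho_n}$) and the analogous fact for the Hecke transfer on (co)invariants; and (4) conclude that commutativity of the asserted homology square is equivalent to commutativity of the cohomology square, which is the main theorem of \cite{kuhn4}. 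Nothing beyond bookkeeping of dualities is needed; the real content lives in the cited paper.
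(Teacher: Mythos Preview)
Your proposal is correct and matches the paper's approach exactly: the paper does not prove this theorem in the text but simply remarks that the main theorem of \cite{kuhn4} establishes the dual cohomology diagrams, leaving the dualization implicit. Your write-up just makes that dualization explicit (there is a small slip in your labeling of which direction between $(-)_{B_n}$ and $(-)_{P_i}$ is the transfer versus the quotient, but the substance is right).
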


Now we note that the Thom spaces $B\widetilde B_n^{\rho_n}$ and $B\widetilde P_i^{\rho_n}$ identify as $D_p^nS^1$ and  $D_p^{i-1}D_{p^2}D_p^{n-i-1}S^1$.  Thus \lemref{Rn lem 2} tells us that
$$R_1^nL_0 = \im \{ \tilde H_*(BE_n^{\rho_n})_{B_n} \ra \tilde H_*(B\widetilde B_n^{\rho_n})\}$$
and
$$R_1^{n-i-1}R_2R_1^{i-1}L_0 = \im \{ \tilde H_*(BE_n^{\rho_n})_{P_i} \ra \tilde H_*(B\widetilde P_i^{\rho_n})\}.$$

\begin{cor} \label{transfer cor} The diagrams
$
\xymatrix{
\tilde H_*(BE_n^{\rho_n})_{P_i} \ar@{->>}[d] \ar[r]^-{tr} &  \tilde H_*(BE_n^{\rho_n})_{B_n}\ar@{->>}[d]  \\
R_1^{n-i-1}R_2R_1^{i-1}L_0 \ar[r]^-{tr} & R_1^nL_0 }
$
commute.
\end{cor}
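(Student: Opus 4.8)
The plan is to deduce Corollary \ref{transfer cor} directly from \thmref{transfer thm} by restricting and corestricting the two vertical maps of that theorem's square to their images. Recall that, just before the statement, the Thom spaces $B\widetilde B_n^{\rho_n}$ and $B\widetilde P_i^{\rho_n}$ were identified with iterated extended powers of $S^1$, and, by \lemref{Rn lem 2}, the images of the maps
$$\tilde H_*(BE_n^{\rho_n})_{B_n} \ra \tilde H_*(B\widetilde B_n^{\rho_n}), \qquad \tilde H_*(BE_n^{\rho_n})_{P_i} \ra \tilde H_*(B\widetilde P_i^{\rho_n})$$
are precisely the submodules $R_1^nL_0$ and $R_1^{n-i-1}R_2R_1^{i-1}L_0$. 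The two surjections appearing in the corollary are, by definition, the corestrictions of these maps onto their images. So the content to be verified is that the lower transfer in \thmref{transfer thm} carries the one image submodule into the other, and that the resulting square of restricted maps still commutes.

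First I would pass from ordinary to reduced homology throughout \thmref{transfer thm}: the maps induced by the subgroup inclusions $E_n < \widetilde B_n$, $E_n < \widetilde P_i$ and the wreath-product transfer $\Sinfty B\widetilde P_i^{\rho_n} \ra \Sinfty B\widetilde B_n^{\rho_n}$ are all based maps (equivalently, are compatible with the augmentation, i.e.\ the $H_0$ summand), so the square of \thmref{transfer thm} descends to a commuting square of the same shape in reduced homology, which is a square of right $\A$-modules. Next, given $x \in R_1^{n-i-1}R_2R_1^{i-1}L_0$, write $x$ as the image of some $y \in \tilde H_*(BE_n^{\rho_n})_{P_i}$ under the left vertical map; then by commutativity of the (reduced) square, $tr(x)$ equals the image of $tr(y) \in \tilde H_*(BE_n^{\rho_n})_{B_n}$ under the right vertical map, and hence lies in $R_1^nL_0$. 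Thus the lower transfer restricts to a map $R_1^{n-i-1}R_2R_1^{i-1}L_0 \ra R_1^nL_0$, which is by definition the lower horizontal arrow of the corollary. Finally, commutativity of the square in the corollary holds because it holds after post-composing with the monomorphism $R_1^nL_0 \hra \tilde H_*(B\widetilde B_n^{\rho_n})$ --- that enlarged square is exactly the reduced version of \thmref{transfer thm} --- and a monomorphism may be cancelled.

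The genuine geometric input --- that the maps induced by the inclusions of the $E_n$'s intertwine the coinvariant-level transfer with the wreath-product transfer --- is precisely \thmref{transfer thm}, taken from \cite{kuhn4}, so nothing new is needed there. The only points requiring care are bookkeeping: checking that every map in sight is based, so that the passage to $\tilde H_*$ is legitimate, and correctly matching the iterated extended powers of $S^1$ recorded above with the functors $R_1^{\bullet}R_2R_1^{\bullet}$ applied to $L_0$ via \lemref{Rn lem 2}. Neither is a real obstacle; the whole argument is a short diagram chase, and I would expect the written proof to be only a few lines.
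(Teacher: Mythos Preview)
Your proposal is correct and matches the paper's approach: the paper states this as an immediate corollary of \thmref{transfer thm} together with the identifications made just before it (via \lemref{Rn lem 2}), without writing out a separate proof. Your diagram chase---restricting the square of \thmref{transfer thm} to images and cancelling the monomorphism into $\tilde H_*(B\widetilde B_n^{\rho_n})$---is exactly the intended deduction, spelled out in more detail than the paper bothers to record.
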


Mitchell and Priddy, using a homology calculation similar to those in \thmref{transfer thm}, observe the following.

\begin{prop} \cite{MiP83}  There exists a stable map $f_k: \Sinfty B\widetilde B_k^{\rho_k} \ra \Sinfty BE_k^{\rho_k}$ such that the diagram
\begin{equation*}
\xymatrix{
& \Sinfty B\widetilde B_k^{\rho_k} \ar[dr]^{f_k}  &\\
\Sinfty BE_k^{\rho_k} \ar[rr]^{\Sinfty e^{St}_k} \ar[ur]&& \Sinfty BE_k^{\rho_k} }
\end{equation*}
homotopy commutes.  Thus $L_1(k)$ is a stable retract of $B\widetilde B_k^{\rho_k} = D_p^kS^1$, and the $\A$--module $L_k$ is a direct summand of $R_1^kL_0$.
\end{prop}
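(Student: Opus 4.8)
The plan is to produce the map $f_k$ and to verify, by a homology calculation of the type behind \thmref{transfer thm}, that $f_k\circ\iota$ induces multiplication by the Steinberg idempotent $e^{St}_k$ on mod $p$ homology; here $\iota\colon \Sinfty BE_k^{\rho_k}\ra\Sinfty B\widetilde B_k^{\rho_k}$ is the map of Thom spectra induced by the inclusion $E_k<\widetilde B_k$ (the unlabelled arrow in the triangle). Everything else is formal. Granting the homology identity, homotopy commutativity of the triangle follows from the idempotent–splitting yoga of \cite{adamskuhn}, since $BE_k^{\rho_k}$ is connected, $p$–complete and of finite type. Then, writing $i\colon L_1(k)\ra BE_k^{\rho_k}$ and $r\colon BE_k^{\rho_k}\ra L_1(k)$ for the maps defining $L_1(k)=Tel(e^{St}_k)$, with $ri\simeq\id$ and $ir\simeq e^{St}_k$ stably, the composite
$$ \Sinfty L_1(k)\xra{\ \iota\circ\Sinfty i\ }\Sinfty B\widetilde B_k^{\rho_k}\xra{\ \Sinfty r\circ f_k\ }\Sinfty L_1(k) $$
is homotopic to $\Sinfty r\circ\Sinfty e^{St}_k\circ\Sinfty i\simeq\Sinfty r\circ\Sinfty i\circ\Sinfty r\circ\Sinfty i=\id$, so $L_1(k)$ is a stable retract of $B\widetilde B_k^{\rho_k}=D_p^kS^1$. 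Applying $\tilde H_*$ makes $L_k$ an $\A$–module retract of $\tilde H_*(B\widetilde B_k^{\rho_k})$; and since $i_*$ has image the Steinberg summand $e^{St}_k\tilde H_*(BE_k^{\rho_k})$ — which, as $e^{St}_kb=e^{St}_k$ for $b\in B_k$, factors through the $B_k$–coinvariants — the map $L_k\ra\tilde H_*(B\widetilde B_k^{\rho_k})$ factors through $R_1^kL_0=\im\{\tilde H_*(BE_k^{\rho_k})_{B_k}\ra\tilde H_*(B\widetilde B_k^{\rho_k})\}$, and restricting the retraction exhibits $L_k$ as a direct summand of $R_1^kL_0$.

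To construct $f_k$, I use the formula $e^{St}_k=u^{-1}\,\tilde w_k\,b_k$ in $\Z_{(p)}[GL_k]$, where $b_k=\sum_{b\in B_k}[b]$, $\tilde w_k=\sum_{w\in W_k}(-1)^{l(w)}[w]$, and $u=[GL_k:U_k]=\prod_{i=1}^k(p^i-1)$ is a unit mod $p$. The element $\tilde w_k$ acts on $\Sinfty BE_k^{\rho_k}$ through the $W_k$–action (each $w\in W_k\subset GL_k$ permutes $E_k$ linearly, hence acts on $BE_k^{\rho_k}$), while $b_k$ is realised — after precomposition by $\iota$ — by the Thom spectrum transfer $tr\colon\Sinfty B\widetilde B_k^{\rho_k}\ra\Sinfty BE_k^{\rho_k}$ of the finite–index inclusion $E_k<\widetilde B_k$ (legitimate since $\rho_k|_{E_k}$ is the restriction of $\rho_k|_{\widetilde B_k}$). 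So I set $f_k=u^{-1}\,(\Sinfty\tilde w_k)\circ tr$.

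The verification that $(f_k\circ\iota)_*=e^{St}_{k*}$ is the ``homology calculation similar to \thmref{transfer thm}''. Expanding $tr\circ\iota$ by the double–coset formula for $E_k<\widetilde B_k$: because $E_k$ is normal in $N_{\widetilde B_k}(E_k)$ with quotient $B_k$, the double cosets lying inside the normalizer contribute exactly $\sum_{b\in B_k}c_b=b_k$, while each remaining double coset contributes a term whose image in $\tilde H_*(BE_k^{\rho_k})$ lies in the image of the transfer from a \emph{proper} subgroup of $E_k$. Applying $\tilde w_k$ and invoking the decisive input — that the Steinberg element (equivalently $e^{St}_k$) annihilates the image in $\tilde H_*(BE_k^{\rho_k})$ of the transfer from any proper subgroup of $E_k$, a property of $H^*(B(\Z/p)^k)$ as an unstable $\A$–module — one gets $\tilde w_k\circ(tr\circ\iota)=\tilde w_k\circ b_k=u\,e^{St}_k$ on homology, whence $(f_k\circ\iota)_*=u^{-1}\cdot u\,e^{St}_{k*}=e^{St}_{k*}$.

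The main obstacle is precisely this vanishing: showing that the Steinberg element kills the proper–subgroup transfer terms in the double–coset expansion of $tr\circ\iota$. This is a genuine computation with the type $A_{k-1}$ Weyl group acting on $H^*(B(\Z/p)^k)$ together with its transfer filtration, and it is the calculational heart of \cite{MiP83}, of a piece with the computations behind \thmref{transfer thm} and \corref{transfer cor}. The double–coset bookkeeping, the passage to mapping telescopes, and the two corollary statements about retracts are all routine once that vanishing is in hand.
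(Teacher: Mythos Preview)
The paper does not give its own proof of this proposition: it is stated as a result of Mitchell--Priddy \cite{MiP83}, with only the remark that it follows from ``a homology calculation similar to those in \thmref{transfer thm}.'' Your proposal is a faithful and essentially correct reconstruction of that Mitchell--Priddy argument, and the formal deductions (retract of $D_p^kS^1$, summand of $R_1^kL_0$) are handled cleanly.

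Two small points of precision. First, in your double--coset analysis the error terms for $g\notin N_{\widetilde B_k}(E_k)$ factor through the \emph{inclusion--induced} map $H_*(BE')\ra H_*(BE_k)$ for a proper $E'<E_k$, not through a transfer in the homological sense; your phrase ``image of the transfer from a proper subgroup'' has the arrow pointing the wrong way, though the intended meaning is clear. Second, the vanishing you invoke is not really ``a property of $H^*(B(\Z/p)^k)$ as an unstable $\A$--module''; it is a group--theoretic/representation--theoretic fact about how $\tilde w_k$ (or $e^{St}_k$) interacts with the image of $H_*(BE')$ in $H_*(BE_k^{\rho_k})$, and is exactly the calculation carried out in \cite{MiP83}. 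You correctly flag this as the nontrivial input and defer to the reference, which is all the paper itself does.
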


\begin{cor} \label{DpLk retract cor} $L_1(k+1)$ is a stable retract of $D_pL_1(k)$, and the $\A$--module $L_{k+1}$ is a direct summand of $R_1L_k$.
\end{cor}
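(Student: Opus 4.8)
The plan is to obtain the corollary from the previous proposition by applying the $p$th extended power functor $D_p$, and then to read off the assertion about $\A$--modules from homology. Recall that $L_1(k)$ is the retract of $BE_k^{\rho_k}$ cut out by the Steinberg idempotent $e^{St}_k$, and that, as a consequence, it is a stable retract of $B\widetilde B_k^{\rho_k}=D_p^kS^1$. Applying the functor $D_p$ exhibits $D_pL_1(k)$ as the retract of $D_pBE_k^{\rho_k}$ cut out by $D_p(e^{St}_k)$, hence as a stable retract of $D_p(B\widetilde B_k^{\rho_k})=B\widetilde B_{k+1}^{\rho_{k+1}}=D_p^{k+1}S^1$. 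On the other hand the previous proposition, applied with $k+1$ in place of $k$, exhibits $L_1(k+1)$ as a stable retract of this \emph{same} spectrum $B\widetilde B_{k+1}^{\rho_{k+1}}$, namely the one cut out by $e^{St}_{k+1}$. So the corollary amounts to the statement that the second of these retracts is contained in the first.

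For the inclusion $\Sinfty L_1(k+1)\hookrightarrow\Sinfty D_pL_1(k)$ I would use Proposition \ref{ek st prop}: in $\Z_{(p)}[GL_{k+1}(\Z/p)]$ one has $e^{St}_{k+1}=e^{St}_ke^{St}_{k+1}$, where $e^{St}_k$ is taken via the block inclusion $GL_k(\Z/p)\times GL_1(\Z/p)\subset GL_{k+1}(\Z/p)$; hence $L_1(k+1)=e^{St}_{k+1}BE_{k+1}^{\rho_{k+1}}$ is already a stable summand of $e^{St}_kBE_{k+1}^{\rho_{k+1}}$. The inclusion $E_{k+1}<\Sigma_p\wr E_k$ (with $E_{k+1}\cong\Z/p\times E_k$ sitting via a regular cyclic subgroup of $\Sigma_p$ and a diagonal copy of $E_k$), under which $\rho_{k+1}$ restricts to the wreath bundle defining $D_pBE_k^{\rho_k}$, induces a natural map $g\colon\Sinfty BE_{k+1}^{\rho_{k+1}}\to\Sinfty D_pBE_k^{\rho_k}$ which is equivariant for this block $GL_k(\Z/p)$, and therefore intertwines the action of $e^{St}_k$ on the source with that of $D_p(e^{St}_k)$ on the target. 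Thus $g$ carries $L_1(k+1)=e^{St}_k\cdot L_1(k+1)$ into $D_p(e^{St}_k)\cdot D_pBE_k^{\rho_k}=D_pL_1(k)$, which gives the desired stable inclusion.

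To split this inclusion I would build $\pi\colon\Sinfty D_pL_1(k)\to\Sinfty L_1(k+1)$ out of the transfer associated to $E_{k+1}<\Sigma_p\wr E_k$, composed with $e^{St}_{k+1}$ and the projection onto $L_1(k+1)$; the claim is that the composite of $\pi$ with the inclusion above is a stable equivalence. By the double coset formula and the defining properties of $e^{St}_{k+1}$, this composite is, on the Steinberg summand, a $p$--local unit times the identity — the relevant scalar being computed from the Hecke-algebra identity $e_{k+1}=e_ke(k)e_k$ of Corollary \ref{ek cor}, transported to the group-ring/Thom-space setting via the normalizer identifications $N_{\widetilde B_n}(E_n)/E_n=B_n$ and the commuting transfer squares of Theorem \ref{transfer thm}. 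Granting this, $L_1(k+1)$ is a stable retract of $D_pL_1(k)$; applying $\tilde H_*$ and passing to homology primitives — using $PH_*(D_pL_1(k))=R_1\tilde H_*(L_1(k))=R_1L_k$ and $PH_*(L_1(k+1))=L_{k+1}$ (the latter because $L_1(k+1)$ is a co-$H$-space) — turns the inclusion and retraction into $\A$--module maps exhibiting $L_{k+1}$ as a direct summand of $R_1L_k$.

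The step I expect to be the main obstacle is the construction and verification of the retraction $\pi$. The naive transfer for $E_{k+1}<\Sigma_p\wr E_k$ has index divisible by $p$, so taken by itself it dies mod $p$; what makes it survive is the presence of the Steinberg idempotent $e^{St}_{k+1}$, and proving that $e^{St}_{k+1}$ squeezed around the relevant transfer-and-inclusion yields a $p$--local unit multiple of $e^{St}_{k+1}$ is exactly the geometric incarnation of Corollary \ref{ek cor}, which is where real work is needed. Checking the equivariance of the comparison map $g$ — matching $E_{k+1}<\Sigma_p\wr E_k$, and the restriction of $\rho_{k+1}$, against the block $GL_k(\Z/p)\subset GL_{k+1}(\Z/p)$ — is a secondary and more routine point.
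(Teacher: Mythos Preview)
Your construction of the inclusion $L_1(k+1)\to D_pL_1(k)$ is exactly the paper's: the subgroup inclusion $E_{k+1}<\Sigma_p\wr E_k$ together with the identity $e^{St}_ke^{St}_{k+1}=e^{St}_{k+1}$ from \propref{ek st prop}. Where you diverge is in building the retraction. The paper does \emph{not} use the transfer for $E_{k+1}<\Sigma_p\wr E_k$; it simply observes that the composite
\[
L_1(k+1)\hookrightarrow BE_{k+1}^{\rho_{k+1}}\to D_pBE_k^{\rho_k}\xrightarrow{D_pe^{St}_k}D_pBE_k^{\rho_k}\to D_pB\widetilde B_k^{\rho_k}=B\widetilde B_{k+1}^{\rho_{k+1}}
\]
agrees with the direct inclusion $L_1(k+1)\hookrightarrow BE_{k+1}^{\rho_{k+1}}\to B\widetilde B_{k+1}^{\rho_{k+1}}$ (this is the content of the commutative diagram in the paper's proof), and then composes with the Mitchell--Priddy map $f_{k+1}:\Sinfty B\widetilde B_{k+1}^{\rho_{k+1}}\to\Sinfty BE_{k+1}^{\rho_{k+1}}$ from the immediately preceding proposition. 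Since $f_{k+1}$ followed by projection to $L_1(k+1)$ splits the inclusion $L_1(k+1)\to B\widetilde B_{k+1}^{\rho_{k+1}}$, one is done. No new transfer calculation is needed; the hard work was already absorbed into the previous proposition.

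Your proposed retraction via the transfer $tr:D_pBE_k^{\rho_k}\to BE_{k+1}^{\rho_{k+1}}$ followed by $e^{St}_{k+1}$ is not wrong in spirit, but the verification you flag as the ``main obstacle'' is genuinely nontrivial: the composite $tr\circ(\text{incl})$ is governed by a double coset decomposition of $E_{k+1}\backslash(\Sigma_p\wr E_k)/E_{k+1}$, and showing that after sandwiching by $e^{St}_{k+1}$ one obtains a $p$--local unit amounts to reproving the Mitchell--Priddy splitting in this special case. Invoking \corref{ek cor} alone does not do this; that is a Hecke algebra identity for $B_n$--coinvariants, and translating it into the required statement about $\Sigma_p\wr E_k$--transfers needs exactly the machinery of \thmref{transfer thm} plus the homology computation behind the Mitchell--Priddy proposition. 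So your route is a detour through the proof of the preceding proposition rather than a use of its statement.
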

\begin{proof}  This follows from the commutative diagram
\begin{equation*}
\xymatrix{
BE_{k+1}^{\rho_{k+1}} \ar[d]_{e^{St}_{k+1}} \ar[dr]^{e^{St}_{k+1}} &&  \\  BE_{k+1}^{\rho_{k+1}} \ar[d] \ar[r]^{e^{St}_{k}\times 1} &  BE_{k+1}^{\rho_{k+1}} \ar[d] \ar[r] &  B\widetilde B_{k+1}^{\rho_{k+1}} \ar@{=}[d]\\
 D_pBE_{k}^{\rho_{k}} \ar[r]^{D_p e^{St}_{k}}& D_pBE_{k}^{\rho_{k}} \ar[r] &  D_pB\widetilde B_k^{\rho_k}}
\end{equation*}
where the unlabeled maps are induced by subgroup inclusions.  The triangle commutes by \propref{ek st prop}.
\end{proof}

\section{$\A$--module maps among the modules $R_nL_k$}

Recall that we wish to understand $\A$--module maps among the $\A$--modules
$$R_nL_k \subseteq H_*(D_{p^n}L_1(k)).$$

The results in the last sections make it clear that very explicit geometric maps exhibit $R_nL_k$ as a direct summand of $R_1^{k+n}L_0$.  Thus it suffices to understand $\A$--module maps among the family $R_1^nL_0$.

Recall that $E_n < \widetilde B_n$ induces an epimorphism of $\A$--modules
$$ H_*(BE_n^{\rho_n})_{B_n} \twoheadrightarrow R_1^nL_0 \subseteq H_*(B\widetilde B_n^{\rho_n}).$$

\begin{rem} The epimorphism here is an isomorphism when $p=2$, and very close to an isomorphism when $p$ is odd: see \lemref{Nil iso lem}.
\end{rem}

\begin{thm} \label{m not n thm} If $m< n$,
$$\Hom_{\A}(H_*(BE_m^{\rho_m})_{B_m},H_*(BE_n^{\rho_n})_{B_n}) = \Hom_{\A}(R_1^mL_0, R_1^nL_0) =
0.$$
Thus $\Hom_{\A}(R_mL_j,R_nL_k)= 0$ if $m+j< n+k$.
\end{thm}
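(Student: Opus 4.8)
The plan is to prove the vanishing statement $\Hom_{\A}(H_*(BE_m^{\rho_m})_{B_m},H_*(BE_n^{\rho_n})_{B_n}) = 0$ for $m < n$ by reducing it to a statement about connectivity and top-dimensional behavior of these $\A$--modules, combined with a dimension count forced by the Steinberg idempotent. First I would record the relevant numerical data: $H_*(BE_n^{\rho_n})_{B_n}$ is a finite-type $\A$--module, and its relation to $R_1^nL_0$ (which is $(c(n)-1)$--connected with $c(n) = 2p^n-1-n$, and with $H_{c(n)} = \Z/p$ one-dimensional) pins down where the generators live. The essential point is that $R_1^nL_0$ is an \emph{unstable}-flavored module whose minimal-degree class sits in degree $c(n)$, and these degrees grow strictly with $n$; moreover the whole module $R_1^mL_0$ is generated over the Dyer--Lashof/Steenrod structure in a range determined by $c(m)$.

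The key steps, in order, are: (i) reduce from $R_mL_j$ and $R_nL_k$ to $R_1^{m+j}L_0$ and $R_1^{n+k}L_0$ using the established fact that each $R_nL_k$ is a direct summand of $R_1^{n+k}L_0$ (from \corref{DpLk retract cor} and the Mitchell--Priddy splitting), so $\Hom_{\A}(R_mL_j, R_nL_k)$ embeds in $\Hom_{\A}(R_1^{m+j}L_0, R_1^{n+k}L_0)$, and it suffices to treat $\Hom_{\A}(R_1^mL_0, R_1^nL_0) = 0$ for $m<n$; (ii) identify $R_1^nL_0$ with the image of $H_*(BE_n^{\rho_n})_{B_n}$ in $H_*(B\widetilde B_n^{\rho_n}) = H_*(D_p^nS^1)$, so that a nonzero $\A$--map out of $R_1^mL_0$ composed with the surjection $H_*(BE_m^{\rho_m})_{B_m}\twoheadrightarrow R_1^mL_0$ would give a nonzero $\A$--map $H_*(BE_m^{\rho_m})_{B_m}\to H_*(BE_n^{\rho_n})_{B_n}$; (iii) use the structure of $H^*(BE_m)$ as an unstable $\A$--module together with the Steinberg idempotent $e_m$ (equivalently the Borel coinvariants) to show that $H_*(BE_m^{\rho_m})_{B_m}$, as an $\A$--module, is generated by classes in a range of degrees that is \emph{too low} to map nontrivially into $H_*(BE_n^{\rho_n})_{B_n}$: any $\A$--map must send the bottom class (degree $c(m)$) somewhere, but the only candidate target degrees in $H_*(BE_n^{\rho_n})_{B_n}$ are $\geq c(n) > c(m)$, and more sharply one shows the generators of the source, sitting in a band of degrees bounded above by something like $2p^m - O(m)$ scaled appropriately, cannot hit the generators of the target which start at $c(n)$; (iv) conclude that the map is zero on generators, hence zero.

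The main obstacle I anticipate is step (iii): making the degree/connectivity argument actually close. Connectivity alone gives that the bottom class of the source maps to zero, but one needs to control \emph{all} the $\A$--module generators of $H_*(BE_m^{\rho_m})_{B_m}$ — a priori there could be generators in arbitrarily high degree forced by the Steinberg/Borel truncation — and show none of them can map onto the generators of $H_*(BE_n^{\rho_n})_{B_n}$. This requires a genuine input about the module structure of $e_m H^*(BE_m)$: one wants to know that it is generated (as an $\A$--module) in degrees that, after accounting for the Thom twist by $\rho_m$ and dualizing, lie strictly below where the analogous generators for $n$ begin. I would look to the explicit descriptions of $H^*(L_1(k))$ over $\A$ promised in the introduction (the Steinberg summand of $H^*(BE_k)$ twisted by the Euler class), and to the calculations with Steinberg idempotents in \cite{kuhn4} and \cite{kuhnpriddy}, to extract the needed generator-degree bound; this is presumably where the bulk of the real work sits, with steps (i), (ii), and (iv) being essentially formal given the machinery already set up in the preceding sections.
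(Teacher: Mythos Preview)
Your steps (i) and (ii) are fine and match the paper's own reductions: passing to $R_1^{m}L_0$ and $R_1^nL_0$ via the summand inclusions is exactly right, and the last sentence of the theorem is then formal.

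The problem is step (iii). A connectivity/degree argument of the kind you sketch cannot close. The right $\A$--module $R_1^mL_0$ (equivalently, $H_*(BE_m^{\rho_m})_{B_m}$) is \emph{not} generated, as an $\A$--module, by classes in any bounded range of degrees: it is infinite-dimensional, with elements $Q^Ix$ for all allowable $I$ of length $m$, and the right Steenrod action (via Nishida relations) \emph{lowers} degree, so ``generated in low degree'' is not even the right notion. Knowing that the bottom class in degree $c(m)$ maps to zero (by connectivity of the target) tells you nothing about what happens in higher degrees; you would need to control infinitely many $\A$--module generators of the source, and there is no bound of the sort you are hoping for. So the ``main obstacle'' you identify is not just the hard part --- it is a dead end for this line of attack.

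The paper's argument is entirely different and does not use degree estimates at all. It dualizes to cohomology and works in the category $\U$ of unstable $\A$--modules: one has $N(n) = \Sigma^{-1}(R_1^nL_0)^* \subseteq c_nH^*(BE_n)^{B_n}$, where $c_n$ is the Euler class of $\tilde\rho_n$. The key input is the Adams--Gunawardena--Miller / Lannes--Zarati linearization theorem, which says $\Hom_{\U}(H^*(BE_n), H^*(BE_m)) \cong \Z/p[\Hom(E_m,E_n)]$. Since $c_n$ restricts to zero on every proper subgroup of $E_n$, one shows that
\[
\Hom_{\U}(c_nH^*(BE_n), H^*(BE_m)) \cong \Z/p[\Epi(E_m,E_n)],
\]
and this is zero when $m<n$ simply because there are no surjections $E_m \to E_n$. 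The nil-closedness of $H^*(BE_m)$ and a nilpotence statement for the inclusion $N(n) \hookrightarrow c_nH^*(BE_n)^{B_n}$ then transfer this vanishing to $\Hom_{\U}(N(n), c_mH^*(BE_m)^{B_m})$, which dualizes to the claim. So the essential idea you are missing is to replace a hopeless generator-degree bookkeeping by the injectivity of $H^*(BE_m)$ in $\U$ and the linearization theorem.
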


We will prove this in \secref{End RnL0 section}. Note that this includes Lemmas \ref{d lem 1}, \ref{s lem 2}, and \ref{s lem 3} as special cases.

More interesting is when $m=n$.

Let $\End_{\A}(H_*(BE_n^{\rho_n})_{B_n}, R_1^nL_0)$ be the set of pairs $(f,g)$ of $\A$--module maps making the diagram
\begin{equation*}
\xymatrix{
H_*(BE_n^{\rho_n})_{B_n} \ar@{->>}[d] \ar[r]^f & H_*(BE_n^{\rho_n})_{B_n} \ar@{->>}[d]  \\
R_1^nL_0 \ar[r]^g & R_1^nL_0 }
\end{equation*}
commute.  This set is a subalgebra of $\End_{\A}(H_*(BE_n^{\rho_n})_{B_n})$ in the evident way, and there is an algebra homomorphism to $\End_{\A}(R_1^nL_0)$ by sending $(f,g)$ to $g$.

Note that there is a homomorphism of algebras
$$ \mathcal H_n \ra \End_{\A}(M_{B_n})$$
for all $\A$--modules $M$ equipped with an action of $GL_n$.

\begin{thm} \label{End thm}  The algebra homomorphisms
$$ \mathcal H_n \ra \End_{\A}(H_*(BE_n^{\rho_n})_{B_n}) \supseteq \End_{\A}(H_*(BE_n^{\rho_n})_{B_n}, R_1^nL_0) \ra End_{\A}(R_1^nL_0)$$
are all isomorphisms.
\end{thm}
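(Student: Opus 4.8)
The plan is to establish the chain of inclusions/quotients as a sequence of injections and surjections, and then show the composite $\mathcal H_n \to \End_{\A}(R_1^nL_0)$ is both injective and surjective, forcing every arrow in sight to be an isomorphism. First I would observe that the maps $\mathcal H_n \to \End_{\A}(M_{B_n})$ (for $M$ a $\Z/p[GL_n]$-module on which the Steenrod algebra acts compatibly) are algebra homomorphisms by construction, and that the restriction to $\End_{\A}(H_*(BE_n^{\rho_n})_{B_n}, R_1^nL_0)$ together with the projection $(f,g)\mapsto g$ makes sense because the epimorphism $H_*(BE_n^{\rho_n})_{B_n}\twoheadrightarrow R_1^nL_0$ is $GL_n$-equivariant (via Theorem~\ref{transfer thm} and Corollary~\ref{transfer cor}), so every Hecke operator $\mathcal H_n$ acts compatibly on source and target; this produces the factorization $\mathcal H_n \to \End_{\A}(H_*(BE_n^{\rho_n})_{B_n}, R_1^nL_0) \to \End_{\A}(R_1^nL_0)$, with the map to $\End_\A(H_*(BE_n^{\rho_n})_{B_n})$ the forgetful one $(f,g)\mapsto f$.

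The surjectivity end: I would show $\mathcal H_n \to \End_{\A}(R_1^nL_0)$ is \emph{onto}. Here the key input is Theorem~\ref{m not n thm} and the structure of $R_1^nL_0$ as a module built from Dyer--Lashof operations on $L_0 = \tilde H_*(S^1)$. An arbitrary $\A$-endomorphism $\varphi$ of $R_1^nL_0$ must be analyzed on the distinguished generator (or lowest-degree class), using the identification $R_1^nL_0 = \im\{\tilde H_*(BE_n^{\rho_n})_{B_n}\to \tilde H_*(B\widetilde B_n^{\rho_n})\}$ and the fact that $\tilde H_*(BE_n^{\rho_n})$ is essentially the Steinberg summand inside $H^*(BE_n)\otimes(\text{Thom class})$, an unstable $\A$-module whose $\A$-module endomorphisms are governed by $GL_n$-equivariance. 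The known calculation (essentially as in \cite{kuhn4,kuhnpriddy}) that $\End_{\A}$ of this module is spanned by the $GL_n$-action is what makes $\mathcal H_n$ surject; I would cite the computation of $H^*(BE_n)$ as an unstable $\A$-module (Nil-closure / the action of the Steenrod algebra forcing $\A$-linear maps to be $\F_p$-linear combinations of $GL_n$-translates on the relevant degrees).

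The injectivity end: I would show $\mathcal H_n \to \End_{\A}(R_1^nL_0)$ is \emph{injective}, equivalently that a nonzero element of $\mathcal H_n$ acts nonzero on $R_1^nL_0$. Using the presentation of $\mathcal H_n$ (Proposition on generators $e(i)$) and the basis indexed by $W_n$, it suffices to exhibit enough classes in $R_1^nL_0$ detecting the Bruhat cells; concretely, the surjection $\tilde H_*(BE_n^{\rho_n})_{B_n}\twoheadrightarrow R_1^nL_0$ together with Corollary~\ref{transfer cor} and the nondegeneracy of the Steinberg idempotent pairing (Steinberg's theorem, \cite{steinberg}) shows $e_n R_1^nL_0 \neq 0$ and, more generally, that the $\mathcal H_n$-module $R_1^nL_0$ is faithful — this is the point where one uses that $R_1^nL_0$ contains a faithful copy of the Steinberg-related piece of $\Z/p[B_n\backslash GL_n]$. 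Once $\mathcal H_n \hookrightarrow \End_{\A}(R_1^nL_0)$ and $\mathcal H_n \twoheadrightarrow \End_{\A}(R_1^nL_0)$, the whole chain collapses: every map in the displayed sequence is sandwiched between a copy of $\mathcal H_n$ and $\End_{\A}(R_1^nL_0)$ of the same finite dimension, hence all are isomorphisms, and in particular the middle object $\End_{\A}(H_*(BE_n^{\rho_n})_{B_n}, R_1^nL_0)$ is identified with both.

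The main obstacle I expect is the surjectivity of $\mathcal H_n \to \End_\A(R_1^nL_0)$: one needs genuine control of $H^*(BE_n)$ (equivalently $R_1^nL_0$) as a module over the \emph{whole} Steenrod algebra, not just over the stable or unstable structure, to rule out exotic $\A$-linear endomorphisms not coming from $GL_n$. This is precisely where the special properties of $H^*(BE_n)$ as an unstable $\A$-module (alluded to at the end of the "What is next" subsection) must be invoked, and it is the technical heart of the argument; the injectivity, by contrast, is a relatively soft consequence of Steinberg-idempotent nondegeneracy and the transfer compatibilities already set up in Theorem~\ref{transfer thm} and Corollary~\ref{transfer cor}.
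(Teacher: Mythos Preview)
Your overall architecture is right---factor the Hecke action through the epimorphism, then prove the composite $\mathcal H_n \to \End_{\A}(R_1^nL_0)$ is bijective---and you correctly flag surjectivity as the crux. But the proposal never names the actual computational input that does the work, and without it the surjectivity paragraph is a promissory note rather than an argument.

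The paper's proof dualizes to cohomology and works in the category $\U$ of unstable $\A$--modules, where the decisive fact is the Adams--Gunawardena--Miller / Lannes--Zarati theorem: the natural map $\Z/p[\Hom(E_m,E_n)] \to \Hom_{\U}(H^*(BE_n),H^*(BE_m))$ is an isomorphism. Restricting to the Thom class ideal $c_nH^*(BE_n)$ (using that $c_n$ vanishes on proper subgroups) cuts this down to $\Z/p[\Epi(E_m,E_n)]$, and then $\U$--injectivity of $H^*(BE_m)$ lets one take $B_n$--coinvariants on the source freely. Setting $m=n$ and taking $B_n$--invariants on the target yields $\mathcal H_n = \Z/p[GL_n/B_n]^{B_n}$ directly as $\End_{\U}(c_nH^*(BE_n)^{B_n})$. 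The passage from $c_nH^*(BE_n)^{B_n}$ down to the dual of $R_1^nL_0$ is handled by showing the inclusion has nilpotent cokernel (an isomorphism when $p=2$), so that mapping into the nil--closed module $H^*(BE_n)$ sees no difference. Your mention of ``Nil-closure'' is on target, but you are missing the $\U$--injectivity and the AGM/Lannes--Zarati input entirely; these are exactly what ``rule out exotic $\A$--linear endomorphisms,'' and there is no softer substitute in the paper.

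Two further remarks. First, in the paper's route injectivity is not a separate argument via Steinberg nondegeneracy: once one has computed $\Hom_{\U}(N(n),c_nH^*(BE_n)^{B_n}) = \mathcal H_n$ on the nose, both injectivity and surjectivity for the larger module are immediate, and the remaining point is that the generators $e(i)$ already preserve the submodule $N(n)$ (this is where \thmref{transfer thm} enters). Second, your sandwich at the end tacitly assumes $\End_{\A}(H_*(BE_n^{\rho_n})_{B_n})$ is finite-dimensional, which is not obvious a priori for $\A$--module endomorphisms of an infinite-dimensional module; the paper avoids this by computing each $\Hom$ group explicitly rather than by a dimension count.
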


This fundamental result will also be proved in \secref{End RnL0 section}.

It is worth pondering what key elements in $\mathcal H_n$ correspond to in $End_{\A}(R_1^nL_0)$.

Firstly $R_1^nL_0 \subset H_*(D_p^nS^1) = H_*(B\widetilde B_n^{\rho_n})$ can be viewed as the span of sequences of Dyer Lashof operations of length $n$ acting on the fundamental class of $S^1$, {\em before} Adem relations have been applied.

\corref{transfer cor} tells us that $\hat e(i) \in \mathcal H_n$ corresponds to the endomorphism
$$ R_1^nL_0 \xra{i_*} R_1^{n-i-1}R_2R_1^{i-1}L_0 \xra{tr} R_1^nL_0$$
where $i_*$ and $tr$ are induced by the wreath product subgroup inclusion $\widetilde B_n < \widetilde P_i$.  This map can be regarded as `rewriting' the $i$th and $(i+1)$st operations (counting from the right) in some sort of `admissible' form.

The complementary idempotent $e(i)$ rewrites these operations in `completely inadmissible' form.

We see that $\hat e_nR_1^nL_0 = R_nL_0$ and $e_kR_1^kL_0 = L_k$, and finally that
$$ \hat e_ne_kR_1^{n+k}L_0 = R_nL_k, $$
which can be interpreted as the span of admissible sequences of length $n$ applied to the module of completely inadmissible sequences of length $k$ (acting on a 1--dimensional class).

\begin{cor} \label{End cor} If $m+j=n+k$,
$ \Hom_{\A}(R_mL_j, R_nL_k) = \hat e_ne_k\mathcal H_{k+n} \hat e_m e_j$.
\end{cor}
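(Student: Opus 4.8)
The plan is to reduce the statement, via the explicit geometric direct-summand decompositions established in the preceding sections, to the case $j=k=0$ handled by \thmref{End thm} and \thmref{m not n thm}. First I would record the two facts we need: (a) by \propref{MiP83} (the Mitchell--Priddy retraction) together with the discussion in \secref{introduction}, the module $R_mL_j$ is the image of the idempotent $\hat e_m e_j$ acting on $R_1^{m+j}L_0$ — more precisely, $e_j$ cuts $R_1^jL_0$ down to $L_j$ and then $\hat e_m$ cuts $R_1^mL_j$ down to $R_mL_j$, using the block embedding $\mathcal H_m\times\mathcal H_j\hookrightarrow\mathcal H_{m+j}$; and (b) under the identification $\End_{\A}(R_1^{m+j}L_0)=\mathcal H_{m+j}$ of \thmref{End thm}, the inclusion $R_mL_j\subseteq R_1^{m+j}L_0$ and a chosen retraction exhibit $R_mL_j$ as $(\hat e_m e_j)R_1^{m+j}L_0$. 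The key compatibility is that $\hat e(i)$ really does act on $R_1^{m+j}L_0$ as the wreath-product transfer-through-$\widetilde P_i$ endomorphism, which is exactly \corref{transfer cor}; this is what lets us say ``$R_mL_j$ = image of $\hat e_m e_j$'' rather than merely ``$R_mL_j$ is a summand''.

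The core step is then a standard Morita-type identity for Hom-sets between images of idempotents. Writing $n'=m+j=n+k$ and $A=\mathcal H_{n'}=\End_{\A}(R_1^{n'}L_0)$, with $\epsilon=\hat e_m e_j$ and $\epsilon'=\hat e_n e_k$ idempotents in $A$, I would argue that the natural map
\[
\epsilon' A\,\epsilon \;\longrightarrow\; \Hom_{\A}(\epsilon R_1^{n'}L_0,\ \epsilon' R_1^{n'}L_0),\qquad a\mapsto (x\mapsto a x),
\]
is an isomorphism. Surjectivity: given an $\A$-module map $f\colon R_mL_j\to R_nL_k$, precompose with the retraction $R_1^{n'}L_0\twoheadrightarrow R_mL_j$ and postcompose with the inclusion $R_nL_k\hookrightarrow R_1^{n'}L_0$ to get an element $\tilde f\in\End_{\A}(R_1^{n'}L_0)=A$; then $\epsilon'\tilde f\epsilon$ restricts to $f$, since $\epsilon$ and $\epsilon'$ act as identity on $R_mL_j$ and $R_nL_k$ respectively. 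Injectivity: if $\epsilon' a\epsilon$ kills $R_mL_j=\epsilon R_1^{n'}L_0$, then $\epsilon' a\epsilon$ is zero as an endomorphism of $R_1^{n'}L_0$, hence zero in $A$, and $\epsilon' a\epsilon$ is what $a$ represents in $\epsilon' A\epsilon$. This gives $\Hom_{\A}(R_mL_j,R_nL_k)\cong\hat e_n e_k\,\mathcal H_{n'}\,\hat e_m e_j$ as claimed.

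I should also dispose of the case $m+j\ne n+k$ implicitly (it is the hypothesis that $m+j=n+k$, so nothing is needed), and note that when $p$ is odd one works with $H_*(BE_{n'}^{\rho_{n'}})_{B_{n'}}$ rather than $R_1^{n'}L_0$ on the nose — but \thmref{End thm} asserts all three algebra maps in its display are isomorphisms, so the argument goes through verbatim with $R_1^{n'}L_0$ as stated, and the odd-primary discrepancy (the epimorphism of \remref{} that is ``very close to an isomorphism'') is absorbed there rather than here. The main obstacle I anticipate is not the Morita formalism — that is routine — but making airtight the claim that the chosen inclusion and retraction for $R_mL_j\subseteq R_1^{m+j}L_0$ are genuinely the image/coimage of the specific product idempotent $\hat e_m e_j$ (and not merely of \emph{some} conjugate idempotent), i.e.\ that the geometry of \propref{MiP83}, \corref{DpLk retract cor}, and \corref{transfer cor} assembles consistently across the $m$ and $j$ blocks. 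This is where the commutativity of the diagram in \corref{DpLk retract cor} (the triangle commuting by \propref{ek st prop}) and the identity $e_ke(k)e_k=e_{k+1}$ of \corref{ek cor} must be invoked inductively to pin down that the composite of the retractions is exactly multiplication by $\hat e_m e_j$.
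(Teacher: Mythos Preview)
Your proposal is correct and is exactly the argument the paper has in mind: the paper states the corollary without proof, treating it as immediate from \thmref{End thm} together with the identification $R_nL_k=\hat e_ne_kR_1^{n+k}L_0$ spelled out in the paragraph just before the corollary, and your Morita-type computation $\Hom_{\A}(\epsilon M,\epsilon'M)=\epsilon'\End_{\A}(M)\epsilon$ is the standard way to make that step explicit. The ``main obstacle'' you flag --- that the summand inclusion and retraction for $R_mL_j$ really are governed by the specific idempotent $\hat e_me_j$ rather than some conjugate --- is precisely what the paper's discussion preceding the corollary (via \corref{transfer cor} and the Mitchell--Priddy retraction) establishes, so there is no gap.
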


\subsection{More calculations in the Hecke algebra}

The next lemma and corollary should be compared with \lemref{d lem} and \corref{d cor}.

\begin{lem} \label{d lem 2}
$ \Hom_{\A}(L_k, R_1L_{k-1}) = \hat e_1e_{k-1}\mathcal H_{k}  e_k = \langle e_k \rangle$.
\end{lem}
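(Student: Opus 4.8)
The goal is to identify $\Hom_{\A}(L_k, R_1L_{k-1})$ with $\hat e_1 e_{k-1}\mathcal H_k e_k$ and then show that this corner of the Hecke algebra is the $1$-dimensional space $\langle e_k\rangle$. The first identification is immediate from \corref{End cor} with $m=1$, $j=k-1$, $n=1$, $k$ replaced by $k-1$ in the roles there: indeed $R_1L_{k-1} = \hat e_1 e_{k-1}R_1^k L_0$ and $L_k = e_k R_1^k L_0$ (using $\hat e_1 = 1$ on a single factor), so $\Hom_{\A}(L_k, R_1L_{k-1}) = \hat e_1 e_{k-1}\mathcal H_k e_k$. Since $\hat e_1 = 1 \in \mathcal H_k$, this is just $e_{k-1}\mathcal H_k e_k$. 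So everything reduces to a purely algebraic computation inside $\mathcal H_k$.

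Next I would compute $e_{k-1}\mathcal H_k e_k$. Write $\mathcal H_k$ on generators $e(1),\dots,e(k-1)$ with the relations (i)--(iii). Recall $e_{k-1}$ is the longest word in $e(1),\dots,e(k-2)$ and $e_k$ is the longest word in $e(1),\dots,e(k-1)$; both are idempotents, and by the Definition/Proposition characterizing $e_k$ we have $e(i)e_k = e_k = e_k e(i)$ for all $i\le k-1$, and $e(i)e_{k-1} = e_{k-1} = e_{k-1}e(i)$ for $i\le k-2$. Using \corref{ek cor} in the form $e_{k-1}e(k-1)e_{k-1} = e_k$, one sees $e_k \in e_{k-1}\mathcal H_k e_k$ (clearly, since $e_k e_k = e_k$ and $e_{k-1}e_k = e_k$), so $\langle e_k\rangle \subseteq e_{k-1}\mathcal H_k e_k$. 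For the reverse inclusion, I would argue that any element of $e_{k-1}\mathcal H_k e_k$ is a $\Z/p$-multiple of $e_k$. The cleanest route is to use the standard basis of $\mathcal H_k$ indexed by the Weyl group $W_k = \Sigma_k$ (Bruhat basis $T_w$), together with the fact that $e_{k-1}$ corresponds to the longest element of the parabolic $W_{k-1} = \Sigma_{k-1}$ and $e_k$ (up to the hat-involution) to the longest element $w_0$ of $\Sigma_k$. Left multiplication by $e_{k-1}$ and right multiplication by $e_k$ collapse the basis: $e_{k-1}T_w$ depends only on the coset $W_{k-1}w$, and $T_w e_k$ forces $w = w_0$ up to sign because $e_k$ is (a twist of) the sign-idempotent-like longest word that is killed by any $e(i)-$nontrivial left factor. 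Concretely, since $e(i)e_k = e_k$ for all $i$, for any $w \ne 1$ one has $T_w e_k = \pm e_k$ (reduce $w$ to a product of $e(i)$'s and absorb), so $\mathcal H_k e_k = \langle e_k\rangle$ is already $1$-dimensional; hence $e_{k-1}\mathcal H_k e_k \subseteq \mathcal H_k e_k = \langle e_k\rangle$, giving equality.

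In fact the last observation short-circuits the whole computation: the key lemma is simply that $\mathcal H_k e_k$ is spanned by $e_k$, which follows directly from property (a) of the Definition/Proposition defining $e_k$ (namely $e(i)e_k = e_k$ for all $i$) together with the fact that the $e(i)$ generate $\mathcal H_k$ as an algebra — any product of generators times $e_k$ collapses to $\pm e_k$ (in $\Z/p$, to $e_k$ since the $e(i)$ are genuine idempotents with $e(i)e_k=e_k$, no signs appear). Therefore $e_{k-1}\mathcal H_k e_k = e_{k-1}\langle e_k\rangle = \langle e_{k-1}e_k\rangle = \langle e_k\rangle$, where the last equality uses $e_{k-1}e_k = e_k$ (again property (a), since $e_{k-1}$ is a word in the $e(i)$). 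This completes the proof.

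**Main obstacle.** The only subtle point is justifying that products of the generators $e(i)$ acting on $e_k$ never produce a scalar other than $1$ — i.e., that working mod $p$ with the idempotent relation $e(i)^2 = e(i)$ rather than the integral relation $e(i)^2 = (p+1)e(i)$, the collapse $e(i)\cdots e(j) e_k = e_k$ is exact with no stray coefficients. This is precisely property (a) in the Definition/Proposition, so it is already available; but I would state it carefully, since it is the linchpin. Everything else is bookkeeping with the presentation in relations (i)--(iii) and \corref{ek cor}.
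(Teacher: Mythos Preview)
Your argument is correct and is exactly what the paper intends: the lemma is stated without proof there, as it follows immediately from \corref{End cor} together with the defining property (a) of $e_k$, precisely via your observation that $\mathcal H_k e_k = \langle e_k\rangle$ since the generators $e(i)$ all fix $e_k$. (One small slip: in applying \corref{End cor} the source $L_k = R_0L_k$ has $m=0$, $j=k$, not $m=1$, $j=k-1$; your conclusion $\hat e_1 e_{k-1}\mathcal H_k e_k$ is nonetheless correct because $\hat e_0 = \hat e_1 = 1$.)
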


\begin{cor} \label{d cor 2} If $d: L_k \ra R_1L_{k-1}$ corresponds to $\lambda e_k$ for some $\lambda \in \Z/p$, then the composite
$$ R_nL_k \xra{R_nd} R_nR_1L_{k-1} \xra{i_*} R_{n+1}L_{k-1}$$
corresponds to $\lambda \hat e_{n+1}e_k \in \hat e_{n+1}e_{k-1}\mathcal H_{k+n} \hat e_n e_k$.
\end{cor}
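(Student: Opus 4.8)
The plan is to assemble the statement from two ingredients already in hand: the identification of $\Hom_{\A}(L_k,R_1L_{k-1})$ with $\langle e_k\rangle \subseteq \hat e_1 e_{k-1}\mathcal H_k e_k$ in \lemref{d lem 2}, and the fact (from the discussion preceding \corref{End cor}) that the maps $i_*$ and $tr$ induced by the wreath-product inclusions $\widetilde B_n < \widetilde P_i$ realize the Hecke-algebra elements $e(i)$, $\hat e(i)$ on the modules $R_1^nL_0$, compatibly with the splitting of $R_nL_k$ off $R_1^{n+k}L_0$ via the idempotent $\hat e_n e_k$. So the first step is to unwind what it means for $d\colon L_k\to R_1L_{k-1}$ to correspond to $\lambda e_k$: under \corref{End cor} with $(m,j)=(0,k)$ and $(n,k)$ there replaced by $(1,k-1)$, we have $L_k = e_k R_1^k L_0$ and $R_1L_{k-1}=\hat e_1 e_{k-1}R_1^k L_0$, and $d$ is identified with right-and-left multiplication by $\lambda e_k$ inside $\mathcal H_k$ (using $\hat e_1 e_k = e_k$ since $\hat e_1=1$ and $e_k e(i)=e_k$ for $i<k$).

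Next I would apply the functor $R_n$ and the map $i_*$. Applying $R_n$ to $d$ gives $R_n d\colon R_nL_k \to R_nR_1L_{k-1}$; under the embeddings $R_nL_k \subseteq \hat e_n e_k R_1^{n+k}L_0$ and $R_nR_1L_{k-1}\subseteq \hat e_n \hat e_1 e_{k-1}R_1^{n+k}L_0$, the functoriality of $R_n$ together with the description of Hecke elements as wreath-product transfers shows that $R_n d$ is multiplication by $\lambda e_k$ acting in the last $k$ factors, i.e.\ by the image of $\lambda e_k\in\mathcal H_k$ under the block embedding $\mathcal H_n\times\mathcal H_k\hookrightarrow \mathcal H_{n+k}$ (the element still called $e_k$, not involving $e(n)$). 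Then $i_*\colon R_nR_1L_{k-1}\to R_{n+1}L_{k-1}$, induced by the subgroup inclusion that merges the $n$th and $(n+1)$st slots, is precisely multiplication by $\hat e_n \hat e(n)$: indeed $R_{n+1}L_{k-1}=\hat e_{n+1}e_{k-1}R_1^{n+k}L_0$ and $\hat e_{n+1}=\hat e_n\hat e(n)\hat e_n$ by \corref{ek cor}, while $i_*$ on the $R$-modules is exactly the inclusion-induced half of the pair realizing $\hat e(n)$, post-composed with the projector $\hat e_n$ already present. Composing, the endomorphism corresponds to $\hat e_n\hat e(n)\cdot \lambda e_k$, and since $e_k$ commutes with $\hat e(n)$ and $\hat e_n$ (they involve disjoint generators $e(i)$, $i>k$, versus $i<k$) and $\hat e_n\hat e(n)=\hat e_n\hat e(n)\hat e_n\cdot(\text{something})$—more carefully, because $\hat e_n \hat e(n)\hat e_n=\hat e_{n+1}$ and $\hat e_n$ is idempotent absorbing into $\hat e_{n+1}$ on the relevant side—the total element is $\lambda\,\hat e_{n+1}e_k$, lying in $\hat e_{n+1}e_{k-1}\mathcal H_{k+n}\hat e_n e_k = \Hom_{\A}(R_nL_k,R_{n+1}L_{k-1})$ by \corref{End cor}.

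The main obstacle I anticipate is bookkeeping the idempotents correctly: one must check that the composite $\hat e_n\hat e(n)\cdot e_k$, a priori just an element of $\mathcal H_{k+n}$, genuinely equals $\hat e_{n+1}e_k$ and genuinely lands in the subspace $\hat e_{n+1}e_{k-1}\mathcal H_{k+n}\hat e_n e_k$ cut out on both sides. This is a purely Hecke-algebraic identity, and it should follow from \corref{ek cor} ($\hat e_n\hat e(n)\hat e_n=\hat e_{n+1}$) together with the defining absorption properties $\hat e_{n+1}\hat e(i)=\hat e_{n+1}$ for the relevant $i$ and the commutation of $e_k$ with all generators not indexed in $\{1,\dots,k-1\}$; the slightly delicate point is that $i_*$ produces $\hat e_n\hat e(n)$ on the left but one needs a further $\hat e_n$ on the right of $\hat e(n)$ to build $\hat e_{n+1}$, and this is supplied because the source $R_nR_1L_{k-1}$ already carries the projector $\hat e_n$ (in its first $n$ slots) before $i_*$ is applied, so the honest composite is $\hat e_n\hat e(n)\hat e_n\cdot\lambda e_k=\lambda\hat e_{n+1}e_k$. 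Once this identity is verified, the corollary is immediate. Everything else is a direct application of \corref{QZ to QY on primitives cor}, \corref{transfer cor}, and \corref{End cor}.
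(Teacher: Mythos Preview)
The paper does not actually supply a proof of this corollary; it is stated as an immediate consequence of the framework built up through \corref{End cor}, \corref{transfer cor}, and \corref{ek cor}. Your proposal is precisely the natural unpacking of that framework, and the overall strategy---identify $d$ with $\lambda e_k$, observe that $R_n(-)$ transports Hecke elements along the block embedding $\mathcal H_k\hookrightarrow\mathcal H_{k+n}$, and recognize $i_*$ as supplying the extra factor needed to promote $\hat e_n$ to $\hat e_{n+1}$ via \corref{ek cor}---is correct and is exactly what the author has in mind.

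One small bookkeeping slip: in the paper's indexing convention the ``middle'' generator separating the $\mathcal H_k$-- and $\mathcal H_n$--blocks inside $\mathcal H_{k+n}$ is $e(k)$, not $e(n)$, and \corref{ek cor} reads $\hat e_n\hat e(k)\hat e_n=\hat e_{n+1}$. So where you write $\hat e_n\hat e(n)$ you should have $\hat e_n\hat e(k)$. This does not affect the substance of your argument, since you correctly invoke \corref{ek cor} and the commutation of $e_k$ (built from $e(1),\dots,e(k-1)$) with $\hat e_n$ and $\hat e(k)$ (built from $\hat e(k),\dots,\hat e(k+n-1)$). With that correction, your final computation $\hat e_n\hat e(k)\hat e_n\cdot\lambda e_k=\lambda\hat e_{n+1}e_k$ is exactly right, and the element visibly lies in $\hat e_{n+1}e_{k-1}\mathcal H_{k+n}\hat e_n e_k$ since $e_{k-1}e_k=e_k$ and $\hat e_{n+1}\hat e_n=\hat e_{n+1}$.
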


Similarly, the next lemma and corollary should be compared with \lemref{s lem} and \corref{s cor}.

\begin{lem} \label{s lem 4}
$ \Hom_{\A}(R_1L_{k-1}, L_k) = e_k\mathcal H_{k}  \hat e_1e_{k-1} = \langle e_k \rangle$.
\end{lem}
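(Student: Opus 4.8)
The plan is to deduce \lemref{s lem 4} from \lemref{d lem 2} and the general framework of \corref{End cor}, using the involution $\hat{\text{}}$ on $\mathcal H_n$ to pass between the ``$d$-side'' and the ``$s$-side.'' First I would invoke \corref{End cor} with $m=1$, $j=k-1$, $n=0$, $k$ replaced by $k$: since $1+(k-1)=0+k$, we get
$$\Hom_{\A}(R_1L_{k-1},L_k)=\hat e_0 e_k\,\mathcal H_{k}\,\hat e_1 e_{k-1}=e_k\,\mathcal H_{k}\,\hat e_1 e_{k-1},$$
using that $\hat e_0=1$ in $\mathcal H_k$ and that the image of $e_k\in\mathcal H_k$ and $\hat e_1=1\in\mathcal H_1$ under the block embedding $\mathcal H_k\times\mathcal H_1\hra\mathcal H_{k}$ are as named (here $\hat e_1$ sits on the single last coordinate and is the identity, while $e_k$ is the longest word in $e(1),\dots,e(k-1)$). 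This identifies the first equality in the statement; it is the exact mirror of the identification $\Hom_{\A}(L_k,R_1L_{k-1})=\hat e_1 e_{k-1}\mathcal H_k e_k$ from \lemref{d lem 2}, with the roles of source and target interchanged.

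Next I would compute $e_k\,\mathcal H_k\,\hat e_1 e_{k-1}$. Since $\hat e_1=1$ in $\mathcal H_k$, this is $e_k\,\mathcal H_k\,e_{k-1}$. Now $e_{k-1}$ is the longest word in $e(1),\dots,e(k-2)$, and by the characterizing properties in the Definition/Proposition we have $e(i)e_{k-1}=e_{k-1}$ for $1\le i\le k-2$; similarly $e_k e(i)=e_k$ for $1\le i\le k-1$. Given any $x\in\mathcal H_k$, I want to show $e_k x e_{k-1}$ is a scalar multiple of $e_k$. By \corref{ek cor} (with $n=1$), $e_{k-1}e(k-1)e_{k-1}=e_k$, and more to the point $e_k=e_{k-1}\,w\,e_{k-1}$ is itself absorbed appropriately; the cleaner route is to observe that $e_k\mathcal H_k e_{k-1}$ is a right $e_{k-1}\mathcal H_k e_{k-1}$-module and a left $e_k\mathcal H_k e_k$-module, and that $e_k\mathcal H_k e_k=\langle e_k\rangle\cong\Z/p$ because $e_k$ is a primitive idempotent whose corner algebra is one-dimensional (this is exactly property (b) of the Definition/Proposition together with the fact that $e_k$ generates a simple module). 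Then $e_k\mathcal H_k e_{k-1}=e_k\cdot(e_k\mathcal H_k e_{k-1})$, and since $e_k\mathcal H_k e_k=\langle e_k\rangle$, every element of $e_k\mathcal H_k e_{k-1}$ has the form $\lambda e_k e_{k-1}+\text{(stuff killed on the left by }e_k)$. Using $e_k e(i)=e_k$ for $i\le k-1$ and that $e_{k-1}$ is a word in those $e(i)$, one gets $e_k e_{k-1}=e_k$, so $e_k\mathcal H_k e_{k-1}\subseteq\langle e_k\rangle$; conversely $e_k=e_k e_{k-1}\in e_k\mathcal H_k e_{k-1}$, giving equality $\langle e_k\rangle$. (That $e_k\neq 0$ is part of the Definition/Proposition, so this is genuinely $\cong\Z/p$.)

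The step I expect to be the main obstacle is verifying that $e_k\mathcal H_k e_k\cong\Z/p$, equivalently that $e_k$ is a primitive idempotent with trivial endomorphism ring of its associated projective/simple. This is essentially the statement that the Steinberg module for $\mathcal H_k$ is absolutely irreducible over $\Z/p$; it is well known, and in this paper's setup it should follow from the defining properties (a), (b) of $e_k$ together with the presentation of $\mathcal H_k$ — indeed any $y=e_k x e_k$ satisfies $e(i)y=y=ye(i)$ for all $i$ and $y\in e_k\mathcal H_k$, so by the uniqueness clause in the Definition/Proposition $y$ is a scalar multiple of $e_k$. So in fact property (a) does the work directly: $e_k x e_k$ has $e(i)(e_k x e_k)=e_k x e_k=(e_k x e_k)e(i)$, hence $e_k x e_k\in\langle e_k\rangle$ by uniqueness of the element satisfying (a)–(b) up to scalar. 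This makes the argument short. I would then conclude by noting, parallel to \corref{d cor 2}, that the corresponding $s_{k*}$ computed via \corref{QZ to QY on primitives cor} and \corref{s cor} is the nonzero scalar multiple of the Hecke-algebra element $\hat e_n e_{k-1}\,w\,\hat e_1 e_k$ (to be recorded in the companion corollary to \lemref{s lem 4}), but for \lemref{s lem 4} itself the displayed chain of equalities is exactly what the above establishes.
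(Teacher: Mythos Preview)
Your approach is correct and is exactly what the paper intends: the paper states \lemref{s lem 4} without proof, as an immediate consequence of \corref{End cor} together with a short Hecke algebra computation, just as you outline. Your application of \corref{End cor} and the simplifications $\hat e_0=\hat e_1=1$ are right.

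There is one small logical gap in your Hecke algebra argument. You write that $y=e_kxe_k$ satisfies property~(a) and then conclude $y\in\langle e_k\rangle$ ``by uniqueness of the element satisfying (a)--(b) up to scalar.'' But the Definition/Proposition only asserts uniqueness among nonzero elements satisfying \emph{both} (a) and (b); satisfying (a) alone does not immediately force $y$ to be a scalar multiple of $e_k$ (a priori the two-sided fixed subspace could be a local algebra of dimension $>1$). The clean fix avoids this detour entirely: from property~(a) one has $e_k e(i)=e_k$ for all $i$, and since $e(1),\dots,e(k-1)$ generate $\mathcal H_k$ as a unital algebra, it follows at once that $e_k\mathcal H_k=\langle e_k\rangle$. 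Hence $e_k\mathcal H_k\,\hat e_1 e_{k-1}=e_k\mathcal H_k\,e_{k-1}\subseteq\langle e_k\rangle$, and since $e_k e_{k-1}=e_k$ you get equality. This replaces the whole middle paragraph and makes the convoluted passage through $e_k\mathcal H_k e_k$ and module structures unnecessary.
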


\begin{cor} \label{s cor 2} If $s: R_1L_{k-1}\ra L_k$ corresponds to $\lambda e_k$ for some $\lambda \in \Z/p$, then the composite
$$ R_{n+1}L_{k-1} \xra{tr} R_nR_1L_{k-1} \xra{R_ns} R_nL_k$$
corresponds to $\lambda e_k \hat e_{n+1} \in \hat e_n e_k\mathcal H_{k+n} \hat e_{n+1}e_{k-1}$.
\end{cor}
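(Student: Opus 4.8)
The plan is to reduce the computation to a pure identity in the Hecke algebra $\mathcal H_{k+n}$, using the dictionary between the $\A$-module maps among the $R_1^jL_0$ and elements of the Hecke algebras supplied by \thmref{End thm} and \corref{End cor}. By \corref{End cor}, the composite $R_{n+1}L_{k-1} \xra{tr} R_nR_1L_{k-1} \xra{R_ns} R_nL_k$ is an element of $\Hom_{\A}(R_{n+1}L_{k-1}, R_nL_k) = \hat e_n e_k\, \mathcal H_{k+n}\, \hat e_{n+1}e_{k-1}$, so it only remains to identify which element. The strategy is to split this identification into two halves matching the two arrows in the composite.

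First I would analyze the transfer $tr: R_{n+1}L_{k-1} \ra R_nR_1L_{k-1}$. Writing $R_{n+1}L_{k-1} = \hat e_{n+1}e_{k-1}R_1^{n+k}L_0$ and $R_nR_1L_{k-1} = \hat e_n e_{k-1} R_1^{n+k}L_0$ (the latter because $R_1L_{k-1}$ is a summand of $R_1^kL_0$ cut out by $e_{k-1}$ sitting in the last $k-1$ spots, and then $R_n$ of that is cut out by $\hat e_n$ in the first $n$ spots — here $e_{k-1}$ means the longest word in the last $k-1$ of the $e(i)$, built from $e(n+1),\dots,e(n+k-1)$, so the indices are disjoint from $\hat e_n$), the transfer associated to $\widetilde B_{n+1}<\widetilde P_n$ is, via \corref{transfer cor} and the discussion identifying $\hat e(i)$ with $i_*\circ tr$, exactly multiplication by $\hat e(n)\in\mathcal H_{k+n}$. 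Combined with $\hat e_n\hat e(n)\hat e_n=\hat e_{n+1}$ from \corref{ek cor}, the transfer map $R_{n+1}L_{k-1}\to R_nR_1L_{k-1}$ corresponds to the element $\hat e_{n+1}\in\hat e_n e_{k-1}\mathcal H_{k+n}\hat e_{n+1}e_{k-1}$ (note $\hat e_{n+1}\hat e_n=\hat e_{n+1}$, so this lies in the correct Hom-space).

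Next I would analyze $R_ns: R_nR_1L_{k-1}\ra R_nL_k$. By naturality of the functor $R_n$ and the fact that applying $R_n$ corresponds, on the Hecke-algebra side, to the block embedding $\mathcal H_k\hookrightarrow\mathcal H_{k+n}$ sending $\mathcal H_k$ to the copy generated by $e(n+1),\dots,e(n+k-1)$, the hypothesis that $s:R_1L_{k-1}\to L_k$ corresponds to $\lambda e_k$ (in the notation of \lemref{s lem 4}, where $e_k$ is the longest word in $e(1),\dots,e(k-1)$, now relabeled to $e(n+1),\dots,e(n+k-1)$) gives that $R_ns$ corresponds to $\lambda e_k\in \hat e_n e_k\mathcal H_{k+n}\hat e_ne_{k-1}$. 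Composing, the full map corresponds to $\lambda e_k\cdot\hat e_{n+1}$; since $e_k$ and $\hat e_{n+1}$ are built from index sets $\{n+1,\dots,n+k-1\}$ and $\{1,\dots,n\}$ — wait, $\hat e_{n+1}$ uses $\hat e(1),\dots,\hat e(n)$, disjoint from $e_k$'s indices — they commute, and $\lambda e_k\hat e_{n+1}=\lambda\hat e_{n+1}e_k$ lies in $\hat e_n e_k\mathcal H_{k+n}\hat e_{n+1}e_{k-1}$ exactly as claimed (using $e_k\hat e_n = e_k$ and $\hat e_{n+1}e_{k-1}=\hat e_{n+1}e_{k-1}$ on the appropriate sides). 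This is the desired identification.

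The main obstacle is bookkeeping with the index shifts under the block embedding $\mathcal H_k\hookrightarrow\mathcal H_{k+n}$: one must be careful that the $e_k$ appearing in \lemref{s lem 4} gets relabeled to the longest word in $e(n+1),\dots,e(n+k-1)$ inside $\mathcal H_{k+n}$ (matching the \textbf{Notation} convention), that $\hat e_{n+1}$ is the longest word in $\hat e(1),\dots,\hat e(n)$, and that the commutation $e_k\hat e_{n+1}=\hat e_{n+1}e_k$ really does hold because no $e(i)$ with $i\le n$ appears in $e_k$ and no $\hat e(i)$ with $i\ge n+1$ appears in $\hat e_{n+1}$ — except for the single overlap index $n$, which $e_k$ avoids by construction (it is the longest word in the \emph{last} $k-1$ generators, not involving $e(n)$). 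Once this indexing is pinned down, every step is a direct application of \corref{transfer cor}, \corref{ek cor}, \corref{End cor}, and naturality of $R_n$.
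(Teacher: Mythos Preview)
Your approach is the intended one: the paper states this corollary without proof, and the argument is exactly the dictionary computation you describe --- identify $R_ns$ with (the block-embedded) $\lambda e_k$, identify $tr$ with $\hat e_{n+1}$, and multiply. Two corrections are in order, however.

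First, you have the paper's indexing convention reversed. In the \textbf{Notation} preceding \corref{ek cor}, the embedding $\mathcal H_k\times\mathcal H_n\hookrightarrow\mathcal H_{k+n}$ places $e_k$ as the longest word in the \emph{first} $k-1$ generators $e(1),\dots,e(k-1)$, and $\hat e_n$ as the longest word in the \emph{last} $n-1$ generators $\hat e(k+1),\dots,\hat e(k+n-1)$. So in $\mathcal H_{k+n}$ the idempotent $e_{k-1}$ uses $e(1),\dots,e(k-2)$ and $\hat e_{n+1}$ uses $\hat e(k),\dots,\hat e(k+n-1)$, not the other way around. Your computation survives this relabeling, but the sentences explaining which indices are in play should be rewritten.

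Second, and more substantively, your claim that $e_k$ and $\hat e_{n+1}$ commute is \emph{false}. In the paper's convention $e_k$ involves $e(k-1)$ and $\hat e_{n+1}$ involves $\hat e(k)=1-e(k)$; since $|(k-1)-k|=1$ the braid relation applies and these do \emph{not} commute (already for $k=2$, $n=1$ one has $e(1)\hat e(2)\neq\hat e(2)e(1)$ in $\mathcal H_3$). Indeed, the key identity in the proposition just after \corref{s cor 2} reads $\hat e_n e_{k+1}\hat e_n + e_k\hat e_{n+1}e_k = \hat e_n e_k$, and would collapse to something trivial if $e_{k+1}$ commuted with $\hat e_n$ or $e_k$ with $\hat e_{n+1}$. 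Fortunately this commutation is irrelevant to the corollary: the statement only asserts that the composite equals $\lambda e_k\hat e_{n+1}$, and your computation of the product in that order is correct. The verification that $e_k\hat e_{n+1}$ lies in $\hat e_ne_k\,\mathcal H_{k+n}\,\hat e_{n+1}e_{k-1}$ uses only that $\hat e_n$ commutes with $e_k$ and that $\hat e_{n+1}$ commutes with $e_{k-1}$, both of which hold since the relevant index sets are separated by at least $2$. Simply delete the commutation claim and the proof stands.
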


The identity in the next proposition was identified by us in \cite{kuhnpriddy}.

\begin{prop} In \ $\End_{\A}(R_nL_k) = \hat e_ne_k\mathcal H_{k+n} \hat e_{n} e_{k}$,
$$ \hat e_{n} e_{k+1}\hat e_n +e_k \hat e_{n+1} e_{k} = \hat e_n e_k.$$
The elements $\hat e_{n} e_{k+1}\hat e_n$ and $e_k \hat e_{n+1} e_{k}$ are orthogonal, and thus idempotent.
\end{prop}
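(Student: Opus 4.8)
The plan is to reduce the identity $\hat e_n e_{k+1}\hat e_n + e_k\hat e_{n+1}e_k = \hat e_n e_k$ in $\mathcal H_{k+n}$ to the rank-one Hecke identity from \corref{ek cor}, namely $e_k e(k) e_k = e_{k+1}$ and $\hat e_n \hat e(k)\hat e_n = \hat e_{n+1}$, together with the fundamental relation $\hat e(k) + e(k) = 1$ in $\mathcal H_{k+n}$ (which holds because $e(k) = 1 - \hat e(k) \bmod p$, by definition of the complementary idempotents). Everything takes place inside the corner algebra $\hat e_n e_k \mathcal H_{k+n}\hat e_n e_k$, where $\hat e_n$ and $e_k$ are the commuting idempotents from the Notation preceding \corref{ek cor}: $e_k$ is the longest word in $e(1),\dots,e(k-1)$, $\hat e_n$ is the longest word in $\hat e(k+1),\dots,\hat e(k+n-1)$, and neither involves $e(k)$.

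First I would observe that since $e_k$ commutes with $\hat e(k+1),\dots,\hat e(k+n-1)$ and $\hat e_n$ commutes with $e(1),\dots,e(k-1)$, both $e_k$ and $\hat e_n$ commute with $e(k)$'s neighbors appropriately — more precisely, $e_k$ absorbs all the $e(i)$ with $i<k$ and $\hat e_n$ absorbs all the $\hat e(j)$ with $j>k$, while the only generator linking the two blocks is $e(k) = 1 - \hat e(k)$. So in the algebra $\hat e_n e_k \mathcal H_{k+n} \hat e_n e_k$, which is generated (as a corner) by the single extra element $\hat e_n e_k\, \hat e(k)\, \hat e_n e_k = \hat e_n e_k\, e(k)\, \hat e_n e_k$ suitably interpreted, one shows this corner algebra is spanned by $\hat e_n e_k$ and the two elements $\hat e_n e_{k+1}\hat e_n$ and $e_k \hat e_{n+1} e_k$. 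Then I would compute directly: writing $1 = \hat e(k) + e(k)$ and multiplying on both sides by $\hat e_n e_k$, we get
\[
\hat e_n e_k = \hat e_n e_k\,\hat e(k)\,\hat e_n e_k' + \hat e_n e_k\,e(k)\,\hat e_n e_k',
\]
but care is needed because $\hat e(k)$ does not commute past $e_k$ or $\hat e_n$; the correct bookkeeping is to use $e_k e(k) e_k = e_{k+1}$ to identify $e_k\, e(k)\, e_k$ with $e_{k+1}$, hence (since $\hat e_n$ commutes with $e_{k+1}$ — as $e_{k+1}$ is a word in $e(1),\dots,e(k)$ and $\hat e_n$ involves only $\hat e(k+1),\dots$) $\hat e_n e_k\, e(k)\, e_k \hat e_n$ becomes $e_k \hat e_{n+1} e_k$ after using $\hat e_n \hat e(k)\hat e_n = \hat e_{n+1}$ in the conjugate form; symmetrically $\hat e_n \hat e(k)\hat e_n = \hat e_{n+1}$ gives the term $\hat e_n e_{k+1}\hat e_n$ from the $\hat e(k)$ piece. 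The orthogonality and idempotence then follow formally: $(\hat e_n e_{k+1}\hat e_n)(e_k \hat e_{n+1}e_k) = 0$ because the middle contains $e_{k+1}\hat e_{n+1}$, and $e_{k+1}$ is a word using $e(k)$ while $\hat e_{n+1}$ is a word using $\hat e(k) = 1 - e(k)$, and one checks $e_{k+1}\hat e_{n+1} = e_{k+1}(1 - e(k))\cdots = 0$ using $e_{k+1}e(k) = e_{k+1}$ (absorption, since $e(k)e_{k+1} = e_{k+1}$ from property (a) of the characterization of $e_{k+1}$). Given the sum is $\hat e_n e_k$ (the identity of the corner algebra) and the cross terms vanish, each summand is idempotent.

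The main obstacle will be the careful commutation bookkeeping in the middle step: unlike the idempotents $e_k, \hat e_n$, the generator $\hat e(k)$ (equivalently $e(k)$) does not commute with either $e_k$ or $\hat e_n$, so one cannot naively substitute $1 = \hat e(k) + e(k)$ and split. The right approach is to work entirely with the absorption identities — $e(i)e_k = e_k = e_k e(i)$ for $i<k$, $\hat e(j)\hat e_n = \hat e_n = \hat e_n\hat e(j)$ for $j>k$, and the key lemma \corref{ek cor} — and to never move $e(k)$ past a block idempotent without simultaneously converting a length-$(k{+}1)$ or length-$(n{+}1)$ word. Since this identity is already asserted (with proof) in \cite{kuhnpriddy}, the cleanest exposition is probably to cite that reference for the computational heart and here only indicate the structural reason: the corner algebra $\hat e_n e_k\mathcal H_{k+n}\hat e_n e_k$ is (at most) $2$-dimensional, spanned by the unit $\hat e_n e_k$ and one nontrivial idempotent, and the displayed equation exhibits that idempotent and its complement.
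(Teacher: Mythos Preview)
Your proposal uses exactly the same ingredients as the paper --- \corref{ek cor} ($e_{k+1}=e_ke(k)e_k$ and $\hat e_{n+1}=\hat e_n\hat e(k)\hat e_n$), the relation $e(k)+\hat e(k)=1$, and the absorption $e_{k+1}e(k)=e_{k+1}$, $\hat e(k)\hat e_{n+1}=\hat e_{n+1}$ --- and the orthogonality argument is the same. So the approach is right.

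Where you make life hard for yourself is direction. The paper starts from the \emph{left} side, substitutes $e_{k+1}=e_ke(k)e_k$ and $\hat e_{n+1}=\hat e_n\hat e(k)\hat e_n$, and then only needs that $e_k$ and $\hat e_n$ commute with \emph{each other} (they involve disjoint commuting generators) to rewrite both terms as $\hat e_n e_k\cdot x\cdot \hat e_n e_k$ with $x=e(k)$ or $\hat e(k)$; summing gives $(\hat e_n e_k)^2=\hat e_n e_k$ in three lines. Your attempt to start from $\hat e_n e_k$ and split via $1=e(k)+\hat e(k)$ forces you to confront moving $e(k)$ past $e_k$ or $\hat e_n$, which indeed fails --- the ``main obstacle'' you flag is self-inflicted and disappears if you reverse the computation.

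One minor caution: your closing structural claim that the corner algebra $\hat e_n e_k\mathcal H_{k+n}\hat e_n e_k$ is at most $2$-dimensional is neither used in the paper nor justified in your sketch; the proof does not need it.
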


\begin{proof}  For the first statement, we compute:
\begin{equation*}
\begin{split}
\hat e_{n} e_{k+1}\hat e_n +e_k \hat e_{n+1} e_{k} &
= \hat e_{n} e_ke(k)e_k\hat e_n +e_k \hat e_n \hat e(k) \hat e_n e_{k} \\
  & = \hat e_{n} e_k(e(k)+ \hat e(k)) \hat e_n e_{k}\\
  & = (\hat e_{n} e_k)^2 = \hat e_n e_{k}\\
\end{split}
\end{equation*}
Similarly, for the second statement, we have:
\begin{equation*}
\begin{split}
(\hat e_{n} e_{k+1}\hat e_n)(e_k \hat e_{n+1} e_{k}) &
= \hat e_{n} e_{k+1} e_k \hat e_n \hat e_{n+1} e_{k} \\
  & = \hat e_{n} e_{k+1} \hat e_{n+1} e_{k}\\
  & = \hat e_{n} e_{k+1} e(k) \hat e(k) \hat e_{n+1} e_{k}\\
  & = \hat e_{n} e_{k+1} 0 \hat e_{n+1} e_{k} = 0.
\end{split}
\end{equation*}
It is now formal that each of these elements are idempotent: if $x,y$ are elements in an algebra with unit $e$ such that $x+y=e$ and $xy=0$, then $x$ and $y$ are idempotent.
\end{proof}

\begin{cor} \label{iso cor} If $\lambda, \mu \in \Z/p$ are nonzero, then
$$\lambda \hat e_{n} e_{k+1}\hat e_n + \mu e_k \hat e_{n+1} e_{k}: R_nL_k \ra R_nL_k$$ will be an isomorphism.
\end{cor}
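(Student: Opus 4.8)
The plan is to deduce this immediately from the previous proposition by a standard fact about idempotents. Write $x = \hat e_n e_{k+1}\hat e_n$ and $y = e_k \hat e_{n+1} e_k$. By the proposition, $x$ and $y$ are orthogonal idempotents in the algebra $\End_{\A}(R_nL_k) = \hat e_n e_k \mathcal H_{k+n}\hat e_n e_k$ with $x + y = \hat e_n e_k$, and $\hat e_n e_k$ is exactly the unit of this algebra (it is the idempotent cutting out the summand $R_nL_k$ inside $R_1^{k+n}L_0$). Thus $R_nL_k$ decomposes as an internal direct sum of $\A$--submodules $R_nL_k = xR_nL_k \oplus yR_nL_k$: indeed $x R_nL_k = \ker(y)$ and $yR_nL_k = \ker(x)$, and every element $v$ is recovered as $v = xv + yv$ with $x(xv) = xv$, $y(xv) = 0$, etc.

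First I would observe that on the summand $xR_nL_k$ the operator $\lambda x + \mu y$ acts as multiplication by $\lambda$ (since $x$ acts as the identity there and $y$ as zero), and on $yR_nL_k$ it acts as multiplication by $\mu$. Since $\lambda$ and $\mu$ are nonzero elements of the field $\Z/p$, scalar multiplication by either is invertible on the respective summand, with inverse given by multiplication by $\lambda^{-1}$ respectively $\mu^{-1}$. Hence $\lambda x + \mu y$ is invertible on all of $R_nL_k$, with explicit inverse $\lambda^{-1} x + \mu^{-1} y$; one checks $(\lambda x + \mu y)(\lambda^{-1}x + \mu^{-1}y) = x + y = \hat e_n e_k$ using $x^2 = x$, $y^2 = y$, $xy = yx = 0$. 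Being an invertible $\A$--module endomorphism, it is an isomorphism.

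There is essentially no obstacle here — the content is entirely in the preceding proposition, whose Hecke-algebra identities do the real work. The only point requiring a word of care is the identification of $\hat e_n e_k$ as the unit of $\End_{\A}(R_nL_k)$; this is precisely the discussion just before Corollary \ref{End cor}, where $\hat e_n e_k R_1^{n+k}L_0 = R_nL_k$ is established, so the idempotent $\hat e_n e_k$ is the identity of the corner algebra $\hat e_n e_k \mathcal H_{k+n} \hat e_n e_k \cong \End_{\A}(R_nL_k)$ by \thmref{End thm} and \corref{End cor}. Everything else is the elementary linear-algebra fact that a linear combination of complementary orthogonal idempotents with invertible scalar coefficients is invertible.
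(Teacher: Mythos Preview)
Your proof is correct and is exactly the intended argument: the paper states this corollary with no proof, since it follows immediately from the preceding proposition by the elementary fact you spell out, namely that if $x,y$ are complementary orthogonal idempotents then $\lambda x + \mu y$ has inverse $\lambda^{-1}x + \mu^{-1}y$. Your remark that $\hat e_n e_k$ is the unit of the corner algebra is the only point needing comment, and you handle it correctly via \thmref{End thm} and \corref{End cor}.
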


\section{Proofs of \thmref{m not n thm} and \thmref{End thm}} \label{End RnL0 section}

To prove \thmref{m not n thm} and \thmref{End thm}, it is convenient to work with cohomology, rather than homology.  The inclusion $E_n < \widetilde B_n$ induces
$$ \widetilde H^*(B\widetilde B_n^{\rho_n}) \twoheadrightarrow (R_1^nL_0)^* \hookrightarrow \widetilde H^*(BE_n^{\rho_n})^{B_n}.$$

It is also convenient to desuspend once, using that the representation $\rho_n$ has a trivial 1--dimensional summand.  Let $N(n) = \Sigma^{-1} (R_1^nL_0)^*$. If one writes $\rho_n = 1 \oplus \tilde \rho_n$, then $B\widetilde B_n^{\rho_n} = \Sigma B\widetilde B_n^{\tilde \rho_n}$, $BE_n^{\rho_n} = \Sigma BE_n^{\tilde \rho_n}$, and the inclusion $E_n < \widetilde B_n$ induces
$$ \widetilde H^*(B\widetilde B_n^{\tilde \rho_n}) \twoheadrightarrow N(n) \hookrightarrow \widetilde H^*(BE_n^{\tilde \rho_n})^{B_n}.$$

We let $c_n \in H^{p^n-1}(BE_n)$ denote the Euler class of the bundle associated to $\tilde \rho_n$.  When $p=2$, this is the top Dickson invariant; when $p$ is odd, it is the `square root' of the top Dickson invariant of the `even part' of $H^*(BE_n)$.  In either case, $\widetilde H^*(BE_n^{\tilde \rho_n}) = c_nH^*(BE_n)$, and so $N(n) \subseteq c_nH^*(BE_n)^{B_n}$.  (By $c_nH^*(BE_n)^{B_n}$, we mean $(c_nH^*(BE_n))^{B_n}$: at odd primes, $c_n$ is a twisted invariant of $B_n$.)

We will use the next property of $c_n$.

\begin{lem} \label{cn lem} If $E<E_n$ is a proper subgroup, then $c_n$ restricts to 0 in $H^*(BE)$.
\end{lem}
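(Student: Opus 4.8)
The plan is to realize $c_n$ as an Euler class and exploit naturality. By definition $c_n = e(\xi_n)$, where $\xi_n \to BE_n$ is the real vector bundle associated to the representation $\tilde\rho_n$ of $E_n$. Hence, for the inclusion $\iota\colon E \hookrightarrow E_n$ and the induced map $B\iota\colon BE \to BE_n$, the restriction of $c_n$ to $H^*(BE)$ is the Euler class of $B\iota^*\xi_n$, the bundle over $BE$ associated to the restricted representation $\tilde\rho_n|_E$. So it suffices to show that this restricted bundle has vanishing mod $p$ Euler class.

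The key step is to observe that $\tilde\rho_n|_E$ contains a trivial summand. Indeed $\rho_n$ is the regular representation $\R[E_n]$ of $E_n$ (acting on itself by translation), and writing $E_n$ as a disjoint union of its $[E_n:E]$ cosets of $E$ gives an isomorphism of $E$--representations $\rho_n|_E \cong \R[E]^{\oplus [E_n:E]}$, a direct sum of $[E_n:E]$ copies of the regular representation of $E$. Since $E$ is a proper subgroup of the $p$--group $E_n$ we have $[E_n:E]\ge p\ge 2$, so the trivial representation occurs in $\rho_n|_E$ with multiplicity at least two; removing the single trivial summand present in $\rho_n = 1\oplus\tilde\rho_n$ leaves $\tilde\rho_n|_E \cong W\oplus\underline{\R}$ for some representation $W$ of $E$. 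Therefore $B\iota^*\xi_n$ splits off a trivial line bundle, hence admits a nowhere--zero section, so its Euler class is zero; that is, $c_n|_{BE}=0$.

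I do not expect a genuine obstacle here; the only point meriting a sentence of care is that the mod $p$ Euler class is defined for the bundles in play — automatic when $p=2$, and when $p$ is odd because $\tilde\rho_n$, being built from complex line bundles, is orientable, as then is its restriction to $BE$. For readers who prefer the combinatorial description of $c_n$ as (a square root of) the top Dickson invariant, the same conclusion follows by noting that the restriction $H^1(BE_n)\to H^1(BE)$ is dual to $\iota$ and hence annihilates some nonzero class $\lambda_0$; since $c_n$ is, up to sign, a product of linear forms (respectively of the classes $\beta\mu$), one factor of which is $\lambda_0$ (respectively $\beta\lambda_0$), its restriction acquires a zero factor and so vanishes.
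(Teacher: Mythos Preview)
Your proof is correct. The paper actually states this lemma without proof, treating it as a standard fact; your argument supplies exactly the natural justification. The Euler class argument is clean: $\rho_n|_E \cong \R[E]^{\oplus [E_n:E]}$ with $[E_n:E]\ge p\ge 2$, so $\tilde\rho_n|_E$ retains a trivial summand and its Euler class vanishes. Your remark on orientability at odd primes is the right thing to check, and your alternative via the explicit product formula for $c_n$ (as a product of linear forms when $p=2$, or of Bocksteins of linear forms when $p$ is odd) is the way most readers familiar with Dickson invariants would see the lemma instantly --- and is likely why the paper omits the proof.
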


To compare $N(n)$ and $c_nH^*(BE_n)^{B_n}$, we will use the following lemma, whose proof will be given at the end of the section.

\begin{lem} \label{Nil iso lem} $N(n) \subseteq c_nH^*(BE_n)^{B_n}$ is an isomorphism if $p=2$, and has nilpotent cokernel if $p$ is odd.
\end{lem}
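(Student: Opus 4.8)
The plan is to unwind the definitions and then carry out an explicit invariant-theoretic comparison. By construction, $N(n)$ is exactly the image of the restriction map $\mathrm{res}\colon \widetilde H^*(B\widetilde B_n^{\tilde\rho_n}) \to c_nH^*(BE_n)^{B_n}$ induced by the inclusion $E_n < \widetilde B_n$, so the lemma is the statement that $\mathrm{res}$ is surjective when $p=2$, and has nilpotent cokernel when $p$ is odd. Resuspending once, this is equivalent to understanding the image of the restriction $\widetilde H^*(D_p^nS^1) = \widetilde H^*(B\widetilde B_n^{\rho_n}) \to \widetilde H^*(BE_n^{\rho_n})$, where I will use $B\widetilde B_n^{\rho_n}=D_p(B\widetilde B_{n-1}^{\rho_{n-1}})$.

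First I would recall the standard description of both sides as graded vector spaces with explicit bases. For the source, $\widetilde H_*(D_p^nS^1)$ is computed by iterated Dyer--Lashof operations (as in \cite{clm} or \cite[Ch.~IX]{bmms}): it has a basis of admissible monomials $\beta^{\epsilon}Q^I\iota_1$ together with product-decomposable classes created by the maps $D_iW\wedge D_jW\to D_{i+j}W$ inside the inner extended powers; dually $\widetilde H^*(D_p^nS^1)$ carries the co-Dyer--Lashof basis. For the target, Dickson--Mui invariant theory describes $H^*(BE_n)^{B_n}$, hence $c_nH^*(BE_n)^{B_n}$, explicitly: at $p=2$ it is a free module on a monomial basis over the Dickson algebra, all of whose classes are (by \lemref{cn lem}) supported only at the full-rank subgroup; at odd $p$ it decomposes as a polynomial (``reduced'') summand $c_n\F_p[x_1,\dots,x_n]^{B_n}$ plus the ideal of nilpotents built from the exterior generators $y_i$, the latter being a Nil unstable $\A$--module.

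Second I would induct on $n$. The base case $n=1$ is the classical restriction $\widetilde H^*(D_pS^1)\to c_1H^*(B\Z/p)$ along $\Z/p<\Sigma_p$ (here $B_1$ is trivial): at $p=2$ this is an isomorphism (indeed $\widetilde H_*(D_2S^1)=R_1L_0$, one class in each degree $\geq 2$); at odd $p$ its image is the span of the $\beta$--free classes $Q^i\iota_1$, which after Thom desuspension recovers all of $c_1\F_p[x]$, while the complementary $\beta Q^i\iota_1$ would have mapped to nilpotent classes. For the inductive step I would apply $D_p(-)$ to the level $n-1$ statement and use the compatibility of $\mathrm{res}$ with the inner extended-power operations, together with the transfer diagrams \thmref{transfer thm} and \corref{transfer cor} and the Steinberg-idempotent identities \propref{ek st prop}, to propagate surjectivity, respectively surjectivity-modulo-nilpotents; one checks separately that $D_p$ of a map with Nil cokernel again has Nil cokernel, since the $p$th extended power of a Nil module is Nil. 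Finally, the $p=2$ upgrade is formal: $H^*(BE_n)=\F_2[x_1,\dots,x_n]$ is a reduced unstable $\A$--module, so its submodule $c_nH^*(BE_n)^{B_n}$ is reduced and admits no nonzero Nil submodule; a Nil cokernel must therefore vanish.

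The step I expect to be the real obstacle is pinning down the cokernel at odd primes: one must show that the classes in $c_nH^*(BE_n)^{B_n}$ that are \emph{not} hit by $\mathrm{res}$ are precisely those detected by exterior generators, hence form a Nil submodule, rather than also missing some reduced classes. Concretely this requires matching the $\beta$--free versus $\beta$--involving dichotomy of Dyer--Lashof monomials against the polynomial-versus-nilpotent splitting of $H^*(BE_n)$, compatibly with the $B_n$--invariant truncation and the Steinberg idempotent, and tracking this through the extended-power induction. The $p=2$ combinatorics is cleaner but still needs the exact monomial count to see that $\mathrm{res}$ is bijective, not merely injective with dense image.
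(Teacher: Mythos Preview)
Your approach is genuinely different from the paper's, and as sketched it has two real gaps.

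First, a small but concrete error: in your base case you write ``here $B_1$ is trivial''. In the paper $B_n$ is the Borel subgroup of $GL_n(\Z/p)$, so $B_1 = GL_1(\Z/p) = (\Z/p)^\times$, which is trivial only when $p=2$. At odd primes the target of the base-case restriction is $c_1H^*(B\Z/p)^{(\Z/p)^\times}$, not $c_1H^*(B\Z/p)$, and this already interacts nontrivially with the twisting of $c_1$.

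Second, and more seriously, the inductive step does not close. Applying $D_p(-)$ to the level-$(n-1)$ statement controls the restriction along $\Sigma_p\wr E_{n-1} < \Sigma_p\wr\widetilde B_{n-1}=\widetilde B_n$, but the target you must hit is $c_nH^*(BE_n)^{B_n}$, and $B_n$ does not arise from $B_{n-1}$ by any wreath-product construction. The transfer diagrams of \thmref{transfer thm} and \corref{transfer cor} relate the \emph{sources} $R_1^nL_0$ and $H_*(BE_n^{\rho_n})_{B_n}$, not the inclusion of $N(n)$ into its putative target; they do not by themselves let you propagate surjectivity-modulo-Nil onto $c_nH^*(BE_n)^{B_n}$. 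Your own diagnosis of the obstacle---matching the $\beta$-dichotomy of Dyer--Lashof monomials against the polynomial/nilpotent splitting, compatibly with the $B_n$-invariants---is essentially this gap restated, and would require a full Dickson--Mui comparison that you have not carried out.

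The paper avoids all of this. It first passes from $\widetilde B_n,B_n$ to their $p$--Sylow subgroups $\widetilde U_n,U_n$ by a transfer argument (so that orientability issues with $c_n$ disappear), and then invokes the functorial criterion for nil-epimorphisms in $\U$: a map $M\to N$ is a nil-epi iff $\Hom_{\U}(N,H^*(BE))\to\Hom_{\U}(M,H^*(BE))$ is injective for every elementary abelian $E$. Both Hom-sets are identified combinatorially (exactly as in the proof of \propref{m not n prop}), and the required injectivity becomes the elementary statement that
\[
\Z/p[\Epi(E,E_n)/U_n]\ \longrightarrow\ \Z/p[\TransRep(E,\widetilde U_n)]
\]
is monic. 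No induction, no Dickson--Mui bookkeeping, and the odd-prime case is no harder than $p=2$. Your closing remark---that a Nil cokernel into a reduced module must vanish---is correct, and is one way to recover the $p=2$ isomorphism once the nil-epi statement is in hand.
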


Here `nilpotent' is in the sense of unstable $\A$--module theory \cite{schwartz}.  Let $\U$ be the category of unstable modules, and $\K$ be the category of unstable algebras. In our case, our modules are in $\K$, and the lemma equivalently says that a sufficiently high $p$th power of any element in $c_nH^*(BE_n)^{B_n}$ will be in the submodule $N(n)$.

Since $\Hom_{\U}(N(n),N(m)) \subseteq \Hom_{\U}(N(n),c_mH^*(BE_m)^{B_m})$,
to prove \thmref{m not n thm} it suffices to prove the next proposition.

\begin{prop} \label{m not n prop} If $m< n$, then
$$ \Hom_{\U}(N(n),c_mH^*(BE_m)) = \Hom_{\U}(c_nH^*(BE_n),c_mH^*(BE_m)) = 0.$$
\end{prop}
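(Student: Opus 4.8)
The plan is to reduce the vanishing statement to a restriction argument using the Euler class $c_n$. First I would dispense with the left equality: since $N(n)\hookrightarrow c_nH^*(BE_n)^{B_n}\subseteq c_nH^*(BE_n)$, any $\A$-module map $N(n)\to c_mH^*(BE_m)$ factors through restriction along this inclusion, so it suffices to show $\Hom_{\U}(c_nH^*(BE_n),c_mH^*(BE_m))=0$, which is the second equality's source. (Strictly, one needs that a map out of $N(n)$ extends, or at least is controlled by a map out of $c_nH^*(BE_n)$; the cleaner route is simply $\Hom_{\U}(N(n),c_mH^*(BE_m))\subseteq\Hom_{\U}(c_nH^*(BE_n),c_mH^*(BE_m))$ is false in general, so instead I would directly bound $\Hom_{\U}(N(n),-)$ by restricting a putative map to the image of the various $c_nH^*(BE)^{?}$ and invoke \lemref{Nil iso lem} only if a nilpotence issue arises — but since the target $c_mH^*(BE_m)$ at $m<n$ will be forced to vanish entirely on the relevant classes, nilpotent ambiguity is harmless.)

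The heart of the matter is thus: for $m<n$, any unstable $\A$-module map $\phi\colon c_nH^*(BE_n)\to c_mH^*(BE_m)$ is zero. Here I would use that $c_nH^*(BE_n)$ is a cyclic unstable $\A$-module—indeed a cyclic module over the unstable algebra $H^*(BE_n)$—generated by $c_n$ itself (this is standard: $c_n$ is the Euler class of $\tilde\rho_n$, it is a nonzerodivisor, and $c_nH^*(BE_n)$ is the Thom module). So $\phi$ is determined by $\phi(c_n)\in c_mH^*(BE_m)$ in degree $p^n-1$. Now $c_mH^*(BE_m)$ lives in degrees $\ge p^m-1$, which does not immediately force vanishing; the real constraint comes from the $\A$-module (in fact unstable-algebra-module) structure. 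The key step is to detect $\phi(c_n)$ by restricting to subgroups of $E_m$: since $H^*(BE_m)$ injects into $\prod_{\mathrm{rk}\,1\ E\subseteq E_m}H^*(BE)$ after inverting Dickson invariants, or more elementarily since $H^*(BE_m)$ is detected on elementary abelians, it suffices to show the composite $c_nH^*(BE_n)\xrightarrow{\phi}c_mH^*(BE_m)\to H^*(BE)$ vanishes for every line $E\subseteq E_m$. By \lemref{cn lem}, $c_m$ restricts to $0$ on any proper subgroup of $E_m$, so $c_mH^*(BE_m)$ restricts to $0$ on every proper $E\subsetneq E_m$; the only subgroup on which it can survive is $E_m$ itself.

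So the plan narrows to: the map $\phi$, postcomposed with restriction to $E_m\subseteq E_m$ (the identity), is zero. Equivalently, $\phi(c_n)=0$ inside $c_mH^*(BE_m)=c_m\cdot\F_p[x_1,\dots,x_m]$ (at $p=2$; with the odd-primary modification $c_m$ a twisted invariant, but the argument is the same). To see this I would use the nilpotence structure and the action of the full power operations: $c_nH^*(BE_n)$ is generated by a class on which the Frobenius/top Steenrod power acts invertibly in the appropriate localized sense (the Euler class condition: $P^{?}c_n=c_n^p$ up to a unit times lower terms, reflecting that $\tilde\rho_n$ is $(p^n-1)$-dimensional so its top Dickson-type invariant is a $p$th power only after rank $n$). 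Concretely, $c_nH^*(BE_n)$, as an object of $\U$, is a suspension of a module whose "nilpotent-free quotient" is $BE_n^{\tilde\rho_n}$ supported in rank exactly $n$; any map to $c_mH^*(BE_m)$, supported in rank exactly $m<n$, must therefore kill the generator, because an $\A$-linear map cannot raise the "effective rank" (this is the same phenomenon that makes $\Hom_\A(L_j,L_k)=0$ for $j\ne k$). I expect the main obstacle to be making this "rank cannot increase" heuristic rigorous: the clean formulation is via Lannes--Schwartz theory—compute $T_{E}c_nH^*(BE_n)$ for $E$ of rank $1$ and observe that the nilpotent-closure of $c_nH^*(BE_n)$ has trivial $T_{E}^{>0}$ unless one has iterated $T$ a total of $n$ times, whereas $c_mH^*(BE_m)$ is reached after only $m<n$ iterations; since an $\A$-map commutes with all $T_E$, a map $c_nH^*(BE_n)\to c_mH^*(BE_m)$ would equip the latter with an $n$-fold $T$-detection it does not possess. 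The bookkeeping of these iterated Lannes functors, together with the odd-primary twist on $c_m$, is where the actual work lies; everything else is formal.
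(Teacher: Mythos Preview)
Your proposal circles the right ingredients but never closes the argument, and the two places where you try to make progress both have real problems.

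First, the reduction step. You correctly note that $\Hom_{\U}(N(n),c_mH^*(BE_m))\subseteq\Hom_{\U}(c_nH^*(BE_n),c_mH^*(BE_m))$ is false, and then defer the issue. The paper handles this cleanly in the other direction: rather than comparing the \emph{sources}, it enlarges the \emph{target} by embedding $c_mH^*(BE_m)\subseteq H^*(BE_m)$. Since $H^*(BE_m)$ is an injective in $\U$ and is nil--closed, \lemref{Nil iso lem} gives
\[
\Hom_{\U}(N(n),H^*(BE_m)) \;\cong\; \Hom_{\U}(c_nH^*(BE_n)^{B_n},H^*(BE_m)),
\]
so both Hom--groups in the proposition are computed simultaneously once one knows $\Hom_{\U}(c_nH^*(BE_n),H^*(BE_m))$.

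Second, the core vanishing. Your argument that $c_nH^*(BE_n)$ is a ``cyclic unstable $\A$--module'' generated by $c_n$ conflates cyclicity over $H^*(BE_n)$ (true: it is the principal ideal $(c_n)$) with cyclicity over $\A$ (not established, and in any case irrelevant since an $\A$--module map need not respect the $H^*(BE_n)$--action). Your detection scheme via restriction to rank--one subgroups of $E_m$ also cannot work: $c_mH^*(BE_m)$ restricts to zero on \emph{every} proper subgroup of $E_m$ by \lemref{cn lem}, so restricting to lines kills the target entirely and tells you nothing. The iterated $T$--functor sketch at the end is in the right spirit but, as you concede, is not carried out.

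What the paper actually does is short: use injectivity of $H^*(BE_m)$ to see that restriction along $c_nH^*(BE_n)\subset H^*(BE_n)$ gives a surjection
\[
\Hom_{\U}(H^*(BE_n),H^*(BE_m)) \twoheadrightarrow \Hom_{\U}(c_nH^*(BE_n),H^*(BE_m)),
\]
identify the left side with $\Z/p[\Hom(E_m,E_n)]$ via \cite{agm} or \cite{lz}, and then observe (using \lemref{cn lem} together with Lannes' Linearization Theorem) that a homomorphism $E_m\to E_n$ restricts nontrivially to $c_nH^*(BE_n)$ exactly when it is epic. Hence $\Hom_{\U}(c_nH^*(BE_n),H^*(BE_m))\cong\Z/p[\Epi(E_m,E_n)]$, which vanishes for $m<n$. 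The single idea you are missing is to exploit the $\U$--injectivity of $H^*(BE_m)$ to pass to the known computation of maps out of all of $H^*(BE_n)$.
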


Similarly, to prove \thmref{End thm}, it suffices to prove the following proposition.

\begin{prop} \label{End prop} The homomorphisms
$$ \mathcal H_n \ra \End_{\U}(c_nH^*(BE_n)^{B_n}) \ra \Hom_{\U}(N(n),c_nH^*(BE_n)^{B_n}) \supseteq \End_{\U}(N(n))$$
are all isomorphism.
\end{prop}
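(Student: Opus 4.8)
The plan is to reduce the whole statement, via \lemref{Nil iso lem} and a standard "nilpotent modules don't matter for maps into reduced algebras" argument, to a single computation of $\End_{\U}(c_nH^*(BE_n)^{B_n})$, and then to identify this endomorphism algebra with $\mathcal H_n$ by a direct analysis using the restriction map to the maximal torus $E_n < GL_n$. First I would observe that $c_nH^*(BE_n)$ is, by \lemref{cn lem}, precisely the submodule of $H^*(BE_n)$ detected away from all proper subgroups of $E_n$; equivalently it is the "free part" on which the Steinberg idempotent can act nontrivially. Since $H^*(BE_n)$ is a polynomial (tensor exterior, at odd $p$) algebra with a free $GL_n$-action on the top Dickson-type class $c_n$, any unstable $\A$-module endomorphism of $c_nH^*(BE_n)$ or of $c_nH^*(BE_n)^{B_n}$ is determined by where it sends the generator(s) in lowest degree; the point is that $c_nH^*(BE_n)$ is, as an unstable module, cyclic over $\A$ modulo its nilpotents (or more precisely generated in a controlled range), so $\Hom_{\U}$ out of it is computed by evaluation on a small number of classes.

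Next I would handle the chain of maps in the statement one link at a time. The map $\mathcal H_n \ra \End_{\U}(c_nH^*(BE_n)^{B_n})$: injectivity follows because $\mathcal H_n$ already acts faithfully on $H^*(BE_n)_{B_n}$ in homology (the pairing with the Steinberg summand is nondegenerate — this is essentially the content of \cite{kuhn4} and the Hecke-algebra presentation recalled above), and surjectivity is the heart of the matter, addressed below. The inclusion $\End_{\U}(c_nH^*(BE_n)^{B_n}) \supseteq \End_{\U}(N(n))$ together with $\Hom_{\U}(N(n),c_nH^*(BE_n)^{B_n}) \supseteq \End_{\U}(N(n))$: here I would use \lemref{Nil iso lem}. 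When $p=2$ there is nothing to prove since $N(n) = c_nH^*(BE_n)^{B_n}$. When $p$ is odd, the cokernel is nilpotent, so any $\A$-module map $N(n) \ra c_nH^*(BE_n)^{B_n}$ extends uniquely over the inclusion $N(n) \hookrightarrow c_nH^*(BE_n)^{B_n}$ — existence because a map killing a high enough Frobenius can be pushed forward using that the target, being inside $H^*(BE_n)$, has no nonzero maps from a nilpotent module into its "reduced" part in the relevant degrees (equivalently, use that $c_nH^*(BE_n)^{B_n}$ is reduced modulo its own nilpotents and the quotient is the same reduced module), and uniqueness because two extensions differ by a map from the nilpotent cokernel, which must vanish by degree reasons since $N(n)$ contains all sufficiently high powers. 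This shows all the outer objects coincide with $\End_{\U}(c_nH^*(BE_n)^{B_n})$, and by \propref{m not n prop}-type reasoning the middle $\Hom_{\U}(N(n), c_nH^*(BE_n)^{B_n})$ is squeezed between them and hence equal as well.

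The main obstacle — and the step I would budget the most space for — is proving that $\mathcal H_n \twoheadrightarrow \End_{\U}(c_nH^*(BE_n)^{B_n})$, i.e. that there are \emph{no extra} unstable $\A$-module endomorphisms beyond those coming from the group ring. The strategy I would use: restrict along $E_n < GL_n$ to land in $\End_{\U}(c_nH^*(BE_n))$, where $c_nH^*(BE_n)$ is a cyclic unstable algebra on classes of known degree; compute $\End_{\U}(c_nH^*(BE_n))$ by evaluation on the generator, using Lannes' theory or the explicit injectivity of $H^*(BE_n)$ in $\U$ to see that an endomorphism is the same as a degree-preserving choice of image of $c_n$ in $c_nH^*(BE_n)$ compatible with the $\A$- and algebra-structure — this forces the endomorphism to be (twisted) multiplication by a $GL_n$-equivariant-up-to-Borel scalar, i.e. an element of $\Z/p[GL_n]$ acting through $B_n$-coinvariants. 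Taking $B_n$-invariants then cuts this down to exactly $\mathcal H_n = \End_{GL_n}(\Z/p[B_n\backslash GL_n])$ by the Yoneda/Mackey bookkeeping already recorded in \secref{introduction}. The delicate point inside this is the odd-primary twist: $c_n$ is only a twisted $B_n$-invariant, so one must track the character of $B_n$ acting on $c_n$ throughout, and confirm that the "twist" is exactly compensated when one passes from $c_nH^*(BE_n)^{B_n}$ back to the untwisted picture — this is where I expect the argument to be most technical and where I would lean on the explicit Euler-class computations of \cite{kuhn4} and the Dickson-invariant structure of $H^*(BE_n)^{GL_n}$.
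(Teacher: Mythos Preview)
Your plan has the right ingredients (injectivity of $H^*(BE_n)$ in $\U$, the nilpotence comparison of \lemref{Nil iso lem}), but the way you assemble them has two genuine gaps.

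First, the surjectivity step. Your proposal to compute $\End_{\U}(c_nH^*(BE_n))$ by ``evaluation on the generator $c_n$'' presumes that $c_nH^*(BE_n)$ is cyclic over $\A$ (or over $\A$ modulo nilpotents). It is not: it is a free rank--one $H^*(BE_n)$--module, not a cyclic $\A$--module, and knowing where $c_n$ goes does not determine an $\A$--module map. The paper never attempts this. Instead it computes $\Hom_{\U}(M,H^*(BE_n))$ for $M = c_nH^*(BE_n)^{B_n}$ or $N(n)$ by combining the Adams--Gunawardena--Miller/Lannes--Zarati isomorphism $\Z/p[\Hom(E_n,E_n)] \cong \Hom_{\U}(H^*(BE_n),H^*(BE_n))$ with \lemref{cn lem} (which cuts $\Hom$ down to $\Epi$, i.e.\ to $GL_n$) and the $\U$--injectivity of $H^*(BE_n)$ (which lets one pass to the quotient by $B_n$, and, via nil--closedness, to $N(n)$). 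One then observes that the image already lands in $c_nH^*(BE_n)$, and takes $B_n$--invariants to obtain $\mathcal H_n = \Z/p[GL_n/B_n]^{B_n}$. Note in particular that the nilpotence argument is applied with target $H^*(BE_n)$, which is nil--closed; your version, with target $c_nH^*(BE_n)^{B_n}$, would require that module to be nil--closed, which is not established.

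Second, and more seriously, you do not explain why the inclusion $\End_{\U}(N(n)) \subseteq \Hom_{\U}(N(n),c_nH^*(BE_n)^{B_n})$ is an equality, i.e.\ why every map $N(n)\to c_nH^*(BE_n)^{B_n}$ lands back in $N(n)$. Your nilpotence argument runs the wrong way: it would (at best) inject $\End_{\U}(N(n))$ into $\End_{\U}(c_nH^*(BE_n)^{B_n})$, but gives no reason for surjectivity. There is no abstract nilpotence reason for $\Hom_{\U}(N(n),C)=0$ when $C$ is the nilpotent cokernel and $N(n)$ is the \emph{source}. The paper closes this gap geometrically: \thmref{transfer thm} shows that each Hecke generator $e(i)$, acting on $c_nH^*(BE_n)^{B_n}$ via the parabolic transfer, restricts to an endomorphism of $N(n)$; since the $e(i)$ generate $\mathcal H_n$, every element of $\Hom_{\U}(N(n),c_nH^*(BE_n)^{B_n})=\mathcal H_n$ already lies in $\End_{\U}(N(n))$. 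This input from \thmref{transfer thm} is essential and is absent from your outline.
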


We begin the proofs.

Recall that, by either \cite{agm} or \cite{lz}, the natural map
$$\Z/p[\Hom(E_m,E_n)] \ra \Hom_{\U}(H^*(BE_n),H^*(BE_m))$$ is an isomorphism.  Our first lemma is a variant of this.

\begin{lem}  The natural map
$$\Z/p[\Epi(E_m,E_n)] \ra \Hom_{\U}(c_nH^*(BE_n),H^*(BE_m))$$ is an isomorphism.
\end{lem}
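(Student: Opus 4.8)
The plan is to bootstrap from the known isomorphism $\Z/p[\Hom(E_m,E_n)] \xrightarrow{\sim} \Hom_{\U}(H^*(BE_n),H^*(BE_m))$ by analyzing which homomorphisms $\varphi\colon E_m \to E_n$ contribute once one restricts attention to the submodule $c_nH^*(BE_n) = \widetilde H^*(BE_n^{\tilde\rho_n}) \subseteq H^*(BE_n)$. First I would observe that an $\A$-module map $H^*(BE_n) \to H^*(BE_m)$ which is a $\Z/p$-linear combination $\sum_\varphi a_\varphi \varphi^*$ sends $c_n$ to $\sum_\varphi a_\varphi \varphi^*(c_n)$, so the map restricts to one defined on $c_nH^*(BE_n)$ only after multiplying by $c_n$ in the source; more to the point, every map $c_nH^*(BE_n) \to H^*(BE_m)$ extends (non-canonically) to a map $H^*(BE_n) \to H^*(BE_m)$, because $c_nH^*(BE_n)$ is a free rank-one $H^*(BE_n)$-module on the generator $c_n$ and the target is injective enough in $\U$ — alternatively, one uses that $H^*(BE_n) \to c_nH^*(BE_n)$, $x \mapsto c_n x$, together with the Thom isomorphism, identifies $\Hom_{\U}(c_nH^*(BE_n), H^*(BE_m))$ with a subquotient of $\Hom_{\U}(H^*(BE_n),H^*(BE_m))$ on which we already have a basis.

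Granting the extension, the heart of the argument is to identify exactly which $\varphi\in\Hom(E_m,E_n)$ survive. The key computation is that $\varphi^*(c_n) = 0$ in $H^*(BE_m)$ precisely when $\varphi$ is \emph{not} surjective: indeed if $\varphi$ factors through a proper subgroup $E < E_n$, then by \lemref{cn lem} $c_n$ restricts to $0$ in $H^*(BE)$, hence $\varphi^*(c_n)=0$; conversely, if $\varphi$ is surjective then $m\geq n$ and $\varphi^*$ is a split injection (a surjection of elementary abelian groups admits a section), so $\varphi^*(c_n)$ is a nonzero Euler class (it is $c_n$ of a rank-$(p^n-1)$ bundle pulled back along a surjection, which stays nonzero — one can see this by further restricting along a section $E_n \hookrightarrow E_m$, which recovers $c_n \neq 0$). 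Therefore a linear combination $\sum_\varphi a_\varphi\varphi^*$ kills all of $c_nH^*(BE_n)$ — equivalently, kills $c_n$, since $c_n$ generates the module over $H^*(BE_n)$ and the maps are $H^*(BE_n)$-linear via $\varphi^*$ composed with... — wait, one must be careful: the maps are only $\A$-linear, not $H^*(BE_n)$-linear. So instead I would argue: $\sum_\varphi a_\varphi \varphi^*$ restricted to $c_nH^*(BE_n)$ depends only on the $\varphi$ with $\varphi$ surjective, because for non-surjective $\varphi$ the image $\varphi^*(c_nH^*(BE_n)) = \varphi^*(c_n)\cdot\varphi^*(H^*(BE_n)) = 0$. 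Hence the natural map $\Z/p[\Epi(E_m,E_n)] \to \Hom_{\U}(c_nH^*(BE_n),H^*(BE_m))$ is surjective with the non-epi part of $\Z/p[\Hom(E_m,E_n)]$ exactly in the kernel of the restriction map.

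For injectivity of $\Z/p[\Epi(E_m,E_n)] \to \Hom_{\U}(c_nH^*(BE_n),H^*(BE_m))$, suppose $\sum_{\varphi \in \Epi} a_\varphi \varphi^*$ vanishes on $c_nH^*(BE_n)$. Evaluate on $c_n$: we get $\sum_\varphi a_\varphi \varphi^*(c_n) = 0$ in $H^*(BE_m)$. Now I would use that the classes $\{\varphi^*(c_n) : \varphi \in \Epi(E_m,E_n)\}$ are linearly independent in $H^*(BE_m)$ — this is the analogue, for top Euler classes, of the linear independence of the $\varphi^*$ themselves, and it follows because restricting $\varphi^*(c_n)$ to the various elementary abelian subgroups $E \subseteq E_m$ of corank $m-n$ distinguishes the $\varphi$'s up to automorphisms of $E_n$, combined with the nonvanishing established above; when $m = n$ this is just the statement that $c_n$ spans a one-dimensional $GL_n$-submodule (the determinant-power representation) so the $\varphi^*(c_n)$ for $\varphi \in GL_n = \Aut(E_n)$ are all scalar multiples of $c_n$ — and here I realize the cleanest route is to reduce to $m=n$ by precomposing with sections and to invoke that $\varphi \mapsto \varphi^*(c_n)$ is injective on $\Epi/\!\sim$ then handle the $\Aut(E_n)$-ambiguity via the one-dimensionality of $\langle c_n\rangle$ as a $GL_n$-module. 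The main obstacle I anticipate is precisely this injectivity/independence step: proving that distinct epimorphisms give linearly independent Euler-class images requires either an explicit Dickson-invariant computation (easy at $p=2$ where $c_n$ is the top Dickson class, more delicate at odd $p$ where $c_n$ is a twisted invariant and one must track the sign/determinant twist) or a clean reduction to the already-cited $\Hom(E_m,E_n)$ result. I would spend the bulk of the write-up there, handling $p=2$ and odd $p$ separately, and treat the surjectivity and the extension lemma as routine.
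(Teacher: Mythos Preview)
Your surjectivity argument is essentially the paper's: use the $\U$--injectivity of $H^*(BE_m)$ to show that restriction $\Hom_{\U}(H^*(BE_n),H^*(BE_m)) \to \Hom_{\U}(c_nH^*(BE_n),H^*(BE_m))$ is onto, and then observe via \lemref{cn lem} that the non--epimorphisms contribute zero after restriction.

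The injectivity argument, however, has a genuine gap. Your plan is to evaluate $\sum_{\varphi\in\Epi} a_\varphi\,\varphi^*$ on $c_n$ and invoke linear independence of the family $\{\varphi^*(c_n)\}$ in $H^*(BE_m)$. As you yourself notice, this fails already when $m=n$: there $\Epi(E_n,E_n)=GL_n$ and $c_n$ spans a one--dimensional $GL_n$--module, so every $\varphi^*(c_n)$ is a nonzero scalar multiple of $c_n$ and the family is as dependent as it could be. Your proposed workaround (``reduce to $m=n$ via sections and handle the $\mathrm{Aut}(E_n)$--ambiguity via one--dimensionality'') does not repair this: the one--dimensionality is precisely the obstruction, not the cure, and evaluating on $c_n$ alone can never separate the $|GL_n|$ many epimorphisms.

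The paper avoids this difficulty entirely by inserting Lannes' Linearization Theorem: for suitable $K$, $\Hom_{\U}(K,H^*(BE_m))$ is the free $\Z/p$--module on the set $\Hom_{\K}(K,H^*(BE_m))$. Applied to $K = c_nH^*(BE_n)$, this replaces the hard question ``are the $\U$--maps $\varphi^*$ linearly independent?'' with the easy question ``are the $\K$--maps $\varphi^*$ pairwise distinct?''. The latter is immediate: if $\varphi_1^*$ and $\varphi_2^*$ agree on $c_nH^*(BE_n)$ then in particular $\varphi_1^*(c_n)=\varphi_2^*(c_n)$ is a non--zero--divisor in $H^*(BE_m)$, and agreement on every $c_n y$ then forces $\varphi_1^*(y)=\varphi_2^*(y)$ for all $y$, whence $\varphi_1=\varphi_2$. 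A diagram chase then finishes. If you want to salvage a direct argument without Linearization, you must evaluate on \emph{all} of $c_nH^*(BE_n)$ rather than on $c_n$ alone, and exploit that each $\varphi^*(c_n)$ is a non--zero--divisor; this can be made to work but is more laborious than the paper's route.
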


\begin{proof} Consider the diagram
\begin{equation*}
\xymatrix{
\Z/p[\Hom(E_m,E_n)] \ar@{->>}[r] \ar[d]^-{\wr} & \Z/p[\Epi(E_m,E_n)]  \ar@{^{(}->}[d]\\
\Z/p[\Hom_{\K}(H^*(BE_n),H^*(BE_m))] \ar[r]  \ar[d]^-{\wr}& \Z/p[\Hom_{\K}(c_nH^*(BE_n),H^*(BE_m))] \ar[d]^-{\wr}\\
\Hom_{\U}(H^*(BE_n),H^*(BE_m)) \ar@{->>}[r] & \Hom_{\U}(c_nH^*(BE_n),H^*(BE_m)).}
\end{equation*}

The top horizontal map sends any homomorphism $E_m \ra E_n$ that is not epic to 0, and the top square commutes because such a homomorphism has image in a proper subgroup of $E_n$, so that \lemref{cn lem} applies.  The bottom two vertical maps are isomorphisms by Lannes' Linearization Theorem \cite[Appendix]{lz}.  The bottom horizontal map is an epimorphism because $H^*(BE_m)$ is a $\U$--injective \cite{schwartz}.

Finally, the top right vertical map is visibly monic: seen, e.g., by considering the action of $\Epi(E_m,E_n)$ on $c_mH^1(BE_m)$.  From the rest of the diagram, one concludes that this same map is an epimorphism, and the lemma follows.
\end{proof}

\begin{cor} $\Z/p[\Epi(E_m,E_n)/B_n] \xra{\sim} \Hom_{\U}(c_nH^*(BE_n)^{B_n},H^*(BE_m)) \xra{\sim} \Hom_{\U}(N(n),H^*(BE_m))$.
\end{cor}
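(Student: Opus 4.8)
The corollary packages two isomorphisms of quite different character, and I would establish them one at a time, the second being formal and the first carrying the real content (which is entirely supplied by the preceding lemma together with Lannes theory).

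For the isomorphism $\Hom_{\U}(c_nH^*(BE_n)^{B_n},H^*(BE_m)) \xra{\sim} \Hom_{\U}(N(n),H^*(BE_m))$ I would simply invoke \lemref{Nil iso lem}, which gives a short exact sequence of unstable modules
$$ 0 \ra N(n) \ra c_nH^*(BE_n)^{B_n} \ra C \ra 0 $$
with $C$ nilpotent (and $C=0$ when $p=2$). Since $H^*(BE_m)$ is $\U$--injective, $\Hom_{\U}(-,H^*(BE_m))$ carries this to a short exact sequence, so the restriction map to $\Hom_{\U}(N(n),H^*(BE_m))$ is onto with kernel $\Hom_{\U}(C,H^*(BE_m))$; and the latter vanishes because $C$ is nilpotent while $H^*(BE_m)$ is a reduced unstable module. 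Nothing beyond \lemref{Nil iso lem} is needed here.

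For the isomorphism $\Z/p[\Epi(E_m,E_n)/B_n] \xra{\sim} \Hom_{\U}(c_nH^*(BE_n)^{B_n},H^*(BE_m))$ the plan is to read it off from the preceding lemma by passing to $B_n$--invariants on the source, where $B_n < GL_n$ acts on the set $\Epi(E_m,E_n)$ by post-composition; this action is free, so $\Z/p[\Epi(E_m,E_n)]$ is a free $\Z/p[B_n]$--module. The natural map of the corollary sends $[\phi]$ to the restriction of $\phi^{*}$, and since $(b\phi)^{*}=\phi^{*}\circ b^{*}$ agrees with $\phi^{*}$ on $B_n$--invariants it descends to $\Z/p[\Epi(E_m,E_n)/B_n]=\Z/p[\Epi(E_m,E_n)]_{B_n}$, and it is surjective by $\U$--injectivity of $H^*(BE_m)$. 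To see it is injective I would compute both sides with Lannes' functor $T_{E_m}$: it is exact, hence commutes with the finite limit $(-)^{B_n}$, and the adjunction gives $\Hom_{\U}(M,H^*(BE_m))=\Hom_{\U}(T_{E_m}M,\Z/p)=((T_{E_m}M)^{0})^{\vee}$; feeding in the preceding lemma, which identifies $(T_{E_m}c_nH^*(BE_n))^{0}\cong \Z/p[\Epi(E_m,E_n)]$ as a $B_n$--module, yields
$$ \Hom_{\U}(c_nH^*(BE_n)^{B_n},H^*(BE_m)) \cong \big(\Z/p[\Epi(E_m,E_n)]^{B_n}\big)^{\vee} \cong \Z/p[\Epi(E_m,E_n)/B_n], $$
the last step being the elementary fact that, for a permutation module on a free $G$--set $S$, one has $(\Z/p[S]^{G})^{\vee}\cong \Z/p[S]_{G}=\Z/p[S/G]$. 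Matching dimensions then forces the descended map above to be an isomorphism.

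The main obstacle, such as it is, is bookkeeping with the $B_n$--action: one must check that post-composition on $\Epi(E_m,E_n)$ corresponds, under the preceding lemma, to the action induced on $(T_{E_m}c_nH^*(BE_n))^{0}$ by the $GL_n$--action on $c_nH^*(BE_n)$, and keep track of the twisted character by which $B_n$ acts on $c_n$. Neither affects the final group: because $B_n$ acts \emph{freely} on surjections $E_m\twoheadrightarrow E_n$, the relevant permutation module stays $\Z/p[B_n]$--free even after twisting, so its coinvariants are $\Z/p[\Epi(E_m,E_n)/B_n]$ regardless. This freeness is exactly what makes the passage from invariants on the module to coinvariants on the $\Hom$ legitimate, and is why the input of the preceding lemma is genuinely used rather than formal.
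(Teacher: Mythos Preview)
Your proof is correct and follows the same outline as the paper's. For the second isomorphism you and the paper both invoke \lemref{Nil iso lem} together with the fact that $H^*(BE_m)$ is nil--closed (you use the word ``reduced'', which is the part actually needed to kill $\Hom_{\U}(C,H^*(BE_m))$). For the first isomorphism the paper is terser: it simply cites the preceding lemma and the $\U$--injectivity of $H^*(BE_m)$. The mechanism behind that one sentence is that an exact contravariant functor carries the finite limit $M^{B_n}$ to the colimit $\Hom_{\U}(M,I)_{B_n}$, and for the permutation module $\Z/p[\Epi(E_m,E_n)]$ one always has $\Z/p[\Epi]_{B_n}=\Z/p[\Epi/B_n]$, with no freeness hypothesis required.

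Your route via Lannes' $T_{E_m}$ is an equivalent repackaging of the same injectivity (exactness of $T_V$ and $\U$--injectivity of $H^*(BV)$ are two faces of the same coin), but it is slightly more roundabout: you pass through $(T_{E_m}M)^0$, take $B_n$--invariants rather than coinvariants, and then must invoke freeness of the $B_n$--action on $\Epi(E_m,E_n)$ to reconcile invariants, coinvariants, and duals. That works, and your remark about the twisted character on $c_n$ being harmless because of this freeness is correct; but the paper's direct use of injectivity sidesteps both the $T$--functor and the freeness argument entirely.
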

\begin{proof} The first isomorphism follows from the lemma, again using that $H^*(BE_n)$ is a $\U$--injective.  The second follows from \lemref{Nil iso lem}, using, for odd primes, that $H^*(BE_n)$ is nil--closed \cite{schwartz}.
\end{proof}

As there are no epimorphisms $E_m \ra E_n$ if $m<n$, this corollary proves \propref{m not n prop} and thus \thmref{m not n thm}.

We turn to the case when $m=n$.  The next lemma and corollary imply \propref{End prop}.
\begin{lem} With $M$ equal to $N(n)$ or $c_nH^*(BE_n)^{B_n}$,
$$ \mathcal H_n = \Hom_{\U}(M,c_nH^*(BE_n)^{B_n}).$$
\end{lem}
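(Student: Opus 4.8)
The plan is to bootstrap from the corollary just proved, which for $m=n$ (where $\Epi(E_n,E_n)=GL_n$) gives, for $M$ equal to $N(n)$ or $c_nH^*(BE_n)^{B_n}$, an isomorphism $\Z/p[B_n\backslash GL_n]\xra{\sim}\Hom_{\U}(M,H^*(BE_n))$ sending the class $[g]$ of $g\in GL_n$ to $g^*|_M$, where $g^*$ denotes the action of $g$ on $H^*(BE_n)$. Under this identification, $\Hom_{\U}(M,c_nH^*(BE_n)^{B_n})$ is precisely the subspace of those combinations $\phi=\sum_{[g]}a_{[g]}\,g^*|_M$ whose image lies in the submodule $c_nH^*(BE_n)^{B_n}\subseteq H^*(BE_n)$, so the whole task is to locate that subspace inside $\Z/p[B_n\backslash GL_n]$ and recognize it as $\mathcal H_n$.

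First I would dispose of the constraint ``$\im\phi\subseteq c_nH^*(BE_n)$'': it is automatic, since $GL_n$ acts on the Thom space $BE_n^{\tilde\rho_n}$, so each $g^*$ preserves $c_nH^*(BE_n)=\widetilde H^*(BE_n^{\tilde\rho_n})$ (equivalently, $g^*c_n$ is a unit multiple of $c_n$), and hence so does any $\phi$. Thus the only genuine condition is that the image be pointwise $B_n$-fixed, i.e. $b^*\circ\phi=\phi$ for all $b\in B_n$, where $b^*$ acts on $\Hom_{\U}(M,H^*(BE_n))$ by post-composition with the action of $b$ on the target. The second step is to transport this $B_n$-action across the corollary's isomorphism: since $b^*\circ g^*=(gb)^*$ for $g,b\in GL_n$, post-composition by $b^*$ takes $[g]$ to $[gb]$, i.e. it is right translation by $b$, which is well defined on the orbit set $B_n\backslash GL_n$ and makes it a right $B_n$-set. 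Its fixed subspace in $\Z/p[B_n\backslash GL_n]$ is spanned by the sums over double cosets, so $\Hom_{\U}(M,c_nH^*(BE_n)^{B_n})\cong\Z/p[B_n\backslash GL_n/B_n]$.

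Finally I would invoke the standard identification $\Z/p[B_n\backslash GL_n/B_n]=\End_{GL_n}(\Z/p[B_n\backslash GL_n])=\mathcal H_n$ (Frobenius reciprocity for the module induced from the trivial $B_n$-module), and then check the bookkeeping that this bijection is the one coming from the canonical algebra map $\mathcal H_n\ra\End_{\U}(M_{B_n})$ mentioned earlier: the double coset of $1$ goes to $\id=1^*|_M$, and, via \corref{transfer cor}, the generators $e(i),\hat e(i)$ go to the transfer--restriction composites, so the isomorphism is one of algebras; in particular, for $M=c_nH^*(BE_n)^{B_n}$ it identifies $\mathcal H_n$ with $\End_{\U}(c_nH^*(BE_n)^{B_n})$ as required for \propref{End prop}. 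No analytic obstacle remains at this stage --- all the real input (Lannes' linearization theorem, $\U$-injectivity of $H^*(BE_m)$, and \lemref{Nil iso lem}) was already consumed in the preceding corollary --- so the only point demanding care is keeping the left/right conventions straight, so that ``$B_n$-invariance of the image'' genuinely lands on the double-coset algebra rather than on something off by an inverse or a transpose.
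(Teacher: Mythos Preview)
Your proposal is correct and follows essentially the same route as the paper: start from the corollary's isomorphism $\Z/p[GL_n/B_n]\xra{\sim}\Hom_{\U}(M,H^*(BE_n))$, observe that the image automatically lands in $c_nH^*(BE_n)$, and then pass to $B_n$-invariants to obtain $\Z/p[GL_n/B_n]^{B_n}=\mathcal H_n$. Your treatment is somewhat more explicit (spelling out why the $c_n$ constraint is vacuous and tracking the algebra structure), but the underlying argument is identical to the paper's three-line proof.
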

\begin{proof} Note that $\mathcal H_n = \Z/p[GL_n/B_n]^{B_n}$.
We have already shown that the natural map
$$ \Z/p[GL_n/B_n] \ra \Hom_{\U}(M,H^*(BE_n))$$
is a bijection. By construction, the image of this map lands in the subspace $\Hom_{\U}(M,c_nH^*(BE_n))$.  Now take $B_n$ invariants.
\end{proof}

\begin{cor} $\End_{\U}(N(n)) \subseteq \Hom_{\U}(N(n),c_nH^*(BE_n)^{B_n})$ is an isomorphism.
\end{cor}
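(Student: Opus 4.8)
The plan is to realize the inclusion $\End_{\U}(N(n))\subseteq\Hom_{\U}(N(n),c_nH^*(BE_n)^{B_n})$ as post-composition with the monomorphism $i\colon N(n)\hookrightarrow P$, where I write $P:=c_nH^*(BE_n)^{B_n}$, and to show the resulting injective map $i_*\colon\End_{\U}(N(n))\hookrightarrow\Hom_{\U}(N(n),P)$ is onto. By the preceding lemma $\Hom_{\U}(N(n),P)=\HH_n$, a finite set, so it suffices to produce an \emph{injective} algebra homomorphism $\HH_n\hookrightarrow\End_{\U}(N(n))$: then $|\HH_n|\le|\End_{\U}(N(n))|\le|\Hom_{\U}(N(n),P)|=|\HH_n|$, all three are equal, and $i_*$, being an injection of finite sets of the same size, is a bijection. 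This supplies the remaining ingredient of \propref{End prop}, hence of \thmref{End thm}.

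To build $\HH_n\to\End_{\U}(N(n))$, recall that $\HH_n$ acts on $P$ by $\A$-linear endomorphisms: the $GL_n$-action on $H^*(BE_n)$ commutes with the Steenrod operations, so the Hecke operators, assembled from the transfers and restrictions for $B_n<P_i<GL_n$, are $\A$-linear. By the cohomological dual of \corref{transfer cor}, the generator $\hat e(i)$ of $\HH_n$, acting on $P$, carries the submodule $N(n)=\Sigma^{-1}(R_1^nL_0)^*$ into itself: $\hat e(i)$ on $H^*(BE_n^{\rho_n})^{B_n}$ is dual to the operation $R_1^nL_0\xrightarrow{i_*}R_1^{n-i-1}R_2R_1^{i-1}L_0\xrightarrow{tr}R_1^nL_0$ of \corref{transfer cor}, which is defined on the quotient $R_1^nL_0$. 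Since the $\hat e(i)$ generate $\HH_n$, the entire $\HH_n$-action on $P$ restricts to $N(n)$, and restriction is the homomorphism we want.

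For injectivity, suppose $h\in\HH_n$ acts as zero on $N(n)$. Regarding $h$ as an endomorphism of $P$, its restriction along $i$ is zero, so $h\colon P\to P$ annihilates $N(n)$ and factors as $P\twoheadrightarrow P/N(n)\xrightarrow{\bar h}P$. If $p=2$ then $N(n)=P$ by \lemref{Nil iso lem} and there is nothing to prove; if $p$ is odd, \lemref{Nil iso lem} says $P/N(n)$ is nilpotent. Because $H^*(BE_n)$ is nil-closed, $\Hom_{\U}(P/N(n),H^*(BE_n))=0$, so the composite $P/N(n)\xrightarrow{\bar h}P\hookrightarrow H^*(BE_n)$ is zero; as $P\hookrightarrow H^*(BE_n)$ is monic, $\bar h=0$ and hence $h=0$ on $P$. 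Finally the lemma, applied with $M=P$, identifies $\HH_n$ with $\End_{\U}(P)$ via the natural action, so $h=0$ in $\HH_n$. This proves injectivity, and the Corollary follows.

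I expect the last step to be the real obstacle: passing from "$h$ kills $N(n)$" to "$h$ kills $P$" is exactly where \lemref{Nil iso lem} (nilpotence of the cokernel at odd primes) and the nil-closedness of $H^*(BE_n)$ are essential — everything else is formal manipulation of the group-ring and Hecke-algebra identifications already in hand. One might instead try the long exact sequence $0\to\End_{\U}(N(n))\to\Hom_{\U}(N(n),P)\to\Hom_{\U}(N(n),P/N(n))$ and argue the last term vanishes, but proving that $N(n)$ admits no nonzero map to a nilpotent module looks no easier, since $N(n)$ is only a submodule — not a quotient — of the nil-closed module $H^*(BE_n)$.
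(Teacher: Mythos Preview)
Your proof is correct and shares the essential step with the paper's: both invoke the cohomological dual of \thmref{transfer thm}/\corref{transfer cor} to show that the Hecke generators preserve the submodule $N(n)\subset P:=c_nH^*(BE_n)^{B_n}$, yielding an algebra map $\HH_n\to\End_{\U}(N(n))$. The paper then finishes in one line: since the $e(i)$ generate $\End_{\U}(P)=\HH_n$, every endomorphism of $P$ preserves $N(n)$; and the preceding lemma (via the corollary just before it) already identifies the restriction map $\End_{\U}(P)\to\Hom_{\U}(N(n),P)$ with the identity on $\HH_n$, so surjectivity of $i_*$ is immediate. Your cardinality argument, requiring a separate proof that $\HH_n\to\End_{\U}(N(n))$ is injective, is valid but redundant: the nilpotence-plus-nil-closedness step you carry out is precisely what already went into establishing $\Hom_{\U}(P,H^*(BE_n))\cong\Hom_{\U}(N(n),H^*(BE_n))$ in that earlier corollary. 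The paper's route is shorter; yours has the minor expository virtue of making the dependence on \lemref{Nil iso lem} visible at this point rather than absorbed into the preceding results.
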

\begin{proof}  As discussed earlier in dual form, \thmref{transfer thm} tells us that there are maps $e(i): N(n) \ra N(n)$ making the diagram
\begin{equation*}
\xymatrix{
N(n) \ar@{^{(}->}[d] \ar@{-->}[r]^{e(i)} & N(n) \ar@{^{(}->}[d]  \\
c_nH^*(BE_n)^{B_n} \ar[r]^{e(i)} & c_nH^*(BE_n)^{B_n} }
\end{equation*}
commute.  Since the $e(i)$ generate the algebra $\End_{\U}(c_nH^*(BE_n))^{B_n})$, the corollary follows.
\end{proof}

It remains to prove \lemref{Nil iso lem}.

\begin{proof}[Sketch proof of \lemref{Nil iso lem}]

The lemma says that the restriction
$$ H^*(B\widetilde B_n^{\tilde \rho_n}) \ra H^*(BE_n^{\tilde \rho_n})^{B_n}.$$
is an epimorphism when $p=2$ and a nil--epimorphism in all cases.

This can be proved in various ways. For starters, using a transfer argument, one can replace $\widetilde B_n$ and $B_n$ with their $p$--Sylow subgroups $\widetilde U_n$ and $U_n$.  This makes orientability issues disappear: the Euler class $c_n$ extends to $\widetilde U_n$ and is $U_n$--invariant.

One approach now goes as follows. The map
$$ c_nH^*(B\widetilde U_n) \ra c_nH^*(BE_n^{\tilde \rho_n})^{U_n}.$$
is a nil--epimorphism if and only if, for all elementary abelian $p$--groups $E$,
$$ \Hom_{\U}(c_nH^*(BE_n^{\tilde \rho_n})^{U_n}, H^*(BE)) \ra \Hom_{\U}(c_nH^*(B\widetilde U_n), H^*(BE))$$
is a monomorphism.

One then checks that this map identifies with
$$ \Z/p[\Epi(E,E_n)/U_n] \ra \Z/p[\TransRep(E,\widetilde U_n)],$$
where $\TransRep(E,\widetilde U_n)$ is set of representations of $E$ in $\widetilde U_n$ that are transitive, when viewed as an $E$--set with $p^n$ elements.  This is a monomorphism.
\end{proof}

\section{Proof of the main results} \label{final proofs section}

We complete the proof of \thmref{prim thm} and thus also \thmref{big thm}.

\begin{proof}[Proof of \thmref{prim thm}: the last step]

We need to understand the diagram
$$ R_*L_{k+1} \begin{array}{c} d_{k*} \\[-.08in] \longrightarrow \\[-.1in] \longleftarrow \\[-.1in] s_{k*}
\end{array} R_*L_k \begin{array}{c} d_{{k-1}*} \\[-.08in] \longrightarrow \\[-.1in] \longleftarrow \\[-.1in] s_{{k-1}*}
\end{array} R_*L_{k-1}$$
if conditions $(1_j)$ and $(2_j)$ hold for $j=k-1,k$.

By \corref{d cor} and \corref{s cor}, this diagram is the direct sum over $n$ of diagrams of the form
$$ R_{n-1}L_{k+1} \begin{array}{c} d_{k*} \\[-.08in] \longrightarrow \\[-.1in] \longleftarrow \\[-.1in] s_{k*}
\end{array} R_nL_k \begin{array}{c} d_{{k-1}*} \\[-.08in] \longrightarrow \\[-.1in] \longleftarrow \\[-.1in] s_{{k-1}*}
\end{array} R_{n+1}L_{k-1}.$$

\corref{d cor} and \corref{d cor 2} combine to say that $d_{k*}$ and $d_{{k-1}*}$ are nonzero multiples of $\hat e_{n}e_{k+1}$ and $\hat e_{n+1}e_{k}$, while \corref{s cor} and \corref{s cor 2} combine to say that $s_{k*}$ and $s_{{k-1}*}$ are nonzero multiples of $e_{k+1}\hat e_{n}$ and $e_k\hat e_{n+1}$.

Thus there are nonzero elements $\lambda, \mu \in \Z/p$ such that
$$ {d_k}_*  {s_k}_* + {s_{k-1}}_* {d_{k-1}}_*: R_nL_k \ra R_nL_k$$
equals
$$\lambda (\hat e_{n}e_{k+1})(e_{k+1}\hat e_{n}) + \mu (e_k\hat e_{n+1})(\hat e_{n+1}e_{k}) = \lambda \hat e_{n}e_{k+1}\hat e_{n} + \mu e_k\hat e_{n+1}e_{k}:R_nL_k \ra R_nL_k,$$
which is an isomorphism, by \corref{iso cor}.

\end{proof}

We now prove \lemref{d_k lemma}, showing that the maps $d_k$ in the Whitehead conjecture satisfy the conditions $(1_k)$.

\begin{proof}[Proof of \lemref{d_k lemma}] Recall that one has the identification $$\Sinfty L_1(k) = \Sigma^{1-k} SP^{p^k}(S)/SP^{p^{k-1}}(S).$$  We need to be prove that
$ d_k:QL_1(k+1) \ra QL_1(k)$
is nonzero on $H_{c(k+1)}$, where $d_k$ is the composition $\Oinfty \delta_k \circ \Oinfty i_{k+1}$    in the diagram
\begin{equation*}
\xymatrix{
& \Oinfty \Sigma^{-k} H\Z/SP^{p^k}(S) \ar[dr]^{\Oinfty \delta_k} && \Oinfty \Sigma^{1-k} H\Z/SP^{p^{k-1}}(S) \\
QL_1(k+1) \ar[ur]^{\Oinfty i_{k+1}} & & QL_1(k) \ar[ur]^{\Oinfty i_{k}} &}
\end{equation*}

We prove this by induction on $k$, beginning with the trivial and slightly degenerate case $d_{-1}=\Oinfty i_0: QS^1 \ra S^1$, which is nonzero on $H_1$.

For the inductive step, when the conditions of \thmref{big thm} hold for $j<k$, \thmref{big thm} implies a truncated version of  \corref{cor 1}, and we learn that the fibration sequence
$$ \Oinfty \Sigma^{-k} H\Z/SP^{p^k}(S) \xra{\Oinfty \delta_k} QL_1(k) \xra{\Oinfty i_{k}} \Oinfty \Sigma^{1-k} H\Z/SP^{p^{k-1}}(S)$$
is a split fibration of infinite loop spaces.  Thus $\Oinfty \delta_k$ is certainly monic in homology.

Meanwhile, $\Oinfty i_{k+1}$ is an isomorphism on bottom homology $H_{c(k+1)}$, as this is the case for $i_{k+1}$.
\end{proof}

We prove \lemref{s_k lemma}, showing that the maps $s_k$ arising from the Goodwillie tower of $S^1$ satisfy the conditions $(2_k)$.

\begin{proof}[Proof of \lemref{s_k lemma}] We need to prove that
$ s_k:QL_1(k) \ra QL_1(k+1)$ is nonzero on $PH_{c(k+1)}$.  The right vertical map is an isomorphism in the diagram
\begin{equation*}
\xymatrix{
\pi_{c(k+1)}(QL_1(k)) \ar[d] \ar[r]^-{s_{k*}} & \pi_{c(k+1)}(QL_1(k+1)) \ar[d]  \\
PH_{c(k+1)}(QL_1(k)) \ar[r]^-{s_{k*}} & PH_{c(k+1)}(QL_1(k+1)).}
\end{equation*}
Thus we need to prove that $ s_k:QL_1(k) \ra QL_1(k+1)$ is nonzero on $\pi_{c(k+1)}$.

We prove this by induction on $k$, beginning with the trivial and slightly degenerate case $s_{-1}:S^1 \ra QS^1$, which is nonzero on $\pi_1$.

We now make use of the fact that $s_{k*}$ appears as a differential on the $E^1$ page of the homotopy spectral sequence which we know, thanks to \cite{aronekan}, strongly converges to $\pi_*(S^1)$.  This second quadrant spectral sequence has $E^1_{-s,t} = \pi_{t}^S(L_1(s))$, and converges to $\pi_{t-s}(S^1)$, so that $E^{\infty}_{-s,t} =0$ unless $(s,t)=(0,1)$.

For our inductive step, when the conditions of \thmref{big thm} hold for $j<k$, \thmref{big thm} implies a truncated version of \corref{cor 2}, and we learn that
$$ 0 \ra \pi_*(S^1) \ra \pi_*(QL_1(0)) \ra \cdots \ra \pi_*(QL_1(k-1)) \ra \pi_*(QL_1(k))$$
is exact. Interpreted as a statement about the spectral sequence, we learn that
$E^2_{-s,t} = 0$ for $s=0, \dots, k-1$ unless $(s,t)=(0,1)$.

By connectivity reasons, if $s >k+1$, $E^1_{-s,t}=0$ unless $t-s$ is much bigger than $c(k+1)-k-1$.  Thus, since $E^{\infty}_{-(k+1),c(k+1)}=0$, the $E^1$--differential
$$ E^1_{-k,c(k+1)} \ra E^1_{-(k+1),c(k+1)}$$
must be onto.  But this just means that
$$s_{k*}:\pi_{c(k+1)}(QL_1(k)) \ra \pi_{c(k+1)}(QL_1(k+1))$$
is onto, as needed.
\end{proof}

\appendix
\section{Proof of \corref{cor 1}} \label{appendix 1}

\corref{cor 1} follows from the next proposition, specialized to the situation of \thmref{big thm}.

\begin{prop}  Suppose one has a diagram of spaces of the following form:
\begin{equation} \label{diagram for prop}
\xymatrix{
\dots \ar@<.5ex>[r]^{d_2} & X_2 \ar@<.5ex>[r]^{d_1} \ar@<.5ex>[l]^{s_2} & X_1 \ar@<.5ex>[r]^{d_0} \ar@<.5ex>[l]^{s_1} & X_0 \ar@<.5ex>[l]^{s_0} \ar@<.5ex>[r]^{d_{-1}} & Y_{-1}. \ar@<.5ex>[l]^{s_{-1}} \\}
\end{equation}
Suppose the following properties hold: \\

\noindent{\bf (i)} $d_{k-1}d_{k}$ is null for all $k$. \\

\noindent{\bf (ii)} The $d_k$ are maps of H--spaces. (Infinite loop, in our situation.) \\

\noindent{\bf (iii)} The maps $d_{-1}s_{-1}: Y_{-1} \ra Y_{-1}$ and
$ d_ks_k + s_{k-1}d_{k-1}: X_k \ra X_k$,
for $k \geq 0$, are all homotopy equivalences.\\

Then (\ref{diagram for prop}) refines to a diagram
\begin{equation*}
\xymatrix{
\dots \ar@<.5ex>[rr]^{d_2} \ar[dr]^{p_2} && X_2 \ar@<.5ex>[rr]^{d_1} \ar@<.5ex>[ll]^{s_2} \ar[dr]^{p_1}&& X_1 \ar@<.5ex>[rr]^{d_0} \ar@<.5ex>[ll]^{s_1} \ar[dr]^{p_0}&& X_0 \ar@<.5ex>[ll]^{s_0} \ar@<.5ex>[rr]^{\epsilon} && Y_{-1}. \ar@<.5ex>[ll]^{\eta} \\
& Y_2 \ar[ur]^{i_2}&& Y_1 \ar[ur]^{i_1} && Y_0 \ar[ur]^{i_0} &&&}
\end{equation*}
with the following properties. \\

\noindent{\bf (a)} \  $Y_{k} \xra{i_{k}} X_k \xra{p_{k-1}} Y_{k-1}$ is a split fibration sequence of $H$--spaces, and thus define a product decomposition $X_k \simeq Y_k \times Y_{k-1}$. \\

\noindent{\bf (b)} \ The triangles
$ \xymatrix{
X_{k+1} \ar[dr]^{p_k} \ar[rr]^{d_k} && X_{k}  \\
& Y_k \ar[ur]^{i_k}& }
$
commute.

\noindent{\bf (c)} \ For all $k\geq 0$, $p_ks_k i_k: Y_k \ra Y_k$ is a homotopy equivalence.
\end{prop}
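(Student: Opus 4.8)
\emph{Strategy.} The plan is to realise, at the level of spaces, the elementary homological fact that a contracting homotopy $s$ for a chain complex $(X_\bullet,d)$ splits every term as $X_k\cong\im(d_k)\oplus s(\im(d_{k-1}))$, the two summands being cut out by the complementary idempotents $d_ks_k$ and $s_{k-1}d_{k-1}$; it is hypothesis (i) that makes these two self-maps idempotent. I will run this as a single induction on $k\ge -1$ which simultaneously produces $Y_k$, $i_k$, $p_k$ and proves (a), (b), (c), together with the auxiliary statement that the self-equivalence $g_k:=d_ks_k+s_{k-1}d_{k-1}$ of $X_k$ (with $g_{-1}:=d_{-1}s_{-1}$) carries the sub-$H$-space $i_k(Y_k)$ into itself and restricts there to $p_ks_ki_k$. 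I work throughout with infinite loop spaces and infinite loop maps --- permitted by the parenthetical in (ii) and by the setting of \thmref{big thm} --- so that homotopy fibres, retractions, product splittings, and the lifts used below all stay in that category.

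\emph{Construction of $Y_k,i_k,p_k$.} The base of the induction is $Y_{-1}$ as given, $i_{-1}:=\id$, $p_{-1}:=d_{-1}\colon X_0\ra Y_{-1}$; since $g_{-1}=d_{-1}s_{-1}$ is an equivalence by (iii), $p_{-1}$ is split surjective, and the auxiliary statement at level $-1$ holds trivially because $g_{-1}i_{-1}=d_{-1}s_{-1}=p_{-1}s_{-1}i_{-1}$. For the inductive step at level $k\ge 0$, assume that $p_{k-1}\colon X_k\ra Y_{k-1}$ is a split surjective infinite loop map with $i_{k-1}p_{k-1}\simeq d_{k-1}$, and that (b), (c) and the auxiliary statement hold at level $k-1$. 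Put $Y_k:=\hofib(p_{k-1})$ with its canonical infinite loop map $i_k\colon Y_k\ra X_k$; split surjectivity of $p_{k-1}$ then yields the $H$-space product decomposition $X_k\simeq Y_k\times Y_{k-1}$, which is (a). Because $i_{k-1}(p_{k-1}d_k)\simeq d_{k-1}d_k\simeq *$ by (i) and $i_{k-1}$ is a split monomorphism, $p_{k-1}d_k\simeq *$, so $d_k$ lifts through $i_k$ to an infinite loop map $p_k\colon X_{k+1}\ra Y_k$ with $i_kp_k\simeq d_k$; this is (b) at level $k$.

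\emph{Proof of (c), and closing the induction.} Using that $d_{k-1}$ is an $H$-map together with (i), one has $d_{k-1}g_k=d_{k-1}d_ks_k+d_{k-1}s_{k-1}d_{k-1}\simeq(d_{k-1}s_{k-1})d_{k-1}\simeq g_{k-1}d_{k-1}$, so $g$ commutes with $d$ up to homotopy. Writing $d_{k-1}\simeq i_{k-1}p_{k-1}$ and $g_{k-1}i_{k-1}\simeq i_{k-1}(p_{k-1}s_{k-1}i_{k-1})$ (the auxiliary statement at level $k-1$), then cancelling the split monomorphism $i_{k-1}$, gives $p_{k-1}g_k\simeq g_{k-1}^{Y}p_{k-1}$, where $g_{k-1}^{Y}:=p_{k-1}s_{k-1}i_{k-1}$ is an equivalence of $Y_{k-1}$ by (c) at level $k-1$. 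Since $p_{k-1}i_k\simeq *$, it follows that $p_{k-1}g_ki_k\simeq *$ and $p_{k-1}g_k^{-1}i_k\simeq(g_{k-1}^{Y})^{-1}p_{k-1}i_k\simeq *$, i.e. both $g_k$ and $g_k^{-1}$ carry $i_k(Y_k)$ into itself. On the other hand, distributing the $H$-sum over precomposition with $i_k$ and using $d_{k-1}i_k=i_{k-1}p_{k-1}i_k\simeq *$, we get $g_ki_k\simeq d_ks_ki_k+s_{k-1}d_{k-1}i_k\simeq d_ks_ki_k=i_k(p_ks_ki_k)$, so the restriction of $g_k$ to $i_k(Y_k)$ is $p_ks_ki_k$. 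As $g_k$ is an equivalence and $g_k^{-1}$ also preserves $i_k(Y_k)$, the restriction $p_ks_ki_k\simeq r_kg_ki_k$ (with $r_k$ a retraction, $r_ki_k\simeq\id$) is an equivalence, with homotopy inverse $r_kg_k^{-1}i_k$; this is (c) at level $k$. Finally $s_ki_k(p_ks_ki_k)^{-1}$ is a homotopy section of $p_k$, so $p_k$ is split surjective and the induction continues. (At the first step $k=0$ the role of ``(c) at level $-1$'' is played by hypothesis (iii), that $d_{-1}s_{-1}$ is an equivalence.)

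\emph{The main obstacle.} Beyond the bookkeeping of the simultaneous induction, the point needing genuine care is that all of this be arranged coherently at the $H$-space level, not merely in the homotopy category of spaces: the lift $p_k$, the retraction and product splitting of (a), and the assertion that $g_k^{\pm1}$ preserve $i_k(Y_k)$ all have to be realised by $H$-maps, since a plain block-triangular self-equivalence need not have equivalences for diagonal blocks. Working throughout with infinite loop spaces and infinite loop maps --- legitimate in the situation of \thmref{big thm} --- dissolves this difficulty, as homotopy fibres of infinite loop maps, their structure maps, and nullhomotopy-governed lifts all remain infinite loop; what is left are the routine diagram chases indicated above.
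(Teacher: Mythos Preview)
Your proof is correct and follows essentially the same inductive scheme as the paper's: define $Y_k$ as the fibre of $p_{k-1}$, use that $i_{k-1}$ is a split monomorphism together with $d_{k-1}d_k\simeq *$ to produce the lift $p_k$, then verify (c) and use it to split $p_k$ so the induction continues. The paper's proof is much terser --- it simply asserts ``Property (c) then holds for the new triangle by assumption (iii)'' --- whereas you supply a genuine argument for (c): you show $g_k$ and $g_k^{-1}$ both carry the fibre $i_k(Y_k)$ into itself (via the intertwining $p_{k-1}g_k\simeq g_{k-1}^Y p_{k-1}$), and that the restriction of $g_k$ is $p_ks_ki_k$, whence that restriction is invertible. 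This is a clean way to extract the diagonal block from a triangular self-equivalence, and it makes explicit the auxiliary inductive statement the paper's one-line justification is implicitly leaning on.

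One small expository wrinkle: you announce that you ``work throughout with infinite loop spaces and infinite loop maps,'' but in the setting of \thmref{big thm} the maps $s_k$ are \emph{not} infinite loop (that is the whole point of condition $(2_k)$), so neither $g_k$ nor the section $s_ki_k(p_ks_ki_k)^{-1}$ of $p_k$ lives in that category. Fortunately your actual computations only use that the $d_k$ are $H$-maps (to distribute $d_{k-1}$ over the sum defining $g_k$) and that precomposition distributes over $H$-sums (always true); you never need $g_k$ or the section to be $H$-maps. So the mathematics is sound --- just soften the framing in your ``main obstacle'' paragraph to say that the $d$-side lives in infinite loop spaces while the $s$-side stays at the level of spaces, which is exactly what hypothesis (ii) provides.
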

\begin{proof}  One constructs the refinement by induction on $k$, so assume the properties for $k-1$.  One then defines $i_k: Y_k \ra X_k$ to be the fiber of $p_{k-1}$. Property (c) tells us that $i_{k-1}$ is a monomorphism in the sense of category theory.  To define $p_{k}$, one knows that $i_{k-1}p_{k-1}d_{k} = d_{k-1} d_{k}$ is null, by assumption (i).  Thus so is $p_{k-1}d_{k}$, and so the lift $p_{k}$ exists as in property (b).  Property (c) then holds for the new triangle by assumption (iii), showing that $p_k$ is split.  Assumption (ii) tells us that the split fibrations $$Y_{k} \xra{i_{k}} X_k \xra{p_{k-1}} Y_{k-1}$$ are product decompositions.
\end{proof}

\section{Discussion of \conjref{loop prop}} \label{loop prop appendix}

With our notation, Theorem 6.3 of \cite{aronedwyerlesh} says the following.

\begin{thm} \label{adl thm}  For any $d>0$, the $d$--fold looping map from
$$ \hoNat[ QBE_k^{(\R \oplus V) \otimes_{\R} \rho_k}, QBE_{k+1}^{(\R \oplus V) \otimes_{\R} \rho_{k+1}}]$$
to
$$ \hoNat[ \Omega^dQBE_k^{(\R \oplus V) \otimes_{\R} \rho_k}, \Omega^dQBE_{k+1}^{(\R \oplus V) \otimes_{\R} \rho_{k+1}}]$$
is onto on $\pi_0$.
\end{thm}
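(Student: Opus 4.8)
The plan is to translate the assertion into one about natural transformations of spectrum-valued functors and then run a Weiss orthogonal calculus argument, continuing exactly the circle of ideas behind \lemref{Weiss calc lemma}. Since the targets are infinite loop spaces, the $(\Sinfty,\Oinfty)$-adjunction gives a natural equivalence
$$\hoNat\big[\,QBE_k^{(\R\oplus V)\otimes_{\R}\rho_k},\ QBE_{k+1}^{(\R\oplus V)\otimes_{\R}\rho_{k+1}}\,\big]\ \simeq\ \hoNat_{\mathrm{Sp}}\big[\,\Sinfty\Oinfty\underline F(V),\ \underline G(V)\,\big],$$
where $\underline F(V)=\Sinfty BE_k^{(\R\oplus V)\otimes_{\R}\rho_k}$ and $\underline G(V)=\Sinfty BE_{k+1}^{(\R\oplus V)\otimes_{\R}\rho_{k+1}}$; since $\Omega^d\Oinfty=\Oinfty\Sigma^{-d}$ and $\Omega^d$ commutes with the Weiss approximations $T_n$, the $d$-fold looped version is obtained by replacing $\underline F$ and $\underline G$ in this expression by $\Sigma^{-d}\underline F$ and $\Sigma^{-d}\underline G$. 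On the target $\Sigma^{-d}$ is an equivalence, so the content lies entirely in the source. The functors $\underline F,\underline G$ are homogeneous of Weiss degrees $p^k$ and $p^{k+1}$ (cf.\ the proof of \lemref{Weiss calc lemma}), and by the stable splitting (\thmref{stable splitting thm}, valid since $BE_k^{\rho_k}$ is simply connected) one has, naturally in $V$, $\Sinfty\Oinfty\underline F(V)\simeq\bigvee_{r\geq1}(\underline F(V)^{\wedge r})_{h\Sigma_r}$, with $r$th summand $\Sinfty D_rBE_k^{(\R\oplus V)\otimes_{\R}\rho_k}$, homogeneous of degree $rp^k$.

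Now one compares the two mapping spaces summand by summand. For $r>p$ the summand is homogeneous of degree $rp^k>p^{k+1}$, hence maps to the polynomial-degree-$p^{k+1}$ functor $\underline G$ by a null transformation --- precisely the vanishing used in \lemref{Weiss calc lemma} --- and likewise after looping; for $r=1$ the looping map is literally the identity, so that summand is transparent. For $2\leq r\leq p$ one unwinds the looping map. The degree-$rp^k$ Weiss layer of $V\mapsto\Sinfty\Oinfty\Sigma^{-d}\underline F(V)$ is $\big((\Sigma^{-d}\underline F(V))^{\wedge r}\big)_{h\Sigma_r}=(S^{-d\rho}\wedge\underline F(V)^{\wedge r})_{h\Sigma_r}$, where $\rho$ is the $r$-dimensional permutation representation of $\Sigma_r$ --- here one argues through the Weiss tower rather than the stable splitting, since $\Sigma^{-d}\underline F(V)$ need not be a suspension spectrum, but the layer is determined by the behaviour for large $V$, where connectivity is restored. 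Splitting off the trivial summand of $\rho$ and moving the resulting $\Sigma^{-d}$ back onto the target identifies the looped $r$th term with $\hoNat_{\mathrm{Sp}}\big[(S^{-d\tilde{\rho}}\wedge\underline F(-)^{\wedge r})_{h\Sigma_r},\ \underline G(-)\big]$, $\tilde{\rho}$ the reduced $(r-1)$-dimensional representation, and the looping map becomes $\hoNat_{\mathrm{Sp}}[-,\underline G(-)]$ applied to the map induced by the $\Sigma_r$-equivariant collapse $\xi_r\colon S^{-d\tilde{\rho}}\to S^0$ --- the Spanier--Whitehead dual of the inclusion of fixed points $S^0\hookrightarrow S^{d\tilde{\rho}}$, a representation-sphere incarnation of the diagonal map $\epsilon$ of \lemref{Rn lem 1}. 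After the vanishing for $r>p$ the filtration over $r$ is finite, so no $\lim^{1}$ intervenes, and the theorem reduces to the statement that for $2\leq r\leq p$ the map on $\pi_0$ induced by $\xi_r$ is surjective, the case $r=p$ being the one that carries the actual maps $s_k$.

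That last point is the main obstacle, and is where the work of \cite{aronedwyerlesh} goes. The cofiber of $\xi_r$ is a connective $\Sigma_r$-spectrum but is not highly connected, so no crude connectivity estimate suffices; one needs the careful equivariant analysis of \cite{aronedwyerlesh}, which --- unsurprisingly --- runs parallel to the Hecke-algebra identities that later control the maps $s_k$ in this paper. A secondary technical point, flagged above, is that for $d\geq2$ the functor $\Omega^dQBE_k^{(\R\oplus V)\otimes_{\R}\rho_k}=\Oinfty\Sigma^{-d}\Sinfty BE_k^{(\R\oplus V)\otimes_{\R}\rho_k}$ is $\Oinfty$ of a spectrum that is not a suspension spectrum, so the whole argument must be carried out in terms of Weiss towers (legitimate since $\Oinfty$ commutes with the $T_n$), using the stable splitting only to identify the homogeneous layers of the unlooped source and then transporting that identification across the looping map.
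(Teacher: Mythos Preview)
The paper does not prove this theorem: it is quoted verbatim as Theorem~6.3 of \cite{aronedwyerlesh}, with no argument given. So there is no ``paper's own proof'' to compare against.

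Your outline is broadly in the spirit of the \cite{aronedwyerlesh} argument: pass to spectrum-valued natural transformations via the adjunction, split the source using the stable decomposition, kill summands of Weiss degree $>p^{k+1}$ exactly as in \lemref{Weiss calc lemma}, and reduce the remaining summands to a question about a $\Sigma_r$--equivariant map of representation spheres. The object you call $\mathrm{cof}(\xi_r)$ is indeed what governs the problem --- compare the paper's own remark, just after \thmref{loop thm}, about ``the cofiber of the map $S^0\to S^{(i-1)d}$'' on page~202 of \cite{aronedwyerlesh}.

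But your proposal is not a proof. At the decisive step you write ``That last point is the main obstacle, and is where the work of \cite{aronedwyerlesh} goes,'' and stop. The surjectivity for $2\le r\le p$ is the entire content of the theorem, and \cite{aronedwyerlesh} establishes it via a double-coset analysis of subgroups of $U(p^{k+1})$ --- recorded in this paper as \lemref{appendix lem 1} and the surrounding discussion of the groups $I_g$ --- none of which you supply. Your aside that this equivariant analysis ``runs parallel to the Hecke-algebra identities'' is also off the mark: the \cite{aronedwyerlesh} argument concerns intersections of $\Sigma_i\wr E_k$ and $E_{k+1}$ inside unitary groups and has no Hecke-algebra input; the Hecke algebras in this paper enter only in the separate computation of maps on $PH_*$.
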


Here $\hoNat$ denotes the (homotopically meaningful) space of natural transformations.

This theorem has the formal consequence that there exist natural $k$--fold deloopings $s_k(V): QL_1(k,V) \ra QL_1(k+1,V)$ of the $k$th connecting map in the Weiss tower.

We observe that their argument allows one to show some uniqueness.

\begin{thm} \label{loop thm}  For any $d>1$, the $(d-1)$--fold looping map from
$$ \hoNat[ \Omega QBE_k^{(\R \oplus V) \otimes_{\R} \rho_k}, \Omega QBE_{k+1}^{(\R \oplus V) \otimes_{\R} \rho_{k+1}}]$$
to
$$ \hoNat[ \Omega^dQBE_k^{(\R \oplus V) \otimes_{\R} \rho_k}, \Omega^dQBE_{k+1}^{(\R \oplus V) \otimes_{\R} \rho_{k+1}}]$$
is bijective (and, in particular, one-to-one) on $\pi_0$.
\end{thm}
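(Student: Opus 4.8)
\emph{Proof proposal for \thmref{loop thm}.}

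The plan is to rerun the obstruction-theoretic argument of Arone--Dwyer--Lesh behind \thmref{adl thm}, observing that it gives a uniqueness statement and not merely an existence one. Write $F(V) = QBE_k^{(\R\oplus V)\otimes_\R\rho_k}$ and $G(V) = QBE_{k+1}^{(\R\oplus V)\otimes_\R\rho_{k+1}}$. Surjectivity on $\pi_0$ is automatic: the $d$--fold looping map of \thmref{adl thm} factors through the $(d-1)$--fold looping map of \thmref{loop thm}, so the latter is onto on $\pi_0$, and likewise each single looping step $\hoNat[\Omega^aF,\Omega^aG]\to\hoNat[\Omega^{a+1}F,\Omega^{a+1}G]$ is onto on $\pi_0$ for $a\geq 0$. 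Since $\Omega^dG$ is an infinite loop space, $\hoNat[\Omega^dF,\Omega^dG]$ is grouplike with all components equivalent, so it suffices to prove that for each $a\geq 1$ the homotopy fiber of this single step is \emph{connected} --- one degree better than the \emph{nonemptiness} given by \thmref{adl thm}.

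To analyze this homotopy fiber I would first make the reduction used in \secref{sk section}: applying the stable splitting of $\Sinfty\Oinfty$ of \secref{james hopf map subsection} to the functor $V\mapsto\Sinfty BE_k^{(\R\oplus V)\otimes_\R\rho_k}[-a]$ and discarding all natural transformations between Weiss--homogeneous functors of unequal degree --- exactly as in the proof of \lemref{Weiss calc lemma} --- leaves only the $r=p$ term, a mapping space between two functors $\V_\C\to\mathrm{Sp}$ that are Weiss--homogeneous of degree $p^{k+1}$. By Weiss's classification of homogeneous functors this mapping space becomes a Borel $U(p^{k+1})$--equivariant mapping spectrum $\mathrm{Map}^{hU(p^{k+1})}(\Theta\wedge S^{-a\bar\rho_p},\Theta')$, where $\bar\rho_p$ is the reduced permutation representation of $\Sigma_p\subset U(p^{k+1})$ (permuting the $p$ blocks of size $p^k$) and $\Theta,\Theta'$ are the fixed, highly connective derivative $U(p^{k+1})$--spectra of source and target (their connectivity coming from that of the Thom spaces $BE_k^{(\R\oplus V)\otimes_\R\rho_k}$). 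Under this dictionary looping $a\to a+1$ becomes precomposition with the dual Euler class $S^{-\bar\rho_p}\to S^0$ smashed with $\Theta$, so the homotopy fiber is $\mathrm{Map}^{hU(p^{k+1})}(\Theta\wedge S^{-a\bar\rho_p}\wedge\mathrm{cofib}(S^{-\bar\rho_p}\to S^0),\Theta')$.

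It remains to estimate the connectivity of this mapping spectrum, and this is where ADL's input is essential rather than a crude cell count. The $\Sigma_p$--spectrum $\mathrm{cofib}(S^{-\bar\rho_p}\to S^0)$ has trivial geometric fixed points (the map is the identity $S^0\to S^0$ on $\Phi^{C_p}$), so $p$--locally it lies in the image of $(-)\wedge (EC_p)_+$, and the equivariant mapping spectrum it controls is governed by the homotopy-orbit/Tate behaviour of the derivatives that ADL analyze in proving \thmref{adl thm}; their computation shows the fiber nonempty for $a\geq 0$, and the assertion we need is that it is connected for $a\geq 1$ --- i.e.\ that looping once more lands in the range where the relevant obstruction vanishes. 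I expect the main obstacle to be precisely this: carrying ADL's equivariant computation through with one extra degree of connectivity while tracking the orientation twists that appear at odd primes (the same twists behind the ``$c_n$ is a twisted $B_n$--invariant'' bookkeeping of \secref{End RnL0 section}), together with verifying that looping really does correspond to the dual Euler class map at the level of Weiss derivatives.
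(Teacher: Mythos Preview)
Your overall strategy---rerun the ADL obstruction theory and squeeze out one extra degree of connectivity---is exactly the paper's. But the paper's execution is far more economical than your sketch: it consists of a single surgical modification to ADL's argument. Where ADL (near the bottom of their page~202) use the $0$--connected cofiber of $S^0 \to S^{(i-1)d}$ to obtain surjectivity of the $d$--fold looping map, the paper observes that if one instead starts at $\Omega$ rather than at the unlooped level, the relevant map becomes $S^{i-1} \to S^{(i-1)d}$, whose cofiber is at least $1$--connected; ADL's argument then yields bijectivity on $\pi_0$ with no further change. That is the entire proof.

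Your proposal differs in several respects. You analyze one looping step at a time rather than the composite; you attempt to reduce to the single piece $r=p$ via a Weiss--homogeneity argument, whereas ADL carry all $1\le i\le p$ through their argument (this is why the index $i$ appears in the paper's description, and why the later discussion of the $r\ge 2$ conjectures and the subgroups $I_g$ is organized around the full range $1\le i\le p^r$); and you try to identify the obstruction as an explicit $U(p^{k+1})$--equivariant mapping spectrum built from $\mathrm{cofib}(S^{-\bar\rho_p}\to S^0)$. This last identification does not make transparent why starting at $a=1$ rather than $a=0$ buys exactly the needed degree: the cofiber you write down is independent of $a$, and you do not explain how the factor $S^{-a\bar\rho_p}$ interacts with the connectivity of $\Theta$ and $\Theta'$ to produce a connected (rather than merely nonempty) fiber. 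You correctly flag this as the main obstacle, and it is; the paper's reformulation in terms of $S^{i-1}\to S^{(i-1)d}$ makes the extra degree immediate, while your route leaves it as an equivariant connectivity estimate still to be carried out.
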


\begin{cor}  There is a unique natural $(k-1)$--fold delooping
$$\Omega s_k(V): \Omega QL_1(k,V) \ra \Omega QL_1(k+1,V)$$ of the $k$th connecting map of the Weiss tower.
\end{cor}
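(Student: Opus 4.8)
The existence half is immediate from \thmref{adl thm}: if $s_k(V)$ denotes a natural $k$-fold delooping of the $k$th connecting map $c_k(V)\colon \Omega^k QL_1(k,V)\ra\Omega^k QL_1(k+1,V)$, then $\Omega s_k(V)$ is a natural $(k-1)$-fold delooping. So the real point is uniqueness, and the plan is to transfer \thmref{loop thm} from the Thom spaces $BE_j^{(\R\oplus V)\otimes_\R\rho_j}$ to their Steinberg retracts. We may assume $k\ge 2$; the cases $k\le 1$ are trivial (for $k=1$ a $0$-fold delooping is the connecting map itself).

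First I would set up the retract structure. Since every $\rho_j$ contains a trivial $\R$-summand, each Thom space $BE_j^{(\R\oplus V)\otimes_\R\rho_j}$ is a suspension, so the Steinberg idempotent $e_j$ is represented by a homotopy idempotent self-map and $L_1(j,V)$ is its mapping telescope, all functorially in $V\in\V_{\C}$. Applying $Q$ and then $\Omega$ shows that the functor $V\rightsquigarrow\Omega QL_1(j,V)$ is a natural retract of $V\rightsquigarrow\Omega QBE_j^{(\R\oplus V)\otimes_\R\rho_j}$; write $\iota_j$ and $r_j$ for the structure transformations, so $r_j\iota_j\simeq\id$. Using the elementary fact that $\operatorname{Map}(A,B)$ is a retract of $\operatorname{Map}(A',B')$ whenever $A$ and $B$ are retracts of $A'$ and $B'$ --- here by pre- and post-composition with $\iota_k,r_k,\iota_{k+1},r_{k+1}$ --- we get that $\hoNat[\Omega QL_1(k,V),\Omega QL_1(k+1,V)]$ is a retract of $\hoNat[\Omega QBE_k^{(\R\oplus V)\otimes_\R\rho_k},\Omega QBE_{k+1}^{(\R\oplus V)\otimes_\R\rho_{k+1}}]$, and likewise after applying $\Omega^{k-1}$ throughout. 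As looping is a functor, these retractions are compatible with the $(k-1)$-fold looping maps, so the square comparing the two looping maps exhibits the $L_1$-version as a retract, in the category of arrows, of the $BE$-version.

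Next I would invoke \thmref{loop thm} with $d=k>1$: the $(k-1)$-fold looping map on the $BE$-side is a bijection on $\pi_0$. Since a retract of a bijection is again a bijection, applying $\pi_0$ to the arrow-category retract above shows that
$$\pi_0\hoNat[\Omega QL_1(k,V),\Omega QL_1(k+1,V)] \ra \pi_0\hoNat[\Omega^k QL_1(k,V),\Omega^k QL_1(k+1,V)]$$
is a bijection. The homotopy classes of natural $(k-1)$-fold deloopings of $c_k(V)$ form precisely the preimage of the class $[c_k(V)]$ in the target, so this preimage is a single point, represented by $\Omega s_k(V)$; this is the asserted unique natural $(k-1)$-fold delooping.

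The only delicate point is the bookkeeping in the middle paragraph: one must check that the Steinberg telescope can be formed functorially in $V$ and that the resulting retraction survives to the homotopy natural-transformation spaces, compatibly with looping. Everything else is formal. In particular, existence (surjectivity of the looping map) is already guaranteed by \thmref{adl thm}; it is injectivity --- the ``$d>1$'' strengthening in \thmref{loop thm}, which has no counterpart in \thmref{adl thm} --- that delivers uniqueness.
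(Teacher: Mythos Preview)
Your argument is correct and is exactly the deduction the paper intends: the corollary is stated without proof, as an immediate consequence of \thmref{loop thm}, and the only thing to supply is the passage from the $BE_j$ Thom spaces to their Steinberg retracts $L_1(j,V)$, which you carry out via the standard ``retract of a bijection is a bijection'' argument on $\pi_0$ of the $\hoNat$ spaces. Your bookkeeping remarks about functoriality of the telescope in $V$ and compatibility with looping are the right checks, and they go through because $L_1(k,V)$ is defined in \S1.1 as a natural retract of $BE_k^{(\R\oplus V)\otimes_\R\rho_k}$.
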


One might conjecture the following generalizations of these theorems.

\begin{conj} \label{adl conj}  For all $r\geq 1$, and for any $d>0$, the $d$--fold looping map from
$$ \hoNat[ QBE_k^{(\R \oplus V) \otimes_{\R} \rho_k}, QBE_{k+r}^{(\R \oplus V) \otimes_{\R} \rho_{k+r}}]$$
to
$$ \hoNat[ \Omega^dQBE_k^{(\R \oplus V) \otimes_{\R} \rho_k}, \Omega^dQBE_{k+r}^{(\R \oplus V) \otimes_{\R} \rho_{k+r}}]$$
is onto on $\pi_0$.
\end{conj}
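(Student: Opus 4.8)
The natural line of attack is to extend the argument of Arone, Dwyer and Lesh \cite{aronedwyerlesh} that proves the case $r=1$ (\thmref{adl thm}), keeping track of how its estimates depend on $r$. Write $F_j(V) = BE_j^{(\R \oplus V)\otimes_{\R}\rho_j}$. The first step is to convert the statement about mapping spaces into one about mapping spectra. Since $QF_{k+r}(V) = \Oinfty \Sinfty F_{k+r}(V)$ is naturally an infinite loop space and $F_k(V)$ is connected, the stable splitting of \thmref{stable splitting thm} gives a natural equivalence $\Sinfty QF_k(V) \simeq \bigvee_{j\geq 0}\Sinfty D_j F_k(V)$, and the $\Oinfty$--adjunction identifies $\hoNat[QF_k(V), QF_{k+r}(V)]$ with $\Oinfty\left(\prod_{j\geq 1}\mathcal N_j\right)$, where $\mathcal N_j$ is the spectrum of derived natural transformations in $V$ from $\Sinfty D_j F_k(V)$ to $\Sinfty F_{k+r}(V)$. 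For the $d$--fold looped functors, $\Omega^d QF_k(V) = \Oinfty(\Sigma^{-d}\Sinfty F_k(V))$, whose stabilization carries instead the Goodwillie filtration of $\Sinfty\Oinfty$ with $j$--th layer $\Sigma^{-jd}\Sinfty D_j F_k(V)$; the same analysis presents $\hoNat[\Omega^d QF_k(V), \Omega^d QF_{k+r}(V)]$ as $\Oinfty$ of a spectrum filtered with $j$--th graded piece $\Sigma^{(j-1)d}\mathcal N_j$, and the $d$--fold looping map preserves the filtration, inducing on the $j$--th piece a natural map $\mathcal N_j \to \Sigma^{(j-1)d}\mathcal N_j$ which is an equivalence for $j=1$.

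Weiss calculus now eliminates the high pieces: as functors of $V$, $\Sinfty D_j F_k(V)$ is homogeneous of degree $jp^k$ and $\Sinfty F_{k+r}(V)$ of degree $p^{k+r}$, so $\mathcal N_j\simeq\ast$ whenever $j>p^r$ (this is the mechanism of \lemref{Weiss calc lemma}). What remains is the looping map on the pieces with $1\leq j\leq p^r$; the pieces with $1<j<p^r$ are of strictly lower order and should not affect $\pi_0$, by a connectivity (or $\A$--module) estimate on the relevant homogeneous layers in the spirit of \lemref{s lem 2} and \thmref{m not n thm}, so the whole problem comes down to the piece $j=p^r$. There source and target are both homogeneous of degree $p^{k+r}$; by the classification of homogeneous functors, $\mathcal N_{p^r}$ and $\Sigma^{(p^r-1)d}\mathcal N_{p^r}$ are mapping spectra between the classifying spectra of these functors --- spectra with an action of $\Sigma_{p^{k+r}}$, built (for the sphere functor) from partition complexes, and reducing along $E_{k+r}\rtimes GL_{k+r}$ to the Steinberg summand of $BE_{k+r}$ that defines $L_1(k+r)$. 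As in \cite{aronedwyerlesh}, the looping map on this piece is induced by an explicit equivariant map of representation spheres, and one rewrites it as a map of homotopy-orbit spectra whose homotopy fibre must be estimated; surjectivity on $\pi_0$ of the original looping map is equivalent to a mild connectivity lower bound on that fibre.

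The main obstacle is precisely this connectivity bound. For $r=1$, Arone, Dwyer and Lesh obtain it from the high connectivity of the partition complex of $\Sigma_p\wr\Sigma_{p^k}$ relative to $\Sigma_{p^{k+1}}$ --- adjacent layers of the Weiss tower leave just enough room. For $r\geq 2$ one is comparing layer $k$ with layer $k+r$, governed by the more intricate combinatorics of $\Sigma_{p^r}$-transitive subgroups (compare the inclusion $\Sigma_{p^r}\wr\Sigma_{p^k}<\Sigma_{p^{k+r}}$ of \propref{Rn trans prop} and the sets of transitive representations appearing in the proof of \lemref{Nil iso lem}), and it is not at all clear the bound survives without dropping below the threshold that forces surjectivity on $\pi_0$. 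The tempting shortcut of factoring the comparison through the intermediate layers $k+1,\dots,k+r-1$ and applying \thmref{adl thm} repeatedly fails for the familiar reason that a composite of maps each onto on $\pi_0$ need not be onto on $\pi_0$; as with the failed attempt at \conjref{loop prop} discussed in \appref{loop prop appendix}, making that reduction work would itself require the vanishing of the intervening obstruction groups. I would therefore expect a proof to rest on a careful accounting of partition-complex connectivities for non-adjacent layers, in the spirit of \cite{aronedwyerlesh}; whether that accounting closes for all $r$ is exactly what keeps the statement a conjecture.
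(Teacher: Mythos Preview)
The statement is a \emph{conjecture}: the paper gives no proof, and in \appref{loop prop appendix} explicitly explains why the natural generalization of the Arone--Dwyer--Lesh argument breaks down.  You correctly arrive at the same conclusion --- that the obvious approach does not close and the statement remains open --- so in that sense your assessment agrees with the paper's.

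Where your discussion differs is in \emph{locating} the obstruction.  You place it in ``partition-complex connectivities for non-adjacent layers''.  The paper is more specific: following \cite{aronedwyerlesh}, one must analyze, for each $1\leq i\leq p^r$ and each $g\in U(p^{k+r})$, the group
\[
I_g=\{(\phi,\gamma)\in(\Sigma_i\wr E_k)\times E_{k+r}\mid \exists\,h\in U(p^{k+r}-ip^k)\text{ with }g\gamma g^{-1}=\phi h\}.
\]
The projection $I_g\to E_{k+r}$ is always monic (\lemref{appendix lem 1}), which suffices for \thmref{loop thm}.  What is needed for the conjectures is the analogue of \cite[Lemma 5.1(5-2)]{aronedwyerlesh}: that the other projection $I_g\to\Sigma_i\wr E_k$ is monic.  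The paper exhibits an explicit counterexample with $p=r=i=2$, $k=1$, so the finiteness input that drives the connectivity estimate in \cite[\S6]{aronedwyerlesh} is simply false for $r\geq 2$.  The failure is group-theoretic rather than a matter of sharpening a partition-complex bound.

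One further gap in your outline: you dismiss the pieces $1<j<p^r$ by appeal to \lemref{s lem 2} and \thmref{m not n thm}.  Those results control $\A$--module maps between homology primitives, not the natural-transformation spectra $\mathcal N_j$ themselves, and they do not show $\mathcal N_j\simeq\ast$ or even that $\pi_0\mathcal N_j=0$.  Indeed, the analysis in \cite{aronedwyerlesh} (and the paper's appendix) runs over \emph{all} $1\leq i\leq p^r$, not just the extremes; the intermediate values genuinely enter the argument.
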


\begin{conj} \label{loop conj}  For all $r\geq 1$, for any $d>1$, the $(d-1)$--fold looping map from
$$ \hoNat[ \Omega QBE_k^{(\R \oplus V) \otimes_{\R} \rho_k}, \Omega QBE_{k+r}^{(\R \oplus V) \otimes_{\R} \rho_{k+r}}]$$
to
$$ \hoNat[ \Omega^dQBE_k^{(\R \oplus V) \otimes_{\R} \rho_k}, \Omega^dQBE_{k+r}^{(\R \oplus V) \otimes_{\R} \rho_{k+r}}]$$
is bijective (and, in particular, one-to-one) on $\pi_0$.
\end{conj}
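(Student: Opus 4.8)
The plan is to extend the Weiss orthogonal calculus argument of \cite{aronedwyerlesh} underlying \thmref{adl thm} and \thmref{loop thm} from the gap $r=1$ to arbitrary $r$, while tracking how the ambient connectivity estimates degrade as $r$ grows. The first step is to make the natural transformation spaces computable. For $d$ small relative to $p^k$ and $\dim_{\C}V$ one may desuspend $BE_k^{(\R\oplus V)\otimes_{\R}\rho_k}$ (whose connectivity is of order $(1+2\dim_{\C}V)p^k$) $d$ times as a space, so $\Omega^dQBE_k^{(\R\oplus V)\otimes_{\R}\rho_k}\simeq Q(\Sigma^{-d}BE_k^{(\R\oplus V)\otimes_{\R}\rho_k})$ is $\Oinfty$ of a connective suspension-spectrum-valued functor, and the natural stable splitting of $\Sinfty(QZ)_+$ for $Z$ connected gives a natural equivalence
\[ \hoNat\bigl[\Omega^dQBE_k^{(\R\oplus V)\otimes_{\R}\rho_k},\Omega^dQBE_{k+r}^{(\R\oplus V)\otimes_{\R}\rho_{k+r}}\bigr]\simeq\prod_{j\geq 1}\hoNat\bigl[\Sinfty D_j(\Sigma^{-d}BE_k^{(\R\oplus V)\otimes_{\R}\rho_k}),\Sigma^{-d}\Sinfty BE_{k+r}^{(\R\oplus V)\otimes_{\R}\rho_{k+r}}\bigr] \]
of spectrum-valued natural transformation spaces. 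As in the proof of \lemref{Weiss calc lemma}, the $j$th source is homogeneous of Weiss degree $jp^k$ and the target is homogeneous of degree $p^{k+r}$; since homogeneous spectrum-valued functors of distinct Weiss degrees admit no nontrivial natural transformations, every factor with $j\neq p^r$ is contractible and the product collapses to a single factor $\Oinfty F_d(V)$, where $F_d(V)$ is an equivariant homotopy mapping spectrum between the group-spectra classifying these two degree-$p^{k+r}$ homogeneous functors. Thus \conjref{loop conj} becomes the assertion that the ``loop $(d-1)$ more times'' comparison $F_1(V)\to F_d(V)$ is a $\pi_0$-isomorphism for all $d>1$, and \conjref{adl conj} that $F_0(V)\to F_d(V)$ is $\pi_0$-epic; since $F_0\to F_d$ factors through $F_1$, the two conjectures reduce to showing $F_0(V)\to F_1(V)$ is $\pi_0$-epic and $F_1(V)\to F_d(V)$ is a $\pi_0$-isomorphism.

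The second step is the connectivity bookkeeping for the tower $F_0(V)\to F_1(V)\to F_2(V)\to\cdots$. Following \cite{aronedwyerlesh}, I would resolve $F_d(V)$ by the cellular filtration of its source, i.e.\ by the iterated--extended--power decomposition of $D_{p^r}$ applied to the Thom spectrum of $BE_k$; this presents $F_d(V)$ as a limit of a tower whose layers are mapping spectra between shifted suspension spectra of Thom spectra of representations of iterated wreath products, estimable by the Lannes-theory and Segal-conjecture inputs already used in \secref{End RnL0 section}. The map $F_d(V)\to F_{d+1}(V)$ is not an equivalence, because desuspending the source \emph{inside} the $p^r$th extended power introduces a twist by (the negative of) the permutation representation of $\Sigma_{p^r}$, so the corresponding resolution layers differ by Thom shifts. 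One must show that the connectivity of the fiber of $F_d(V)\to F_{d+1}(V)$ grows with $d$ and is already positive for $d\geq 1$, forcing a $\pi_0$-isomorphism, while for $d=0$ it is only $0$-connected, forcing merely a $\pi_0$-surjection. The case of very large $d$ would then follow from $d=2$ by a Freudenthal stability argument.

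The hard part is precisely this connectivity estimate. For $r=1$ the resolution involves only the two adjacent strata $BE_k^{\rho_k}$ and $BE_{k+1}^{\rho_{k+1}}$, where the bounds of \cite{aronedwyerlesh} are sharp enough; but for $r\geq 2$ the resolution of $D_{p^r}(\Sigma^{-d}BE_k^{(\R\oplus V)\otimes_{\R}\rho_k})$ also contains intermediate strata of the form $D_{p^a}D_{p^b}(\cdots)$ with $a+b$ contributing to $r$, whose associated mapping spectra are substantially less connected, so a naive transcription of the ADL estimate does not confine the permutation-twist discrepancy to negative degrees after a single desuspension --- it is even conceivable that more than one extra loop is needed, which would make the conjecture false as stated. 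Making the argument go through should require either a sharper, uniform-in-$V$ connectivity bound on these intermediate terms --- most plausibly obtained by feeding in the $\A$-module vanishing of \thmref{m not n thm} in place of crude dimension counts, since that is exactly what governs maps among the modules $R_nL_k$ --- or a new structural fact describing how the iterated wreath-product transfers of \thmref{transfer thm} interact with desuspension inside extended powers.
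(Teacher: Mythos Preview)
The statement you are attempting to prove is labeled \emph{Conjecture} in the paper, and indeed the paper does \emph{not} prove it; Appendix~B is an explicit discussion of a failed attempt. Your proposal is not a proof either: it is a strategy sketch that correctly identifies the shape of the argument (reduce via the stable splitting and Weiss homogeneity to a single factor, then run a connectivity/tower analysis as in \cite{aronedwyerlesh}) but leaves the decisive step --- the connectivity estimate for $r\geq 2$ --- unresolved, and you yourself flag that ``it is even conceivable that more than one extra loop is needed, which would make the conjecture false as stated.''

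The paper is more precise than your sketch about \emph{where} the argument breaks. The author observes that to push the \cite{aronedwyerlesh} proof from $r=1$ to general $r$ one must generalize the representation-theoretic input, specifically Lemma~5.1 and Proposition~5.2 of \cite{aronedwyerlesh}. One half of Lemma~5.1 does generalize (this is \lemref{appendix lem 1}), but the other half --- that the projection $I_g\subset \Sigma_i\wr E_k\times E_{k+r}\to \Sigma_i\wr E_k$ is monic --- fails already for $r=2$: the paper gives an explicit element $(e,\gamma)\in I_g$ with nontrivial $\gamma$ (Example following \lemref{appendix lem 1}). This kills the finiteness statement one needs about the relevant $I_g$--spectrum, so the connectivity bound you are hoping for does not follow from the ADL machinery as written. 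Your suggestion to replace crude dimension counts by the $\A$--module vanishing of \thmref{m not n thm} is not obviously relevant here: the obstruction is equivariant-homotopical (a statement about certain double-coset subgroups $I_g$ inside $U(p^{k+r})$), not an $\A$--module Hom computation, and the paper gives no indication that the Hecke-algebra input circumvents it.
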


This second conjecture, in the case when $r=2$, implies \conjref{loop prop}: the conjecture that $\Omega s_k(V)\Omega s_{k-1}(V)$ would be null.

The analysis in \cite{aronedwyerlesh} proving \thmref{adl thm} also proves \thmref{loop thm} if one makes one change:
\begin{itemize}
\item Near the bottom of page 202 of \cite{aronedwyerlesh}, the (0--connected) cofiber of the map $S^0 \ra S^{(i-1)d}$ should be replaced by the (at least 1--connected) cofiber of  $S^{i-1} \ra S^{(i-1)d}$.
\end{itemize}

To prove the conjectures, one is led to the attempt to do the following.

\begin{itemize}
\item $k+1$ should be replaced by $k+r$ in the discussion beginning on page 203 of \cite{aronedwyerlesh}, as well as in supporting arguments in \S 5.
\end{itemize}

All of this appears to be straightforward, except for generalizing the `representation theory' described in Lemma 5.1 and Proposition 5.2 of \cite{aronedwyerlesh}.

As a first observation, one can weaken Lemma 5.1 to just what is needed in \cite[\S 6]{aronedwyerlesh}.  We recall the context.

Let $1 \leq i \leq p^r$.

There is a subgroup inclusion $U(ip^k) \times U(p^{k+r} - ip^k) \subset U(p^{k+r})$ using block matrices.  In the usual way, we consider $\Sigma_i \wr E_k$ to be a subgroup of $U(ip^k)$, and $E_{k+r}$ to be a subgroup of $U(p^{k+r})$.  ($E_k$ here is written as $\Delta_k$ in \cite{aronedwyerlesh}.)

Now fix $g \in U(p^{k+r})$, and let
$$I_g = \{(\phi,\gamma) \in \Sigma_i \wr E_k \times E_{k+r} \ | \ \exists h \in U(p^{k+r} - ip^k) \text{ such that }g\gamma g^{-1}=\phi h\}.$$

When $r=1$, the next lemma is a version of Lemma 5.1 (5-1) of  \cite{aronedwyerlesh} sufficient for the needs of the proof of both theorems above.  It is proved using the argument in \cite{aronedwyerlesh}.

\begin{lem} \label{appendix lem 1} The composite $I_g \subset \Sigma_i \wr E_k \times E_{k+r} \ra E_{k+r}$ is monic.  In particular, $I_g$ will be an elementary abelian $p$--group.
\end{lem}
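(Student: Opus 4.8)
The plan is to prove that the composite $I_g\hra \Sigma_i\wr E_k\times E_{k+r}\xra{\mathrm{pr}_2} E_{k+r}$ is injective; the last sentence of the lemma then follows for free, since $E_{k+r}=(\Z/p)^{k+r}$ is elementary abelian and any subgroup of an elementary abelian $p$--group is again elementary abelian.

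First I would check that $I_g$ is in fact a subgroup of $\Sigma_i\wr E_k\times E_{k+r}$. With the block--matrix conventions in force, if $g\gamma_1 g^{-1}=\phi_1 h_1$ and $g\gamma_2 g^{-1}=\phi_2 h_2$ with $\phi_j\in \Sigma_i\wr E_k\subset U(ip^k)$ and $h_j\in U(p^{k+r}-ip^k)$ occupying the two complementary blocks, then elements of the two blocks commute and multiply blockwise, so $g(\gamma_1\gamma_2)g^{-1}=(\phi_1\phi_2)(h_1h_2)$ with $\phi_1\phi_2\in\Sigma_i\wr E_k$ and $\gamma_1\gamma_2\in E_{k+r}$; inverses and the identity element are handled identically. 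Thus $I_g$ is a subgroup and $\mathrm{pr}_2|_{I_g}$ is a group homomorphism.

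The heart of the matter is then a one--line observation: the defining condition of $I_g$ says exactly that $g\gamma g^{-1}$ is block--diagonal for the decomposition $\C^{p^{k+r}}=\C^{ip^k}\oplus\C^{p^{k+r}-ip^k}$ and that $\phi$ is its upper--left block. Hence $\phi$ is completely determined by $\gamma$ (for the fixed $g$), which is precisely the injectivity of $\mathrm{pr}_2|_{I_g}$; equivalently, the kernel is trivial, since $(\phi,1)\in I_g$ forces $\phi h=g\cdot 1\cdot g^{-1}=1$, and a block--diagonal matrix equals the identity only when each block does, so $\phi=1$. Composing, $I_g$ embeds as a subgroup of $E_{k+r}$, and is therefore elementary abelian of rank at most $k+r$.

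There is no serious obstacle here: this weakened form of \cite[Lemma 5.1]{aronedwyerlesh} is essentially formal, and all of the apparent complication coming from the non--abelian group $\Sigma_i\wr E_k$ evaporates because the compatibility condition pins $\phi$ down as a function of $\gamma$. The only point needing care is the bookkeeping — which block of $U(p^{k+r})$ houses $\phi$ and which houses $h$, and how $\Sigma_i\wr E_k$ and $E_{k+r}$ are embedded — and this is exactly where one follows the setup in \cite{aronedwyerlesh}.
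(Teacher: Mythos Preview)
Your argument is correct and is precisely the elementary observation underlying the result: since $U(ip^k)\times U(p^{k+r}-ip^k)\subset U(p^{k+r})$ is the block--diagonal inclusion, the condition $g\gamma g^{-1}=\phi h$ forces $\phi$ to be the upper--left block of $g\gamma g^{-1}$, so $\gamma$ determines $\phi$. The paper itself does not spell out a proof but simply points to the argument in \cite{aronedwyerlesh}; your write--up is exactly that argument, carried out for general $r$.
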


The evident generalization of Lemma 5.1 (5-2) of  \cite{aronedwyerlesh} would be the following: the composite $I_g \subset \Sigma_i \wr E_k \times E_{k+r} \ra \Sigma_i \wr E_k$ is monic.

Much to our disappointment, we found this can be false as soon as $r=2$.

\begin{ex}  Let $p=r=i=2$ and $k=1$.  Then, for $g \in U(8)$, $I_g$ is a subgroup of $\Sigma_2\wr E_1 \times E_3$.  If $g$ is chosen to conjugate the permutation matrix $\gamma$ consisting of four $2\times 2$ blocks $\begin{bmatrix} 0&1 \\ 1&0  \end{bmatrix}$ to the matrix with $4\times 4$ blocks $\begin{bmatrix} I_4&\mathbf 0 \\ \mathbf 0&-I_4  \end{bmatrix}$, then $(e,\gamma) \in I_g$ (with $h=-I_4$).

Thus the finiteness statement, analogous to that on page 204 of \cite{aronedwyerlesh}, that one would like to make about the spectrum $S^{-4}\sm S^8$ as an $I_g$--spectrum appears to be not true.
\end{ex}

\end{document}